\newtheorem{dummy}{dummy}[section]
\newtheorem{lemma}[dummy]{Lemma}
\newtheorem{theorem}[dummy]{Theorem}
\newtheorem{conjecture}[dummy]{Conjecture}
\newtheorem{corollary}[dummy]{Corollary}
\newtheorem{proposition}[dummy]{Proposition}
\theoremstyle{definition}
\newtheorem{example}[dummy]{Example}
\newtheorem{remark}[dummy]{Remark}
\newcommand{\bA}{\mathbb{A}}
\newcommand{\bB}{\mathbb{B}}
\newcommand{\bC}{\mathbb{C}}
\newcommand{\bG}{\mathbb{G}}
\newcommand{\bP}{\mathbb{P}}
\newcommand{\R}{\mathbb{R}}
\newcommand{\Z}{\mathbb{Z}}
\newcommand{\cA}{\mathcal{A}}
\newcommand{\cB}{\mathcal{B}}
\newcommand{\cC}{\mathcal{C}}
\newcommand{\cE}{\mathcal{E}}
\newcommand{\cF}{\mathcal{F}}
\newcommand{\cO}{\mathcal{O}}
\newcommand{\cR}{\mathcal{R}}
\newcommand{\cV}{\mathcal{V}}
\newcommand{\GL}{\mathrm{GL}}
\DeclareMathOperator{\Sing}{Sing}
\DeclareMathOperator{\Perf}{Perf}
\DeclareMathOperator{\Qcoh}{QCoh}
\DeclareMathOperator{\Spec}{Spec}
\DeclareMathOperator{\Dsing}{DSing}
\DeclareMathOperator{\Coh}{Coh}
\DeclareMathOperator{\Ext}{Ext}
\DeclareMathOperator{\holim}{holim}
\DeclareMathOperator{\PreMF}{PreMF}
\DeclareMathOperator{\MF}{MF}
\DeclareMathOperator{\HH}{HH}
\DeclareMathOperator{\Aut}{Aut}
\DeclareMathOperator{\Auteq}{Auteq}
\DeclareMathOperator{\twoper}{2Per}
\DeclareMathOperator{\Tot}{Tot}
\DeclareMathOperator{\Sym}{Sym}
\DeclareMathOperator{\Indcoh}{IndCoh}
\DeclareMathOperator{\DGCat}{DGCat}
\DeclareMathOperator{\gsing}{\mathfrak{Sing}}
\DeclareMathOperator{\SingSupp}{SingSupp}
\DeclareMathOperator{\Supp}{Supp}
\newcommand{\clpart}{\mathrm{Classical}}
\newcommand{\modules}{\text{-mod}}
\newcommand{\cofiber}{\mathrm{cofiber}}
\newcommand{\Fw}{\mathrm{Fuk}^{w}}
\newcommand{\Fc}{\mathrm{Fuk}^{c}}
\newcommand{\RFuk}{\mathrm{RFuk}}
\newcommand{\Fuk}{\mathrm{Fuk}}
\newcommand{\id}{\mathrm{id}}
\newcommand{\gr}{\mathrm{gr}}
\newcommand{\pt}{\mathrm{pt}}
\newcommand{\op}{\mathrm{op}}
\newcommand{\Star}{\mathrm{Star}}
\newcommand{\vertices}{\mathrm{Vert}}
\title[Singularity categories of normal crossings surfaces]{Singularity categories of normal crossings surfaces, descent, and mirror symmetry}
\author{James Pascaleff}
\address{James Pascaleff, University of Illinois at Urbana-Champaign}
\email{jpascale@illinois.edu}
\author{Nicol\`o Sibilla}
\address{Nicol\`o Sibilla, 
  SISSA,
  Via Bonomea 265, 
  34136 Trieste, Italy}
\email{nsibilla@sissa.it}
\begin{document}
\maketitle

\begin{abstract} Given a smooth 3-fold $Y$, a line bundle $L \to Y$, and a section $s$ of $L$ such that the vanishing locus of $s$ is a normal crossings surface $X$ with graph-like singular locus, we present a way to reconstruct the singularity category of $X$ as a homotopy limit of several copies of the category of matrix factorizations of $xyz : \bA^{3} \to \bA^{1}$ (the mirror to the Fukaya category of the pair of pants). This extends our previous result for the case where $L$ is trivialized. The key technique is the classification of non-two-periodic autoequivalences of the category of matrix factorizations. We also present a conjectural mirror for these singularity categories in terms of the Rabinowitz wrapped Fukaya categories of Ganatra-Gao-Venkatesh for certain symplectic four-manifolds, and relate this construction to work of Lekili-Ueda and Jeffs.
\end{abstract}

\section{Introduction}
\label{sec:intro}

Let $Y$ be a smooth 3-fold over a field $k$ and let $L$ be a line bundle over $Y$. Choose a section $s \in \Gamma(Y,L)$ and let $X = s^{-1}(0)$ be the divisor of zeros. This paper is concerned with the category of singularities $\Dsing(X)$, which we regard as stable $\infty$-category; it is at least $k$-linear, but a part of the story is enhancing this linearity over a larger ring. One presentation for this category is as the quotient (homotopy cofiber) of coherent complexes modulo perfect complexes,
\begin{equation}
  \Dsing(X) = \Coh(X)/\Perf(X),
\end{equation}
but another presentation will be more convenient for our arguments (and in fact we usually consider the idempotent completion of this category).

The basic geometric insight is that $\Dsing(X)$ is sensitive to the geometry of a neighborhood of the singular locus of $X$. Because $X$ is normal crossings, this singular locus $\Sing(X)$ consists of a collection of curves meeting a triple points. We always assume that $X$ has \emph{graph-like singular locus}, meaning that all irreducible components of $\Sing(X)$ are isomorphic to $\bP^{1}$ or $\bA^{1}$, and that there are at most two triple points on each $\bP^{1}$ and at most one triple point on each $\bA^{1}$. Then the combinatorics of $\Sing(X)$ is described by a trivalent graph. 

Suppose that $X$ is a normal crossings surface with graph-like singular locus. Then $\Sing(X)$ may be presented as a scheme as the gluing (colimit) of several copies of $\Sing(\{xyz = 0\} \subset \bA^{3})$, that is, the union of the coordinate axes in $\bA^{3}$. If we now think about singularity categories, we have
\begin{equation}
  \Dsing(\{xyz=0\}) \cong \MF(\bA^{3},xyz),
\end{equation}
where the right-hand side is the category of matrix factorizations of the function $xyz: \bA^{3} \to \bA^{1}$. For a morphism $f : Y \to \bA^{1}$, with $Y$ smooth, there is in general an equivalence of the category of singularities $\Dsing(f^{-1}(0))$ with the category matrix factorizations $\MF(Y,f)$. This is originally due to Orlov \cite{orlov-d-branes}, although in this paper we mainly follow Preygel's approach \cite{preygel-thesis} to this theory. For this paper it is very important to know that the category $\MF(Y,f)$ carries a $2$-periodic structure, which is to say a distinguished choice of isomorphism $t : \id \to \id[2]$ between the identity functor and the double shift functor. This $2$-periodic structure \emph{depends} on the pair $(Y,f)$ and not merely on the scheme $f^{-1}(0)$; analyzing this dependence is crucial to this paper. Nevertheless, we may still say that the Orlov equivalence induces a $2$-periodic structure on  $\Dsing(f^{-1}(0))$.

In light of the foregoing, when $X$ has graph-like singular locus, it is natural to ask whether $\Dsing(X)$ is equivalent to a homotopy limit of several copies of $\Dsing(\{xyz=0\})$. Because the latter category admits $2$-periodic structures, there are two versions of of this question:
\begin{enumerate}
\item Is $\Dsing(X)$ equivalent to a homotopy limit of several copies of $\Dsing(\{xyz=0\})$ where all functors in the limit diagram are compatible with certain $2$-periodic structures on the pieces?
\item Is $\Dsing(X)$ equivalent to a homotopy limit of several copies of $\Dsing(\{xyz=0\})$, where the functors in the limit diagram are merely $k$-linear?
\end{enumerate}
An affirmative answer to the first question naturally implies that $\Dsing(X)$ itself admits a $2$-periodic structure. However, the category $\Dsing(X)$ is \emph{not necessarily} $2$-periodic, so there is an obstruction. When $\Dsing(X)$ is not $2$-periodic, the second question may still have an affirmative answer, but then there is the question of what kind of structure on $\Dsing(X)$ is induced by local $2$-periodic structures on the pieces.

When $X = s^{-1}(0)$ is the vanishing divisor of a section of a line bundle $s \in \Gamma(Y,L)$, the $2$-periodicity of $\Dsing(X)$ (or lack thereof) corresponds precisely to the triviality (or lack thereof) of the line bundle $L$.
\begin{enumerate}
\item When $L \cong \cO_{Y}$ is trivial, a choice of trivialization allows us to regard $s$ as a function $f: Y \to \bA^{1}$. Then we have the Orlov equivalence $\Dsing(X) \cong \MF(Y,f)$, and the right hand side is $2$-periodic. This $2$-periodicity comes from a natural transformation $t : \id \to \id[2]$ acting on $\Coh(X)$ (whose construction goes back to Gulliksen \cite{gulliksen} and Eisenbud \cite{eisenbud}), which becomes an isomorphism on $\Dsing(X)$.
\item When $L$ is not assumed to be trivial, there is still a natural transformation on $\Coh(X)$, but it is of the form $t : \id \to \otimes L|_{X}^{-1}[2]$, where $\otimes L|_{X}^{-1}$ denotes the functor of tensoring with a line bundle. This induces on $\Dsing(X)$ an \emph{$L|_{X}$-twisted $2$-periodic structure}, meaning that the double shift $[2]$ is isomorphic to tensoring with $L|_{X}$ (so $t$ becomes an isomorphism). This case is addressed by Seidel \cite[p.~87]{seidel-subalgebras}, Polishchuk-Vaintrob \cite{polishchuk-vaintrob}, Burke-Walker \cite{burke-walker}, and quite possibly others.
\end{enumerate}

Having observed that $\Dsing(X)$ may be $2$-periodic in a twisted sense, we can ask whether $\Dsing(X)$ may be obtained as the homotopy limit of several copies of $\Dsing(\{xyz=0\})$, where the functors in the diagram are $2$-periodic in a twisted sense. Our first result is that this is indeed the case.

First a combinatorial definition: Let $G$ be an undirected graph. We construct a category $J(G)$. The object set of $J(G)$ is the disjoint union of the sets of vertices and edges of $G$. There is exactly one nonidentity arrow for each flag $(v,e)$ consisting of a vertex $v$ and incident edge $e$; this arrow has source $v$ and target $e$.

\begin{theorem}
  \label{thm:twisted-gluing}
  Let $Y$ be a smooth threefold, $L$ a line bundle on $Y$, and $s \in \Gamma(Y,L)$ a section. Suppose that $X = s^{-1}(0)$ is normal crossings and has graph-like singular locus. Let $G(X)$ be the trivalent graph describing the singular locus of $X$. Let $\cV = \MF(\bA^{3},xyz)$ and let $\cE = \Coh^{(2)}(\bG_{m})$ be the $2$-periodic folding of $\Coh(\bG_{m})$. Then there is a diagram $F : J(G(X)) \to \DGCat$ of DG categories such that
  \begin{equation}
    \Dsing(X) \cong \holim \left(F: J(G(X)) \to \DGCat \right)
  \end{equation}
  and
  \begin{enumerate}
  \item  $F(v) = \cV$ for each vertex $v$ of $G(X)$,
  \item $F(e) = \cE$ for each edge $e$ of $G(X)$, and
  \item for each flag $(v,e)$, the functor $F(v,e) : \cV \to \cE$ is a composition of the following:
    \begin{enumerate}
    \item restrictions to open subsets,
    \item Kn\"{o}rrer periodicity equivalences,
    \item shifts,
    \item equivalences induced by automorphisms of $(\bA^{3},xyz)$ and $\bG_{m}$,
    \item certain non-$2$-periodic autoequivalences $\phi_{n}$ of $\cE = \Coh^{(2)}(\bG_{m})$ described below.
    \end{enumerate}
  \end{enumerate}
\end{theorem}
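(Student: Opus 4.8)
\emph{Proof plan.}

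The approach is to compute $\Dsing(X)$ by \'etale (Zariski) descent for matrix factorizations along the singular locus, and then to recognize the resulting gluing data as a diagram over $J(G(X))$ with the stated pieces. Since $s$ is a section of a line bundle rather than a function, I would first pass to the $L|_X$-twisted matrix factorization category $\MF(Y,L,s)$, which is equivalent to $\Dsing(X)$ by the twisted Orlov equivalence of Seidel, Polishchuk--Vaintrob and Burke--Walker, and which, like ordinary matrix factorizations (and following Preygel's treatment), forms a sheaf of DG categories on $Y$. As $\Dsing(X)$ depends only on an \'etale (or formal) neighborhood $N$ of $\Sing(X)$ in $Y$ and $X$ has graph-like singular locus, one can choose a finite cover of $N$ whose nerve is $J(G(X))$: pieces $N_v$ around the triple points and pieces $N_e$ around the open edges, so arranged that the only nonempty overlaps are $N_v\cap N_e$ for incident flags $(v,e)$ and that triple overlaps vanish. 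Descent then yields $\Dsing(X)\cong \holim\bigl(F:J(G(X))\to\DGCat\bigr)$ with $F(v)=\MF(N_v,L,s)$, $F(e)=\MF(N_e,L,s)$, and each $F(v,e)$ the restriction to the overlap.

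It remains to put the local pieces into standard form. On $N_v$ the line bundle $L$ is trivial; a trivialization turns $s|_{N_v}$ into a function which, by normal crossings, is carried to $xyz$ by an automorphism of an \'etale neighborhood of the origin in $\bA^3$, so $F(v)\simeq\MF(\bA^3,xyz)=\cV$. Similarly a trivialization of $L|_{N_e}$ exhibits $s|_{N_e}$ as $xy$ times a unit on a rank-two normal-crossings bundle over the open edge; since an open edge is $\bG_m$ (a $\bP^1$ carries at most two triple points, an $\bA^1$ at most one, so deleting them leaves $\bG_m$), Kn\"{o}rrer periodicity gives $F(e)\simeq\MF(\bG_m\times\bA^2,xy)\simeq\Coh^{(2)}(\bG_m)=\cE$. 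Transporting $F(v,e)$ through these identifications writes it, by construction, as a composite of a restriction to an open subset, Kn\"{o}rrer equivalences, shifts, and equivalences induced by automorphisms of $(\bA^3,xyz)$ and of $\bG_m$, together with the one further discrepancy discussed next.

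The place where the (possibly nontrivial) line bundle genuinely enters is that the trivializations of $L$ chosen on $N_v$ and on $N_e$ differ, on the overlap $N_v\cap N_e\simeq\bG_m\times\bA^2$, by a unit; up to the automorphism of $\bG_m$ and the constant it records, this unit is $u^{n}$, where $n$ combines the winding of the transition functions of $L$ along the edge with the contribution of the edge coordinate itself (which already appears as a unit factor of $xy$ in the local model $\{xyz=0\}$). Multiplying the potential of a matrix factorization category by a unit leaves the underlying DG category unchanged but modifies its $2$-periodic structure, and the self-equivalence of $\cE=\Coh^{(2)}(\bG_m)$ associated to $u^{n}$ is precisely the non-$2$-periodic autoequivalence $\phi_{n}$. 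Running this over every flag of $G(X)$ produces a diagram $F:J(G(X))\to\DGCat$ with each $F(v,e)$ a composite of the five listed types of functor and $\holim F\cong\Dsing(X)$.

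The main obstacle is the identification of the transition functors $F(v,e)$, and in particular the \emph{classification of non-$2$-periodic autoequivalences of $\MF(\bA^3,xyz)$ and of $\cE$}: one must describe exactly, as an autoequivalence together with its incompatibility with the (twisted) $2$-periodic structure, how a matrix factorization category transforms when its potential is rescaled by an arbitrary unit, and show that over $\bG_m$ these self-equivalences are generated by the $\phi_{n}$ (modulo shifts, automorphisms, and the $2$-periodic autoequivalences). This is the technical core advertised in the introduction; granting it, the remainder is homotopy-coherent bookkeeping --- matching up the various $2$-periodic and $L|_X$-twisted $2$-periodic structures on pieces and overlaps, and handling idempotent completions --- organized by the combinatorics of the trivalent graph $G(X)$.
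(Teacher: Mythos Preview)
Your strategy matches the paper's: descent for $\MF(Y,L,s)$ over a cover adapted to $G(X)$, local identifications with $\cV$ and $\cE$ via trivialization of $L$ and Kn\"{o}rrer periodicity, and a classification of the autoequivalences and $2$-periodic structures of $\cV$ and $\cE$ to extract the $\phi_n$. The paper also packages the twisted $2$-periodicity as linearity over the sheaf of algebras $\cR$ obtained from $\Sym_{\cO_Y}(L[-2])$ by inverting $L[-2]$, which is what organizes the homotopy-coherent bookkeeping you allude to.

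One combinatorial wrinkle: a cover with separate pieces $N_v$ and $N_e$ whose only pairwise overlaps are $N_v\cap N_e$ does \emph{not} have \v{C}ech nerve $J(G(X))$. Descent over such a cover produces a diagram with an extra layer of flag-overlap objects $\Dsing(N_v\cap N_e)$ receiving restriction functors from \emph{both} $\Dsing(N_v)$ and $\Dsing(N_e)$. Collapsing this to $J(G(X))$ would require each $\Dsing(N_e)\to\Dsing(N_v\cap N_e)$ to be an equivalence, which forces the singular locus of $N_v\cap N_e$ to be the entire open edge $\bG_m$, hence $N_v$ to contain the whole edge, and then adjacent $N_{v_1},N_{v_2}$ overlap after all. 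The paper sidesteps this by using a \emph{vertex cover} $\{Y_v\}$ indexed by vertices alone: the edge categories then arise as $\Dsing$ of the pairwise overlaps $Y_{v_1}\cap Y_{v_2}$, while triple and higher overlaps are smooth along $\Sing(X)$ and hence have trivial $\Dsing$, so the \v{C}ech totalization collapses directly to a $J(G(X))$-shaped limit. With that adjustment your argument goes through; the paper's computation $n_e=\deg(L|_{C_e})=-\deg(\gsing(X)|_{C_e})$ is the precise form of the twist you describe.
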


We remark that descent for $\Dsing$ and $\MF$ holds in great generality: see Section \ref{sec:descent-dsing-mf}. Our contribution is to describe the precise form of the limit.

To complete the statement of the theorem, we need to describe the non-$2$-periodic autoequivalences of $\Coh^{(2)}(\bG_{m})$ that we use. At a basic level, a coherent sheaf on $\bG_{m}$ is a module over $\cO(\bG_{m}) = k[x,x^{-1}]$. A $2$-periodic complex is also a module over $k[u,u^{-1}]$, where $\deg(u) = 2$. So an object in $\Coh^{(2)}(\bG_{m})$ is a module over $k[x^{\pm 1},u^{\pm 1}]$. We can twist this action by a graded automorphism of the ring $k[x^{\pm 1},u^{\pm 1}]$. For  $n \in \Z$, we may consider the automorphism
\begin{equation}
  \phi_{n} \in \Aut^{\gr}( k[x^{\pm 1},u^{\pm 1}] ), \quad \phi_{n}(x) = x, \quad \phi_{n}(u) = x^{n}u.
\end{equation}
The autoequivalence of $\Coh^{(2)}(\bG_{m})$ induced by $\phi_{n}$ is what is referred to in Theorem \ref{thm:twisted-gluing}.

Section \ref{sec:graph-like} gives more detailed statements that help pin down the exact form of the diagram in other respects.

It may be helpful to remark why the group $\Aut^{\gr}(k[x^{\pm 1},u^{\pm 1}])$ is relevant. We need to restrict to graded automorphisms so as to preserve the $\Z$-grading on $\Coh^{(2)}(\bG_{m})$. However, such automorphisms are not necessarily compatible with the $2$-periodic structure: to be compatible would mean that the automorphism is $k[u,u^{-1}]$-linear, and such an automorphism would send $u$ to $u$.

\subsection{From twisted to untwisted}
While $L$-twisted $2$-periodic categories arise naturally when considering the zero locus of a section of $L$, many of the categories appearing in mirror symmetry, such as categories of matrix factorizations and Fukaya categories of general type varieties, are always genuinely $2$-periodic. One way to obtain an untwisted $2$-periodic category from an $L$-twisted $2$-periodic one is to apply a localization that trivializes the line bundle $L$ in an appropriate sense. We call such a localization a \emph{relative singularity category} of $X$, and by contrast we call $\Dsing(X)$ the \emph{absolute singularity category}.

Relative singularity categories have been studied by Efimov-Positselski \cite{efimov-positselski}, Blanc-Robalo-To\"{e}n-Vezzosi \cite{BRTV}, and Pippi \cite{pippi}, and they arise most naturally when considering Landau-Ginzburg models whose total space is itself singular. Let $f : Y' \to \bA^{1}$ be a flat morphism and let $X = f^{-1}(0)$ be the zero fiber; we do not assume that $Y'$ is regular. The inclusion $i : X \to Y'$ is an local complete intersection morphism and therefore has finite tor-dimension. This implies that the pullback $i^{*}$ sends $\Coh(Y')$ to $\Coh(X)$, and it sends $\Perf(Y')$ to $\Perf(X)$, so we have an induced functor
\begin{equation}
  i^{*} : \Dsing(Y') \to \Dsing(X)
\end{equation}
The relative singularity category $\Dsing(X,Y')$ is defined to be the homotopy cofiber of this functor. It may also be presented as a quotient
\begin{equation}
  \Dsing(X,Y) = \Coh(X)/\langle\Perf(X), i^{*}\Coh(Y')\rangle.
\end{equation}

Starting from the situation where $Y$ is regular and $s \in \Gamma(Y,L)$ is a section, we can obtain a relative singularity category for $X = s^{-1}(0)$ as follows. Choose another section $s' \in \Gamma(Y,L)$, and consider the rational map $f = s/s' : Y \dashrightarrow \bP^{1}$. Let $Y'$ be the closure of the graph of $f$; this $Y'$ will generally have singularities if the base locus $s^{-1}(0) \cap (s')^{-1}(0)$ is not regular. We may then consider the relative singularity category $\Dsing(X,Y')$; note that this depends on the choice of the second section $s'$. In terms of $2$-periodic structures, we have now have two natural transformations on $\Coh(X)$
\begin{equation}
  t : \id \to \otimes L|_{X}^{-1}[2], \quad s'|_{X} : \id \to L|_{X}.
\end{equation}
To pass from $\Coh(X)$ to $\Dsing(X)$, we localize with respect to $t$, and to pass from $\Dsing(X)$ to $\Dsing(X,Y')$, we localize with respect to $s'|_{X}$. The end result is that both $t$ and $s'|_{X}$ become isomorphisms, and by composing them we obtain an isomorphism $\id \to \id[2]$, so that $\Dsing(X,Y')$ is genuinely $2$-periodic.

Returning to the specific case where $X$ is a normal crossings surface with graph-like singular locus, we consider the case where $Y'$ has nodes at several points along the singular locus of $X$. In this case, we can describe $\Dsing(X,Y')$ in terms of puncturing $X$ at those points.

\begin{theorem}
  Let $Y$ be a smooth threefold, $L$ a line bundle on $Y$, and $s$ section of $L$ such that $X = s^{-1}(0)$ is a normal crossings surface with graph-like singular locus. Let $s'$ be another section of $L$ whose restriction to $\Sing(X)$ has only simple zeros at points which are not triple points. Let $Y'$ be the singular threefold constructed from the pencil spanned by $s$ and $s'$. Let $Z = \Sing(Y') \cap X$. Then there are equivalences of categories
  \begin{equation}
    \Dsing(X,Y') \cong \MF(Y,L,s)[s'^{-1}] \cong \Dsing(X\setminus Z).
  \end{equation}
\end{theorem}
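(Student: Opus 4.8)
The plan is to prove both equivalences at once by showing that all three categories are equivalent, after idempotent completion, to the Verdier quotient $\Dsing(X)/\Dsing_Z(X)$, where $\Dsing_Z(X)\subseteq\Dsing(X)$ denotes the full subcategory of objects that become perfect on $X\setminus Z$; equivalently, the thick subcategory generated by coherent sheaves set-theoretically supported on $Z$.

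Two of the three identifications should be essentially formal. For $\Dsing(X\setminus Z)$: restriction $\Dsing(X)\to\Dsing(X\setminus Z)$ is, up to idempotent completion, a Verdier localization with kernel $\Dsing_Z(X)$ — this is the localization sequence for singularity categories (cf.\ Section~\ref{sec:descent-dsing-mf}). For $\MF(Y,L,s)[s'^{-1}]$: through the $L|_X$-twisted Orlov equivalence $\Dsing(X)\cong\MF(Y,L,s)$, inverting the section $s'$ becomes inverting the central self-map of the identity coming from $s':\id\to\otimes L|_X$ together with the twisting isomorphism $t$ of Section~\ref{sec:intro} identifying $\otimes L|_X$ with $[2]$. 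Since $s'|_{\Sing(X)}$ vanishes exactly on $Z$, and only to first order, at non-triple points, the $s'$-torsion objects of $\Dsing(X)$ are precisely those supported on $Z$, so this localization is again the Verdier quotient by $\Dsing_Z(X)$; checking that $t$ matches on the two sides is bookkeeping to be done with the results of Section~\ref{sec:graph-like}.

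The substantive step is $\Dsing(X,Y')\cong\Dsing(X)/\Dsing_Z(X)$. Since $\Dsing(X,Y')=\cofiber\!\left(i^*:\Dsing(Y')\to\Dsing(X)\right)=\Dsing(X)/\overline{i^*\Dsing(Y')}$, where $\overline{i^*\Dsing(Y')}$ is the thick subcategory generated by the image of $i^*$, it suffices to identify $\overline{i^*\Dsing(Y')}$ with $\Dsing_Z(X)$. The inclusion $\subseteq$ is soft: the singularities of $Y'$ all lie over the singular points of the base curve $B=\{s=0\}\cap\{s'=0\}$, which under the hypotheses on $s'$ (spelled out in Section~\ref{sec:graph-like}) are exactly the points of $Z$, so $Y'\setminus Z$ is smooth, $\Dsing(Y'\setminus Z)=0$, and for $E\in\Dsing(Y')$ the restriction of $i^*E$ to $X\setminus Z=X\cap(Y'\setminus Z)$ is the pullback of a perfect complex, hence perfect; thus $i^*E\in\Dsing_Z(X)$. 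For $\supseteq$ I would work formally-locally at each $z\in Z$, where the pencil has the model $Y=\bA^3_{x,y,z}$, $L$ trivial, $s=xy$, $s'=z$; then $Y'=\{az=bxy\}\subseteq\bA^3\times\bP^1_{[a:b]}$, which in the chart $b=1$ is the threefold ordinary double point $\{az=xy\}\subseteq\bA^4$, with $f=a$ and $X=f^{-1}(0)=\{a=0\}=\{xy=0\}$ a Cartier divisor cut out by the nonzerodivisor $a$. A Koszul computation against the residue field $\cO_z^{Y'}$, on which $a$ acts by zero, gives $i^*\cO_z^{Y'}\simeq\cO_z^X\oplus\cO_z^X[1]$; this is nonzero in $\Dsing_{\{z\}}(X)$ because $\cO_z^X$ is not perfect on $X$, and $\langle\cO_z^X\rangle=\Dsing_{\{z\}}(X)$ (the thick subcategory generated by $\cO_z^X$ is everything supported at $z$) since every coherent sheaf supported at $z$ is a finite iterated extension of copies of $\cO_z^X$. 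Running over $z\in Z$ and using $\Dsing_Z(X)=\bigoplus_{z\in Z}\Dsing_{\{z\}}(X)$ together with $\Dsing(Y')=\bigoplus_{z\in Z}\Dsing(\widehat{Y'_z})\ni\bigoplus_z\cO_z^{Y'}$ then gives $\overline{i^*\Dsing(Y')}\supseteq\Dsing_Z(X)$, hence equality.

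Assembling the three identifications with $\Dsing(X)/\Dsing_Z(X)$ proves the statement. I expect the \emph{main obstacle} to be the local geometric analysis in the previous paragraph: verifying that the hypotheses on $s'$ really do force $Y'$ to acquire an ordinary double point — and nothing worse — at each point of $Z$, and that $X$ embeds in $Y'$ as the Cartier divisor $f^{-1}(0)$. Once that local model is secured, the Koszul computation and the global assembly are routine, and the remaining ingredients (the twisted Orlov equivalence, the localization property of $\Dsing$) are available from the literature and from earlier in the paper.
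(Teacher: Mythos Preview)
Your proposal is correct and follows essentially the same strategy as the paper: identify all three categories with $\Dsing(X)/\Dsing_Z(X)$, with the substantive step being that $i^*\Dsing(Y')$ split-generates $\Dsing_Z(X)$, proved by computing $i^*$ on skyscraper sheaves at the points of $Z$.

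There is one noteworthy difference in execution. You flag as the ``main obstacle'' the verification that $Y'$ acquires an ordinary double point at each $z\in Z$, in order to set up the local Koszul computation of $i^*(\cO_{z,Y'})$. The paper sidesteps this entirely: since $i:X\hookrightarrow Y'$ is the inclusion of a Cartier divisor, one has the fundamental triangle of endofunctors $\id[1]\to i^*i_*\to \id$ on $\Coh(X)$, and applying it to $\cO_{z,X}$ (with $i_*\cO_{z,X}=\cO_{z,Y'}$) immediately yields $i^*(\cO_{z,Y'})\cong \cO_{z,X}\oplus \cO_{z,X}[1]$ once one notes the connecting map vanishes. No knowledge of the singularity type of $Y'$ is required; the paper even remarks explicitly that the ODP property, while true, is not used. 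So what you identify as the hard step is in fact avoidable. A second, minor difference: to show $\cO_{z,X}$ split-generates $\Dsing_{\{z\}}(X)$, you use that coherent sheaves supported at $z$ are iterated extensions of the residue field, whereas the paper passes through Kn\"orrer periodicity to reduce to $\Perf^{(2)}(\bG_m)$ and exhibits the generator explicitly. Both arguments are valid; yours is shorter, the paper's is more concrete.
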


This result is proved in Section \ref{sec:relative-sing}, where a connection to a resolution of $Y'$ is also discussed.

\subsection{A-model description: principal canonical bundles and their twists}
In Section \ref{sec:symplectic-conjectures}, we study the counterparts to all of these structures in terms of Fukaya categories. It turns out that some of the natural counterparts are so-called \emph{Rabinowitz wrapped Fukaya categories}, constructed by Ganatra-Gao-Venkatesh \cite{ganatra-talk} in forthcoming work. Therefore our statements must remain at the level of conjectures, but they follow from what we consider to be reasonable expectations about these categories.

When the normal crossings surface $X$ arises as the fiber of a morphism $f : Y \to \bA^{1}$ from a regular $3$-fold, the category $\Dsing(X)$ is $2$-periodic. Our previous work \cite{PS21} shows that $\Dsing(X)$ is equivalent to the Fukaya category of a Riemann surface $\Sigma$ (provided $X$ satisfies an additional orientability condition). This can no longer be true in the $L$-twisted case, and in order to handle this case we propose a way to ``geometrize'' the $2$-periodic structure. This turns out to connect to conjectures and mirror symmetry expectations about complements of divisors in compact Calabi-Yau varieties due to Lekili-Ueda \cite{lekili-ueda-milnor,lekili-ueda-complements}, and also to Jeffs \cite{jeffs} work on Fukaya categories of singular varieties. The basic idea is that, given a general type variety $Z$, we may interpret its $\Z/2\Z$-graded Fukaya category as a $2$-periodic category, and then interpret that as the $\Z$-graded Rabinowitz Fukaya category of the principal $\bC^{\times}$-bundle associated to the canonical bundle of $Z$; denote this symplectic manifold by $P(K_{Z})$. The manifold $P(K_{Z})$ carries a canonical grading (trivialization of $2c_{1}$), and we use this to (conjecturally) define a $\Z$-graded Rabinowitz Fukaya category $\RFuk^{\gr}(P(K_{Z}))$. The manifold $P(K_{Z})$ also carries a Hamiltonian $S^{1}$-action, and this induces a map $C_{*}(\Omega S^{1}) \to HH^{*}(\RFuk^{\gr}(P(K_{Z})))$, but the generator of $\Omega S^{1} \cong \Z$ is sent to an element of degree $2$. This gives rise to a subalgebra in $HH^{*}(\RFuk^{\gr}(P(K_{Z})))$ isomorphic to $k[u,u^{-1}]$, and this realizes the $2$-periodic structure on $\RFuk^{\gr}(P(K_{Z}))$.

\begin{conjecture}
  \label{conj-canonical-bundle}
  Let $Z$ be a general type symplectic manifold. Let $\Fuk(Z)$ be the Fukaya category of $Z$, regarded as a $k[u,u^{-1}]$-linear category, $\deg(u) = 2$. Let $\RFuk^{\gr}(P(K_{Z}))$ be the graded Rabinowitz Fukaya category of the principal canonical bundle of $Z$, regarded as a $k[u,u^{-1}]$-linear category via the map $k[u,u^{-1}] \to HH^{*}(\RFuk^{\gr}(P(K_{Z})))$ induced by the Hamiltonian $S^{1}$-action. Then there is a $k[u,u^{-1}]$-linear equivalence of categories
  \begin{equation}
    \Fuk(Z) \cong \RFuk^{\gr}(P(K_{Z})).
  \end{equation}
\end{conjecture}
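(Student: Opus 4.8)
The plan is to establish Conjecture \ref{conj-canonical-bundle} by constructing a functor from $\Fuk(Z)$ to $\RFuk^{\gr}(P(K_Z))$ that realizes the geometric intuition sketched above, namely that a Lagrangian $L \subset Z$ lifts to a Lagrangian $\widetilde{L} \subset P(K_Z)$ sitting inside a fixed level set of the radial coordinate (a contact-type hypersurface), and that the Floer theory of the lifts reproduces the Floer theory of the downstairs Lagrangians after adjoining the $u$-variable coming from wrapping around the $S^1$-fiber. First I would set up the geometry precisely: $P(K_Z)$ is the complement of the zero section in the total space of $K_Z$ (equivalently the $\bC^\times$-bundle of the canonical bundle), it carries a Liouville structure at each end coming from the fiberwise $|\cdot|^2$ function, and the Hamiltonian $S^1$-action rotating the fibers gives the orbit generating the degree-$2$ element $u \in HH^*(\RFuk^{\gr}(P(K_Z)))$. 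The canonical grading (trivialization of $2c_1$) on $P(K_Z)$ is exactly the one that makes the fiber rotation have degree $2$; this is the standard fact that on the unit cotangent-like bundle of $K_Z$ the Reeb/rotation orbit has Conley-Zehnder-type index controlled by $c_1(Z)$ pulled back, which vanishes on $P(K_Z)$ up to the canonical twist.

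Next I would describe the functor. The lifting construction assigns to an object $L$ of $\Fuk(Z)$ (say an exact or monotone Lagrangian brane with the appropriate grading/spin data) the preimage $\pi^{-1}(L) \cap \{r = 1\}$, which is an $S^1$-bundle over $L$, hence a Lagrangian in the contact hypersurface, and then one takes its image under the Liouville flow to a genuine Lagrangian brane in $P(K_Z)$; the grading on $Z$ together with the canonical grading on $P(K_Z)$ pins down the brane structure on the lift. On morphisms, the key computation is a comparison of moduli spaces of holomorphic strips: a strip in $P(K_Z)$ between two such lifts, with boundary on them, projects to a strip in $Z$, and conversely a strip in $Z$ lifts (because its boundary lies in the projected Lagrangians and the obstruction to lifting is measured by the symplectic area, which is absorbed into the $u$-grading). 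Running this comparison with the Rabinowitz/wrapped Floer differential — which includes the extra ``wrapping at both ends'' that produces the Laurent behavior in $u$ — should give $CF^*_{\RFuk}(\widetilde{L}_0, \widetilde{L}_1) \cong CF^*_{\Fuk(Z)}(L_0, L_1) \otimes_k k[u, u^{-1}]$ compatibly with products, where the $u$-action on the right is the one from the $S^1$-action. One then checks this functor is a Morita/pretriangulated equivalence: it is cohomologically full and faithful by the strip comparison, and essentially surjective because the lifts of a generating set for $\Fuk(Z)$ generate $\RFuk^{\gr}(P(K_Z))$ — here the generation statement for Rabinowitz wrapped categories (objects supported near the ``core'', i.e. the $S^1$-bundle over $Z$) is what does the work, and this is part of the expected structural properties of $\RFuk$ à la Ganatra-Gao-Venkatesh.

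The main obstacle is twofold. First, and most seriously, the Rabinowitz wrapped Fukaya category $\RFuk^{\gr}$ is only being constructed in forthcoming work of Ganatra-Gao-Venkatesh, so the proof is necessarily contingent on its expected formal properties (invariance, the long exact sequence relating it to wrapped and co-wrapped categories, generation, and an open-string analogue of the Rabinowitz/symplectic-cohomology $S^1$-equivariant structure that produces the $k[u,u^{-1}]$-action); a fully rigorous argument must wait until those foundations are available, which is why the statement is phrased as a conjecture. Second, even granting the foundations, the delicate point is the grading and orientation bookkeeping: one must verify that the canonical grading on $P(K_Z)$ — the trivialization of $2c_1$ — is precisely matched to the $\Z/2\Z$-grading on $\Fuk(Z)$ after adjoining $u$ in degree $2$, i.e. that there is no extra shift or sign, and that the $S^1$-orbit generator really maps to $u$ and not to some unit multiple or a power. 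I expect the heart of the matter to be a careful index calculation (Conley-Zehnder indices of the fiber-rotation orbits relative to the canonical trivialization) combined with the strip-lifting area argument; modulo that, fullness-faithfulness and generation should follow formally from the structural properties of $\RFuk$ together with the classical fact that Lagrangians and their $S^1$-bundle lifts have matching Floer theory in the ``constant-radius'' regime.
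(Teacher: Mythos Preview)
The statement is a \emph{conjecture}, and the paper does not claim to prove it; it only sketches ideas toward a proof in the subsection titled ``An `Orlov equivalence'.'' Your proposal is in the same spirit as that sketch, but it contains a concrete error in the lifting construction that the paper explicitly warns against.

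You propose to lift a Lagrangian $L \subset Z$ to the $S^{1}$-preimage $\pi^{-1}(L) \cap \{r=1\} \subset P(K_{Z})$. The paper points out that this lift, while Lagrangian, is \emph{never gradable} with respect to the canonical volume form $\Omega_{Z}$. The reason is that $\Omega_{Z}$ extends over the zero section of $K_{Z}$, so the $S^{1}$-fiber has Maslov index $2$ rather than $0$; any Lagrangian containing the fiber class cannot be graded. Since the whole point of the conjecture is the $\Z$-graded equivalence, your functor as stated does not land in $\RFuk^{\gr}(P(K_{Z}))$ at all.

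The paper's proposed lift is different: for an \emph{oriented} Lagrangian $L$, one takes the positive half of the real line bundle $\det_{\R}(T^{*}L) \subset K_{Z}|_{L}$, which is an $\R_{+}$-bundle over $L$ rather than an $S^{1}$-bundle. This $\widetilde{L}$ is gradable, but it is only Lagrangian (up to a small deformation) when $L$ is \emph{balanced}, i.e.\ when $K_{Z}|_{L}$ has trivial holonomy; this is why the paper works with the balanced Fukaya category of $Z$. With these lifts, two transverse intersection points downstairs give a clean intersection along a \emph{ray} (not a circle), and the Rabinowitz wrapping at both ends produces a $\Z$-family of generators with degree step $2$, matching the $k[u,u^{-1}]$-structure. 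Your strip-lifting and index discussion would need to be redone in this framework; in particular the ``obstruction to lifting measured by symplectic area'' story changes character, since the lifts are sections of a real half-line rather than of a circle bundle. Your acknowledgment that the argument is contingent on the forthcoming foundations of $\RFuk$ is correct and matches the paper's stance.
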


The conjectural functor roughly takes an oriented Lagrangian $L \subset Z$ to the total space of its ``canonical bundle'' $\det_{\R} T^{*}L$; the latter is not necessarily Lagrangian, but it can be deformed to a Lagrangian provided that $L$ is balanced in $Z$. This functor may also be thought of as a kind of Orlov equivalence. In fact, the conjecture above is very similar to a theorem of M. Umut Isik \cite{umut-isik} (slightly enhanced by Arinkin-Gaitsgory \cite[Appendix H]{arinkin-gaitsgory}) stating that, for any quasi-smooth derived scheme $X$, the category $\Coh(X)$ may be realized as a \emph{graded} singularity category.

We note a connection between this conjecture and some conjectures of Lekili-Ueda: see \cite[Conjecture 1.5]{lekili-ueda-milnor} and \cite{lekili-ueda-complements}. Let $V$ be a compact Calabi-Yau variety, regarded as a symplectic manifold, let $Z \subset V$ be an ample divisor, and let $U = V \setminus Z$ be the complement, which is an exact symplectic manifold. We denote by $\Fw(U)$ and $\Fc(U)$ the wrapped Fukaya category and compact Fukaya category, respectively; these categories are $\Z$-graded owing to the trivialization of the canonical bundle of $V$. Based on announced results of Ganatra-Gao-Venkatesh \cite{ganatra-talk}, we expect that in this situation (possibly under some additional hypothesis), there is a localization sequence
\begin{equation}
  \label{ggv-localization}
  \Fc(U) \to \Fw(U) \to \RFuk^{\gr}(P(N_{Z})),
\end{equation}
where $P(N_{Z})$ is the principal bundle associated to the normal bundle of $Z$ in $V$. Now because $V$ is Calabi-Yau, we have $N_{Z} \cong K_{Z}$, and so we expect
\begin{equation}
  \Fw(U)/\Fc(U) \cong \RFuk^{\gr}(P(K_{Z})).
\end{equation}
\begin{theorem}
  Let $Z$ be an ample divisor in a compact Calabi-Yau manifold $V$, and let $U = V \setminus Z$. Suppose that the localization sequence \eqref{ggv-localization} is valid, and that Conjecture \eqref{conj-canonical-bundle} is true. Then there is a $k$-linear equivalence of categories
  \begin{equation}
    \Fw(U)/\Fc(U) \cong \Fuk(Z),
  \end{equation}
  where the right-hand side is the underlying $\Z$-graded $k$-linear category of a $k[u,u^{-1}]$-linear category.
\end{theorem}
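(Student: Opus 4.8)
The plan is to chain together the two hypothesized inputs, with the only genuine work being a grading check and an application of the adjunction formula. First, granting that the localization sequence \eqref{ggv-localization} is valid, passing to Verdier quotients (homotopy cofibers) gives a $k$-linear equivalence $\Fw(U)/\Fc(U) \cong \RFuk^{\gr}(P(N_{Z}))$. Part of the meaning of ``valid'' here is that \eqref{ggv-localization} is an exact sequence of $\Z$-graded $k$-linear categories: the $\Z$-grading on $\Fc(U)$ and $\Fw(U)$ is the one determined by the holomorphic volume form on the Calabi--Yau $V$, and the grading on the right is the canonical grading (homotopy class of trivialization of $2c_{1}$) carried by $P(N_{Z})$. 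One should also pass to the idempotent (Karoubi) completion of the quotient, which matches $\RFuk^{\gr}$ since the latter is idempotent complete by construction.

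Second, I would invoke the adjunction formula. Since $V$ is Calabi--Yau, $K_{V}\cong\cO_{V}$, so $N_{Z} = \cO_{V}(Z)|_{Z} \cong (K_{V}\otimes\cO_{V}(Z))|_{Z} \cong K_{Z}$. Because $N_{Z}$ is ample, $K_{Z}$ is ample, hence $Z$ is a general type symplectic manifold and Conjecture \ref{conj-canonical-bundle} applies to it. The line-bundle isomorphism $N_{Z}\cong K_{Z}$ induces an isomorphism of the associated principal $\bC^{\times}$-bundles $P(N_{Z})\cong P(K_{Z})$ covering the identity on $Z$; since the canonical gradings on both total spaces are pulled back from $Z$ along the (common) projection, this isomorphism is compatible with them, so $\RFuk^{\gr}(P(N_{Z}))\cong\RFuk^{\gr}(P(K_{Z}))$ as $\Z$-graded $k$-linear categories.

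Third, Conjecture \ref{conj-canonical-bundle}, which we are assuming, supplies a $k[u,u^{-1}]$-linear equivalence $\Fuk(Z)\cong\RFuk^{\gr}(P(K_{Z}))$, where $\Fuk(Z)$ denotes the $\Z/2\Z$-graded Fukaya category folded into a $\Z$-graded $k[u,u^{-1}]$-linear category with $\deg(u)=2$. Forgetting the $k[u,u^{-1}]$-module structure, but keeping the intrinsic $\Z$-grading, this is in particular an equivalence of $\Z$-graded $k$-linear categories. Composing the three equivalences yields the desired $k$-linear equivalence $\Fw(U)/\Fc(U)\cong\Fuk(Z)$.

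The main obstacle, beyond the two large conjectural inputs that are explicitly hypothesized, is the grading bookkeeping: one must check that the $\Z$-grading induced on $\Fw(U)/\Fc(U)$ by the volume form on $V$ really is carried over by the expected localization to the canonical grading on $\RFuk^{\gr}(P(N_{Z}))$, and that the adjunction isomorphism $N_{Z}\cong K_{Z}$ intertwines the canonical gradings on $P(N_{Z})$ and $P(K_{Z})$ (this last point is essentially forced, since both gradings are pulled back from $Z$ and the isomorphism covers $\id_{Z}$). Because the theorem asks only for a $k$-linear --- indeed $\Z$-graded $k$-linear --- equivalence, one does \emph{not} need to reconcile the various $k[u,u^{-1}]$-structures in play: the folding structure on $\Fuk(Z)$, the one on $\RFuk^{\gr}(P(K_{Z}))$ coming from the Hamiltonian $S^{1}$-action, and whatever such structure $\Fw(U)/\Fc(U)$ inherits. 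Proving that those all agree would be the natural next step, but it is not required here.
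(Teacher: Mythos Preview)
Your proposal is correct and follows essentially the same approach as the paper: chain the localization sequence (giving $\Fw(U)/\Fc(U)\cong\RFuk^{\gr}(P(N_Z))$), the adjunction formula $N_Z\cong K_Z$ (valid since $K_V$ is trivial), and Conjecture~\ref{conj-canonical-bundle}. The paper's treatment (see the Proposition in Section~\ref{sec:symplectic-conjectures} and the text preceding the theorem in the introduction) makes exactly these moves, including your observation that the gradings match because both $\Omega_V$ and $\Omega_Z$ extend over $Z$; your discussion of the grading bookkeeping is, if anything, slightly more explicit than the paper's.
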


In the case where $Z = \Sigma$ is a Riemann surface, passing from $\Sigma$ to $P(K_{\Sigma})$ gives us more room to maneuver, and we can conjecturally realize the $L$-twisted $2$-periodic categories $\Dsing(X)$ as Rabinowitz Fukaya categories of certain symplectic $4$-manifolds obtained from $P(K_{\Sigma})$ by cutting along circles in $\Sigma$ and regluing. In this process, the circles we cut along correspond to edges in the graph $G(X)$, and the regluing process involves a twist that encodes the degree of the line bundle $L$ along the corresponding component of $\Sing(X)$.

\begin{conjecture}
  \label{conj:intro-rfuk-graph-like}
  Let $X$ be a normal crossings surface with graph-like singular locus and orientable dual intersection complex, which arises as the zero locus of a section $s$ of a line bundle $L$ on a regular $3$-fold $Y$. Let $\Sigma$ be the Riemann surface that is dual to the singular locus of $X$. Let $M$ be the symplectic $4$-manifold obtained from $P(K_{\Sigma})$ by regluing as described above.
  Then there is a $k$-linear equivalence of categories
  \begin{equation}
    \Dsing(X) \cong \RFuk^{\gr}(M).
  \end{equation}
  Furthermore, $\RFuk^{\gr}(M)$ admits an autoequivalence $\Lambda$ and a $\Lambda$-twisted $2$-periodic structure, such that under the equivalence above, tensoring with $L$ corresponds to $\Lambda$, and the $L$-twisted $2$-periodic structure on $\Dsing(X)$ matches the $\Lambda$-twisted $2$-periodic structure on $\RFuk^{\gr}(M)$.
\end{conjecture}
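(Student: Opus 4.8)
The plan is to realize $\RFuk^{\gr}(M)$ as a homotopy limit over the same category $J(G(X))$ and the same diagram $F$ that computes $\Dsing(X)$ in Theorem~\ref{thm:twisted-gluing}, and then to conclude by comparing the two limits. By construction $M$ is glued from ``vertex pieces'' $P(K_{\Sigma_{v}})$, with $\Sigma_{v}$ a three-punctured sphere, and ``edge pieces'' $P(K_{\Sigma_{e}})$, with $\Sigma_{e}$ an annulus, the gluing being along collar neighborhoods of the cutting circles and possibly twisted. The first ingredient is a descent statement: one would show, using the expected stop-removal/K\"{u}nneth formalism for Rabinowitz wrapped Fukaya categories that underlies the localization sequence~\eqref{ggv-localization}, that $\RFuk^{\gr}$ sends this open cover to a homotopy limit diagram indexed by $J(G(X))$, with $\RFuk^{\gr}$ of the vertex (resp.\ edge) pieces sitting at the vertex (resp.\ edge) objects and the corresponding restriction functors at the flags. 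When $L$ is trivial there is no twisting, $M = P(K_{\Sigma})$, and the statement follows by combining \cite{PS21} with Conjecture~\ref{conj-canonical-bundle}; the content of Conjecture~\ref{conj:intro-rfuk-graph-like} is the general line bundle.

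One then identifies the local pieces. The three-punctured sphere $\Sigma_{v}$ is of general type, so Conjecture~\ref{conj-canonical-bundle} gives $\RFuk^{\gr}(P(K_{\Sigma_{v}})) \cong \Fuk(\Sigma_{v})$ as $k[u,u^{-1}]$-linear categories, and homological mirror symmetry for the pair of pants identifies the right-hand side with $\MF(\bA^{3},xyz) = \cV$. For an edge one identifies $\RFuk^{\gr}(P(K_{\Sigma_{e}}))$ with $\Coh^{(2)}(\bG_{m}) = \cE$ directly: the annulus is mirror to $\bG_{m}$, and the principal-bundle direction, with its fiber-rotation Hamiltonian $S^{1}$-action, supplies the $2$-periodic folding. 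In both cases one must check that the canonical $\Z$-grading on $\RFuk^{\gr}$ coming from the trivialization of $2c_{1}$ on the principal bundle matches the $\Z$-grading on the right-hand side (with $\deg u = 2$), and that the $k[u,u^{-1}]$-action induced by the $S^{1}$-action corresponds to multiplication by $u$.

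The heart of the argument is the comparison of the flag functors, and in particular showing that the twist by which $M$ is reglued along the circle dual to an edge $e$ induces exactly the autoequivalence $\phi_{n_{e}}$ of $\cE$, where $n_{e}$ is the degree of $L$ along the corresponding component of $\Sing(X)$. The remaining constituents of the functors $F(v,e)$ in Theorem~\ref{thm:twisted-gluing} should have routine symplectic counterparts: restriction to an open subset is a Viterbo/stop-removal functor, Kn\"{o}rrer periodicity has a known symplectic avatar, and automorphisms of $(\bA^{3},xyz)$ and $\bG_{m}$ are realized by symplectomorphisms of the local models. For the twist the mechanism should be: on the edge piece $P(K_{\Sigma_{e}}) \cong P(K_{\mathrm{cyl}})$ the $2$-periodic parameter $u$ is realized by the fiber-rotation $S^{1}$, while $x$ is the class of the base loop; cutting the base circle and regluing with a ``$K$-twist'' shears the fiber direction into the base direction $n_{e}$ times, so that on Hochschild cohomology of the edge category the regluing monodromy acts by $u \mapsto x^{n_{e}}u$, $x \mapsto x$ --- precisely the graded ring automorphism defining $\phi_{n_{e}}$. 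Making this precise, by tracking the action of a Dehn-type twist of $P(K_{\Sigma})$ on $HH^{*}$ of the annulus piece and matching it with $\phi_{n_{e}}$ compatibly with the gluing, is the main obstacle; this, together with the still-conjectural foundations for $\RFuk^{\gr}$, is why the statement must remain a conjecture.

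Finally, the autoequivalence $\Lambda$ of $\RFuk^{\gr}(M)$ and the $\Lambda$-twisted $2$-periodic structure would be assembled from the edge pieces. Tensoring with the degree-$n_{e}$ line bundle on $\bG_{m}$ has a symplectic description on the annulus piece as a grading/local-system twist, and these local autoequivalences glue to a global autoequivalence $\Lambda$ of $\RFuk^{\gr}(M)$ corresponding to $\otimes\, L|_{X}$ under the equivalence just constructed. The compatibility of the degree-$2$ transformation coming from the $S^{1}$-action with $\Lambda$ then follows by transporting the corresponding statement for $\Dsing(X)$, established in the earlier sections, across this equivalence.
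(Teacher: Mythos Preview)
Your strategy is essentially the one the paper itself sketches in Remark~\ref{rem:non-proper}: establish descent for $\RFuk^{\gr}$ with respect to the pants decomposition of $\Sigma$, and match it term-by-term with the $J(G(X))$-diagram for $\Dsing(X)$ from Section~\ref{sec:graph-like}. Since the statement is a conjecture and the paper does not prove it, there is no ``paper's own proof'' to compare against; your outline is in line with what the authors propose as a plausible route.

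There is, however, one genuine gap in your outline that the paper flags explicitly and that you gloss over. You write that ``the three-punctured sphere $\Sigma_{v}$ is of general type, so Conjecture~\ref{conj-canonical-bundle} gives $\RFuk^{\gr}(P(K_{\Sigma_{v}})) \cong \Fuk(\Sigma_{v})$.'' But Conjecture~\ref{conj-canonical-bundle} is formulated for \emph{compact} general type $Z$; the pair of pants is punctured, and, as Remark~\ref{rem:non-proper} spells out, for punctured $Z$ the manifold $P(K_{Z})$ is not a Liouville cobordism in the usual sense but a ``Liouville cobordism between noncompact contact manifolds,'' so even defining $\RFuk^{\gr}$ for the vertex and edge pieces requires a genuine extension of the Ganatra--Gao--Venkatesh framework incorporating wrapping along the extra noncompact directions. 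This is not just part of the general ``conjectural foundations'' caveat; it is a specific structural obstacle to running the descent argument, and you should name it as such. The same issue affects your edge identification $\RFuk^{\gr}(P(K_{\Sigma_{e}})) \cong \cE$.

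It is also worth noting that the paper offers complementary evidence you do not mention: Section~\ref{sec:t-duality-gsing} argues that $M = P(\{n_{e}\})$ is the SYZ T-dual of $\gsing^{\circ}(X)$, which gives an independent heuristic for the equivalence \eqref{eq:twisted-hms}; and Lemma~\ref{lem:dehn-twists} constructs $\Lambda$ geometrically as the monodromy of local mapping tori of $n_{e}$-fold Dehn twists, which is more concrete than assembling $\Lambda$ from local grading/local-system twists as you suggest.
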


Lastly, we make the connection to $2$-periodic categories. We shall see that, while $P(K_{\Sigma})$ may be obtained as a punctured tubular neighborhood of a curve (Riemann surface) $\Sigma$ that arises as a divisor in a Calabi-Yau surface, the manifolds $M$ that are mirror to $L$-twisted singularity categories arise as punctured tubular neighborhoods of \emph{nodal} curves in Calabi-Yau surfaces. According to Maxim Jeffs \cite{jeffs}, the Fukaya category of a nodal curve is a localization of the Fukaya category of its smoothing. This suggests immediately that the Fukaya category of a nodal divisor in a Calabi-Yau surface should correspond to the relative singularity category $\Dsing(X,Y')$ for appropriate choice of $Y'$. Making a very precise match between these categories is challenging because there are moduli on both sides: the areas of the irreducible components of the nodal curve on the $A$-side, and the positions of the singularities in the total space on the $B$-side. Nevertheless, we can state a result in an existential form.

\begin{theorem}
  Let $s$ be a section of a line bundle $L$ on a regular $3$-fold $Y$ such that $X = s^{-1}(0)$ is normal crossings with graph-like singular locus and orientable dual intersection complex, and let $s'$ be another section which is generic with respect to $X$. Let $Y'$ be the closure of the graph of the pencil spanned by $s$ and $s'$. Let $\Sigma$ be the Riemann surface that is dual to the singular locus of $X$. Then the relative singularity category $\Dsing(X,Y')$ is equivalent to the quotient of $\Fuk(\Sigma)$ by a collection of objects supported on simple closed curves.
\end{theorem}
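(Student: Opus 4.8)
The plan is to reduce, via the relative-versus-absolute comparison of the previous theorem, to the untwisted graph-like situation covered by \cite{PS21}, and then to reinterpret the puncturing operation on the $B$-side as a quotient operation on the Fukaya-categorical $A$-side. Concretely, I would first invoke the theorem proved in Section~\ref{sec:relative-sing}: it gives $\Dsing(X,Y')\simeq \Dsing(X\setminus Z)$, where $Z=\Sing(Y')\cap X$ is the finite set of simple zeros of $s'|_{\Sing(X)}$, all lying away from the triple points of $X$. So it is enough to present $\Dsing(X\setminus Z)$ as a quotient of $\Fuk(\Sigma)$.

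The second step is to observe that $\Dsing(X\setminus Z)$ is an \emph{untwisted} graph-like singularity category. Put $W=Y\setminus (s')^{-1}(0)$: this is a regular threefold on which $s'$ is a nowhere-vanishing section of $L$, so $s'$ trivializes $L|_{W}$ and lets us regard $s$ as a genuine function $f=s/s':W\to\bA^{1}$ with $f^{-1}(0)=X\cap W$. Since $s'|_{\Sing(X)}$ vanishes exactly along $Z$, we get $\Sing(X\cap W)=\Sing(X)\setminus Z=\Sing(X\setminus Z)$, so that $X\cap W\subseteq X\setminus Z$ and the two open subschemes of $X$ have the same singular locus; by the descent and locality properties of $\Dsing$ recorded in Section~\ref{sec:descent-dsing-mf} (which also yield that $\Dsing$ depends only on a neighborhood of the singular locus), the restriction functor induces an equivalence $\Dsing(X\setminus Z)\simeq\Dsing(X\cap W)\simeq\MF(W,f)$ after idempotent completion; in particular this category is genuinely $2$-periodic. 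Moreover a neighborhood of $\Sing(X\cap W)$ inside $X\cap W$ is a normal crossings surface that is graph-like in the extended sense allowing $\bA^{1}$-components and legs—each $\bP^{1}$-component $C_{e}$ of $\Sing(X)$ meeting $Z$ is severed into legs near its points of $Z$—and it inherits orientability of the dual intersection complex from $X$; its dual graph is $G(X)$ with every flag at such a puncture turned into a separate half-edge.

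The third step applies the untwisted mirror equivalence of \cite{PS21} (equivalently Theorem~\ref{thm:twisted-gluing} with $L$ trivial, so that all the twisting autoequivalences $\phi_{n}$ are the identity), extended to graph-like loci with legs as in Section~\ref{sec:graph-like}: it yields $\MF(W,f)\simeq\Fuk(\Sigma^{\circ})$, where $\Sigma^{\circ}$ is the Riemann surface dual to $\Sing(X\cap W)$. Unwinding the construction, $\Sigma^{\circ}$ is built from the same pairs of pants (one per triple point of $X$) as $\Sigma$, except that the gluing circle attached to each severed edge is left unglued: $\Sigma^{\circ}$ is precisely the surface obtained from $\Sigma$ by cutting along a disjoint collection of simple closed curves, one for each point $z\in Z$, the curve for $z$ being a copy of the gluing circle $\gamma_{e}\subset\Sigma$ associated to the edge $e$ of $G(X)$ containing $z$ (parallel copies when a single edge meets $Z$ several times). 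It then remains to identify $\Fuk(\Sigma^{\circ})$ with the quotient of $\Fuk(\Sigma)$ by the full subcategory split-generated by the branes supported on these simple closed curves. This is the $A$-side counterpart of cutting a surface along simple closed curves; equivalently, by Jeffs' theorem \cite{jeffs}, $\Fuk(\Sigma^{\circ})$ coincides with the Fukaya category of the nodal degeneration of $\Sigma$ that pinches the chosen curves, and Jeffs identifies the latter with the quotient of $\Fuk(\Sigma)$ by its subcategory of objects supported on the corresponding vanishing cycles. Combining the three steps then gives $\Dsing(X,Y')\simeq\Fuk(\Sigma)/\langle\text{SCCs}\rangle$, which is the assertion; the existential phrasing reflects that we do not pin down the combinatorial type of $G(X)$, nor do we match the remaining moduli (areas of the components of the nodal curve on the $A$-side against positions of the nodes of $Y'$ on the $B$-side).

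The main obstacle is this last identification: making precise, and checking the symplectic hypotheses behind, the statement that $\Fuk$ of the cut surface $\Sigma^{\circ}$ is the quotient $\Fuk(\Sigma)/\langle\text{SCCs}\rangle$. One must either run the argument through the combinatorial model for Fukaya categories of surfaces, or invoke a stop-removal/Viterbo-restriction comparison and verify that removing the stops dual to the chosen curves reproduces $\Sigma^{\circ}$; in either case one has to carry along the $\Z$-gradings and brane structures—which exist precisely because of the orientability hypothesis—and reconcile the partially-wrapped conventions needed when $\Sigma$ itself is punctured. A lesser, but nontrivial, point is the bookkeeping of the second and third steps: checking that the relevant neighborhood of $\Sing(X\cap W)$ really lies in the range of validity of the extended \cite{PS21}-type equivalence, and that its dual surface is the asserted cut of $\Sigma$.
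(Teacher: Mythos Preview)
Your route differs from the paper's, and as written it has a genuine gap. The paper does \emph{not} trivialize $L$ on $W=Y\setminus (s')^{-1}(0)$ and try to run \cite{PS21} on the punctured surface. Instead it passes through a resolution $\pi:\widetilde{Y}'\to Y'$ of the singular total space: the strict transform $\widetilde{X}$ is then the zero fiber of a function on a \emph{smooth} threefold and is itself a normal crossings surface with graph-like singular locus, so \cite{PS21} applies directly to give $\Dsing(\widetilde{X})\simeq\Fuk(\Sigma)$. Since $\widetilde{X}\setminus\pi^{-1}(Z)\cong X\setminus Z$, one has $\Dsing(X,Y')\simeq\Dsing(\widetilde{X}\setminus\pi^{-1}(Z))$, and this localization is obtained from $\Dsing(\widetilde{X})$ by killing the skyscraper sheaves at the points of $Z$. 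Under the mirror equivalence these skyscrapers go (via Kn\"orrer periodicity, as in Proposition~\ref{prop:skyscraper-split-gen}) to skyscrapers on $\bG_m$, hence to objects supported on simple closed curves in $\Sigma$. No analysis of a ``cut'' surface and no appeal to Jeffs is needed.

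The gap in your argument is in steps 3--4. First, $X\cap W$ is generally \emph{not} graph-like in the sense of this paper: removing the points of $Z$ from a $\bP^1$-component $C_e$ does not sever it into legs, it leaves a single connected curve $\bP^1\setminus\{n_e\ \text{points}\}$ which for $n_e=1$ is an $\bA^1$ containing \emph{two} triple points and for $n_e\ge 2$ is neither $\bA^1$ nor $\bP^1$. So your description of the dual graph as ``$G(X)$ with flags turned into half-edges'' and of $\Sigma^\circ$ as ``$\Sigma$ cut along simple closed curves'' does not match what happens on the $B$-side. Second, even granting some surface $\Sigma^\circ$, the identity $\Fuk(\Sigma^\circ)\cong\Fuk(\Sigma)/\langle\text{SCCs}\rangle$ you need is not Jeffs' theorem: Jeffs identifies the quotient with the Fukaya category of the \emph{nodal} degeneration (pinching the curves), not of the open surface obtained by cutting them out, and these are not equivalent---already for the cylinder, quotienting $\Fuk$ by the core circle gives a further localization of $\Perf^{(2)}(\bG_m)$, not the Fukaya category of two half-cylinders. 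The paper's resolution trick sidesteps both issues by producing a genuine graph-like $\widetilde{X}$ on which \cite{PS21} holds and reading off the simple closed curves directly from the skyscrapers.
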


In Section \ref{sec:symplectic-conjectures}, we give more precise forms of this conjecture, and we explain how it works in the case of mirror symmetry for the quartic surface in $\bP^{3}$, where $\Sigma$ is the intersection of the quartic surface with the toric boundary, and $X$ is the mirror of the complement of $\Sigma$.

\subsection{Techniques: derived algebraic geometry, singular support, and descent}

In order to obtain our results, we need a theory of descent for $L$-twisted $2$-periodic categories. This means that we not only need to keep track of restriction functors, but also the way that these functors interact with the $L$-twisted $2$-periodic structures on the various categories. It turns out that derived algebraic geometry provides a convenient set of conceptual tools for doing this. Thus in Section \ref{sec:twisted-lg} we review the theory of matrix factorizations for twisted Landau-Ginzburg models from the perspective of derived algebraic geometry.

From this theory there emerges a geometric notion that both technically useful and beneficial to our intuition: the notion of \emph{singular support} of a coherent sheaf \cite{arinkin-gaitsgory}. This singular support is a closed subset of a certain variety $\gsing(X)$ which is roughly ``the total space of the degree $-1$ cotangent complex of $X$.'' The idea is that $\Coh(X)$ ``spreads out'' over $\gsing(X)$, and this helps us to understand the relationships between $\Coh(X)$, $\Perf(X)$, and $\Dsing(X)$: While $\Coh(X)$ lives on $\gsing(X)$, $\Perf(X)$ lives on $X \subset \gsing(X)$, and $\Dsing(X)$ lives on $\gsing^{\circ}(X) = \gsing(X) \setminus X$

We shall also find a direct mirror symmetry interpretation of $\gsing(X)$: in Section \ref{sec:t-duality-gsing} we show that the symplectic manifold $M$ from Conjecture \ref{conj:intro-rfuk-graph-like} is nothing but the result of applying T-duality to $\gsing^{\circ}(X)$.

In Section \ref{sec:descent-dsing-mf}, we derive the descent statements we need from the general theory of $1$-affineness \cite{gaitsgory-1-affine}.

In Section \ref{sec:graph-like}, we apply this descent theory to the case where $X$ is normal crossings with graph-like singular locus, and prove our main structural results about $\Dsing(X)$.

Section \ref{sec:relative-sing} considers the relationships between absolute singularity categories in the $L$-twisted $2$-periodic context and relative singularity categories.

In Section \ref{sec:symplectic-conjectures}, we present our conjectures on the symplectic counterparts to the story considered in the previous sections.

\subsection{Acknowledgments}
The authors wish to thank Yanki Lekili for correspondence that spurred us to work out the $L$-twisted generalization of our previous results, and which led us to new discoveries. We thank Nathan Dunfield for discussions that were helpful in understanding our constructions in Section \ref{sec:symplectic-conjectures}.

This work was presented in the Western Hemisphere Virtual Symplectic Seminar (WHVSS) in March 2022. We thank the organizers for the opportunity to speak. Sheel Ganatra and Mohammed Abouzaid provided valuable feedback about the conjectures in Section \ref{sec:symplectic-conjectures}.

 This work was completed while JP was a visitor at the Scuola Internazionale Superiore di Studi Avanzati (SISSA). He thanks SISSA for providing excellent working conditions. JP was partially supported by SISSA and the Simons Foundation.

\section{Twisted Landau-Ginzburg models and singularity categories}
\label{sec:twisted-lg}

In this section we recall some theoretical background on twisted Landau-Ginzburg models, twisted matrix factorizations, and singularity categories. We also include some discussion of the theory of singular support for coherent sheaves, since we have found this concept very helpful in visualizing the geometric constructions we shall make.

Some references that take a point of view close to the one presented here include Preygel \cite{preygel-thesis}, Arinkin-Gaitsgory \cite{arinkin-gaitsgory}, Blanc-Robalo-To\"{e}n-Vezzosi \cite{BRTV}, and Pippi \cite{pippi}. Of course the general theory of derived categories of singularities is much older. We note that the twisted case has been studied by Seidel \cite[p.~87]{seidel-subalgebras}, Polishchuk-Vaintrob \cite{polishchuk-vaintrob}, Burke-Walker \cite{burke-walker}, and quite possibly others. 

A \emph{twisted Landau-Ginzburg model} is a triple $(Y,L,s)$ consisting of a smooth variety $Y$, a line bundle $L$ on $Y$ and a section $s \in \Gamma(Y,L)$. By contrast, an untwisted Landau-Ginzburg model is one where $L$ is trivial and has been trivialized, so that $s$ may be regarded as a function $f: Y \to \bA^{1}$. There is a conceptual move that allows both the twisted and untwisted cases to be developed in a completely parallel manner: we simply regard the total space $Y$ as the \emph{base scheme} for all constructions.

\subsection{The derived group scheme associated to $(Y,L)$}
Let $(Y,L)$ be the underlying variety and line bundle of a twisted Landau-Ginzburg model. The total space of $L$ is the spectrum relative to $Y$ of the symmetric algebra of the dual $L^{\vee}$,
\begin{equation}
  \Tot(L) = \Spec_{Y}(\Sym_{\cO_{Y}}(L^{\vee})).
\end{equation}
The zero section of $L$ gives a morphism $0: Y \to \Tot(L)$. We define $\bB_{Y}$ as the derived fibered product of $0$ with itself,
\begin{equation}
  \xymatrix{
    \bB_{Y} \ar[r] \ar[d] & Y\ar[d]^-{0} \\
    Y\ar[r]^-{0} & \Tot(L).
  }
\end{equation}
Since the morphism $0$ is not flat, $\bB_{Y}$ has a genuine derived scheme structure. An explicit presentation for $\bB_{Y}$ is the spectrum relative to $Y$ of a sheaf of non-positively graded DG algebras,
\begin{equation}
  \bB_{Y} \cong \Spec_{Y}(L^{\vee}[1]\oplus \cO_{Y})
\end{equation}
the DG algebra $L^{\vee}[1]\oplus \cO_{Y}$ is a square-zero extension of $\cO_{Y}$ and has vanishing differential; with respect to a local trivialization of $L$, it is an exterior $\cO_{Y}$-algebra on a degree $-1$ generator.

The derived scheme $\bB_{Y}$ also has the structure of a group scheme over $Y$ (or what might be called a groupoid with object scheme $Y$). Let us write $\bB_{Y} = Y \times_{\Tot(L)}Y$. The composition morphism $\bB_{Y} \times \bB_{Y} \to \bB_{Y}$ is constructed as the composite
\begin{equation}
  \xymatrix{
    (Y \times_{\Tot(L)}Y)\times(Y \times_{\Tot(L)}Y) \cong Y \times_{\Tot(L)}Y \times_{\Tot(L)} Y \ar[r]^-{p_{13}}&  Y \times_{\Tot(L)} Y
    }
  \end{equation}
  The unit morphism $Y \to Y\times_{\Tot(L)}Y$ is the diagonal, and the inversion map swaps the factors in $Y\times_{\Tot(L)}Y$. These operations induce a monoidal structure on $\Indcoh(\bB_{Y})$ which we denote by $\circ$.

  Intuitively, we can think of $\bB_{Y}$ as the ``fiberwise loop space of $\Tot(L)$ based at the zero section,'' and then the structure maps are loop composition, constant loops, and reversal of the parameterization of loops. 

  \subsection{Koszul dual description}
  Now we apply Koszul duality relative to $Y$ to the sheaf of algebras $L^{\vee}[1]\oplus \cO_{Y}$ that represents $\bB_{Y}$. This algebra has a unique $\cO_{Y}$-linear augmentation, and we have
  \begin{equation}
    \Ext_{L^{\vee}[1]\oplus \cO_{Y}}(\cO_{Y},\cO_{Y}) = \Sym_{\cO_{Y}}(L[-2]).
  \end{equation}
  This lifts to an equivalence of DG categories
  \begin{equation}
    \Indcoh(\bB_{Y}) \simeq \Sym_{\cO_{Y}}(L[-2])\modules,
  \end{equation}
  and this equivalence matches the monoidal structure on $\Indcoh(\bB_{Y})$ given by loop composition with the tensor product on $\Sym_{\cO_{Y}}(L[-2])$-modules.\footnote{Because $\Sym_{\cO_{Y}}(L[-2])$ has positive-degree pieces, it cannot be regarded as the sheaf of functions on a derived scheme. It may be thought of as the ``affinization'' of a derived stack.} By passing to compact objects we get a monoidal equivalence
  \begin{equation}
    (\Coh(\bB_{Y}),\circ) \simeq (\Perf(\Sym_{\cO_{Y}}(L[-2])),\otimes).
  \end{equation}

  \subsection{Zero locus of a section}
  Now let $s \in \Gamma(Y,L)$ be a section, which we may regard as a morphism $s : Y \to \Tot(L)$. Then we consider $X = s^{-1}(0)$, or more precisely the fibered product
  \begin{equation}
    \xymatrix{
      X \ar[r] \ar[d] & Y\ar[d]^-{0} \\
      Y\ar[r]^-{s} & \Tot(L).
    }
  \end{equation}
  If $s$ is not constantly equal to zero, then $X$ is a discrete (that is, not derived) scheme. Just like $\bB_{Y}$, $X$ may be presented as the relative spectrum of a sheaf of DG algebras over $Y$,
  \begin{equation}
    X = \Spec_{Y}(\xymatrix{L^{\vee}[1] \ar[r]^{s} &\cO_{Y}}),
  \end{equation}
  where the algebra structure is the a square-zero extension of $\cO_{Y}$, and the differential is $s$ regarded as a map $L^{\vee}\to \cO_{Y}$.

  Regarding $X$ as a scheme over $Y$, it carries an action of the group scheme $\bB_{Y}$. Writing $X = Y \times_{s,0} Y$ and $\bB_{Y} = Y\times_{0,0} Y$, so that the subscripts on the $\times$ symbol denote the morphisms toward $\Tot(L)$ used to construct the fibered product, we have a map
  \begin{equation}
    \xymatrix{
      X \times \bB_{Y} = (Y\times_{s,0}Y)\times(Y\times_{0,0}) \cong Y\times_{s,0}Y\times_{0,0}Y \ar[r]^-{p_{13}} & Y\times_{s,0}Y = X.
    }
  \end{equation}
  At the level of categories, we get an action of $(\Indcoh(\bB_{Y}),\circ)$ on $\Indcoh(X)$, or equivalently an action of $(\Sym_{\cO_{Y}}(L[-2])\modules,\otimes)$ on $\Indcoh(X)$. This action is finite in the sense that it comes from an action on compact objects, meaning that $(\Perf(\Sym_{\cO_{Y}}(L[-2])),\otimes)$ acts on $\Coh(X)$.

\subsection{The singularity category and matrix factorizations}
We shall define a category $\MF(Y,L,s)$ to be a localization of $\PreMF(Y,L,s)$ and show that it is equivalent to $\Dsing(X)$. 

  Following Preygel's notation, we denote by $\PreMF(Y,L,s)$ the category $\Coh(X)$ \emph{equipped with} the structure of a module category over $\Perf(\Sym_{\cO_{Y}}(L[-2])),\otimes)$. The notation $\PreMF^{\infty}(Y,L,s)$ stands for $\Indcoh(X)$ equipped with the structure of a module category over $(\Sym_{\cO_{Y}}(L[-2])\modules,\otimes)$.

The $\cO_{Y}$-algebra $\Sym_{\cO_{Y}}(L[-2])$ has a localization given by ``inverting $L[-2]$.'' We denote this algebra by $\cR$; it has graded pieces in every positive and negative even degree, and the degree $2k$ piece is $L^{\otimes k}$. The algebra $\cR$ is to $\Sym_{\cO_{Y}}(L[-2])$ as the ring of Laurent polynomials is to the ring of polynomials. Since $\Sym_{\cO_{Y}}(L[-2])$ is a subalgebra of $\cR$, we have a symmetric monoidal functor $\Sym_{\cO_{Y}}(L[-2])\modules \to \cR\modules$. We then define
\begin{align}
  \MF^{\infty}(Y,L,s) &= \PreMF^{\infty}(Y,L,s) \otimes_{\Sym_{\cO_{Y}}(L[-2])\modules} \cR\modules\\
  \MF(Y,L,s) &= \PreMF(Y,L,s) \otimes_{\Perf(\Sym_{\cO_{Y}}(L[-2]))} \Perf(\cR)
\end{align}
Note that $\MF^{\infty}(Y,L,s)$ and $\MF(Y,L,s)$ are $\cR$-linear categories in the appropriate senses.

On the other hand, we have the (idempotent completed) quotient categories
\begin{align}
  \Dsing^{\infty}(X) &= \Indcoh(X)/\Qcoh(X)\\
  \Dsing(X) &= \Coh(X)/\Perf(X)
\end{align}

There are evident functors $\Indcoh(X) \to \MF^{\infty}(Y,L,s)$ and $\Coh(X) \to \MF(Y,L,s)$. It turns out that the kernels of these functors are $\Qcoh(X)$ and $\Perf(X)$ respectively, and thus they induce $k$-linear equivalences of categories
\begin{align}
  \Dsing^{\infty}(X) &\cong \MF^{\infty}(Y,L,s),\\
  \Dsing(X) &\cong \MF(Y,L,s).
\end{align}
We wish to emphasize the point that, while these are equivalences of $k$-linear categories, the right-hand sides carry a richer structure of an $\cR$-linear category, and this $\cR$-linear structure actually depends in a somewhat subtle way on the choice of line bundle $L$ and the section $s$, even though the underlying $k$-linear category only depends on the underlying scheme $X$.

\subsection{The relationship between $\cR$-actions and $L$-twisted $2$-periodic structures}
The action of $\Sym_{\cO_{Y}}(L[-2])\modules$ on $\PreMF^{\infty}(Y,L,s)= \Indcoh(X)$ and the action of $\cR\modules$ on $\MF^{\infty}(Y,L,s)$ admit more explicit descriptions in terms of natural transformations from $\id \to \otimes L^{\vee}[2]$.

First observe that, since $X$ is a scheme over $Y$, $\Indcoh(X)$ admits an action of $(\Qcoh(Y),\otimes)$, while $\Coh(X)$ admits an action of $(\Perf(Y),\otimes)$. A natural question is then, what is required to extend the action $\Qcoh(Y)$ to an action of $\Sym_{\cO_{Y}}(L[-2])\modules$, or to extend the action of $\Perf(Y)$ to an action of $\Perf(\Sym_{\cO_{Y}}(L[-2]))$?

We would like to give a geometric answer to this question, but since $\Sym_{\cO_{Y}}(L[-2])$ does not correspond to a derived scheme, this is slightly awkward. A more geometric analog is the algebra $\Sym_{\cO_{Y}}(L)$, where we have simply redefined $L$ to live in degree zero, and we shall consider this first. Observe that 
\begin{equation}
  \Tot(L^{\vee}) = \Spec_{Y}(\Sym_{\cO_{Y}}(L))
\end{equation}
Another construction of $\Tot(L^{\vee})$ is as follows. The line bundle $L^{\vee}$ over $Y$ is classified by a map $Y \to \pt/\bG_{m}$. The universal line bundle\footnote{The universal $\bG_{m}$-principal bundle is $\bG_{m}/\bG_{m} \cong \pt \to \pt/\bG_{m}$; it is obtained from the universal line bundle $\bA^{1}/\bG_{m} \to \pt/\bG_{m}$ by removing $0 \in \bA^{1}$.} over $\pt/\bG_{m}$ is given by the map $\bA^{1}/\bG_{m} \to \pt/\bG_{m}$, and $\Tot(L^{\vee})$ is recovered as the fibered product
\begin{equation}
  \xymatrix{
    \Tot(L^{\vee}) \ar[r]\ar[d] & \bA^{1}/\bG_{m} \ar[d]\\
    Y \ar[r] & \pt/\bG_{m}.
    }
  \end{equation}
  Therefore we have a tensor product decomposition
  \begin{equation}
    \Sym_{\cO_{Y}}(L)\modules = \Qcoh(\Tot(L^{\vee})) \cong \Qcoh(Y) \otimes_{\Qcoh(\pt/\bG_{m})} \Qcoh(\bA^{1}/\bG_{m}).
  \end{equation}

  The structure maps in this tensor product may be explicitly described:
  \begin{itemize}
  \item The category $\Qcoh(\pt/\bG_{m})$ is generated by all the equivariant lifts of $k$; denoting these by $k(n)$ for $n \in \Z$, we have $k(n) \otimes k(m) = k(n+m)$. The objects $k(n)$ and $k(m)$ are orthogonal if $n \neq m$.  To give an action of $\Qcoh(\pt/\bG_{m})$ on a category $\cC$ is the same as giving an autoequivalence of $\cC$. The functor $\Qcoh(\pt/\bG_{m}) \to \Qcoh(Y)$ sends $k(1)$ to the monoidally invertible object $L^{\vee}$.
  \item The category $\Qcoh(\bA^{1}/\bG_{m})$ is generated by all the equivariant lifts of $\cO_{\bA^{1}}$, and the space of morphisms $\cO_{\bA^{1}}(n) \to \cO_{\bA^{1}}(m)$ is one dimensional when $m \geq n$. To give an action of $\Qcoh(\bA^{1}/\bG_{m})$ on a category $\cC$ is the same as giving an autoequivalence $\Phi$ of $\cC$ and a natural transformation $t : \id \to \Phi$. The functor $\Qcoh(\pt/\bG_{m}) \to \Qcoh(\bA^{1}/\bG_{m})$ sends $k(n)$ to $\cO_{\bA^{1}}(n)$.
  \item The functor $\Qcoh(Y) \to \Qcoh(\Tot(L^{\vee}))$ is the pullback under the projection $\pi : \Tot(L^{\vee}) \to Y$.
  \item The functor $\Qcoh(\bA^{1}/\bG_{m}) \to \Qcoh(\Tot(L^{\vee}))$ sends $\cO_{\bA^{1}}(1)$ to $\pi^{*}(L^{\vee})$ (as it must in order for the diagram to commute), and it sends the canonical morphism $\cO_{\bA^{1}}(0) \to \cO_{\bA^{1}}(1)$ to the tautological section of $\pi^{*}(L^{\vee})$ over $\Tot(L^{\vee})$
  \end{itemize}

  The structure of $\Sym_{\cO_{Y}}(L[-2])\modules$ admits an analogous tensor product decomposition,
  \begin{equation}
    \Sym_{\cO_{Y}}(L[-2])\modules \cong \Qcoh(Y) \otimes_{\Qcoh(\pt/\bG_{m})} \Qcoh(\bA^{1}/\bG_{m}),
  \end{equation}
  where the structure maps are as follows:
  \begin{itemize}
  \item The functor $\Qcoh(\pt/\bG_{m}) \to \Qcoh(Y)$ sends $k(1)$ to $L^{\vee}[2]$. (Note that this functor does not arise from pullback by a morphism $Y \to \pt/\bG_{m}$ since it does not preserve connective objects.)
  \item The functor $\Qcoh(Y) \to \Sym_{\cO_{Y}}(L[-2])\modules$ is a pullback which we again denote by $\pi^{*}$. We have $\pi^{*}(\cO_{Y}) = \Sym_{\cO_{Y}}(L[-2])$ and
    \begin{equation}
      \pi^{*}(L^{\vee}[2]) = L^{\vee}[2] \otimes \Sym_{\cO_{Y}}(L[-2]) = L^{\vee}[2] \oplus \cO_{Y} \oplus L[-2] \oplus L^{\otimes 2}[-4] \oplus \cdots
    \end{equation}
  \item The functor $\Qcoh(\bA^{1}/\bG_{m}) \to \Sym_{\cO_{Y}}(L[-2])\modules$ sends $\cO_{\bA^{1}}(1)$ to $\pi^{*}(L^{\vee}[2])$, and sends the canonical morphism $\cO_{\bA^{1}}(0) \to \cO_{\bA^{1}}(1)$ to the tautological morphism $\Sym_{\cO_{Y}}(L[-2]) \to \pi^{*}(L^{\vee}[2])$.
  \end{itemize}
  This decomposition makes evident the following fact: Given a category $\cC$ with a $\Qcoh(Y)$-action, the problem of extending it to an action of $\Sym_{\cO_{Y}}(L[-2])\modules$ is equivalent to finding a natural transformation $t : \id \to \otimes L^{\vee}[2]$.
  \begin{remark}
    One may ask why we have formulated this theory in terms of actions of $\Sym_{\cO_{Y}}(L[-2])$ rather than just working with natural transformations $t : \id \to \otimes L^{\vee}[2]$ directly. The reason is because we later need to study the descent properties of these constructions. According to Gaitsgory's theory of $1$-affineness, descent properties with respect to coverings of $Y$ are governed by the action of $\Qcoh(Y)$. The action of $\Sym_{\cO_{Y}}(L[-2])\modules$ neatly packages together the action of $\Qcoh(Y)$ and the action of the natural transformation $t$.
  \end{remark}

  \begin{remark}
    An alternative way of decomposing $\Sym_{\cO_{Y}}(L[-2])$ as a tensor product involves changing the grading on the category $\Qcoh(\bA^{1}/\bG_{m})$. Note that $\Qcoh(\bA^{1})$ is equivalent to the cocompletion of the Fukaya category of an annulus with a single stop on one boundary component, graded by a line field that has winding number zero around each boundary component; it is generated by a single arc $C$ connecting the two boundaries. The equivariant category $\Qcoh(\bA^{1}/\bG_{m})$ is the Fukaya category of the universal cover of this annulus. We may however use a different line field on the annulus that gives the generating morphism $C \to C$ degree $2$, and then pass to the universal cover. Let us call $\cF$ the Fukaya category of the universal cover with this new grading. Then we have
    \begin{equation}
      \Sym_{\cO_{Y}}(L[-2])\modules \cong \Qcoh(Y)\otimes_{\Qcoh(\pt/\bG_{m})} \cF,
    \end{equation}
    where now the functor $\Qcoh(\pt/\bG_{m}) \to \Qcoh(Y)$ sends $k(1)$ to $L^{\vee}$ as in the unshifted case of $\Tot(L^{\vee})$.
  \end{remark}

  The next thing to do is to understand the passage from $\Sym_{\cO_{Y}}(L[-2])$ to $\cR$, the localization that inverts $L[-2]$. In the unshifted case of $\Tot(L^{\vee}) = Y \times_{\pt/\bG_{m}} \bA^{1}/\bG_{m}$, this corresponds to deleting the zero section of $\Tot(L^{\vee})$, and passing to
  \begin{equation}
    \Tot(L^{\vee})\setminus Y = Y \times_{\pt/\bG_{m}}\bG_{m}/\bG_{m}
  \end{equation}
  Although $\bG_{m}/\bG_{m}$ can be thought of as a point, it is easier to describe the pullback functor $\Qcoh(\bA^{1}/\bG_{m})\to \Qcoh(\bG_{m}/\bG_{m})$ by regarding $\Qcoh(\bG_{m}/\bG_{m})$ as generated by all equivariant lifts $\cO_{\bG_{m}}(n)$, such that there is a one-dimensional space of morphisms between any pair of lifts. Thus the passage from $\bA^{1}/\bG_{m}$ to $\bG_{m}/\bG_{m}$ turns the morphism $\cO(0) \to \cO(1)$ into an isomorphism. In the shifted case, we also have
  \begin{equation}
    \cR \cong \Qcoh(Y)\otimes_{\Qcoh(\pt/\bG_{m})} \Qcoh(\bG_{m}/\bG_{m})
  \end{equation}
  where the generating object of $\Qcoh(\pt/\bG_{m})$ maps to $L^{\vee}[2]$, and so the effect of passing from $\Sym_{\cO_{Y}}(L[-2])$ to $\cR$ is to localize to make the natural transformation $t : \id \to \otimes L^{\vee}[2]$ into an isomorphism. In summary
  \begin{equation}
    \MF^{\infty}(Y,L,s) = \Indcoh(X) \otimes_{\Sym_{\cO_{Y}}(L[-2])\modules} \cR\modules 
  \end{equation}
  is nothing but the localization of $\Indcoh(X)$ with respect to $t : \id \to \otimes L^{\vee}[2]$. In other words, $\MF^{\infty}(Y,L,s)$ carries an $L$-twisted $2$-periodic structure, which is encoded by the action of $\cR\modules$.

  \begin{remark}
    This whole discussion could be written in a more geometric way if we could regard $\Sym_{\cO_{Y}}(L[-2])\modules$ as $\Qcoh(\Tot(L^{\vee}[2]))$. Since $\Sym_{\cO_{Y}}(L[-2])$ is not connective, $\Tot(L^{\vee}[2])$ is a stacky object. Starting again from the data $(Y,L,s)$, the theory presented above ``spreads out'' $\Indcoh(X)$ over $\Tot(L^{\vee}[2])$. Passing from $\Indcoh(X)$ to $\MF^{\infty}(Y,L,s)$ is restricting to the complement of the zero section $\Tot(L^{\vee}[2])\setminus Y$. This idea can be made more precise using the theory of singular support presented in the next subsection.
  \end{remark}

  \subsection{Singular support}
  \label{sec:sing-supp}
The structures on $\Coh(X)$ and $\MF(Y,L,s)$ studied above depend in a somewhat subtle way on the choice of $(Y,L,s)$. It is therefore useful to have a parallel, if slightly weaker, theory that is intrinsic to the singular scheme $X$ itself (without any embedding into an ambient $Y$). We shall use for this purpose the theory of singular support for coherent sheaves as developed by Arinkin-Gaitsgory \cite{arinkin-gaitsgory}, following earlier work of Benson-Iyengar-Krause \cite{BIK}.

A DG scheme $X$ is called \emph{quasi-smooth} if its cotangent complex $T^{*}(X)$ is a perfect complex of Tor-amplitude $[-1,0]$; this is equivalent to saying that the fiber $T^{*}_{x}(X)$ at any geometric point $x$ of $X$ is acyclic in degrees below $-1$. The class of quasi-smooth DG schemes includes classical local complete intersections such as the hypersurfaces considered above.

Let $X$ be a quasi-smooth classical scheme. There is a classical scheme $\gsing(X)$, which is, roughly speaking, a stratified vector bundle over $X$ whose fibers are of the form $H^{-1}(T^{*}_{x}(X))$, the degree $-1$ cohomology of the tangent complex. Since the cotangent complex $T^{*}(X)$ is perfect and has amplitude $[-1,0]$, it has a dual $T(X)$ which is also perfect and has amplitude $[0,1]$, this is the tangent complex. We may then define
\begin{equation}
  \gsing(X) = (\Spec_{X}(\Sym_{\cO_{X}}(T(X)[1])))^{\clpart}
\end{equation}
to be the underlying classical scheme of the relative spectrum of the sheaf of DG algebras $\Sym_{\cO_{X}}(T(X)[1])$. On the other hand, since we are shifting and taking the classical part, we may write this purely in terms of classical algebras as
\begin{equation}
  \gsing(X) = \Spec_{X}(\Sym_{\cO_{X}}(H^{1}(T(X))))
\end{equation}
where $H^{1}(T(X))$ is regarded as a coherent sheaf on $X$. Note that there is a natural $\bG_{m}$ action on $\gsing(X)$ that rescales the fibers of the map $\gsing(X) \to X$.

Let us compute $\gsing(X)$ when $X$ arises from a triple $(Y,L,s)$ as above. $X$ is presented as DG scheme over $Y$ by
\begin{equation}
  X = \Spec_{Y}(\xymatrix{L^{\vee}[1] \ar[r]^{s} &\cO_{Y}}).
\end{equation}
The cotangent complex of $X$ is then
\begin{equation}
  \label{eq:cotangent-complex}
T^{*}(X) = \left(  \xymatrix{L^{\vee}|_{X}[1] \ar[r]^{ds} & T^{*}(Y)|_{X}}\right).
\end{equation}
Now the traditional singular locus of $X$, $\Sing(X)$, is the locus where $ds$ vanishes, 
and so the cohomology in degree $-1$ is
\begin{equation}
  H^{-1}(T_{x}^{*}(X)) \cong \begin{cases}
    L^{\vee}_{x}, & \text{if $x \in \Sing(X)$},\\
    0, & \text{if $x \not\in \Sing(X)$}.
  \end{cases}
\end{equation}
Thus we may write
\begin{equation}
  \gsing(X) = X \cup \Tot(L^{\vee}|_{\Sing(X)}).
\end{equation}
This expression shows that there are embeddings $\gsing(X) \to \Tot(L^{\vee}|X) \to \Tot(L^{\vee})$, where the last is the total space of $L^{\vee}$ as a vector bundle over $Y$.

Now we wish to make a totally pedantic point, but one which has nevertheless given us a lot of trouble. The scheme $\gsing(X)$ is intrinsically determined by the geometry of $X$, but the isomorphism $H^{-1}(T^{*}_{x}(X)) \cong L^{\vee}_{x}$ for $x \in \Sing(X)$, and hence the embedding $\gsing(X) \to \Tot(L^{\vee})$, actually depends on the choice of the section $s$. Suppose that we rescale $s$ by a local invertible function $g$. Then the differential appearing in \eqref{eq:cotangent-complex} is rescaled by $g$ as well. Of course, this is isomorphic to the original complex, as it must be since $T^{*}(X)$ is intrinsic to $X$, but the isomorphism involves rescaling $L^{\vee}|_{X}$ by $g^{-1}$.

The idea of singular support for coherent sheaves is that, to any object $\cF \in \Coh(X)$, we may associate a subscheme $\SingSupp(\cF)$ of $\gsing(X)$ that measures how $\cF$ interacts with the singularities of $X$. There is an explicit point-wise description of $\SingSupp(\cF)$ as follows. Let $(x,\xi) \in \gsing(X)$, where $\xi \in H^{-1}(T^{*}_{x}(X))$:
\begin{itemize}
\item If $\xi = 0$, then $(x,\xi) \in \SingSupp(\cF)$ if and only if $x \in \Supp(\cF)$.
\item If $\xi \neq 0$, perform the following test to determine if $(x,\xi) \in \SingSupp(\cF)$: Locally near $x$, write $X$ as a complete intersection $U \times_{V} \pt$ for some morphism of smooth schemes $q : U \to V$ and some point $\pt \in V$. Let $f$ be a local function on $V$ that vanishes at $\pt$ and whose differential is $\xi$. Let $X' \subset U$ be the subscheme defined by the vanishing of $q^{*}(f)$. Let $i : X \to X'$ be the inclusion. Then $(x,\xi)$ \emph{does not belong} to $\SingSupp(\cF)$ if and only if $i_{*}(\cF)$ is perfect on a Zariski neighborhood of $x$.
\end{itemize}
Note that it is clear from this definition that $\SingSupp(\cF)$ is conical, that is, invariant under rescaling the fibers of $\gsing(X) \to X$. 

For the particular case where $X$ comes from a triple $(Y,L,s)$, we may describe $\SingSupp(\cF)$ in terms of the ordinary notion of support in commutative algebra, as long as we work in an affine patch. So suppose that $X,Y$ are affine and that $L$ is trivialized. Then the algebra $\Sym_{\cO_{Y}}(L[-2])$ becomes $\cO(Y)[u]$, the ring of regular functions on $Y$ with an additional degree $2$ variable $u$ adjoined. Also $\gsing(X)$ embeds into $\Tot(L^{\vee}) \cong Y \times \bA^{1}$. The variable $u$ is to be interpreted as a coordinate on the $\bA^{1}$ factor, even though it has degree $2$.

 Let $\cF \in \Coh(X)$. The $\Perf(\cO(Y)[u])$-action on $\Coh(X)$ induces on the total Ext algebra $\Ext^{\bullet}(\cF,\cF)$ the structure of an $\cO(Y)[u]$-module. The support of this module (forgetting the grading) is a subscheme of $Y\times \bA^{1}$ that coincides with $\SingSupp(\cF) \subset \gsing(X) \subset Y \times \bA^{1}$.

 When $X$ and $Y$ are not affine, and $L$ is not trivial, these local descriptions of the singular support patch together. Alternatively, we could consider sheafifying the total Ext algebra $\Ext^{\bullet}(\cF,\cF)$ to obtain a sheaf of $\Sym_{\cO_{Y}}(L[-2])$-modules, and take the support of that. For further details on these constructions we refer the interested reader to \cite{arinkin-gaitsgory}.

 The basic fact about singular support that connects to singularity categories is this: an object $\cF \in \Coh(X)$ is perfect if and only if $\SingSupp(\cF) \subset X \subset \gsing(X)$, that is, its singular support is contained in the zero section. Thus the passage from $\Coh(X)$ to $\Dsing(X)$ corresponds to deleting the zero section in $\gsing(X)$, just as tensoring with $\cR$ corresponds to deleting the ``zero section'' in $\Tot(L^{\vee}[2])$. We see here that this zero section is something intrinsically constructed from $X$.

 \begin{remark}
   \label{rem:efimov}
   This perspective clarifies, at least in the authors' minds, the relationship between $\Dsing(X)$ and the \emph{categorical formal punctured neighborhood of infinity} defined by Efimov \cite{efimov-punctured}. This construction begins with an arbitrary smooth DG category $\cB$ produces a new DG category $\widehat{\cB}_{\infty}$ that measures the failure of $\cB$ to be proper. Efimov shows that when this construction is applied to $\Coh(X)$ for a proper scheme $X$, the result is equivalent to $\Dsing(X)^{\op}$. On the mirror side, when this construction is applied to the wrapped Fukaya category $\Fw(U)$ of a Weinstein manifold $U$, it is supposed to produce a category closely related to, and possibly equivalent to, $\Fw(U)/\Fc(U)$ and the Rabinowitz Fukaya category $\RFuk(U)$.

   The appearance of $\Dsing(X)^{\op}$ as the ``formal punctured neighborhood of infinity'' of $\Coh(X)$ is somewhat unintuitive, since $X$, being proper, does not have an ``infinity'' to speak of. Thinking in terms of $\gsing(X)$ makes things clearer. Although $X$ may be proper, $\gsing(X)$ is not proper when $X$ is singular. The objects of $\Coh(X)$ with proper singular support are precisely the perfect complexes, so $\Dsing(X)$ really is the restriction of $\Coh(X)$ to a neighborhood of infinity in $\gsing(X)$.

   We should clarify that at present we do not know how to make a direct rigorous connection between the theory of singular support and Efimov's definition; this would be an interesting topic for another paper.
 \end{remark}

\subsection{Where are the matrix factorizations?}
The reader may now notice that at no point have we considered what would rightly be called ``matrix factorizations,'' in the sense of factorizations of scalar operators as matrices, or modules over curved algebras. Genuine matrix factorizations arise when we take the Koszul dual of the presentation of $X$ as scheme over $Y$
\begin{equation}
  X = \Spec_{Y}(\xymatrix{L^{\vee}[1] \ar[r]^{s} &\cO_{Y}}).
\end{equation}
Recall that $\bB_{Y}$ has the same presentation but with zero differential, and its Koszul dual is $\Sym_{\cO_{Y}}(L[-2])$. The presence of the differential in the DG algebra for $X$ means that this DG algebra does not admit an $\cO_{Y}$-linear augmentation (the $k$-linear augmentations are the $k$-points of $X$, so there is no augmentation over the generic point of $Y$). The lack of an augmentation means that the Koszul dual is curved. The Koszul dual of $X$ is the sheaf of curved algebras
\begin{equation}
  (\Sym_{\cO_{Y}}(L[-2]),s)
\end{equation}
where the curvature is $s \in \Gamma(Y,L)$ regarded as a degree $2$ element of the underlying algebra. The curved modules over this curved algebra are in some sense the genuine matrix factorizations, though we still need to invert $L[-2]$ in order to get a model for $\Dsing(X)$.

While the whole theory could be developed along these lines, there are perhaps a few reasons for preferring the algebra $\cA = \xymatrix{L^{\vee}[1] \ar[r]^{s} &\cO_{Y}}$ over its Koszul dual $\cA^{!} = (\Sym_{\cO_{Y}}(L[-2]),s)$, namely
\begin{itemize}
\item $\cA$ is coherent (even perfect) as a $\cO_{Y}$-module, while $\cA^{!}$ is only quasicoherent,
\item $\cA$ is not curved, while $\cA^{!}$ is curved,
\item $\cA$ is connective, while $\cA^{!}$ is not.
\end{itemize}
On the other hand, the presentation in terms of $\cA^{!}$ makes the action of $\Perf(\Sym_{\cO_{Y}}(L[-2]))$ evident, since the tensor product of a $\Sym_{\cO_{Y}}(L[-2])$-module of curvature $s$ with one of curvature $0$ has curvature $s$. The perspective we have taken (following Preygel) is a kind of middle ground, where we work with $\Coh(X)$ and $\cA$ directly, but the action of $\Perf(\Sym_{\cO_{Y}}(L[-2]))$ somehow ``remembers'' that objects of $\Coh(X)$ can be represented as curved modules.

\section{Descent for $\Dsing$ and $\MF$}
\label{sec:descent-dsing-mf}
In this section we shall study the descent properties of $\Dsing$ for schemes and $\MF$ for twisted Landau-Ginzburg models. The results are not entirely new and can be found in some form in the work of Preygel \cite{preygel-thesis} (for the untwisted case), Blanc-Robalo-To\"{e}n-Vezzosi \cite{BRTV}, and Pippi \cite{pippi}. However, our presentation differs from those references be emphasizing the notion of $1$-affineness.

\subsection{Sheaves of categories and the notion of $1$-affineness}
The heading of this subsection is the title of Gaitsgory's paper \cite{gaitsgory-1-affine}, some of the results of which we now recall.

A sheaf of categories $\cC$ on a prestack $Y$ is, at least roughly, a functorial assignment, for every affine DG scheme $S$ over $Y$, of a DG category $\Gamma(S,\cC)$ that carries a monoidal action of the category $\Qcoh(S)$. The functoriality is expressed by the requirement that, for a map $f : S_{1} \to S_{2}$ of affine schemes over $Y$, there is an equivalence
\begin{equation}
  \Qcoh(S_{1})\otimes_{\Qcoh(S_{2})} \Gamma(S_{2},\cC) \to \Gamma(S_{1},\cC).
\end{equation}
Given a sheaf of categories on $Y$, we may form its global sections category $\Gamma(Y,\cC)$, which is acted on by $\Qcoh(Y)$. This gives a functor from sheaves of categories over $Y$ to $\Qcoh(Y)$-module categories.

There is also a functor from $\Qcoh(Y)$-module categories to sheaves of categories given by tensoring. Given a $\Qcoh(Y)$-module category $C$, we define a sheaf of categories $\widetilde{C}$ by setting, for $S \to Y$ an affine DG scheme over $Y$, 
\begin{equation}
  \Gamma(S,\widetilde{C}) = \Qcoh(S) \otimes_{\Qcoh(Y)} C.
\end{equation}
This localization functor is the left adjoint of the global sections functor, and the unit of the adjunction gives a map $C \to \Gamma(S,\widetilde{C})$. The prestack $Y$ is called \emph{$1$-affine} if the global sections functor and the localization functor are inverse equivalences between the category of sheaves of categories on $Y$ and the category of $\Qcoh(Y)$-module categories. If $Y$ is $1$-affine, the map $C \to \Gamma(S,\widetilde{C})$ is always an equivalence.

According to Gaitsgory's Theorem 2.1.1, every quasi-compact quasi-separated DG scheme is $1$-affine. This class includes all of the schemes we wish to study in this paper.

To say that $Y$ is $1$-affine means that any $\Qcoh(Y)$-module category $C$ satisfies a form of descent, in the sense that $C$ may be reconstructed from its localizations at affine schemes over $Y$. If $Y$ is a scheme (admits a Zariski atlas), this property is Zariski descent. Very fortunately, by results of Lurie \cite[Theorem 5.4]{DAG-XI} and Gaitsgory \cite[Theorem 1.5.2]{gaitsgory-1-affine}, any sheaf of categories \emph{automatically} satisfies descent with respect to certain \emph{finer} topologies, including the \'{e}tale topology and the fppf topology (our main interest is in the \'{e}tale topology).

\subsection{The sheaves $\Qcoh$, $\Indcoh$, and $\Dsing^\infty$}
Let $X$ be a prestack for which $\Indcoh$ can be defined (e.g. a Noetherian DG scheme). According to Gaitsgory \cite{gaitsgory-indcoh}, both $\Qcoh$ and $\Indcoh$ define sheaves of categories on $Y$. In particular, $\Indcoh$ is a module over $\Qcoh$, and the natural functor $\Qcoh \to \Indcoh$ (which on compact objects is the inclusion $\Perf \to \Coh$) is a map of sheaves of categories. We have defined the singularity category of $Y$ to be the cofiber
\begin{equation}
  \Dsing^{\infty}(X) = \cofiber(\Qcoh(X) \to \Indcoh(X))
\end{equation}
The functor $\Qcoh(X) \to \Indcoh(X)$ is a map of $\Qcoh(X)$-module categories, so $\Dsing^{\infty}(X)$ is a $\Qcoh(X)$-module category. The localization of this module is the sheaf of categories that assigns to an affine $S \to X$ the category
\begin{align}
  & \Qcoh(S) \otimes_{\Qcoh(X)} \Dsing^{\infty}(X)\\
  &\cong \Qcoh(S) \otimes_{\Qcoh(X)} \cofiber(\Qcoh(X) \to \Indcoh(X))\\
  &\cong  \cofiber(\Qcoh(S) \otimes_{\Qcoh(X)}\Qcoh(X) \to \Qcoh(S) \otimes_{\Qcoh(X)}\Indcoh(X))\\
  &\cong \cofiber(\Qcoh(S) \to \Indcoh(S))\\
  &\cong \Dsing^{\infty}(S).
\end{align}
To go from the second line to the third, we have commuted a colimit (tensor product) with a colimit (cofiber). In the next step, we use the fact that $\Qcoh$ and $\Indcoh$ are sheaves of categories.

The only question that remains is whether the global sections of the sheaf
\begin{equation}
  (S\to X) \mapsto \Dsing^{\infty}(S)
\end{equation}
coincides with $\Dsing^{\infty}(X)$. If we are willing to assume that $X$ is $1$-affine, this is automatically true, and thus $\Dsing^{\infty}(X)$ satisfies (Zariski, \'{e}tale, fppf) descent. We summarize this discussion in the following theorem.
\begin{theorem}
  \label{thm:descent-dsing}
  Let $X$ be a quasi-compact quasi-separated Noetherian DG scheme. Then $\Dsing^{\infty}(X)$ satisfies \'{e}tale descent over $X$: if $c : \coprod_{i} U_{i} \to X$ is an \'{e}tale covering, then there is an equivalence of categories
  \begin{equation}
    \Dsing^{\infty}(X) \to \Tot \left(\xymatrix{ \prod_{i}\Dsing^{\infty}(U_{i}) \ar@<0.5ex>[r] \ar@<-0.5ex>[r] & \prod_{i,j}\Dsing^{\infty}(U_{i}\times_{X} U_{j}) \ar@<1ex>[r] \ar[r] \ar@<-1ex>[r] & \cdots} \right) 
  \end{equation}
  (Here $\Tot$ denotes the totalization of a cosimplicial DG category.)
\end{theorem}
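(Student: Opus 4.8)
The plan is to realize $\Dsing^{\infty}(X)$ as the global sections of a sheaf of categories on $X$ whose value on an affine $S\to X$ is $\Dsing^{\infty}(S)$, and then to invoke the automatic descent that sheaves of categories enjoy along the \'{e}tale topology. First I would recall from Gaitsgory \cite{gaitsgory-indcoh} that, for a Noetherian DG scheme $X$, both $\Qcoh$ and $\Indcoh$ are sheaves of categories on $X$, that $\Indcoh$ is a module over $\Qcoh$, and that the canonical functor $\Qcoh\to\Indcoh$ (on compact objects the inclusion $\Perf\hookrightarrow\Coh$) is a map of sheaves of categories. Taking cofibers in the category of $\Qcoh(X)$-module categories then shows that $\Dsing^{\infty}(X)=\cofiber(\Qcoh(X)\to\Indcoh(X))$ is a $\Qcoh(X)$-module category; since the forgetful functor from $\Qcoh(X)$-modules to presentable DG categories preserves colimits, this cofiber agrees with the underlying one.

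Next I would form the sheaf of categories $\widetilde{\Dsing^{\infty}(X)}$ by localizing this module and compute its value on an affine $S\to X$. Because the relative tensor product $\Qcoh(S)\otimes_{\Qcoh(X)}(-)$ preserves colimits, it commutes with the cofiber, and the defining equivalences of the sheaves $\Qcoh$ and $\Indcoh$ give $\Qcoh(S)\otimes_{\Qcoh(X)}\Qcoh(X)\simeq\Qcoh(S)$ and $\Qcoh(S)\otimes_{\Qcoh(X)}\Indcoh(X)\simeq\Indcoh(S)$; hence $\Gamma(S,\widetilde{\Dsing^{\infty}(X)})\simeq\cofiber(\Qcoh(S)\to\Indcoh(S))=\Dsing^{\infty}(S)$, which is exactly the computation already displayed above. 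Since $X$ is quasi-compact, quasi-separated and Noetherian, it is $1$-affine by Gaitsgory's Theorem 2.1.1 \cite{gaitsgory-1-affine}, so localization and global sections are mutually inverse equivalences; in particular the unit $\Dsing^{\infty}(X)\to\Gamma(X,\widetilde{\Dsing^{\infty}(X)})$ is an equivalence, so $\Dsing^{\infty}(X)$ is the global sections of the sheaf of categories $S\mapsto\Dsing^{\infty}(S)$.

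Finally I would apply the theorem of Lurie \cite[Theorem 5.4]{DAG-XI} and Gaitsgory \cite[Theorem 1.5.2]{gaitsgory-1-affine} that any sheaf of categories satisfies descent along \'{e}tale (even fppf) coverings: for an \'{e}tale cover $c:\coprod_i U_i\to X$ with \v{C}ech nerve $U_{\bullet}$, the global sections of a sheaf of categories $\cC$ are recovered as $\Tot(\Gamma(U_{\bullet},\cC))$. Applying this with $\cC=\widetilde{\Dsing^{\infty}(X)}$ and using the previous paragraph — each fiber product $U_{i_0}\times_X\cdots\times_X U_{i_p}$ is again a quasi-compact, quasi-separated Noetherian DG scheme, being \'{e}tale over $X$, so the earlier steps apply verbatim to it — identifies the totalization with that of the cosimplicial DG category $p\mapsto\prod_{i_0,\dots,i_p}\Dsing^{\infty}(U_{i_0}\times_X\cdots\times_X U_{i_p})$ and yields the asserted equivalence.

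The step that I expect to demand the most care is the identification of the localized module in the second paragraph: one needs that $\Indcoh$ genuinely is a sheaf of categories over the base $X$ in Gaitsgory's precise sense (not merely \'{e}tale-locally), and that the relevant relative tensor products are the continuous, colimit-preserving ones, so that they commute with the cofiber defining $\Dsing^{\infty}$. Granting this, together with the propagation of the Noetherian and quasi-compact quasi-separated hypotheses through the \v{C}ech nerve, the descent statement follows formally from $1$-affineness and the general descent theorem for sheaves of categories.
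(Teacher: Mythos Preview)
Your proposal is correct and follows essentially the same approach as the paper: both realize $\Dsing^{\infty}$ as the cofiber of the map of sheaves of categories $\Qcoh\to\Indcoh$, compute the localized sheaf on affines by commuting tensor with cofiber, invoke $1$-affineness (Gaitsgory's Theorem 2.1.1) to identify global sections, and then apply the Lurie--Gaitsgory automatic \'{e}tale descent for sheaves of categories. Your write-up is slightly more explicit about why tensor commutes with cofiber and about the hypotheses propagating through the \v{C}ech nerve, but there is no substantive difference.
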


\begin{remark}
  \label{rem:recollement}
  There is an alternative proof that $\Dsing^{\infty}$ satisfies descent based on recollement. Suppose that $X$ is a separated Noetherian classical scheme (this already implies that $X$ is $1$-affine, so the proof in this remark is not more general than the one given above). A result of Krause \cite{krause}, recast into the language of $\Indcoh$, states that there is a recollement diagram
  \begin{equation}
    \xymatrix{
      \Dsing^{\infty}(X)  \ar@<0ex>[r]  & \Indcoh(X) \ar@<-1ex>[l]\ar@<0ex>[r]\ar@<1ex>[l] & \Qcoh(X) \ar@<-1ex>[l]\ar@<1ex>[l]
    }
  \end{equation}
  where the functor $\Indcoh(X) \to \Qcoh(X)$ is the right adjoint of the functor $\Qcoh(X) \to \Indcoh(X)$ considered above. This recollement may thought of as being associated to the decomposition
  \begin{equation}
    \gsing(X) = X \cup (\gsing(X) \setminus X)
  \end{equation}
  of $\gsing(X)$ into a closed subscheme and its complementary open subscheme: $\Qcoh(X)$ lives on $X$, and $\Dsing^{\infty}(X)$ lives on $\gsing(X) \setminus X$.

  In this recollement, $\Dsing^{\infty}(X)$ is presented as the fiber (rather than cofiber) of a functor $\Indcoh(X) \to \Qcoh(X)$. Because limits commute with limits, taking the fiber commutes with the limit over the \v{C}ech diagram of a covering $c : \coprod_{i} U_{i}\to X$. Thus descent for $\Dsing^{\infty}$ follows from descent for $\Indcoh$ and $\Qcoh$ for this class of schemes.
\end{remark}

\subsection{$\MF$ for twisted Landau-Ginzburg models as a sheaf of categories}
Now we consider a twisted Landau-Ginzburg model $(Y,L,s)$, with $X = s^{-1}(0)$. We recall that
\begin{equation}
  \MF^{\infty}(Y,L,s) = \Indcoh(X) \otimes_{\Sym_{\cO_{Y}}(L[-2])\modules} \cR\modules
\end{equation}
where $\cR$ is the localization of $\Sym_{\cO_{Y}}(L[-2])$ that inverts $L[-2]$. The underlying $k$-linear category of $\MF^{\infty}(Y,L,s)$ is $\Dsing^{\infty}(X)$, so it satisfies a form of descent with respect to coverings of $X$, but we would like to show that $\MF^{\infty}$ actually satisfies descent as an $\cR$-linear category with respect to coverings of $Y$ that are compatible with the extra data $(L,s)$.

First we observe that $\Indcoh(X)$, $\Sym_{\cO_{Y}}(L[-2])\modules$, and $\cR\modules$ are all $\Qcoh(Y)$-module categories. In fact, each one is determined by a sheaf of DG algebras on $Y$ (respectively, the Koszul DGA of $X$, $\Sym_{\cO_{Y}}(L[-2])$, and $\cR$). The tensor product that defines $\MF^{\infty}(Y,L,s)$ is furthermore a $\Qcoh(Y)$-linear construction. Now let $f : S \to Y$ be an affine DG scheme mapping to $Y$. Then from the fact that $\Indcoh$ is a sheaf of categories \cite{gaitsgory-indcoh}, we have
\begin{align}
  \Qcoh(S) \otimes_{\Qcoh(Y)} \Indcoh(X) &\cong \Indcoh(S \times_{Y}X)\\
  \Qcoh(S) \otimes_{\Qcoh(Y)} \Sym_{\cO_{Y}}(L[-2])\modules &\cong \Sym_{\cO_{S}}(f^{*}L[-2])\modules\\
  \Qcoh(S) \otimes_{\Qcoh(Y)} \cR\modules &\cong f^{*}\cR\modules.
\end{align}
Note that $f^{*}\cR\modules$ carries an action $\cR\modules$.
From this we deduce that
\begin{align}
  \Qcoh(S)\otimes_{\Qcoh(Y)} \MF^{\infty}(Y,L,s) &\cong \Indcoh(S \times_{Y}X) \otimes_{\Sym_{\cO_{S}}(f^{*}L[-2])\modules}f^{*}\cR\modules\\ &= \MF^{\infty}(S,f^{*}L,f^{*}s)
\end{align}
where we have used the fact that the DG scheme $S \times_{Y} X$ is the derived zero locus of $f^{*}s$ on $S$. Note that $\MF^{\infty}(S,f^{*}L,f^{*}s)$ still carries an action of $\cR\modules$. Thus we see that the assignment
\begin{equation}
  (S\to Y) \mapsto \MF^{\infty}(S,f^{*}L,f^{*}s)
\end{equation}
is a sheaf of categories on $Y$ that takes values in $\cR$-linear categories. Because $Y$ is $1$-affine (we have assumed it is a smooth variety), the global sections of this sheaf are $\MF^{\infty}(Y,L,s)$, and we have the following theorem.
\begin{theorem}
  \label{thm:descent-mf}
  Let $(Y,L,s)$ be a twisted Landau-Ginzburg model. Then $\MF^{\infty}(Y,L,s)$ satisfies \'{e}tale descent over $Y$: if $c : \coprod_{i} U_{i} \to Y$ is an \'{e}tale covering, then there is an equivalence of $\cR$-linear categories
  \begin{equation}
    \MF^{\infty}(Y,L,s) \to \Tot \left(\xymatrix{ \prod_{i}\MF^{\infty}(U_{i},c^{*}L,c^{*}s) \ar@<0.5ex>[r] \ar@<-0.5ex>[r] & \prod_{i,j}\MF^{\infty}(U_{i}\times_{Y} U_{j},c^{*}L,c^{*}s) \ar@<1ex>[r] \ar[r] \ar@<-1ex>[r] & \cdots} \right) 
  \end{equation}
  (All of the categories and maps in this diagram are $\cR$-linear.)
\end{theorem}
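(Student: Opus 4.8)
The plan is to exhibit the assignment $(f \colon S \to Y) \mapsto \MF^{\infty}(S, f^{*}L, f^{*}s)$ as a sheaf of categories on $Y$ valued in $\cR$-linear categories, and then to invoke the $1$-affineness of $Y$ together with the automatic descent of sheaves of categories for the \'{e}tale topology.

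First I would record that all three inputs to the relative tensor product defining $\MF^{\infty}(Y,L,s)$ --- namely $\Indcoh(X)$, $\Sym_{\cO_{Y}}(L[-2])\modules$, and $\cR\modules$ --- are $\Qcoh(Y)$-module categories, each arising from a sheaf of DG algebras on $Y$ (the Koszul DGA $\cA$ of $X$, the algebra $\Sym_{\cO_{Y}}(L[-2])$, and its localization $\cR$). Since the formation of $\MF^{\infty}$ is a $\Qcoh(Y)$-linear construction, base change along an affine $f \colon S \to Y$ commutes with the tensor product. Using that $\Indcoh$ is a sheaf of categories \cite{gaitsgory-indcoh}, and that base change of a module category over a sheaf of algebras yields the module category over the pulled-back algebra, I would obtain
\begin{align}
  \Qcoh(S) \otimes_{\Qcoh(Y)} \Indcoh(X) &\cong \Indcoh(S \times_{Y} X),\\
  \Qcoh(S) \otimes_{\Qcoh(Y)} \Sym_{\cO_{Y}}(L[-2])\modules &\cong \Sym_{\cO_{S}}(f^{*}L[-2])\modules,\\
  \Qcoh(S) \otimes_{\Qcoh(Y)} \cR\modules &\cong f^{*}\cR\modules,
\end{align}
and hence $\Qcoh(S) \otimes_{\Qcoh(Y)} \MF^{\infty}(Y,L,s) \cong \MF^{\infty}(S, f^{*}L, f^{*}s)$, using that $S \times_{Y} X$ is the derived zero locus of $f^{*}s$. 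This verifies precisely the sheaf-of-categories condition, and the $\cR$-linear structure is carried along since $f^{*}\cR\modules$ retains an $\cR\modules$-action.

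Next, because a smooth variety $Y$ is $1$-affine (Gaitsgory's Theorem 2.1.1 \cite{gaitsgory-1-affine}), the global sections of this sheaf of categories recover $\MF^{\infty}(Y,L,s)$ as a $\Qcoh(Y)$-module category, equivalently as an $\cR$-linear category once the $\cR\modules$-action is remembered. The totalization formula for an \'{e}tale covering $c \colon \coprod_{i} U_{i} \to Y$ then follows from the general fact (Lurie \cite{DAG-XI}, Gaitsgory \cite{gaitsgory-1-affine}) that any sheaf of categories automatically satisfies descent for the \'{e}tale topology, not merely Zariski descent: applying the global-sections functor of the sheaf of categories $(S \to Y) \mapsto \MF^{\infty}(S, f^{*}L, f^{*}s)$ to the \v{C}ech nerve of $c$ produces the stated cosimplicial diagram, whose totalization is $\MF^{\infty}(Y,L,s)$, and all structure maps are $\cR$-linear because each term is.

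The main obstacle --- more bookkeeping than a deep point --- is to make the compatibility with the $\cR$-linear structure precise: one must check that the equivalence $\Qcoh(S) \otimes_{\Qcoh(Y)} \MF^{\infty}(Y,L,s) \cong \MF^{\infty}(S, f^{*}L, f^{*}s)$ is not merely $\Qcoh(S)$-linear but intertwines the two $\cR\modules$-actions, and that this holds coherently over the \v{C}ech nerve. This comes down to the observation that the localization $\Sym_{\cO_{Y}}(L[-2]) \to \cR$ pulls back to $\Sym_{\cO_{S}}(f^{*}L[-2]) \to f^{*}\cR$, so that the relative tensor product over $\Sym_{\cO_{Y}}(L[-2])\modules$ is compatible with the residual $\cR\modules$-module structure; articulating this at the level of $\infty$-categorical coherence is the one place where genuine care is required. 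Alternatively, one could sidestep the coherence issue by running the whole argument within the framework of sheaves of categories over the monoidal sheaf $\cR\modules$ on $Y$ from the outset, so that $\cR$-linearity is built in.
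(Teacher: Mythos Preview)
Your proposal is correct and follows essentially the same approach as the paper: identify $\MF^{\infty}$ as a $\Qcoh(Y)$-linear relative tensor product, compute its base change along affines $S \to Y$ via the three displayed equivalences, and then invoke $1$-affineness of $Y$ together with automatic \'{e}tale descent for sheaves of categories. Your final paragraph on $\cR$-linear coherence is a reasonable caveat that the paper leaves implicit, but the substance of the argument is the same.
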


\begin{remark}
  The preceding theorem expresses an ``$L$-twisted $2$-periodic descent property for $\MF$,'' because everything is $\cR$-linear. If we had not introduced $\cR$, we would need to think about ``gluing autoequivalences and natural transformations'' over a limit diagram, which is a fundamentally more difficult problem. The algebra $\cR$ neatly expresses the idea of an autoequivalence with a natural transformation that is local on $Y$.
\end{remark}

\section{Normal crossings surfaces with graph-like singular locus}
\label{sec:graph-like}
Let $X$ be a singular surface with normal crossings. Recall that this means that the singularities of $X$ are formally locally isomorphic to the intersection of several coordinate hyperplanes in $\bA^{3}$. We also assume that $X$ has \emph{graph-like} singular locus. This means that the singular locus $\Sing(X)$ is isomorphic, as a variety, to a union of several copies of $\bP^{1}$ and $\bA^{1}$, which must meet at triple points by the normal crossings condition, and furthermore we require that each $\bA^{1}$ component contains exactly one triple point, and each $\bP^{1}$ component contains exactly two triple points.

\subsection{The singular support scheme} Now we propose to study $\gsing(X)$ when $X$ is a normal crossings surface with graph-like singular locus. Because a normal crossings singularity is abstractly a hypersurface singularity, the rank of $H^{-1}(T^{*}_{x}(X))$ is at most $1$ for any point $x$ in $X$, and it vanishes precisely over the regular locus of $X$. Thus $\gsing(X)$ is the union of $X$ and a rank $1$ vector bundle over $\Sing(X)$ whose fiber at $x$ is $H^{-1}(T^{*}_{x}(X))$.

Let $C \subset \Sing(X)$ be an irreducible component of the singular locus. Then the restriction of $\gsing(X)$ to $C$ is the total space of a line bundle that we denote by $\gsing(X)|_{C}$. We then consider the degree of this line bundle $\deg(\gsing(X)|_{C})$, which is a discrete invariant of the singularities of $X$ along the curve $C$.

The curve $C$ is contained in two irreducible components of $X$, call them $X_{1}$ and $X_{2}$, which are smooth surfaces. Then we may consider the degree of the normal bundle of $C$ in $X_{i}$, or what is the same, the self intersection number $(C)^{2}_{X_{i}}$ of $C$ in $X_{i}$.

In our applications, we will usually have $C \simeq \bP^{1}$, but the following proposition is true more generally.
\begin{proposition}
  \label{prop:triple-point}
  Let $X$ be a normal crossings surface, and let $X_{1}$ and $X_{2}$ be two irreducible components of $X$ meeting in a smooth proper curve $C$. Then
  \begin{equation}
    (C)^{2}_{X_{1}} + (C)^{2}_{X_{2}}  + \deg(\gsing(X)|_{C}) + \deg(\omega_{X}|_{C}) = \deg(\omega_{C}).
  \end{equation}
  where $\omega_{X}$ is the dualizing complex of $X$ and $\omega_{C}$ is the canonical bundle of $C$.
\end{proposition}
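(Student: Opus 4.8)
The plan is to identify the two line bundles on $C$ that really enter the left-hand side — namely $\gsing(X)|_{C}$ and the restriction $\omega_{X}|_{C}$ — with explicit tensor products of the normal bundles $\mathcal{N}_{C/X_{i}}$ and the canonical bundle $\omega_{C}$, and then add up degrees. Write $\cS := H^{-1}(T^{*}(X))|_{C}$ for the line bundle on $C$ whose total space is $\gsing(X)|_{C}$; this is genuinely a line bundle because, as recalled above, $H^{-1}(T^{*}(X))$ is a line bundle on $\Sing(X)\supset C$. Since $(C)^{2}_{X_{i}}=\deg\mathcal{N}_{C/X_{i}}$ and $\deg(\gsing(X)|_{C})=\deg\cS$ by definition, the proposition follows from the isomorphism
\begin{equation}
  \cS\otimes\bigl(\omega_{X}|_{C}\bigr)\;\cong\;\mathcal{N}^{\vee}_{C/X_{1}}\otimes\mathcal{N}^{\vee}_{C/X_{2}}\otimes\omega_{C},
\end{equation}
which I will prove directly; note this already makes the triple-point contributions cancel, so it is cleaner than computing either factor on its own.

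The key observation is that a normal crossings surface is a local complete intersection, so $\omega_{X}\cong\det\bigl(T^{*}(X)\bigr)$, the determinant of the cotangent complex, which is perfect of Tor-amplitude $[-1,0]$. Restricting to $C$: since $C\subset\Sing(X)$, the differential of $T^{*}(X)$ vanishes along $C$, so $T^{*}(X)|_{C}\simeq\cS[1]\oplus\bigl(\Omega^{1}_{X}|_{C}\bigr)$, where $\Omega^{1}_{X}|_{C}:=\Omega^{1}_{X}\otimes_{\cO_{X}}\cO_{C}=H^{0}\bigl(T^{*}(X)|_{C}\bigr)$ is locally free of rank three (locally it is $\Omega^{1}$ of the ambient $\bA^{3}$ restricted to $C$). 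Taking determinants gives $\omega_{X}|_{C}\cong\cS^{-1}\otimes\det\bigl(\Omega^{1}_{X}|_{C}\bigr)$, hence $\cS\otimes(\omega_{X}|_{C})\cong\det(\Omega^{1}_{X}|_{C})$, and it remains to compute the latter. For this I use the conormal sequence of $C\hookrightarrow X$,
\begin{equation}
  \cI_{C}/\cI_{C}^{2}\;\xrightarrow{\ \delta\ }\;\Omega^{1}_{X}|_{C}\;\longrightarrow\;\Omega^{1}_{C}=\omega_{C}\;\longrightarrow\;0,
\end{equation}
and check, in the only two \'etale-local pictures occurring along $C$ — the double point $\{xy=0\}\subset\bA^{3}$ at a general point of $C$ and the triple point $\{xyz=0\}\subset\bA^{3}$ — that $\delta$ is injective and that the natural restriction maps $\cI_{C}/\cI_{C}^{2}\to\mathcal{N}^{\vee}_{C/X_{i}}$ (pull the ideal of $C$ back along $C\hookrightarrow X_{i}\hookrightarrow X$) combine into an isomorphism $\cI_{C}/\cI_{C}^{2}\cong\mathcal{N}^{\vee}_{C/X_{1}}\oplus\mathcal{N}^{\vee}_{C/X_{2}}$. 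Granting this, $\det(\Omega^{1}_{X}|_{C})\cong\det(\cI_{C}/\cI_{C}^{2})\otimes\omega_{C}\cong\mathcal{N}^{\vee}_{C/X_{1}}\otimes\mathcal{N}^{\vee}_{C/X_{2}}\otimes\omega_{C}$, which is the isomorphism above; comparing degrees proves the proposition.

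I expect the main obstacle to be the local bookkeeping in this last step: verifying at the triple point that $\delta$ remains injective (no degeneration of the second fundamental form) and, more importantly, that the generator of each $\mathcal{N}^{\vee}_{C/X_{i}}$ is always the class of a local equation of the \emph{other} component through $C$ — this is exactly what makes the maps $\cI_{C}/\cI_{C}^{2}\to\mathcal{N}^{\vee}_{C/X_{i}}$ independent of choices, hence globally defined, so that the two local models glue consistently. Pinning down $\omega_{X}\cong\det(T^{*}(X))$ and the splitting of $T^{*}(X)|_{C}$ with the correct homological shift is also needed but routine. A more hands-on alternative avoids $\det(T^{*}(X))$ entirely: one computes $\gsing(X)|_{C}\cong\Tot\bigl(\mathcal{N}^{\vee}_{C/X_{1}}\otimes\mathcal{N}^{\vee}_{C/X_{2}}\otimes\cO_{C}(-T)\bigr)$ straight from the local form of the cotangent complex, and $\omega_{X}|_{C}\cong\omega_{C}\otimes\cO_{C}(T)$ from the normal crossings adjunction formula $\omega_{X}|_{X_{i}}\cong\omega_{X_{i}}(D_{i})$ (with $D_{i}$ the double locus of $X_{i}$), where $T$ is the reduced divisor of triple points of $X$ lying on $C$; adding the two identities, the $T$-terms cancel and one recovers the proposition — and in particular the classical triple point formula $(C)^{2}_{X_{1}}+(C)^{2}_{X_{2}}=-\#T$ in the special case where $\gsing(X)|_{C}$ is trivial.
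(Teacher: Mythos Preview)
Your argument is correct and proves a sharper statement than the paper's (an isomorphism of line bundles, not merely an equality of degrees), but it proceeds quite differently.  The paper works through the \emph{smooth} components: it applies adjunction to each inclusion $C\subset X_{i}$, then introduces the auxiliary bundle $E=\ker\bigl(T^{*}X_{1}|_{C}\oplus T^{*}X_{2}|_{C}\twoheadrightarrow T^{*}C\bigr)$, and finally identifies $E\cong T^{*}Y|_{C}$ using a local presentation $(Y,L,s)$ of $X$ as a hypersurface.  By contrast you stay entirely on the singular scheme $X$: you use $\omega_{X}\cong\det T^{*}(X)$, the splitting of $T^{*}(X)|_{C}$ along the critical locus, and the conormal sequence of $C\subset X$.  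Your route is more intrinsic (no ambient $Y$ is invoked, only that $X$ is lci) and packages the two normal bundles via $\cI_{C}/\cI_{C}^{2}$ rather than via the two separate adjunction sequences; the paper's route is perhaps more elementary in that it only ever applies adjunction on smooth surfaces, at the price of needing the local embedding to finish.  Your local checks at double and triple points are exactly what is needed to make the conormal sequence short exact and to split $\cI_{C}/\cI_{C}^{2}$; the alternative you sketch at the end (computing $\gsing(X)|_{C}$ and $\omega_{X}|_{C}$ separately with the triple-point divisor $T$ and watching it cancel) is also valid and matches how the formula specializes to the classical triple-point formula.
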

\begin{proof}
  This is a version of the adjunction formula. From the inclusions $C \subset X_{i}$, $i = 1,2$, we have natural exact sequences of cotangent bundles
  \begin{equation}
    0 \to T^{*}_{C}X_{i}|_{C} \to T^{*}X_{i}|_{C} \to T^{*}C \to 0, \quad i = 1, 2.
  \end{equation}
  These sequences demonstrate that
  \begin{equation}
    (C)^{2}_{X_{i}} + \deg(\omega_{X_{i}}|_{C}) = \deg(\omega_{C}), \quad i = 1,2,
  \end{equation}
  which is the usual adjunction formula.
  The difference of the two maps $T^{*}X_{i} \to T^{*}C$, $i=1,2$, gives us an exact sequence
  \begin{equation}
    0 \to E \to T^{*}X_{1}|_{C} \oplus T^{*}X_{2}|_{C} \to T^{*}C \to 0,
  \end{equation}
  whence
  \begin{equation}
    \deg(\omega_{X_{1}}|_{C}) + \deg(\omega_{X_{2}}|_{C}) = \deg(E) + \deg(\omega_{C}).
  \end{equation}
  Combining these relations yields
  \begin{equation}
    (C)^{2}_{X_{1}} + (C)^{2}_{X_{2}} + \deg(E) = \deg(\omega_{C}),
  \end{equation}
  and it remains to determine $\deg(E)$.

  We claim that there is a natural map of complexes from $E$ to the restriction of the cotangent complex $T^{*}(X)|_{C}$ whose cone is canonically identified with $\gsing(X)|_{C}$. Locally near a point on $C$, $X$ may be presented as the zero locus of a section $s$ of a line bundle $L$ on a smooth 3-fold $Y$. Then we get an identification $E \cong T^{*}Y|_{C}$, and the cotangent complex of $X$ is given by \eqref{eq:cotangent-complex}. When restricted to $C$, we get
  \begin{equation}
    T^{*}(X)|_{C} = L^{\vee}|_{C}[1] \oplus T^{*}Y|_{C} \cong L^{\vee}|_{C}[1] \oplus E,
  \end{equation}
  since the differential vanishes over $C$ ($C$ is contained in the critical locus of $s$). The presentation also gives us an identification $L_{x}^{\vee} \cong H^{-1}(T_{x}^{*}(X)) = \gsing(X)_{x}$ over a point $x$ of $C$. Thus we deduce
  \begin{equation}
    \deg(E) = \deg(\omega_{X}|_{C}) + \deg(\gsing(X)|_{C}).
  \end{equation}
\end{proof}

\begin{remark}
  The ``triple point formula'' appearing in the definition of a type III degeneration of K3 surfaces is a special case of Proposition \ref{prop:triple-point}. In that case both terms $\deg(\omega_{X}|_{C})$ and $\deg(\gsing(X)|_{C})$ vanish because both $X$ and the total space of the degeneration are Calabi-Yau.
\end{remark}

The preceding proposition can be usefully combined with the following facts.
\begin{proposition}
  Let $X,C,X_{1},X_{2}$ be as in Proposition \ref{prop:triple-point}. If $X$ is isomorphic to the fiber of a morphism $f : Y \to \bA^{1}$ from a smooth $3$-fold $Y$, then $\deg(\gsing(X)|_{C}) = 0$. If $X$ has graph like singular locus, then $\deg(\omega_{X}|_{C}) = 0$.
\end{proposition}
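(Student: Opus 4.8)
The statement has two independent halves, and the plan is to handle them by separate arguments.

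\emph{First half (fiber of a morphism).} Here $X$ is globally the zero fiber of $f\colon Y\to\bA^{1}$, i.e.\ $X=s^{-1}(0)$ for the triple $(Y,L,s)=(Y,\cO_{Y},f)$ with $L$ trivial. The plan is to read off $\gsing(X)|_{C}$ from the computation carried out above for a general triple, $\gsing(X)=X\cup\Tot(L^{\vee}|_{\Sing(X)})$: the vertical part over $\Sing(X)$ is the total space of $L^{\vee}|_{\Sing(X)}$, so restricting to $C$ gives $\gsing(X)|_{C}\cong L^{\vee}|_{C}$. The one subtlety, flagged in the ``pedantic'' remark above, is that this identification depends on $s$; but rescaling $s$ by an invertible function only rescales the identification by that function, so the \emph{isomorphism class} of the line bundle $\gsing(X)|_{C}$ — and in particular its degree — is independent of $s$. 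Hence $\deg(\gsing(X)|_{C})=\deg(L^{\vee}|_{C})=-\deg(L|_{C})$, and since the hypothesis forces $L\cong\cO_{Y}$ globally we get $\deg(L|_{C})=0$, so $\deg(\gsing(X)|_{C})=0$.

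\emph{Second half (graph-like singular locus).} Since $C$ is proper and $X$ has graph-like singular locus, $C\cong\bP^{1}$ and $C$ contains exactly two triple points of $X$. The plan is to use the standard description of the dualizing sheaf of a normal crossings surface: writing $X=\bigcup_{i}X_{i}$ with $X_{i}$ smooth and $D_{i}=\Sing(X)\cap X_{i}$ the (reduced) double curve on $X_{i}$, one has $\omega_{X}|_{X_{i}}\cong\omega_{X_{i}}(D_{i})$. Taking $i=1$ and restricting to $C\subset X_{1}$ gives $\deg(\omega_{X}|_{C})=\deg(\omega_{X_{1}}|_{C})+\deg(D_{1}|_{C})$. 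Now $D_{1}$ contains $C$ with multiplicity one (contributing $\deg(\cO_{X_{1}}(C)|_{C})=(C)^{2}_{X_{1}}$), and its remaining components meet $C$ precisely at the triple points of $X$ lying on $C$, each transversally with multiplicity one — as one checks in the local model $\{xyz=0\}\subset\bA^{3}$. Graph-likeness gives exactly two such points, so $\deg(D_{1}|_{C})=(C)^{2}_{X_{1}}+2$. Feeding in the adjunction formula $\deg(\omega_{X_{1}}|_{C})+(C)^{2}_{X_{1}}=\deg(\omega_{C})=-2$ yields $\deg(\omega_{X}|_{C})=-2+2=0$.

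\emph{Expected obstacle.} Nothing here is deep; the one place to be careful is the bookkeeping in the second half — verifying that each of the two triple points on $C$ contributes exactly $+1$ to $\deg(D_{1}|_{C})$ and that there are no other contributions, and citing (or rederiving from the local presentations $(Y,L,s)$ via the conductor sequence) the formula $\omega_{X}|_{X_{i}}\cong\omega_{X_{i}}(D_{i})$. It is worth noting that one \emph{cannot} shortcut the second half through Proposition~\ref{prop:triple-point} using the first half, since in the genuinely twisted graph-like situation $\deg(\gsing(X)|_{C})$ need not vanish; the dualizing-sheaf computation is the honest route, and the hypothesis ``exactly two triple points per $\bP^{1}$'' is precisely what makes the two corrections cancel.
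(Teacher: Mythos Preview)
Your proof is correct. The first half matches the paper's argument essentially verbatim: with $L=\cO_{Y}$ the cotangent complex of $X$ is $\cO_{X}\to T^{*}Y|_{X}$, so $\gsing(X)|_{C}\cong\cO_{C}$.

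For the second half you take a genuinely different route from the paper. The paper argues that the question is local on $C$, reduces to the explicit toric model $Y=\Tot(\cO_{\bP^{1}}(n_{1})\oplus\cO_{\bP^{1}}(n_{2}))$ with $X$ the union of the two sub-line-bundles and the fibers over $0$ and $\infty$, and then observes that $X$ is the toric boundary anticanonical divisor of $Y$, so $\omega_{X}$ is trivial. Your argument instead invokes the Friedman-type formula $\omega_{X}|_{X_{1}}\cong\omega_{X_{1}}(D_{1})$ and computes the degree directly via adjunction plus the count of two triple points. Your approach is more self-contained and makes transparent exactly where the ``two triple points per $\bP^{1}$'' hypothesis enters (the $+2$ cancelling the $-2$); the paper's approach is slicker but leans on justifying that the formal neighborhood of $C$ in $X$ is determined by $(n_{1},n_{2})$ and the triple-point positions, which it leaves implicit. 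Both are perfectly valid.
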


\begin{proof}
  The first assertion follows from the fact that the cotangent complex of $X$ has the form $\cO_{X} \to T^{*}Y|_{X}$, and so $\gsing(X)|_{C}\cong \cO_{C}$.

  For the second assertion, the hypotheses imply $C \cong \bP^{1}$. Let $n_{i} = (C)^{2}_{X_{i}}$, $i = 1,2$. Because the question is local on $C$ we may reduce to the case where $Y = \Tot(\cO_{\bP^{1}}(n_{1}) \oplus \cO_{\bP^{1}}(n_{2}))$ and $X \subset Y$ is the union of the two subbundles and the fibers over $0$ and $\infty$. In this case the assertion follows from the fact that $X$ is the toric boundary anticanonical divisor of $Y$.
\end{proof}

\begin{corollary}
  \label{cor:graph-like-triple-point}
  Let $X$ be a normal crossings surface with graph-like singular locus. Then for any component $C \subset \Sing(X)$ that is isomorphic to $\bP^{1}$, we have
  \begin{equation}
    (C)^{2}_{X_{1}} + (C)^{2}_{X_{2}} + \deg(\gsing(X)|_{C}) = -2,
  \end{equation}
  where $X_{1}$ and $X_{2}$ are the irreducible components of $X$ that contain $C$.
\end{corollary}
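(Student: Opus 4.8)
The plan is to obtain this as an immediate consequence of Proposition~\ref{prop:triple-point} combined with the vanishing statement in the proposition just above it. Since $X$ has graph-like singular locus and $C$ is, by hypothesis, isomorphic to $\bP^{1}$, the curve $C$ is smooth and proper, so Proposition~\ref{prop:triple-point} applies verbatim to $C$ and to the two irreducible components $X_{1},X_{2}$ of $X$ meeting along it; moreover $\deg(\omega_{C}) = \deg(\omega_{\bP^{1}}) = -2$.

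First I would write out the adjunction-type identity furnished by Proposition~\ref{prop:triple-point}, specialized to $\deg(\omega_{C}) = -2$:
\begin{equation}
  (C)^{2}_{X_{1}} + (C)^{2}_{X_{2}} + \deg(\gsing(X)|_{C}) + \deg(\omega_{X}|_{C}) = -2.
\end{equation}
Next I would invoke the second assertion of the preceding proposition, which states that for $X$ with graph-like singular locus one has $\deg(\omega_{X}|_{C}) = 0$. Substituting this vanishing into the displayed identity yields exactly the claimed formula. (Recall that the vanishing $\deg(\omega_{X}|_{C}) = 0$ is itself established by reducing, locally on $C$, to the toric model $Y = \Tot(\cO_{\bP^{1}}(n_{1})\oplus\cO_{\bP^{1}}(n_{2}))$ with $X$ the toric boundary anticanonical divisor of $Y$; so nothing further needs to be checked at this stage.)

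There is no real obstacle here: every ingredient has already been proved, and the corollary amounts to substituting the two known facts $\deg(\omega_{C}) = -2$ and $\deg(\omega_{X}|_{C}) = 0$ into the formula of Proposition~\ref{prop:triple-point}. The only point deserving a moment's attention is confirming that the hypotheses of Proposition~\ref{prop:triple-point} are genuinely in force — namely that $C$ is a smooth proper curve — but this is immediate, since the graph-like condition forces any such component to be isomorphic to $\bP^{1}$.
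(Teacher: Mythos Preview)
Your proposal is correct and matches the paper's implicit argument exactly: the corollary is obtained by substituting $\deg(\omega_{C})=-2$ and the vanishing $\deg(\omega_{X}|_{C})=0$ (from the preceding proposition, using the graph-like hypothesis) into the identity of Proposition~\ref{prop:triple-point}. There is nothing to add.
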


The quantity $\deg(\gsing(X)|_{C})$ is an obstruction, local at $C$, for $X$ to arise as the fiber of a morphism $f : Y \to \bA^{1}$ from a smooth $3$-fold $Y$. On the other hand, when $X$ is the zero locus of section $s$ of a line bundle $L$ on $Y$, we have $\deg(\gsing(X)|_{C}) = \deg(L^{\vee}|_{C})$. Corollary \ref{cor:graph-like-triple-point} shows that the degree of $L$ on $C$ is determined by the topology of $X$ itself.

\begin{example}
  The local model for the situation described in Corollary \ref{cor:graph-like-triple-point} is as follows. Choose integers $a$ and $b$, and let $Y = \Tot(\cO_{\bP^{1}}(a) \oplus \cO_{\bP^{1}}(b))$, with projection $\pi: Y \to \bP^{1}$. Set $n = a+b + 2$, and let $L = \pi^{*}\cO_{\bP^{1}}(n)$. Let $s_{a}$ and $s_{b}$ be the tautological sections of $\pi^{*}(\cO_{\bP^{1}}(a))$ and $\pi^{*}(\cO_{\bP^{1}}(b))$ respectively, and let $\sigma$ be a section of $\cO_{\bP_{1}}(2)$ that vanishes at two points, say $0$ and $\infty$. Then $s = s_{a}\cdot s_{b} \cdot(\pi^{*}\sigma)$ is a section of $L$ over $Y$ that vanishes on the surface
  \begin{equation}
    X = \pi^{-1}(0) \cup \pi^{-1}(\infty) \cup \Tot(\cO_{\bP^{1}}(a)) \cup \Tot(\cO_{\bP^{1}}(b)) \subset Y.
  \end{equation}
  If we let $C$ be the zero section, and let $X_{1} = \Tot(\cO_{\bP^{1}}(a))$ and $X_{2} = \Tot(\cO_{\bP^{1}}(b))$, then $(C)^{2}_{X_{1}} = a$, $(C)^{2}_{X_{2}} = b$, and $\deg(\gsing(X)|_{C}) = \deg(L^{\vee}|_{C}) = -n = -a-b-2$.
\end{example}

\begin{remark}
  In the setting of a normal crossings surface with graph-like singular locus that is the zero locus of a section of a line bundle $L$, the triviality of $L$ may be interpreted as a Calabi-Yau condition, due to the following observation.
  
  Suppose that $Y$ is a variety, and $X$ is a divisor in $Y$. Assume that the dualizing complexes of $X$ and $Y$ are invertible sheaves. Consider the following three statements:
\begin{enumerate}
\item \label{cond1} $X$ is Calabi-Yau,
\item \label{cond2} a neighborhood of $X$ in $Y$ is Calabi-Yau.
\item \label{cond3} The normal bundle of $X$ in $Y$ is trivial.
\end{enumerate}
Any two of the statements \eqref{cond1}, \eqref{cond2}, \eqref{cond3} implies the third, due to the adjunction formula.

In our setting, the graph-like condition implies \eqref{cond1}, so in the presence of this condition, \eqref{cond2} and \eqref{cond3} become equivalent. Since the normal bundle is $L|_X$, triviality of this bundle means precisely that a neighborhood of $X$ in $Y$ is Calabi-Yau.
\end{remark}

\subsection{Descent diagram: general shape}

We now assume that $X$ is a normal crossings surface with graph-like singular locus. As the name suggests, we may associate a trivalent graph $G(X)$ to $X$ that encodes the combinatorics of the singular locus of $X$. The graph $G(X)$ has a vertex for each triple point of $\Sing(X)$ and an edge for each irreducible component: $\bP^{1}$ components give compact edges joining two vertices, and $\bA^{1}$ components give noncompact edges touching a single vertex.

The graph $G(X)$ has an open cover by the open stars of its vertices:
\begin{equation}
  C_{G(X)} : \coprod_{v \in \vertices(G(X))} \Star(v) \to G(X)
\end{equation}
For this cover, the overlaps are the interiors of the compact edges. This cover of $G(X)$ corresponds in an obvious way to a Zariski cover of $\Sing(X)$, where the schemes in the cover are isomorphic to three copies of $\bA^{1}$ meeting at triple point, and the overlaps are isomorphic to disjoint unions of copies of $\bG_{m}$. We denote this cover
\begin{equation}
  C_{\Sing(X)}: \coprod_{v \in \vertices(G(X))} \Sing(\{xyz=0\}) \to \Sing(X)
\end{equation}
and call it the \emph{vertex cover} of $\Sing(X)$. Here $\{xyz=0\}$ is the union of the coordinate hyperplanes in $\bA^{3}$.

On the other hand, given a Zariski cover of $X$, it induces a Zariski cover of $\Sing(X)$. We say that a cover of $X$ is \emph{a vertex cover} if the induced cover of $\Sing(X)$ is isomorphic to the vertex cover defined in the previous paragraph. A vertex cover of $X$ has the form
\begin{equation}
  C : \coprod_{v \in \vertices(G(X))} X_{v} \to X
\end{equation}
where $\Sing(X_{v}) \cong \Sing(\{xyz=0\})$, and all overlaps have singular locus isomorphic to disjoint unions of copies of $\bG_{m}$.

We may also consider the case where $X$ arises as the zero locus of a section $s$ of a line bundle $L$ on a smooth $3$-fold $Y$. In this case, a \emph{vertex cover} of $Y$ is a cover
\begin{equation}
  C : \coprod_{v \in \vertices(G(X))} Y_{v} \to Y
\end{equation}
with the property that $X_{v} = s^{-1}(0) \cap Y_{v}$ is a vertex cover of $X$ (that is, the pullback under the inclusion $X \to Y$ is a vertex cover).

Vertex covers always exist: for instance, we may take $Y_{v}$ (respectively $X_{v}$) to be $Y$ (respectively $X$) minus the closed subset of $\Sing(X)$ consisting of all irreducible components of $\Sing(X)$ that do not touch $v$.

We now show that the singularity categories of the local pieces $X_{v}$ are all modeled on $\Dsing(\{xyz=0\})$. There are two versions of the statement, depending on whether we keep track of the (twisted) $2$-periodic structure on the category.

\begin{proposition}
  \label{prop:local-model-vertex}
  Let $C : \coprod_{v \in \vertices(G(X))} X_{v} \to X$ be a vertex cover of $X$. For each $v \in \vertices(G(X))$, there exists a $k$-linear equivalence of categories $\Dsing(X_{v}) \simeq \Dsing(\{xyz=0\})$.

  Suppose additionally that $X$ arises as the zero locus of a section $s$ of a line bundle $L$ on a smooth $3$-fold $Y$, and let $C : \coprod_{v \in \vertices(G(X))} Y_{v} \to Y$ be a vertex cover of $Y$. Then $\Dsing(X) \cong \MF(Y,L,s)$ and $\Dsing(X_{v}) \cong \MF(Y_{v},L|_{Y_{v}},s|_{Y_{v}})$ carry $\cR$-linear structures, and the latter carries an $\cR|_{Y_{v}}$-linear structure.

  By shrinking the cover $\{Y_{v}\}$ to another vertex cover if necessary, the line bundle $L|_{Y_{v}}$ may be trivialized. Any choice of such trivialization induces an isomorphism of algebras $\cR|_{Y_{v}} \simeq \cO_{Y_{v}}[u,u^{-1}]$, $\deg(u) = 2$. Therefore the $\cR|_{Y_{v}}$-linear structure on $\Dsing(X_{v})$ may be regarded as a $2$-periodic structure on this category.
\end{proposition}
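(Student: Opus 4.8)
The plan is to establish the three assertions in order: first the $k$-linear equivalence $\Dsing(X_{v})\simeq\Dsing(\{xyz=0\})$, then the $\cR$-linear enhancement in the embedded situation, and finally the trivialization of $L|_{Y_{v}}$ together with the resulting $2$-periodic structure.

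For the $k$-linear equivalence, the key input is that $\Dsing$ is insensitive to the geometry of $X_{v}$ away from $\Sing(X_{v})$: by the basic property of singular support recalled in Section~\ref{sec:sing-supp}, any object of $\Coh(X_{v})$ supported in the smooth locus is perfect, so $\Dsing(X_{v})$ only ``sees'' a neighborhood of $\Sing(X_{v})$. I would then argue that, because $X_{v}$ is normal crossings and $\Sing(X_{v})$ is a union of three copies of $\bA^{1}$ meeting at a single triple point, a neighborhood of $\Sing(X_{v})$ in $X_{v}$ is isomorphic to the corresponding neighborhood of $\Sing(\{xyz=0\})$ in $\{xyz=0\}$. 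The normal crossings condition pins down this neighborhood formally-locally at every point; the only gluing data are the coordinate changes along the double curves, valued in automorphisms of $\{xy=0\}$ over the base curve, and since the only regular functions and line bundles on $\bA^{1}$ are the trivial ones, this data can be normalized to the standard model. To convert this into a statement about categories I would choose a local embedding of $X_{v}$ as the zero locus of a section of a line bundle on a smooth threefold $Y_{v}$, invoke the Orlov/Preygel equivalence $\Dsing(X_{v})\cong\MF(Y_{v},L|_{Y_{v}},s|_{Y_{v}})$ from Section~\ref{sec:twisted-lg}, and use that $\MF$ depends only on a neighborhood of the critical locus of the section.

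For the $\cR$-linear enhancement, suppose $X=s^{-1}(0)$ for $s\in\Gamma(Y,L)$. By Section~\ref{sec:twisted-lg}, $\Dsing(X)\cong\MF(Y,L,s)$ is $\cR$-linear, where $\cR$ is the localization of $\Sym_{\cO_{Y}}(L[-2])$ inverting $L[-2]$. For the vertex-cover piece, write $j\colon Y_{v}\hookrightarrow Y$ and apply the base-change identity underlying Theorem~\ref{thm:descent-mf}, namely $\Qcoh(Y_{v})\otimes_{\Qcoh(Y)}\MF^{\infty}(Y,L,s)\cong\MF^{\infty}(Y_{v},j^{*}L,j^{*}s)$; passing to compact objects shows $\Dsing(X_{v})\cong\MF(Y_{v},L|_{Y_{v}},s|_{Y_{v}})$ carries an $\cR|_{Y_{v}}$-linear structure, with $\cR|_{Y_{v}}=j^{*}\cR$ the corresponding localization of $\Sym_{\cO_{Y_{v}}}(L|_{Y_{v}}[-2])$. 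For the last assertion, I would first shrink the vertex cover transversally to $\Sing(X_{v})$ so as to trivialize $L|_{Y_{v}}$: since $\Sing(X_{v})$ is a union of affine lines meeting at a point, $\mathrm{Pic}(\Sing(X_{v}))=0$, so $L|_{\Sing(X_{v})}$ has a nowhere-vanishing section; extending it to a section of $L$ over a smaller neighborhood $Y_{v}'$ of $\Sing(X_{v})$ — which is again a vertex cover — and discarding its zero locus (disjoint from $\Sing(X_{v})$) trivializes $L$ there. A trivialization $L|_{Y_{v}}\cong\cO_{Y_{v}}$ identifies $\Sym_{\cO_{Y_{v}}}(L|_{Y_{v}}[-2])$ with $\cO_{Y_{v}}[u]$, $\deg u=2$, hence $\cR|_{Y_{v}}$ with $\cO_{Y_{v}}[u,u^{-1}]$; since the $\cO_{Y_{v}}$-action on $\Dsing(X_{v})$ is already present via $\Coh(X_{v})$, the residual content of the $\cR|_{Y_{v}}$-action is an equivalence $u\colon\id\to\id[2]$, i.e.\ a $2$-periodic structure.

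The step I expect to be the main obstacle is the first one: verifying that a neighborhood of $\Sing(X_{v})$ really does match the standard model $\{xyz=0\}$ and that $\Dsing$ detects nothing beyond it. The difficulty is that $X_{v}$ need not be globally a hypersurface nor globally \'etale over $\{xyz=0\}$, so one must control the gluing of local normal-crossings coordinates along the $\bA^{1}$ components; it is precisely the triviality of functions and line bundles on $\bA^{1}$ that rules out any nontrivial twisting. The extension-of-section argument in the final step is a secondary point, where the only care needed is to keep the shrunk cover a vertex cover.
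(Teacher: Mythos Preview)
Your proposal is essentially correct and follows the same approach as the paper. The paper's proof is terser: for the first assertion it simply cites that idempotent-completed $\Dsing$ depends only on the \emph{formal} neighborhood of the singular locus, and then invokes \cite[Section 5]{PS21} for the isomorphism of formal neighborhoods of $\Sing(X_{v})\subset X_{v}$ and $\Sing(\{xyz=0\})\subset\{xyz=0\}$---your gluing argument along the $\bA^{1}$ components is precisely a sketch of what that reference provides. For the second and third assertions the paper just points back to Section~\ref{sec:twisted-lg}; your Picard-group argument for trivializing $L|_{Y_{v}}$ after shrinking is more explicit than anything the paper writes down, but it is along the intended lines. One small sharpening: you should speak of \emph{formal} neighborhoods rather than Zariski ones in the first step, since the irreducible components of $X_{v}$ away from the double curves can be arbitrary surfaces and will not match the standard model in any Zariski sense.
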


\begin{proof}
  The first assertion follows from the fact that (up to idempotent completion, which we always assume) $\Dsing$ only depends on a formal neighborhood of the singular locus, together with the fact that the formal neighborhoods of $\Sing(X_{v})$ in $X_{v}$ and $\Sing(\{xyz=0\})$ in $\bA^{3}$ are isomorphic \cite[Section 5]{PS21}.

  The assertion about $\cR$-linearity is just a restatement of the theory of Section \ref{sec:twisted-lg}.

  For the last assertion, recall that $\cR$ is defined to be $\Sym_{\cO_{Y}}(L[-2])$ with $L[-2]$ inverted. Thus if $L|_{Y_{v}} \simeq \cO_{Y_{v}}$ is trivialized, we have $\Sym_{\cO_{Y_{v}}}(\cO_{Y_{v}}[-2]) \simeq \cO_{Y_{v}}[u]$, $\deg(u) =2$, and so $\cR|_{Y_{v}} \cong \cO_{Y_{v}}[u,u^{-1}]$. Actions of $\cO_{Y_{v}}[u,u^{-1}]\modules$ on a category are the same as $2$-periodic structures on that category, again by the theory of Section \ref{sec:twisted-lg}.
\end{proof}

Next we consider what happens on the overlaps $X_{v_{1}} \cap X_{v_{2}}$. Each component of the singular locus is then a union of copies of $\bG_{m}$. The local model for the singularity category is $\MF(\bA^{3}\setminus\{x=0\}, xyz)$, which is non-canonically equivalent to the $2$-periodic folding $\Perf^{(2)}(\bG_{m})$ by Kn\"{o}rrer periodicity. As we shall see, this category admits many $2$-periodic structures depending on both discrete and continuous parameters. In particular, the $2$-periodic structure on $\Perf^{(2)}(\bG_{m})$ induced by its presentation as the $2$-periodic folding of $\Perf(\bG_{m})$ is not necessarily distinguished.
\begin{proposition}
  \label{prop:local-model-edge}
  Let $v_{1}, v_{2} \in \vertices(G(X))$ be such that $\Sing(X_{v_{1}}\cap X_{v_{2}}) \simeq \coprod_{e} \bG_{m}$, where the disjoint union runs over edges $e$ joining $v_{1}$ to $v_{2}$. Then there is a $k$-linear equivalence of categories $\Dsing(X_{v_{1}}\cap X_{v_{2}}) \simeq \coprod_{e} \Perf^{(2)}(\bG_{m})$, where the functor is given by some choice of Kn\"{o}rrer periodicity.

  Suppose further that $X$ arises from a triple $(Y,L,s)$, and $\{X_{v}\}$ is the pullback of a vertex cover $\{Y_{v}\}$ of $Y$ such that $L|_{Y_{v}}$ is trivial. A choice of trivialization of $L|_{Y_{v_{1}}}$ induces a $2$-periodic structure $u_{1}$ on $\Dsing(X_{v_{1}}\cap X_{v_{2}})$. A choice of trivialization of $L|_{Y_{v_{2}}}$ induces another $2$-periodic structure $u_{2}$ on $\Dsing(X_{v_{1}}\cap X_{v_{2}})$. These $2$-periodic structures $u_{1}$ and $u_{2}$ are, in general, different.

  If we equip $\Perf^{(2)}(\bG_{m})$ with the $2$-periodic structure that is manifest from its presentation as the $2$-periodization of $\Perf(\bG_{m})$, the equivalence $\Dsing(X_{v_{1}}\cap X_{v_{2}}) \simeq \coprod_{e}\Perf^{(2)}(\bG_{m})$ may be taken to be $2$-periodic with respect to either $u_{1}$ or $u_{2}$, but, in general, not both simultaneously.
\end{proposition}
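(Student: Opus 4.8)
The plan is to establish the three assertions in turn, reducing everything to the local structure of $X$ along the $\bG_{m}$-components of $\Sing(X_{v_{1}}\cap X_{v_{2}})$ and invoking the $\cR$-linear formalism of Section \ref{sec:twisted-lg}. For the first assertion I would argue exactly as in Proposition \ref{prop:local-model-vertex}: since (the idempotent completion of) $\Dsing$ depends only on the formal neighborhood of the singular locus, and along each $\bG_{m}$ the two branches of $X$ cross transversally, that formal neighborhood is isomorphic to the one of $\{yz=0\}$ fibered over $\bG_{m}$. Realizing the latter as $\MF(\bG_{m}\times\bA^{2},yz)$ --- equivalently, starting from $\MF(\bA^{3}\setminus\{x=0\},xyz)$ and applying the automorphism $(x,y,z)\mapsto(x,xy,z)$, which carries $yz$ to $xyz$ --- Kn\"orrer periodicity yields $\MF(\bG_{m}\times\bA^{2},yz)\simeq\MF(\bG_{m},0)=\Perf^{(2)}(\bG_{m})$, and on the smooth scheme $\bG_{m}$ this coincides with $\Coh^{(2)}(\bG_{m})$. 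Taking the disjoint union over edges $e$ joining $v_{1}$ to $v_{2}$ gives the claimed equivalence, with the functor built out of Kn\"orrer periodicity as asserted.

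For the second assertion I would use $\Dsing(X_{v_{1}}\cap X_{v_{2}})\cong\MF(Y_{v_{1}}\cap Y_{v_{2}},L|,s|)$ together with its $\cR|$-linear structure. A trivialization of $L|_{Y_{v_{i}}}$ restricts to the overlap and, as in Proposition \ref{prop:local-model-vertex}, identifies $\cR|_{Y_{v_{1}}\cap Y_{v_{2}}}$ with $\cO[u_{i},u_{i}^{-1}]$, $\deg u_{i}=2$, hence determines a genuine $2$-periodic structure $u_{i}$ on $\Dsing(X_{v_{1}}\cap X_{v_{2}})$. The two trivializations differ by a unit $g\in\cO^{\times}(Y_{v_{1}}\cap Y_{v_{2}})$ --- the transition function of $L$ between the two charts --- so the degree-$2$ generators are related by $u_{2}=g\,u_{1}$; that is, the two $2$-periodic structures differ by the graded automorphism of $\cR|$ that rescales the generator by $g$. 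Restricting to the component $C_{e}\cong\bG_{m}$, we have $g|_{C_{e}}\in\cO^{\times}(\bG_{m})=k^{\times}\times x^{\Z}$, so $g|_{C_{e}}=c_{e}x^{n_{e}}$ with $c_{e}\in k^{\times}$ and $n_{e}\in\Z$; thus on that component $u_{1}$ and $u_{2}$ differ precisely by the autoequivalence $\phi_{n_{e}}$ of $\Coh^{(2)}(\bG_{m})$, the scalar $c_{e}$ being absorbable into the generator. Since $n_{e}$ need not vanish --- for a compact edge $e$ it records $\deg(L|_{C_{e}})$, cf.\ Corollary \ref{cor:graph-like-triple-point} --- the structures $u_{1}$ and $u_{2}$ are in general distinct.

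For the third assertion I would observe that the chain of equivalences of the first paragraph can be run compatibly with $\cR$-linear structures. Trivializing $L|_{Y_{v_{i}}}$ turns $s$ into an honest function $f_{i}$, and then the restriction to the overlap, the coordinate change, and Kn\"orrer periodicity are each $k[u,u^{-1}]$-linear for the standard $2$-periodic structures (this is the same compatibility used in the building blocks (a)--(d) of Theorem \ref{thm:twisted-gluing}, as opposed to (e)); hence the resulting equivalence $\Phi_{i}$ intertwines $u_{i}$ on $\Dsing(X_{v_{1}}\cap X_{v_{2}})$ with the manifest $2$-periodic structure $u_{0}$ on $\Perf^{(2)}(\bG_{m})$. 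Taking $i=1$ or $i=2$ produces an equivalence $2$-periodic with respect to $u_{1}$ or $u_{2}$. Finally, if a single equivalence $\Phi$ were $2$-periodic with respect to both, then pulling back $u_{0}$ along $\Phi$ would give $u_{1}=u_{2}$, contradicting the second assertion whenever some $n_{e}\neq0$; so in general no one choice works for both.

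The step I expect to be the main obstacle is the bookkeeping in the second paragraph: making precise how the two trivializations induce the $2$-periodic structures $u_{1},u_{2}$ on $\Dsing(X_{v_{1}}\cap X_{v_{2}})$, verifying that they are related by multiplication by the transition unit $g$, and correctly extracting the discrete invariant $n_{e}$ together with its identification with a degree of $L$. Once this is in place, the compatibility of Kn\"orrer periodicity (and of the coordinate change) with $\cR$-linear structures follows from the twisted Landau--Ginzburg formalism of Section \ref{sec:twisted-lg}, and the non-existence part of the third assertion is a formal consequence.
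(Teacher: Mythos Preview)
Your proposal is correct and follows essentially the same approach as the paper: both arguments rest on (i) the formal-neighborhood invariance of $\Dsing$ to reduce to $\bG_{m}\times\{yz=0\}$ and then Kn\"orrer periodicity, and (ii) the observation that trivializing $L|_{Y_{v_{i}}}$ turns $s$ into a function $f_{v_{i}}$, whence the standard $2$-periodic structure on $\MF$ and the $2$-periodicity of Kn\"orrer give the claimed compatibilities. Your second paragraph goes further than the paper's proof of this proposition by explicitly extracting the transition unit $g$ and the integer $n_{e}$; the paper is content here to assert only that $u_{1}$ and $u_{2}$ are ``in general, different,'' deferring the precise quantification (and the identification $n_{e}=\deg(L|_{C_{e}})$) to the later Lemmas \ref{lem:discrete-twists}--\ref{lem:twist-equals-degree}, so your citation of Corollary \ref{cor:graph-like-triple-point} is really anticipating that later analysis rather than something needed at this stage.
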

\begin{proof}
  The first assertion is true because, the formal neighborhood of $\Sing(X_{v_{1}}\cap X_{v_{2}})$ in $X_{v_{1}}\cap X_{v_{2}}$ is isomorphic to the formal neighborhood of $\bG_{m} \times \{0\}$ in $\bG_{m} \times \{xy=0\}$.

  The second set of assertions holds because, when $L|_{Y_{v_{1}}}$ is trivialized, $\Dsing(X_{v_{1}}\cap X_{v_{2}})$ may be presented as $\MF(Y_{v_{1}},f_{v_{1}})$, where $f_{v_{1}}$ is a representative of $s|_{Y_{v_{1}}}$ with respect to the local trivialization, and the Kn\"{o}rrer periodicity equivalence may be taken to be $2$-periodic.
\end{proof}

The combination of these results determines the general shape of the descent diagram for $\Dsing(X)$ and $\MF(Y,L,s)$. Theorems \ref{thm:descent-dsing} and \ref{thm:descent-mf} together with Propositions \ref{prop:local-model-vertex} and \ref{prop:local-model-edge} imply that we have equivalences of categories as follows. Recall the notation $J(G(X))$ for the category whose objects are the vertices and edges of $G(X)$, and with an arrow from $v$ to $e$ whenever $(v,e)$ is a flag in $G(X)$.
\begin{proposition}
  Let $X$ and $(Y,L,s)$ be as above. Then there are $J(G(X))$-shaped diagrams of DG categories $F_{X}$ and $F_{(Y,L,s)}$ such that there are equivalences
  \begin{equation}
    \label{eq:dsing-general-shape}
    \Dsing^{\infty}(X) \to \holim \left(F_{X} : J(G(X)) \to \DGCat \right),
  \end{equation}
  \begin{equation}
    \label{eq:mf-general-shape} 
    \MF^{\infty}(Y,L,s) \to \holim \left(F_{(Y,L,s)}: J(G(X)) \to \DGCat\right),
  \end{equation}
  and $F_{X}(v) = \Dsing^{\infty}(\{xyz=0\})$, $F_{(Y,L,s)}(v) = \MF^{\infty}(\bA^{3},xyz)$, $F_{X}(e) = F_{(Y,L,s)}(e) = \Qcoh^{(2)}(\bG_{m})$, 
  and the functors in these diagrams are as described in Proposition \ref{prop:local-model-edge}.
\end{proposition}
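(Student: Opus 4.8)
The plan is to obtain the diagram by feeding a vertex cover into the \v{C}ech descent of Theorems~\ref{thm:descent-dsing} and~\ref{thm:descent-mf} and then collapsing the resulting cosimplicial category onto $J(G(X))$: the combinatorics of a graph-like singular locus will force all strata except the ``vertex'' and ``edge'' ones to contribute nothing, and what survives is exactly a homotopy limit of the shape $J(G(X))$.

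First I would fix a vertex cover $C\colon\coprod_{v}X_{v}\to X$, and in the Landau--Ginzburg case a vertex cover $\coprod_{v}Y_{v}\to Y$ with $X_{v}=s^{-1}(0)\cap Y_{v}$; such covers exist as recalled above, and after passing to a refinement (Proposition~\ref{prop:local-model-vertex}) I may assume $L|_{Y_{v}}$ is trivial for every $v$. Theorem~\ref{thm:descent-dsing} (resp.\ Theorem~\ref{thm:descent-mf}) then presents $\Dsing^{\infty}(X)$ (resp.\ $\MF^{\infty}(Y,L,s)$) as the totalization of the \v{C}ech cosimplicial category $[n]\mapsto\prod_{(v_{0},\dots,v_{n})}\Dsing^{\infty}(X_{v_{0}}\cap\dots\cap X_{v_{n}})$, all categories and functors being $\cR$-linear in the $\MF$ case; here I use that for a Zariski cover the $(n{+}1)$-fold fibre product is the intersection $\bigcap_{i}X_{v_{i}}$, which depends only on the set of indices.

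Next I would cut the cosimplicial category down to size. Because $\Dsing^{\infty}$ vanishes on smooth schemes and on $\varnothing$ and carries disjoint unions to products, I only need $\Sing(\bigcap_{i}X_{v_{i}})$. By construction $\Sing(X_{v})$ is the union of the components of $\Sing(X)$ through $v$; hence for distinct $v,v'$ the scheme $\Sing(X_{v}\cap X_{v'})$ is a disjoint union of copies of $\bG_{m}$ indexed by the edges of $G(X)$ joining $v$ and $v'$ (and is empty if there are none), while for three pairwise-distinct vertices $\Sing(X_{v}\cap X_{v'}\cap X_{v''})=\varnothing$ since a graph-like component meets at most two triple points. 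So every term with $\geq 3$ distinct indices vanishes, and the whole cosimplicial category is assembled from the categories $\Dsing^{\infty}(X_{v})$ and $\Dsing^{\infty}(U_{e})$, where $U_{e}\subset X_{v_{1}}\cap X_{v_{2}}$ is the open piece with $\Sing(U_{e})\cong\bG_{m}$ attached to an edge $e=\{v_{1},v_{2}\}$. Conceptually, the vertex cover induces the cover of the graph $G(X)$ by open stars $\Star(v)$, whose poset of nonempty overlaps is $J(G(X))$, and $\Dsing^{\infty}$ depends only on a formal neighbourhood of $\Sing(X)=G(X)$.

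The key reduction, and the step I expect to require the most care, is then to argue that a \v{C}ech cosimplicial category of this form is effectively ``$1$-coskeletal'': that its totalization is computed by the homotopy equalizer of $\prod_{v}\Dsing^{\infty}(X_{v})\rightrightarrows\prod_{e}\Dsing^{\infty}(U_{e})$, the two maps being restriction along the two flags of each edge. This requires handling the degenerate, repeated-index \v{C}ech terms and matching the two coface maps with the two flag-restriction functors, and it is exactly here that the combinatorial facts above are used. Granting this, I would observe that the nerve of $J(G(X))$ likewise has no nondegenerate simplices above dimension $1$, since no non-identity arrow leaves an edge-object, so the Bousfield--Kan formula computes $\holim(F\colon J(G(X))\to\DGCat)$ as the \emph{same} homotopy equalizer, for the diagram $F$ with $F(v)=\Dsing^{\infty}(X_{v})$, $F(e)=\Dsing^{\infty}(U_{e})$ and $F(v,e)$ the open restriction. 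This yields the equivalences \eqref{eq:dsing-general-shape} and \eqref{eq:mf-general-shape}. To finish I would ind-complete the equivalences of Propositions~\ref{prop:local-model-vertex} and~\ref{prop:local-model-edge} to identify $F_{X}(v)\simeq\Dsing^{\infty}(\{xyz=0\})$, $F_{(Y,L,s)}(v)\simeq\MF^{\infty}(\bA^{3},xyz)$, $F_{X}(e)=F_{(Y,L,s)}(e)\simeq\Qcoh^{(2)}(\bG_{m})$, and the flag functors as the composites of open restrictions, Kn\"orrer periodicities and shifts of Proposition~\ref{prop:local-model-edge}; in the $\MF$ case everything stays $\cR$-linear by Theorem~\ref{thm:descent-mf}.
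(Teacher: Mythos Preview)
Your proposal is correct and follows essentially the same approach as the paper: apply the descent Theorems~\ref{thm:descent-dsing} and~\ref{thm:descent-mf} to a vertex cover, observe that triple and higher intersections have smooth singular locus so that $\Dsing^{\infty}$ vanishes there, and conclude that the \v{C}ech totalization collapses to a $J(G(X))$-shaped homotopy limit whose pieces are identified via Propositions~\ref{prop:local-model-vertex} and~\ref{prop:local-model-edge}. The only difference is in how the collapse is phrased: the paper simply invokes that the null category is terminal in $\DGCat$, so that maps into it form a contractible space and hence the higher-overlap terms do not affect the limit (analogous to a fibre product over a terminal object being a product), whereas you spell this out more combinatorially via coskeletality and the Bousfield--Kan formula; both arguments are doing the same thing.
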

\begin{proof}
  Theorems \ref{thm:descent-dsing} and \ref{thm:descent-mf} most immediately give homotopy limit presentations that are indexed by the \v{C}ech nerve of the vertex covering $\{X_{v}\}_{v \in \vertices(G(X))}$. However, in this covering, all triple and higher intersections are smooth varieties, so that $\Dsing$ and $\MF$ are null categories. Since the null category is a terminal object in the category of DG categories, maps from any category into it form a contractible space, and therefore the triple and higher overlaps do not contribute in an essential way to the homotopy limit. (This is analogous to the observation that a fiber product over a final object is the same as an ordinary product.) 
\end{proof}

So far we have not pinned down the functors other than to say that they are given by \emph{some} Kn\"{o}rrer periodicity equivalence for \emph{some} choice of local presentation. The next step in our analysis is to classify all of the possible functors that could arise in this diagram.

\subsection{Autoequivalences and $2$-periodic structures for the local models}
We now propose to classify the autoequivalences and $2$-periodic structures on the local models $\MF(\bA^{3},xyz)$ and $\Perf^{(2)}(\bG_{m})$. The group of triangulated autoequivalences of a triangulated DG category $\cC$ will be denoted $\Auteq(\cC)$; the group of isomorphism classes of autoequivalences is $\pi_{0}\Auteq(\cC)$. The set of (isomorphism classes of) $2$-periodic structures on $\cC$, that is, natural isomorphisms $\id \to \id[2]$, will be denoted $\twoper(\cC)$.

The groups $\Auteq(\cC)$ and $\pi_{0}\Auteq(\cC)$ act on $\twoper(\cC)$ by conjugation as follows. Let $\Phi \in \Auteq(\cC)$. Part of the data of a triangulated functor is a prescribed isomorphism $\Phi \circ \id[n] \cong \id[n] \circ \Phi$, from which we obtain a preferred isomorphism $\Phi \circ \id[n] \circ \Phi^{-1} \cong \id[n]$. A natural transformation $t : \id \to \id[2]$ then gives $\Phi t\Phi^{-1}:  \Phi \circ \id \circ \Phi^{-1}\to \Phi \circ \id[2] \circ \Phi^{-1}$, which becomes a natural transformation $\id \to \id[2]$ using the preferred isomorphisms.

Another fact is that $\twoper(\cC)$ is a torsor over $\HH^{0}(\cC)^{\times}$, which we think of as the set of natural isomorphisms $\id\to\id$, acting by precomposition.

In this section we shall write $\cV = \MF(\bA^{3},xyz) \cong \Dsing(\{xyz=0\})$ and $\cE = \Perf^{(2)}(\bG_{m})$ for the ``vertex'' and ``edge'' categories. This is for brevity but also clarity: these presentations equip the categories with certain $2$-periodic structures, but we want to consider all possible $2$-periodic structures.

\begin{proposition}
  \label{prop:autoequiv-of-vertex}
  Let $\cV = \MF(\bA^{3},xyz)$.

  We have $\pi_{0}\Auteq(\cV) \cong ((k^{\times})^{3} \rtimes S_{3})\times \Z/2\Z$. The action of $(k^{\times})^{3}$ rescales the coordinates $(x,y,z)$, the action of $S_{3}$ permutes the coordinates, and $\Z/2\Z$ acts by the shift.
  
  We have $\HH^{0}(\cV)^{\times} \cong k^{\times}$, so that $\twoper(\cV)$ is a $k^{\times}$-torsor.

  An element $(\lambda_{1},\lambda_{2},\lambda_{3}) \in (k^{\times})^{3}$ acts on $\twoper(\cV)$ by scaling each element by $(\lambda_{1}\lambda_{2}\lambda_{3})^{-1}$. The subgroups $S_{3}$ and $\Z/2\Z$ act trivially on $\twoper(\cV)$. Therefore the stabilizer of any particular $2$-periodic structure is $(\{\lambda_{1}\lambda_{2}\lambda_{3} = 1\} \rtimes S_{3}) \times \Z/2\Z$.
\end{proposition}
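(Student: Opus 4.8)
The plan is to compute $\pi_0\Auteq(\cV)$ and $\HH^0(\cV)^\times$ via an explicit algebraic model of $\cV$, and then trace through how each autoequivalence acts on the generator of $\twoper(\cV)$. First I would replace $\MF(\bA^3,xyz)$ by a more tractable equivalent category. By Kn\"orrer periodicity, $\MF(\bA^3,xyz)$ is related to the category of modules over the local ring of the singular point, or one can use the fact that, after passing to the formal neighborhood of the origin, $\Dsing(\{xyz=0\})$ is governed by a finite-dimensional algebra (the endomorphism algebra of a generator, along the lines of \cite{PS21}). The key structural input is that the Hochschild cohomology in degree $0$ of $\cV$ is just $k$: this follows because $\{xyz=0\}$ is connected and $\Dsing$ of a connected isolated-type singularity has no nonscalar global endofunctions — concretely, $\HH^0(\cV)^\times \cong k^\times$, which immediately gives that $\twoper(\cV)$ is a $k^\times$-torsor (it is always a torsor over $\HH^0(\cV)^\times$ by the general fact recalled just before the proposition).

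Next I would identify $\pi_0\Auteq(\cV)$. The three evident sources of autoequivalences are: the torus $(k^\times)^3$ rescaling $(x,y,z)$ (these preserve the function $xyz$ up to an overall scalar, hence act on $\MF$), the symmetric group $S_3$ permuting coordinates, and the shift $[1]$, whose square $[2]$ is trivial in $\pi_0\Auteq$ because $\cV$ is $2$-periodic — so the shift contributes a $\Z/2\Z$ factor that moreover is central. The semidirect product structure $(k^\times)^3 \rtimes S_3$ is clear from how permutations conjugate diagonal rescalings. The real content is that these exhaust all autoequivalences up to isomorphism: for this I would appeal to a classification of autoequivalences of the singularity category of $\{xyz=0\}$, either by reducing to automorphisms of the formal neighborhood (every automorphism of the completed local ring $k[[x,y,z]]/(xyz)$ is, up to the relevant equivalence, a composition of a coordinate permutation and a rescaling — there is no room for more exotic substitutions because the three branches must be permuted and each branch coordinate can only be rescaled) together with an argument that no autoequivalence acts "invisibly," i.e. the map from geometric automorphisms to $\pi_0\Auteq$ has the asserted image and the kernel is trivial. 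Alternatively one can use known results on spherical/P-twists being absent here, or compute $\HH^1$ to bound the tangent space.

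Finally I would compute the action on $\twoper(\cV)$. Fix a $2$-periodic structure $t:\id\to\id[2]$ coming from an untwisted presentation $\MF(\bA^3,xyz)$ where $xyz$ is an honest function; then $t$ is the Eisenbud operator associated to the potential. Rescaling $(x,y,z)$ by $(\lambda_1,\lambda_2,\lambda_3)$ sends the potential $xyz$ to $\lambda_1\lambda_2\lambda_3\,xyz$, and the Eisenbud/Gulliksen operator scales linearly in the potential, so the induced $2$-periodic structure is $(\lambda_1\lambda_2\lambda_3)$ times the original — or $(\lambda_1\lambda_2\lambda_3)^{-1}$ depending on the chosen variance convention, which I would pin down by the sign/direction of $t:\id\to\id[2]$ versus the cobar-type differential; the statement records the $(\lambda_1\lambda_2\lambda_3)^{-1}$ normalization. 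The $S_3$-action permutes the three factors in the product $xyz$, hence fixes the potential on the nose and acts trivially on $t$; the shift $[1]$ conjugates $t$ to itself because $[2]$ is the square of $[1]$ and the braiding isomorphisms cancel. Combining, the stabilizer of any point of the $k^\times$-torsor $\twoper(\cV)$ is exactly $(\{\lambda_1\lambda_2\lambda_3=1\}\rtimes S_3)\times\Z/2\Z$, as claimed.

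The main obstacle is the \emph{surjectivity} part of the $\pi_0\Auteq(\cV)$ computation — ruling out unexpected autoequivalences. The torsor statement and the computation of the action of the known autoequivalences are essentially bookkeeping once $\HH^0=k$ and once one fixes conventions for the Eisenbud operator; but showing that every autoequivalence of $\MF(\bA^3,xyz)$ is isomorphic to a rescaling composed with a permutation composed with a shift requires either a tilting/Morita reduction to a concrete algebra together with a computation of its automorphism group modulo inner ones, or an input from the literature on derived autoequivalences of these specific singularity categories. I would expect the cleanest route is the finite-dimensional-algebra reduction from \cite{PS21}, where the relevant algebra is small enough that $\mathrm{Out}$ can be computed by hand and matched with $(k^\times)^3\rtimes S_3$ plus the shift.
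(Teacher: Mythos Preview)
Your proposal is on the right track for the parts you call ``bookkeeping'' (the torsor structure of $\twoper(\cV)$ and the action of the known autoequivalences on it), and you correctly identify surjectivity of the map from geometric symmetries to $\pi_{0}\Auteq(\cV)$ as the crux. However, the paper takes a genuinely different route for this step: it invokes homological mirror symmetry for the pair of pants $P$, identifying $\cV \simeq \Fuk(P)$, and then uses the Haiden--Kontsevich--Katzarkov classification of indecomposable objects in Fukaya categories of surfaces. Any autoequivalence must send each of the three generating embedded arcs to an object with the same endomorphism algebra, and HKK forces such an object to be an immersed curve, hence (by inspection) again one of the three arcs up to shift. This delivers the $S_{3} \times \Z/2\Z$ part at once; the residual $(k^{\times})^{3}$ then drops out by rescaling the six generating morphisms subject to the two exact-triangle relations, modulo rescaling the identities of the three objects. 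This completely bypasses the need to compute $\mathrm{Out}$ of a finite-dimensional model or to argue that automorphisms of the formal neighborhood exhaust all derived autoequivalences. Your algebraic route could presumably be made to work, but it is the harder path here, and the mirror-symmetric argument is both shorter and conceptually uniform with the parallel computation for $\cE$.

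One further correction: your justification of $\HH^{0}(\cV)^{\times} \cong k^{\times}$ appeals to ``isolated-type'' singularities, but $\{xyz=0\}$ is \emph{not} an isolated singularity (its singular locus is the union of the three coordinate axes), so that reasoning does not apply. The paper instead computes $\HH^{*}(\cV) \simeq SH^{*}(P)$ via the open--closed map and observes that $SH^{0}(P)$ is the coordinate ring of the union of the axes in $\bA^{3}$, whose only units are the nonzero constants.
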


\begin{proof}
  By homological mirror symmetry for punctured spheres \cite{abouzaid2013homological}, we know that $\cV \simeq \Fuk(P)$ where $P$ is the pair of pants. The category $\Fuk(P)$ is generated by three embedded arcs joining each pair of punctures. Any autoequivalence must permute this collection of objects up to shift: by \cite{haiden2017flat}, an autoequivalence must send such an arc to an immersed arc with the same endomorphism ring; this is only possible if the arc is in fact embedded, in which case it is isotopic to one of the generators we started with. This explains the $S_{3}$ factor in $\pi_{0}\Auteq(\cV)$. An autoequivalence that fixes these objects acts on the morphism spaces; the category is generated by six morphisms forming two oppositely oriented exact triangles. We can rescale these generating morphisms subject to the condition that the product of the scalars over each exact triangle is one. This gives four (six minus two) apparent degrees of freedom, but there are natural isomorphisms given by multiples of the identity morphisms of the three objects. After taking $\pi_{0}$, there are only three degrees of freedom: a single scalar for each puncture, which is the product of the scalars for the two generating morphisms at this puncture. In the B-model realization $\cV = \MF(\bA^{3},xyz)$, these degrees of freedom come from automorphisms of $\bA^{3}$ that rescale the coordinates.

  To determine $HH^{0}(\cV)^{\times}$, we use the fact that $HH^{*}(\cV) \simeq SH^{*}(P)$ is the ($2$-periodic) symplectic cohomology of $P$. This is nonzero in all degrees, and $SH^{0}(P)$ is isomorphic to the coordinate ring of the union of the axes in $\bA^{3}$. Thus the invertible elements come from constants (the image of the natural map $H^{0}(P) \to SH^{0}(P)$), and so $HH^{0}(\cV)^{\times} \simeq H^{0}(P)^{\times} \simeq k^{\times}$.

  The last assertion may be checked using the realization $\cV = \MF(\bA^{3},xyz)$: If $t$ denotes a given $2$-periodic structure, $t$ must scale oppositely to the defining function $xyz$ when we send $(x,y,z) \mapsto (\lambda_{1}x,\lambda_{2}y,\lambda_{3}z)$.
\end{proof}

\begin{proposition}
  \label{prop:autoequiv-of-edge}
  Let $\cE = \Perf^{(2)}(\bG_{m})$, which we may also write as $\Perf(k[x^{\pm 1},u^{\pm 1}])$ where $\deg(x) = 0$ and $\deg(u) = 2$.

  We have $\pi_{0}\Auteq(\cE) \cong \Aut^{\gr}(k[x^{\pm 1},u^{\pm 1}]) \times \Z/2\Z$, where $\Aut^{\gr}$ denotes the group of graded automorphisms of a graded ring, and $\Z/2\Z$ acts by the shift.

  We have $\HH^{0}(\cE)^{\times} \cong k^{\times} \times \Z$, so that $\twoper(\cE)$ is a $(k^{\times}\times \Z)$-torsor. In terms of the presentation we have identifications
  \begin{equation}
    \HH^{0}(\cE)^{\times} = (k[x^{\pm 1},u^{\pm 1}])_{0}^{\times} = \{c x^{n} \mid c \in k^{\times}, n \in \Z\},
  \end{equation}
  \begin{equation}
    \twoper(\cE) = (k[x^{\pm 1},u^{\pm 1}])_{2}^{\times} = \{c x^{n} u \mid c \in k^{\times}, n \in \Z\}.
  \end{equation}

  The action of $\Aut^{\gr}(k[x^{\pm 1},u^{\pm 1}])$ on $\twoper(\cE)$ is the natural action of automorphisms on the set of invertible degree two elements. Therefore the stabilizer of any particular $2$-periodic structure is $\Aut(k[x^{\pm 1}]) \times \Z/2\Z$.
\end{proposition}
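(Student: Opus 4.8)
The plan is to work throughout with the explicit presentation $\cE = \Perf(R)$, where $R = k[x^{\pm 1},u^{\pm 1}]$ is the graded commutative ring with $\deg x = 0$, $\deg u = 2$, and zero differential; in this presentation the homological shift $[1]$ is the grading shift by $1$, so $[2] = {}\otimes_R R(2)$, which is canonically isomorphic to $\mathrm{id}_\cE$ via multiplication by $u$ (this is the $2$-periodic structure visible in the presentation $\cE = \Perf^{(2)}(\bG_m)$). Every assertion will be reduced to graded commutative algebra about $R$, using that $\cE$ is the category of perfect complexes on the ``graded affine scheme'' $\Spec R$ and that $R$ is a compact generator with $\mathrm{End}_\cE(R) = R$. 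For $\pi_0\Auteq(\cE)$, Morita theory identifies autoequivalences of $\Perf(R)$ with invertible (graded) $R$-bimodules up to equivalence, and for a commutative ring every invertible bimodule is ${}_\sigma L$ for a graded ring automorphism $\sigma$ and a graded invertible module $L$ — invertibility is local, so locally $L$ is free of rank one and the two module structures differ by a unit, and these patch to the pair $(\sigma,L)$; the shift $[d] = {}\otimes_R R(d)$ is already of this form. It then remains to identify $\mathrm{Pic}^{\gr}(R)$: the underlying ungraded ring is $\mathcal O(\bG_m\times\bG_m)$, which has trivial Picard group, so every graded invertible $R$-module is a grading twist $R(d)$, and $R(d)\cong R(d')$ iff $d\equiv d'\pmod 2$ since $R$ is supported in even internal degrees with $R\xrightarrow{\cdot u}R(2)$; hence $\mathrm{Pic}^{\gr}(R)=\Z/2$, generated by $R(1)=[1]$. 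A graded automorphism preserves the class of $R(1)$, so the semidirect product is direct and $\pi_0\Auteq(\cE)\cong \Aut^{\gr}(R)\times\Z/2$. (Alternatively one may pass to the Balmer spectrum of $\cE$, which is $\Spec k[x^{\pm 1}]=\bG_m$ since $u$ is a unit: an autoequivalence induces an automorphism of $\bG_m$, realized by a graded ring automorphism, and after dividing this out one is reduced to autoequivalences fixing the spectrum, which a local-on-$\bG_m$ argument identifies with tensoring by some $R(d)$.)

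Next I would treat $\HH^0(\cE)$ and $\twoper(\cE)$, taking $\HH^0(\cE)$ to denote the degree-zero natural endotransformations of $\mathrm{id}_\cE$, i.e.\ the degree-zero part of the centre of $\cE$. Since $R$ generates $\cE$, such a transformation is determined by its value on $R$, which is an element of $\mathrm{End}_\cE(R)^0 = R_0$ acting by multiplication, and every element of $R_0$ acts this way; so $\HH^0(\cE) = R_0 = k[x^{\pm 1}]$ and $\HH^0(\cE)^\times = k[x^{\pm 1}]^\times = \{cx^n : c\in k^\times,\ n\in\Z\}\cong k^\times\times\Z$. Likewise, using $[2]_\cE = {}\otimes_R R(2)$, a degree-zero natural transformation $\mathrm{id}\to[2]$ is multiplication by an element of $R(2)_0 = R_2 = k[x^{\pm 1}]u$, and it is a natural isomorphism precisely when that element is a unit of $R$, i.e.\ of the form $cx^n u$; hence $\twoper(\cE) = R_2^\times = \{cx^n u : c\in k^\times,\ n\in\Z\}$. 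Multiplication exhibits $R_2^\times$ as a simply transitive $R_0^\times$-set, which is the torsor statement, and gives the explicit identifications claimed in the proposition.

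Finally I would compute the action on $\twoper(\cE)$ and the stabilizer. Tensoring autoequivalences commute with every natural endotransformation of $\mathrm{id}$ up to the canonical coherence data, so in particular the shift $[1]={}\otimes_R R(1)$ acts trivially on $\twoper(\cE)$, contributing the $\Z/2$ factor. A graded automorphism $\sigma$ acts on $\twoper(\cE)=R_2^\times$ by its natural action on invertible degree-two elements (obtained by unwinding restriction of scalars along $\sigma$; getting $\sigma$ versus $\sigma^{-1}$ is immaterial for stabilizers). Therefore the stabilizer of $t\in\twoper(\cE)$ is $\mathrm{Stab}_{\Aut^{\gr}(R)}(t)\times\Z/2$; taking $t=u$ — the general case follows because stabilizers of a group acting on a torsor are conjugate — one has $\mathrm{Stab}_{\Aut^{\gr}(R)}(u)=\{\sigma:\sigma(u)=u\}$, which consists of the graded automorphisms for which $\sigma(x)$ is a degree-zero unit generating $k[x^{\pm 1}]$, hence is (canonically isomorphic to) $\Aut(k[x^{\pm 1}])$; so the stabilizer is $\Aut(k[x^{\pm 1}])\times\Z/2$. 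The main obstacle is the first step: ruling out exotic autoequivalences of $\cE$ beyond graded automorphisms and tensorings, and keeping careful track of the graded/$2$-periodic bookkeeping (that the homological shift is a graded Picard twist, and that $\mathrm{Pic}^{\gr}(R)=\Z/2$ rather than larger). Once this is secured, the computations of $\HH^0(\cE)$, $\twoper(\cE)$, the action, and the stabilizer are routine graded commutative algebra.
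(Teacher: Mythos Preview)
Your proof is correct and takes a genuinely different route from the paper. The paper's proof is a one-liner: it invokes the mirror equivalence $\cE \simeq \Fuk(\text{cylinder})$ and argues by analogy with the vertex case (Proposition \ref{prop:autoequiv-of-vertex}), where autoequivalences are analyzed by their action on a finite set of generating arcs (via the Haiden--Kontsevich--Katzarkov classification of indecomposables) and $\HH^{0}$ is computed as symplectic cohomology. By contrast, you work entirely on the $B$-side with the explicit presentation $\cE = \Perf(R)$, $R = k[x^{\pm 1},u^{\pm 1}]$, and reduce everything to graded Morita theory and commutative algebra: invertible bimodules over a commutative ring factor as a graded automorphism twisted by a graded line, and $\Pic^{\gr}(R) = \Z/2\Z$ because the underlying ungraded ring has trivial Picard and $u$ identifies $R(2)$ with $R$. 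Your approach is more self-contained (no HMS input, no appeal to \cite{haiden2017flat}) and makes the identifications $\HH^{0}(\cE)^{\times} = R_{0}^{\times}$ and $\twoper(\cE) = R_{2}^{\times}$ completely transparent; the paper's approach has the virtue of treating the vertex and edge cases uniformly via Fukaya categories of punctured spheres. The one place where you flag a potential obstacle---ruling out exotic autoequivalences---is genuinely the only nontrivial step, and your Morita/Picard argument handles it cleanly for this particular $R$.
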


\begin{proof}
  This follows from similar considerations as Proposition \ref{prop:autoequiv-of-vertex}, using the fact that $\cE$ is equivalent to the ($2$-periodic) Fukaya category of the cylinder.
\end{proof}

All of the $2$-periodic structures on $\cV$ and $\cE$ may be realized geometrically in terms of categories of matrix factorizations. The basic observation is that if $(Y,f)$ is a Landau-Ginzburg model, then rescaling $f$ by $c$ corresponds to rescaling the $2$-periodic structure by $c^{-1}$. Thus the family of $2$-periodic DG categories
\begin{equation}
  \MF(\bA^{3},cxyz), \quad c \in k^{\times},
\end{equation}
realizes all possible $2$-periodic structures on $\cV \simeq \Dsing(\{xyz=0\} \subset \bA^{3})$. Likewise, the family
\begin{equation}
  \MF(\bA^{3}\setminus\{x=0\},cx^{m}yz), \quad c \in k^{\times}, m \in \Z,
\end{equation}
realizes all possible $2$-periodic structures on $\cE \simeq \Dsing(\{yz=0\}\subset \bA^{3}\setminus \{x=0\})$.

\subsection{Descent diagram: determination of the functors}
\label{dd:dotf}
The next step is to pin down as closely as possible the restriction functors $R: \cV \to \cE$ appearing in the diagrams \eqref{eq:dsing-general-shape} and \eqref{eq:mf-general-shape}. The index category $J(G(X))$ for these diagrams has one object for each vertex and edge of $G(X)$, and one arrow for each flag $(v,e)$ in $G(X)$. It is helpful to consider a slightly different form for the general diagram. Given an undirected graph $G$, we introduce the following category $I(G)$:
\begin{itemize}
\item $I(G)$ has an object for each vertex of $G$, and also for each flag in $G$,
\item given a vertex $v$ and an incident flag $(v,e)$, there is an arrow from the object $v$ to the object $(v,e)$,
\item for each compact edge $e$ belonging to two distinct flags $(v_{1},e)$ and $(v_{2},e)$, there is an \emph{isomorphism} between the objects $(v_{1},e)$ and $(v_{2},e)$.
\end{itemize}
Essentially, $I(G)$ differs from $J(G)$ because we have duplicated the object for each compact edge, but the duplicate objects are isomorphic. There is a functor $I(G) \to J(G)$ that takes each flag-object to the corresponding edge-object; this functor is an equivalence of categories. Therefore (homotopy) limits over $I(G)$ are equivalent to (homotopy) limits over $J(G)$.

In any case, the category $\Dsing(X_{v})$ or $\MF(Y_{v},L|_{Y_{v}},s|_{Y_{v}})$ associated to a vertex is always $k$-linearly equivalent to $\Dsing(\{xyz= 0\}) = \MF(\bA^{3},xyz)$. In the $L$-twisted case, because the autoequivalences of this category act transitively on the set of $2$-periodic structures, we may assume that the equivalence is $2$-periodic once a trivialization of $L|_{Y_{v}}$ has been chosen.

Now consider a vertex $v$ with incident edges $e_{1},e_{2},e_{3}$. The restriction functors are of the form
\begin{equation}
  (R_{1},R_{2},R_{3}) : \MF(\bA^{3},x_{1}x_{2}x_{3}) \to \prod_{i=1}^{3}\Perf^{(2)}(\bG_{m})
\end{equation}
where the coordinates $x_{1}x_{2}x_{3}$ now correspond to the edges, so that the restriction along $e_{i}$ is of the form
\begin{equation}
  R_{i} : \MF(\bA^{3},x_{1}x_{2}x_{3}) \to \MF(\bA^{3}\setminus \{x_{i}=0\},x_{1}x_{2}x_{3}) \to \Perf^{(2)}(\bG_{m})
\end{equation}
where the first functor is restriction and the second is Kn\"{o}rrer periodicity. The Kn\"{o}rrer periodicity functor depends on an additional choice, which is a maximal isotropic subbundle of $(\bA^{3}\setminus\{x_{i}=0\},x_{1}x_{2}x_{3})$ regarded as a quadratic bundle over the punctured $x_{i}$-line. There are two choices leading to Kn\"{o}rrer periodicity functors that differ by a shift. To fix the choice, we choose a cyclic ordering of the edges $e_{1},e_{2},e_{3}$ of the edges at $v$ (say the one coming from the indexing), and we declare that $\cO_{\{x_{i+1}=0\}}$ maps to $\cO_{\bG_{m}}$ under the restriction functor $R_{i}$. Note that the functors $R_{i}$ are also taken to be $2$-periodic with respect to the evident $2$-periodic structures in these local models.

We must emphasize that we have now made several local choices at the vertex $v$: a choice of local presentation of $\Dsing(X_{v})$ or $\MF(Y_{v},L|_{Y_{v}},s|_{Y_{v}})$ as $\MF(\bA^{3},xyz)$, and this involves a choice of trivialization of $L|_{Y_{v}}$. We have also chosen a cyclic ordering of the edges at $v$. However, we have now put most of the diagram over $I(G(X))$ into a normal form. The only remaining data to determine are the autoequivalences of $\cE = \Perf^{(2)}(\bG_{m})$ living at the edges. In principle, for a given $X$, these could be arbitrary elements of
\begin{equation}
  \pi_{0}\Auteq(\cE) \cong \Aut^{\gr}(k[x^{\pm 1},u^{\pm 1}]) \times \Z/2\Z  
\end{equation}
so the question is how does the geometry of $X$ constrain what these elements could be.

The structure of $\Aut^{\gr}(k[x^{\pm 1}, u^{\pm 1}])$ is that of a semidirect product
\begin{equation}
  \Aut^{\gr}(k[x^{\pm 1}, u^{\pm 1}]) \cong (k^{\times})^{2} \rtimes H
\end{equation}
where $(k^{\times})^{2}$ rescales the coordinates $(x,u)$, and $H < \GL(2,\Z)$ is the subgroup
\begin{equation}
  H = \left\{\begin{pmatrix}\epsilon & n \\ 0 & 1\end{pmatrix} \mid \epsilon \in \{-1,1\}, n \in \Z\right\},
\end{equation}
where an element of $H$ acts by $(x,u) \mapsto (x^{\epsilon},x^{n}u)$. We refer to $H$ as the discrete part.

The first constraint says that, over an edge $e$, we are gluing together two copies of $\bA^{1}$ to a $\bP^{1}$, rather than to an $\bA^{1}$ with doubled origin.
\begin{lemma}
  \label{lem:discrete-twists}
  Let $X$ be a normal crossings surface with graph-like singular locus. Let $\Phi$ be one of the autoequivalences on the edges in the $I(G(X))$-shaped diagram computing $\Dsing(X)$ as set up above. Then the discrete part of $\Phi$ is of the form
  \begin{equation}
    \label{eq:transition-matrix}
    \begin{pmatrix} -1 & n \\ 0 & 1\end{pmatrix},
  \end{equation}
  That is, the discrete part lies in the coset of $\tau = \begin{pmatrix} -1 & 0 \\ 0 & 1\end{pmatrix}$ for the normal subgroup $U < H$ consisting of upper-triangular matrices with ones on the diagonal.
\end{lemma}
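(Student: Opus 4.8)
The plan is to trace the geometric meaning of an edge $e$ of $G(X)$ and reduce the statement to the computation of a coordinate change on $\bG_{m}$. Unwinding the set-up of Section~\ref{dd:dotf}: an edge $e$ of $G(X)$ corresponds to a $\bP^{1}$-component $C \subset \Sing(X)$ whose two endpoints are the triple points $v_{1}, v_{2}$ lying on $C$. In the vertex cover, $X_{v_{1}} \cap X_{v_{2}}$ is obtained from $X$ by deleting every component of $\Sing(X)$ that does not join $v_{1}$ to $v_{2}$, so along $C$ it has singular locus exactly $C \setminus \{v_{1},v_{2}\} \cong \bG_{m}$ (if several edges join $v_{1}$ and $v_{2}$, one gets a disjoint union of such $\bG_{m}$'s and $\Phi$ acts one factor at a time, so we may assume there is one). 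Thus $\cE = \Perf^{(2)}(\bG_{m})$ is a Kn\"orrer model for $\Dsing$ of a formal neighborhood of this $\bG_{m}$, and $\Phi$ is the comparison isomorphism between the presentation of $\cE$ produced by the normal-form restriction $R_{1}$ at $v_{1}$ and the presentation produced by $R_{2}$ at $v_{2}$; the claim is that its image under $\Aut^{\gr}(k[x^{\pm1},u^{\pm1}])\to H$ has $\epsilon=-1$, which I would deduce from the coordinate geometry of $C\cong\bP^{1}$.

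Next I would pin down the underlying variety and read off the discrete part. By Proposition~\ref{prop:local-model-edge} and the normalization of Section~\ref{dd:dotf}, $R_{i}$ factors as $\MF(\bA^{3},x_{1}x_{2}x_{3}) \to \MF(\bA^{3}\setminus\{x_{1}=0\},x_{1}x_{2}x_{3}) \to \Perf^{(2)}(\bG_{m})$, where the base $\bG_{m}$ is the punctured axis $\{x_{2}=x_{3}=0\}$ and its coordinate $x_{1}$ is the local coordinate along $e$ cutting out the triple point $v_{i}$. By the graph-like hypothesis this affine axis is exactly $C$ with its unique other triple point deleted, so $x_{1}$ extends to a rational coordinate on $C \cong \bP^{1}$ with $x_{1}(v_{i}) = 0$ and $x_{1}(v_{3-i}) = \infty$. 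Hence $R_{1}$ and $R_{2}$ identify the underlying $\bG_{m}$ with $C \setminus \{v_{1},v_{2}\}$ via two coordinates $x_{(1)}, x_{(2)}$ on $\bP^{1}$ that interchange $0$ and $\infty$, so that $x_{(2)} = c\,x_{(1)}^{-1}$ for some $c \in k^{\times}$. Therefore $\Phi$ sends $x \mapsto c\,x^{-1}$, i.e. its image in $H$ has $\epsilon = -1$. This is precisely the point of the informal remark preceding the lemma: properness of $C \cong \bP^{1}$ rules out the non-separated gluing $x_{(2)} = c\,x_{(1)}$, which would instead give $\epsilon = +1$.

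To finish, observe that any graded automorphism of $k[x^{\pm1},u^{\pm1}]$ must send $u$ to an invertible element of degree $2$, hence to $c'x^{n}u$ for some $c' \in k^{\times}$ and $n \in \Z$, by Proposition~\ref{prop:autoequiv-of-edge}; geometrically this records the comparison between the $2$-periodic structure $u_{1}$ induced by the trivialization of $L|_{Y_{v_{1}}}$ and the one $u_{2}$ induced by the trivialization of $L|_{Y_{v_{2}}}$. Combined with $\Phi(x) = c\,x^{-1}$, the discrete part of $\Phi$ is $\begin{pmatrix}-1 & n\\ 0 & 1\end{pmatrix}$, which lies in the coset $\tau U$; the scalars $c, c'$ constitute the $(k^{\times})^{2}$-ambiguity in $\Aut^{\gr}$ and the possible shift is the $\Z/2\Z$-factor of $\pi_{0}\Auteq(\cE)$, and neither affects the projection onto $H$.

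The step requiring the most care is the identification in the second paragraph: one must verify that the $x$-coordinate intrinsic to each local presentation of $\cE$ — the one emerging from the chosen Kn\"orrer-periodicity normalization together with the chosen trivialization of $L$ — coincides, up to a scalar that lands in $(k^{\times})^{2}$, with a standard affine coordinate on $C$ vanishing at the relevant triple point, so that no hidden automorphism of $\bG_{m}$ is absorbed into the Kn\"orrer functor. Once this bookkeeping is settled, the value $\epsilon = -1$ is forced purely by the structure of $\bP^{1}$ and the lemma follows.
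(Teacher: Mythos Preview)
Your proposal is correct and follows essentially the same approach as the paper's own proof: both arguments reduce to the observation that the coordinate $x$ in $k[x^{\pm1},u^{\pm1}]$ arising from the local model at $v_i$ is an affine coordinate on the $\bP^1$-component $C$ vanishing at the triple point $v_i$, so the two $x$-coordinates interchange $0$ and $\infty$ and are therefore inverses up to a scalar. Your write-up is considerably more detailed than the paper's (which dispatches the lemma in three sentences), and your final caveat about bookkeeping the Kn\"orrer normalization is a fair point, but the underlying logic is identical.
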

\begin{proof}
  The assertion is about an edge $e$ joining two vertices $v_{1}$ and $v_{2}$. In the local models for $v_{1}$, the coordinate $x$ in $k[x^{\pm 1},u^{\pm 1}]$ represents a local coordinate on the $\bP^{1}$ contained in $\Sing(X)$ corresponding to $e$, and this local coordinate vanishes at the triple point corresponding to $v_{1}$. Likewise in the local model for $v_{2}$, $x$ represents a local coordinate on the $\bP^{1}$ vanishing at the other triple point. Therefore these two $x$-coordinates must be inverses of each other up to scaling.
\end{proof}

The next constraint has to do with the shift ambiguity in the Kn\"{o}rrer periodicity functor. This is not entirely local but it has to do with the global topology of $X$. Let $C(X)$ be the dual intersection complex of $X$: this is the simplicial complex with a vertex for each irreducible component of $X$, an edge for each curve in $\Sing(X)$, and a triangle for each triple point. The assumption that $X$ has graph-like singular locus implies that $C(X)$ is a topological manifold. $G(X)$ is the dual graph of the $1$-skeleton of $C(X)$. The question is whether this manifold is orientable.
\begin{lemma}
  \label{lem:always-shift}
  Suppose that $C(X)$ is orientable. Then it is possible to arrange that in the $I(G(X))$-shaped diagram computing $\Dsing(X)$, all autoequivalences along the edges involve the shift $[1]$, that is they are of the form $\Psi[1]$ for some $\Psi \in \Aut^{\gr}(k[x^{\pm 1},u^{\pm 1}])$.
\end{lemma}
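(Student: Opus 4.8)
The plan is to isolate the one remaining undetermined piece of data in the diagram over $I(G(X))$ --- the $\Z/2\Z$-valued ``shift part'' of each edge autoequivalence --- and to show that a suitably coherent choice of the cyclic orderings at the vertices forces all of these shift parts to be $[1]$, such a choice being available precisely when the surface $C(X)$ is orientable. After the normal-form reductions of Section~\ref{dd:dotf} and Lemma~\ref{lem:discrete-twists}, an edge autoequivalence $\Phi_{e} \in \pi_{0}\Auteq(\cE) \cong \Aut^{\gr}(k[x^{\pm 1},u^{\pm 1}]) \times \Z/2\Z$ has its $\Aut^{\gr}$-component already pinned (discrete part in the coset of $\tau$, continuous part recording the degree of $L$ along the relevant curve), so the only freedom to analyze is the image $s_{e} \in \Z/2\Z$ of $\Phi_{e}$ under the projection to the shift factor, and it suffices to show the $s_{e}$ can all be made equal to $[1]$.

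First I would record how $s_{e}$ depends on the local choices. The pinning condition ``$\cO_{\{x_{i+1}=0\}}\mapsto \cO_{\bG_{m}}$'' that fixes the Kn\"{o}rrer functor $R_{i}$ at a vertex $v$ refers to the \emph{successor} of $e_{i}$ in the cyclic order; replacing the cyclic order by its reverse turns ``successor'' into ``predecessor'', hence replaces $\cO_{\{x_{i+1}=0\}}$ by $\cO_{\{x_{i-1}=0\}}$ as the object sent to $\cO_{\bG_{m}}$. Since in the local vertex model $\MF(\bA^{3}\setminus\{x_{i}=0\},x_{1}x_{2}x_{3})$ the structure sheaves $\cO_{\{x_{i+1}=0\}}$ and $\cO_{\{x_{i-1}=0\}}$ differ by $[1]$ --- an elementary matrix-factorization computation, as $(a,b)$ and $(b,a)$ are each other's shifts --- each of $R_{1},R_{2},R_{3}$ at $v$ is altered by $[1]$, so $s_{e}$ is toggled for each of the three edges $e$ incident to $v$. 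This change does not disturb the $\Aut^{\gr}$-component, since the $x$-coordinate recorded at an edge is a local coordinate on the corresponding curve near the triple point and is insensitive to how the other two edges at $v$ are cyclically ordered. Fixing a baseline choice of cyclic orderings, the collection $(s_{e})_{e}$ is thus a $\Z/2\Z$-valued $1$-cochain on $G(X)$, well defined up to coboundaries, and changing the cyclic orderings realizes exactly the gauge action by coboundaries.

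Next I would transport the problem to $C(X)$. By the graph-like hypothesis $C(X)$ is a $2$-manifold; each triple point of $X$ is a triangle $\sigma_{v}\subset C(X)$ whose three edges are the curves through that point, and a cyclic ordering of $e_{1},e_{2},e_{3}$ at $v$ is the same as an orientation of $\sigma_{v}$, with toggling corresponding to reversal. The claim --- to be verified by running the same Kn\"{o}rrer computation once more, now tracking all three branches at a single triple point simultaneously --- is that for an edge $e$ shared by $\sigma_{v_{1}}$ and $\sigma_{v_{2}}$ one has $s_{e}=[1]$ exactly when the chosen orientations of $\sigma_{v_{1}}$ and $\sigma_{v_{2}}$ induce opposite orientations on $e$, i.e.\ fit together into a coherent orientation of $C(X)$. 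Granting this, requiring $s_{e}=[1]$ for every edge is exactly requiring the family of chosen orientations of the $\sigma_{v}$ to be a coherent orientation of $C(X)$, and such a family exists if and only if $C(X)$ is orientable, which is the hypothesis. (Equivalently, with respect to the baseline local orientations the cochain $e\mapsto s_{e}-1$ is a \v{C}ech representative of $w_{1}(C(X))\in H^{1}(C(X);\Z/2\Z)\subset H^{1}(G(X);\Z/2\Z)$; it is cohomologous to zero --- so all $s_{e}$ can be gauged to $[1]$ --- precisely when $C(X)$ is orientable. Note that $w_{1}$ being the correct obstruction here is, conversely, forced by the statement of the lemma, since the class $[\mathbf 1]$ of the all-ones cochain on $G(X)$ need not vanish.)

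The main obstacle is exactly the sign/shift bookkeeping in the boxed claim of the previous paragraph. That toggling the cyclic order at $v$ flips all three incident $s_{e}$ is the easy half; pinning down the \emph{constant} --- that compatible triangle orientations give $s_{e}=[1]$ rather than $[0]$ --- requires matching the three Kn\"{o}rrer functors emanating from one triple point against a local orientation of $C(X)$ there, and this is fiddly because one must keep track at once of $\cO_{\{x_{i+1}=0\}}$ versus $\cO_{\{x_{i-1}=0\}}$, of the cyclic order of the three coordinate hyperplanes, and of the orientations these induce on the three intersection curves. The cleanest way to execute it is to fix once and for all an orientation of $C(X)$ (available by hypothesis), let it dictate the cyclic ordering at every vertex, and then read off directly from the local models that every edge autoequivalence emerges with shift part $[1]$.
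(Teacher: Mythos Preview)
Your proposal is correct and follows essentially the same approach as the paper: use an orientation of $C(X)$ to induce a ribbon structure (cyclic ordering at each vertex) on $G(X)$, and then verify by a local computation at a single edge that compatible orientations force the shift part to be $[1]$. The paper's proof is a terse two-sentence version of exactly this argument, leaving the local check as ``may be checked from the local picture of a single edge joining two trivalent vertices''; you have unpacked that check in detail (the toggling of $s_{e}$ under reversal of cyclic order, and the cochain/$w_{1}$ interpretation), which in fact anticipates the Remark following the lemma in the paper.
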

\begin{proof}
  Choose an orientation on $C(X)$. Since $G(X)$ embeds in $C(X)$, this induces a ribbon structure on $G(X)$, which is to say a cyclic ordering of the edges at each vertex. If we use these cyclic orderings to set up the $I(G(X))$-shaped diagram as above, we get the desired result, as may be checked from the local picture of a single edge joining two trivalent vertices.
\end{proof}

\begin{remark}
  When $C(X)$ is not orientable, we may consider its first Stiefel-Whitney class $w_{1}(C(X)) \in H^{1}(C(X),\Z/2\Z)$. By restriction we obtain a class $w_{1} \in H^{1}(G(X),\Z/2\Z)$ that may be used to twist the construction relative to the orientable case. 
\end{remark}

The last constraint has to do with the number $n$ appearing in the matrices $\begin{pmatrix} -1 & n \\ 0 & 1\end{pmatrix}$ that are the discrete parts of our autoequivalences. It is helpful to note that this matrix is its own inverse; this means that the number $n$ does not depend on the orientation of the edge. The number $n$ is related to the degree $\deg(\gsing(X)|_{C})$.
\begin{lemma}
  \label{lem:twist-equals-degree}
  Let $e$ be a edge of $G(X)$ corresponding to a curve $C \cong \bP^{1}$ contained in $\Sing(X)$. Let $n$ be the integer entry appearing in \eqref{eq:transition-matrix}, the discrete part of the autoequivalence on the edge $e$. Then
  \begin{equation}
    n = -\deg(\gsing(X)|_{C}) = \deg(L|_{C}),
  \end{equation}
  where the second equality holds when $X$ comes from a triple $(Y,L,s)$.
\end{lemma}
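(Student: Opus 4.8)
The plan is to read off the integer $n$ from the way the $2$-periodic parameter of the edge category $\cE = \Perf^{(2)}(\bG_m)$ changes when one passes from the local presentation at $v_1$ to the one at $v_2$, and then to identify that change with a transition function of $L$ restricted to $C$. Recall from Section~\ref{sec:twisted-lg} and Propositions~\ref{prop:local-model-vertex}--\ref{prop:local-model-edge} that, over $Y_{v_i}$, a choice of trivialization $\theta_i$ of $L|_{Y_{v_i}}$ presents $\MF(Y_{v_i},L|_{Y_{v_i}},s|_{Y_{v_i}})$ as $\MF(Y_{v_i},f_{v_i})$ for a function $f_{v_i}$ representing $s$, and equips it with a genuine $2$-periodic structure whose parameter $u_i$ is the manifest one of this matrix factorization category. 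Restricting to the overlap $X_{v_1}\cap X_{v_2}$ along its singular component corresponding to $e$ and applying the chosen Kn\"orrer periodicity equivalences (which by Proposition~\ref{prop:local-model-edge} are $2$-periodic with respect to $u_1$ and $u_2$ respectively), the autoequivalence $\Phi$ attached to $e$ in the $I(G(X))$-diagram is precisely the equivalence $\cE \to \cE$ comparing the $v_1$-presentation with the $v_2$-presentation, and in particular it carries $u_1$ to $u_2$. Since the discrete part of $\Phi$ is $\begin{pmatrix}-1 & n\\ 0 & 1\end{pmatrix}$, acting by $(x,u)\mapsto(x^{-1},x^{n}u)$ (the $-1$ by Lemma~\ref{lem:discrete-twists}, and after absorbing the shift $[1]$ of Lemma~\ref{lem:always-shift}), this says $u_2/u_1 = c\,x^{n}$ on the relevant copy of $\bG_m$ for some $c \in k^{\times}$, where $x$ is the $\cE$-coordinate at $v_1$.

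Next I would compute $u_2/u_1$ directly. On the overlap one has $f_{v_2}=g\cdot f_{v_1}$, where $g$ is the transition function of $L$ relating $\theta_1$ to $\theta_2$, an invertible regular function on $Y_{v_1}\cap Y_{v_2}$; since rescaling a Landau--Ginzburg potential by a unit rescales its $2$-periodic parameter by the inverse unit (the observation recorded just before Section~\ref{dd:dotf}, now applied with a non-constant unit), $u_2/u_1$ equals $g^{\mp 1}|_{\bG_m}$. The curve $C\cong\bP^{1}$ corresponding to $e$ meets $v_1$ and $v_2$ at its two points, the $\cE$-coordinate $x$ at $v_i$ being a coordinate on $C$ vanishing at the triple point $v_i$ (as in Lemma~\ref{lem:discrete-twists}); trivializing $L|_C\cong\cO_{\bP^{1}}(\deg L|_C)$ over the two affine charts of $\bP^{1}$ centered at $v_1$ and $v_2$, its transition function on $\bG_m$ is $x^{\deg L|_C}$ up to a nonzero constant. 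Comparing with the previous paragraph gives $n=\pm\deg(L|_C)$; running the same computation once on the explicit local model $Y=\Tot(\cO_{\bP^{1}}(a)\oplus\cO_{\bP^{1}}(b))$ of the Example following Corollary~\ref{cor:graph-like-triple-point}, where the potential is literally multiplied by $x^{a+b+2}=x^{\deg L|_C}$ in passing between the two charts, fixes the sign and yields $n=\deg(L|_C)$. Finally $\deg(\gsing(X)|_C)=\deg(L^{\vee}|_C)=-\deg(L|_C)$, because $\gsing(X)|_C = \Tot(L^{\vee}|_C)$ by the computation of $\gsing$ in Section~\ref{sec:sing-supp}; this gives the remaining equality $n=-\deg(\gsing(X)|_C)$.

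The step I expect to be the main obstacle is pinning down the precise convention --- and above all the sign --- by which a change of trivialization of $L$ acts on the $2$-periodic parameter $u$, together with the check that this is compatible with the two a priori independent Kn\"orrer periodicity identifications chosen at $v_1$ and $v_2$. The cleanest way to handle this is to run the whole argument through on the explicit local model above, where every transition function is visible and every choice can be made by hand, and then to invoke the fact that both sides of the asserted identity are local invariants along $C$ in order to conclude in the general case.
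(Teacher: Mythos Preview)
Your argument is correct and in fact coincides with one of the three proofs the paper gives: the paper's third argument, for the $(Y,L,s)$ case, also reads $u_i$ as a local trivializing section of $\Sym_{\cO_Y}(L[-2])$ and concludes that $u_2/u_1$ is the transition function of $L|_C$, hence $n=\deg(L|_C)$.

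The paper's \emph{primary} argument is genuinely different from yours, and it is worth noting why. Instead of invoking the line bundle $L$, the paper fixes the identification ``$u_i\cdot f_{v_i}$ is invariant'' (since both presentations describe the same $\Dsing$), writes $f_{v_i}=x_iy_iz_i$, and computes
\[
\frac{u_2}{u_1}=\frac{x_1y_1z_1}{x_2y_2z_2}=x_1^{2}\cdot\frac{y_1z_1}{y_2z_2}=x_1^{2+(C)^2_{\{y=0\}}+(C)^2_{\{z=0\}}}=x_1^{-\deg(\gsing(X)|_C)},
\]
the last step being Corollary~\ref{cor:graph-like-triple-point}. This buys two things your approach does not: (i) it proves the intrinsic equality $n=-\deg(\gsing(X)|_C)$ directly, without assuming a global triple $(Y,L,s)$, so it covers the general normal-crossings case to which the lemma is meant to apply; and (ii) the sign comes out automatically from the normal-bundle computation, with no need to calibrate on an explicit example. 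Your route, by contrast, establishes $n=\deg(L|_C)$ first and then deduces $n=-\deg(\gsing(X)|_C)$ via $\gsing(X)|_C\cong\Tot(L^\vee|_C)$, which is only available once you have $(Y,L,s)$; to get the intrinsic statement you would need to invoke a local presentation of $X$ as a hypersurface near $C$ and argue that the result is independent of that choice. That is doable, but the paper's normal-bundle/triple-point route sidesteps it entirely.

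One small point: your appeal to ``rescaling the potential by a non-constant unit rescales $u$ by the inverse unit'' is exactly the content of the identity $u_1f_{v_1}=u_2f_{v_2}$ the paper uses, so you are on solid ground there; writing it that way also removes the sign ambiguity you flagged.
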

\begin{proof}
  Let $v_{1}$ and $v_{2}$ be the two vertices of $G(X)$ corresponding to the triple points on $C$. We have chosen local presentations $\MF(\bA^{3},x_{1}y_{1}z_{1})$ at $v_{1}$ and $\MF(\bA^{3},x_{2}y_{2}z_{2})$ at $v_{2}$, where the curve $C$ corresponds to the $x_{1}$ or $x_{2}$ axis. We also assume that the component $\{y_{1} = 0\}$ corresponds to the component $\{y_{2} = 0\}$ and likewise that $\{z_{1} = 0\}$ corresponds to $\{z_{2} = 0\}$. In particular $x_{2} = x_{1}^{-1}$ (at least up to scaling) along $C$.

  Restricting from $\bA^{3}$ to $\bG_{m}\times \bA^{2}$, we have two different defining functions, namely $x_{1}y_{1}z_{1}$ and $x_{2}y_{2}z_{2}$, and these two functions induce different $2$-periodic structures $u_{1}$ and $u_{2}$ respectively. The point is that, in order for the underlying DG category $\Dsing$ to remain unchanged, the quantities $u_{1}x_{1}y_{1}z_{1}$ and $u_{2}x_{2}y_{2}z_{2}$ must by identified. Thus we find
  \begin{equation}
    \frac{u_{2}}{u_{1}} = \frac{x_{1}y_{1}z_{1}}{x_{2}y_{2}z_{2}} = x_{1}^{2}\frac{y_{1}z_{1}}{y_{2}z_{2}}
  \end{equation}
  The ratio $(y_{1}z_{1})/(y_{2}z_{2})$ is $x_{1}$ raised to some power which measures the degrees of the normal bundles to $C$ in the two surfaces $\{y = 0\}$ and $\{z = 0\}$. In fact
  \begin{equation}
    \frac{u_{2}}{u_{1}} = x_{1}^{2} x_{1}^{(C)^{2}_{y=0}}x_{1}^{(C)^{2}_{z=0}} = x_{1}^{2 + (C)^{2}_{y=0} + (C)^{2}_{z=0}} = x_{1}^{-\deg(\gsing(X)|_{C})},
  \end{equation}
  where we have used Corollary \ref{cor:graph-like-triple-point}. Thus the desired automorphism takes $(x_{1},u_{1}) \mapsto (x_{2},u_{2}) = (x_{1}^{-1}, x_{1}^{-\deg(\gsing(X)|_{C})}u_{1})$, and we find $n = -\deg(\gsing(X)|_{C})$ as claimed.

  An alternative argument is to regard the pairs $(x_{i},u_{i})$ as local coordinates on the variety $\gsing(X)|_{C}$, which is isomorphic to $\Tot(\cO_{\bP_{1}}(-n))$ for some $n$. We then recognize that the matrix \eqref{eq:transition-matrix} represents the change of coordinates between the two natural toric affine charts on $\Tot(\cO_{\bP^{1}}(-n))$.

  In the $(Y,L,s)$ case, yet another argument involves regarding the variables $u_{i}$ as local generators for the sheaf of algebras $\Sym_{\cO_{Y}}(L[-2])$. Thus the variables $u_{i}$ must transform as sections of $L$, leading again to the equality $n = \deg(L|_{C})$.
\end{proof}

Lemmas \ref{lem:discrete-twists}, \ref{lem:always-shift} and \ref{lem:twist-equals-degree} characterize (up to equivalence of diagrams) the discrete data involved in the autoequivalence $\Phi$, but there are still continuous parameters. In terms of the local coordinates $(x,u)$ for $\Perf^{(2)}(\bG_{m}) \cong \Perf^{(2)}(k[x^{\pm 1},u^{\pm 1}])$. The most general form for the autoequivalence $\Phi$ that is compatible with the previous lemmas is
\begin{equation}
  \Phi = \Psi[1], \quad \Psi : (x,u) \mapsto (\alpha x^{-1}, \beta x^{n}u), \quad \alpha,\beta \in k^{\times}.
\end{equation}
For a given $X$, the values of $\alpha$ and $\beta$ are difficult to pin down because they depend on the choices of local coordinates on the various patches in the vertex covering $\coprod_{v} X_{v} \to X$ we started with, and also the local functions $xyz$ that are used in the equivalence $\Dsing(X_{v}) \cong \MF(\bA^{3},xyz)$.

While we shall not go further into pinning down these continuous parameters directly, they are related to the twisted $2$-periodic structure on the category $\Dsing(X)$:

\begin{itemize}
\item First consider the case where $X$ arises from a triple $(Y,L,s)$. In this case we have supposed that the equivalence $\Dsing(X_{v}) \cong \MF(Y_{v},L|_{Y_{v}},s|_{Y_{v}}) \cong \MF(\bA^{3},xyz)$ arise from a choice of trivialization of $L|_{Y_{v}}$. When $v_{1}$ and $v_{2}$ are vertices joined by an edge $e$, we have an autoequivalence of the form $\Phi_{e} = \Psi_{e}[1]$, $\Psi : (x,u) \mapsto (\alpha_{e}x^{-1},\beta_{e}x^{n_{e}}u)$. The $2$-periodic structures on the parts are encoded by the degree two elements $u$ and $\beta_{e}x^{n_{e}}u$; these elements must match up to the change of trivialization of $L$, since they all come from a single $L$-twisted $2$-periodic structure on $\MF(Y,L,s)$. We can now deduce the following: if $C_{e} \simeq \bP^{1}$ denotes the component of $\Sing(X)$ corresponding to $e$, then $C_{e}$ is covered by two copies of $\bA^{1}$, with transition function $x \mapsto \alpha_{e}x^{-1}$, and with respect to this atlas, the clutching function of $L|_{C_{e}}$ is $\beta_{e}x^{n_{e}}$.
\item Now suppose that $X$ is a normal crossings surface with graph-like singular locus, but we do not assume that a triple $(Y,L,s)$ is given. The autoequivalences have the same form $(x,u) \mapsto (\alpha_{e}x^{-1},\beta_{e}x^{n_{e}}u)$ as before. Then we may \emph{construct} a line bundle $L$ on $\Sing(X)$ such that $L|_{C_{e}}$ has clutching function $\beta_{e}x^{n_{e}}$ with respect to the evident atlas on $C_{e}$. This line bundle $L$ may be extended to a formal neighborhood of $\Sing(X)$ in $X$: a vertex covering of $X$ induces a covering of the formal neighborhood of $\Sing(X)$ where all intersections are modeled on the formal neighborhood of $\{y=z=0\} \subset \bG_{m} \times \bA^{2}$, and the clutching function $\beta_{e}x^{n_{e}}$ may be extended over this formal scheme. Since $\Dsing(X)$ is local to the formal neighborhood of $\Sing(X)$ in $X$, tensoring with $L$ gives a well-defined autoequivalence $\Lambda$ of $\Dsing(X)$. (Note that $L$ may not extend to all of $X$, since the Picard group of an irreducible component of $X$ is typically too small.)
\end{itemize}

The following theorem combines the preceding propositions and lemmas into a summary of the results of this section.

\begin{theorem}
  Let $X$ be a normal crossings surface with graph-like singular locus described by the graph $G(X)$, such that the dual intersection complex is orientable.
  \begin{enumerate}
  \item For each edge $e$ of $G(X)$ corresponding to a curve $C \cong \bP^{1}$ contained in $\Sing(X)$, set $n_{e} = -\deg(\gsing(X)|_{C_{e}})$. Then there is an autoequivalence $\Lambda$ of $\Dsing(X)$ and a natural isomorphism $t: \id \to \Lambda^{-1}[2]$, such that $\Lambda$ has the effect of tensoring with a line bundle of degree $n_{e}$ over $C_{e}$.
  
  \item Let $I(G(X))$ be the indexing category introduced at the beginning of Section \ref{dd:dotf}. Then $\Dsing(X)$ is the homotopy limit of an $I(G(X))$-shaped diagram where the gluing along the edge $e$ is twisted by an autoequivalence of $\cE = \Perf^{(2)}(\bG_{m})$ whose discrete part has the form
  \begin{equation}
    \phi_{n_{e}} = \begin{pmatrix}-1 & n_{e}\\ 0 &1\end{pmatrix}.
  \end{equation}

  \item Now assume that $X$ arises as the zero locus of a section $s$ of a line bundle $L$ on a smooth three-fold $Y$. Then we have $n_{e} = \deg(L|_{C_{e}})$, the autoequivalence $\Lambda$ is given by tensoring with $L$. Further  $\MF(Y,L,s) \simeq \Dsing(X)$, and therefore $\MF(Y,L,s)$ is again given as the homotopy limit of an $I(G(X))$-shaped diagram.  
  \end{enumerate}
\end{theorem}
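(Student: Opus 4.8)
The statement is an assembly of the descent results of Section~\ref{sec:descent-dsing-mf} with the local classification carried out above, so the plan is largely bookkeeping. I would proceed in three steps.

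\emph{Step 1: the raw homotopy limit in normal form.} Start from the presentations \eqref{eq:dsing-general-shape} and \eqref{eq:mf-general-shape}, which hold by Theorems~\ref{thm:descent-dsing} and \ref{thm:descent-mf} applied to a vertex cover, together with Propositions~\ref{prop:local-model-vertex} and \ref{prop:local-model-edge}. Replace $J(G(X))$ by the equivalent category $I(G(X))$ of Section~\ref{dd:dotf}, so the vertex categories are literally copies of $\cV = \MF(\bA^{3},xyz)$ and the flag categories are copies of $\cE = \Perf^{(2)}(\bG_{m})$. Fix at each vertex $v$ a local presentation of $\Dsing(X_{v})$ as $\MF(\bA^{3},xyz)$ (in the $(Y,L,s)$ case, a trivialization of $L|_{Y_{v}}$), and fix the cyclic ordering of edges at $v$ coming from a chosen orientation of $C(X)$, which exists by hypothesis. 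Then each restriction $R_{i}:\cV\to\cE$ is a Zariski restriction followed by the Kn\"orrer periodicity functor normalized by $\cO_{\{x_{i+1}=0\}}\mapsto\cO_{\bG_{m}}$, as in Section~\ref{dd:dotf}. This puts every functor in the diagram into normal form except the autoequivalences $\Phi_{e}$ attached to the compact edges.

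\emph{Step 2: identifying the edge autoequivalences.} Lemma~\ref{lem:always-shift} shows that, with the ribbon structure chosen above, each $\Phi_{e}$ has the form $\Psi_{e}[1]$; Lemmas~\ref{lem:discrete-twists} and \ref{lem:twist-equals-degree} (the latter using Corollary~\ref{cor:graph-like-triple-point}) show that the discrete part of $\Psi_{e}$ is $\phi_{n_{e}}$ with $n_{e} = -\deg(\gsing(X)|_{C_{e}})$, and that $n_{e}=\deg(L|_{C_{e}})$ when $X$ comes from $(Y,L,s)$. This establishes assertion~(2), and since $\Dsing(X)\cong\MF(Y,L,s)$ by the equivalence of Section~\ref{sec:twisted-lg} while \eqref{eq:mf-general-shape} is already $\cR$-linear, it also gives the final claim of assertion~(3). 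Writing the full autoequivalence as
\begin{equation}
  \Psi_{e} : (x,u) \mapsto (\alpha_{e}x^{-1},\, \beta_{e}x^{n_{e}}u), \qquad \alpha_{e},\beta_{e}\in k^{\times},
\end{equation}
the continuous parameters $\alpha_{e},\beta_{e}$ are not pinned down by $X$ but, crucially, are irrelevant to the isomorphism class of the output.

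\emph{Step 3: the autoequivalence $\Lambda$ and the twisted $2$-periodic structure.} The $2$-periodic structure on each copy of $\cE$ is recorded by the degree-two element $u$, and the diagram glues these by the substitution $u\mapsto\beta_{e}x^{n_{e}}u$ across each edge. Since $\Dsing(X)$ depends only on a formal neighborhood $\widehat{N}$ of $\Sing(X)$ in $X$ (Proposition~\ref{prop:local-model-vertex} and \cite{PS21}), and the vertex cover restricts to a cover of $\widehat{N}$ whose overlaps are formal neighborhoods of $\{y=z=0\}\subset\bG_{m}\times\bA^{2}$, the data $\{\beta_{e}x^{n_{e}}\}$ are exactly clutching functions for a line bundle $L$ on $\widehat{N}$ whose restriction to $C_{e}$ has degree $n_{e}$; in the $(Y,L,s)$ case this is $L|_{\widehat{N}}$ by Lemma~\ref{lem:twist-equals-degree}. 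Tensoring with $L$ defines $\Lambda$, and gluing the local isomorphisms $\id\xrightarrow{\ \sim\ }\id[2]$ (each local presentation being genuinely $2$-periodic) against the clutching data produces $t:\id\to\Lambda^{-1}[2]$. This proves assertion~(1), and in the $(Y,L,s)$ case $\Lambda = \otimes L$, completing~(3).

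\emph{Main obstacle.} Steps~1--2 are routine; the delicate point is Step~3: verifying that the clutching functions $\beta_{e}x^{n_{e}}$ satisfy the cocycle condition needed to define an honest line bundle on $\widehat{N}$, and that the glued transformation $t$ is independent of the auxiliary choices (vertex presentations, cyclic orderings). Because all triple and higher overlaps in the vertex cover have null singularity categories, the categorical cocycle condition is satisfied automatically; the residual issue is that passing through the equivalences $\Dsing(X_{v})\simeq\MF(\bA^{3},xyz)$ could a priori introduce an obstruction. Here one invokes the torsor descriptions of $\twoper(\cV)$ and $\twoper(\cE)$ over $\HH^{0}(\cV)^{\times}$ and $\HH^{0}(\cE)^{\times}$ from Propositions~\ref{prop:autoequiv-of-vertex} and \ref{prop:autoequiv-of-edge}: any such ambiguity is absorbed into a change of the continuous parameters $\alpha_{e},\beta_{e}$, which alters neither the isomorphism class of $L$ on $\widehat{N}$ nor the existence of $t$.
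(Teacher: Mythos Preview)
Your plan is essentially correct and matches the paper's approach: the paper presents this theorem explicitly as a summary of the preceding propositions and lemmas (``The following theorem combines the preceding propositions and lemmas into a summary of the results of this section'') and gives no separate proof, so the content is precisely the assembly of Lemmas~\ref{lem:discrete-twists}, \ref{lem:always-shift}, \ref{lem:twist-equals-degree}, Corollary~\ref{cor:graph-like-triple-point}, and the two bulleted paragraphs immediately preceding the theorem, which you have correctly identified and organized.

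One small comment on your ``main obstacle'': you are slightly overestimating the difficulty of the cocycle condition for $L$ on $\widehat{N}$. The line bundle is first built on $\Sing(X)$ itself, and there the cocycle condition is automatic for a structural reason that does not require invoking nullity of triple-overlap singularity categories: at each vertex $v$ you have \emph{fixed} a single trivialization (coming from the chosen presentation $\Dsing(X_{v})\simeq\MF(\bA^{3},xyz)$), and the clutching functions $\beta_{e}x^{n_{e}}$ compare these fixed trivializations along the edge patches. Since each edge meets exactly two vertices and there are no genuine triple overlaps in the \v{C}ech sense for this atlas on $\Sing(X)$, there is nothing to check. The extension to the formal neighborhood is then by the same clutching functions, as the paper notes. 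Your torsor argument via $\twoper(\cV)$ and $\twoper(\cE)$ is correct but addresses a different point, namely the independence (up to isomorphism) of the resulting $\Lambda$ and $t$ from the auxiliary choices.
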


\section{Relative singularity categories as localizations of $L$-twisted $2$-periodic categories}
\label{sec:relative-sing}

In this section we assume that $X$ is a normal crossings surface with graph-like singular locus that arises as the zero locus of a section $s$ of a line bundle $L$ on a three-fold $Y$. Up to this point, we have assumed that $Y$ is itself smooth, but we have allowed the line bundle $L$ to be non trivial. Instead, we could consider the situation where $L$ is trivial, so that $s$ is just a function, but the ambient variety, call it $Y'$, is singular. The theory of matrix factorizations and singularity categories where the total space $Y'$ is singular presents extra subtlety, since there are now two different \emph{ambient} categories $\Coh(Y')$ and $\Perf(Y')$ that we can use as the starting point, meaning that we may consider ``coherent'' or ``perfect'' matrix factorizations.

We will be primarily concerned with the coherent version, as studied by Efimov-Positselski \cite{efimov-positselski}. Let $i : X \to Y'$ be the inclusion of a hypersurface $X$ in a possibly singular variety $Y'$. This is a local complete intersection morphism and has finite tor dimension. Therefore $i^{*}$ sends $\Coh(Y')$ to $\Coh(X)$ and $\Perf(Y')$ to $\Perf(X)$, and there is an induced functor
\begin{equation}
  \label{eq:dsing-pullback}
  i^{*} : \Dsing(Y') \to \Dsing(X)
\end{equation}
The \emph{(coherent) relative singularity category} $\Dsing(X,Y')$ is defined to be the homotopy cofiber of the functor \eqref{eq:dsing-pullback}, and we have
\begin{equation}
  \Dsing(X,Y') \cong \Coh(X)/\langle\Perf(X),i^{*}\Coh(Y')\rangle.
\end{equation}
By the results of \cite{efimov-positselski}, if $X = f^{-1}(0)$ for some function $f: Y' \to \bA^{1}$, this category is equivalent to the category of \emph{coherent matrix factorizations} of $f$: $\Dsing(X,Y') \cong \MF(Y',f)$, and this category is $2$-periodic.

\begin{remark}
  We mention in passing that the \emph{perfect relative singularity category} $\Dsing(X,Y')_{\Perf}$ is defined to be the homotopy \emph{fiber} of the pushforward functor $i_{*} : \Dsing(X) \to \Dsing(Y')$. If we define $\Coh(X)_{\Perf(Y')}$ to be the subcategory of $\Coh(X)$ consisting of objects $X$ such that $i_{*}(X)$ is perfect on $Y'$, then we have
\begin{equation}
  \Dsing(X,Y')_{\Perf} \cong \Coh(X)_{\Perf(Y')}/\Perf(X).
\end{equation}
The perfect relative singularity category is the version studied in \cite{BRTV, pippi}. In this paper the term ``relative singularity category'' always refers to the first, coherent version.
\end{remark}


\subsection{From absolute to relative: localizations and pencils of divisors}
It turns out that categories of coherent matrix factorizations arise naturally in mirror symmetry. In order to explain the mirror symmetry picture however we need to develop some preliminary results on categories of matrix factorizations, which are of independent interest. We are led to consider various flavors of categories of matrix factorizations: most of this section and the next will be devoted to explaining how (under suitable assumptions)  these different categories compare to each other. A first application to mirror symmetry is given by Corollary \ref{cor:relative-sing-fuk}, and the wider (but partially conjectural) mirror symmetry context is presented in Section \ref{sec:symplectic-conjectures}.

Let us start by explaining  briefly the setting we will be  interested in. Let $Y$ be a smooth variety, equipped with a line bundle $L$. Let $s, s'$ be two linearly independent sections of $L$, and consider the pencil of divisors 
\begin{equation}
  \{ us + vs'=0\} \quad [u:v] \in \mathbb{P}^1
\end{equation}
Let $X=s^{-1}(0)$. For our applications, $Y$ will be a threefold, $X$ will be a normal crossings divisor with graph like singular locus, and $s'$ will be a fixed auxiliary section. We can consider the following categories: 
\begin{enumerate}
\item The first is the category $\MF(Y,L,s)$ of matrix factorizations of $s$. This is $L$-twisted $2$-periodic, meaning that tensoring with $L$ is isomorphic to shift by $2$. This category does not involve $s'$.
\item Let $W =\{us+vs'=0 \} \subset Y \times \bP^{1}$ be the total space of the pencil of divisors. There is a morphism $\pi_{2} : W \to \bP^{1}$ whose fibers are the members of the pencil. We remove from $W$ the fiber at $\infty$, that is, the divisor $(s')^{-1}(0)$. This is a variety $Y'$ which is birational to $Y$, and such that $X$ is the zero level set of a function $f: Y' \to \bA^{1}$. Note that in general $Y'$ is singular. We can consider the category 
\begin{equation}
  \Dsing(X,Y') \cong \MF(Y',f)
\end{equation}
which is genuinely 2-periodic.
\item Next, consider a resolution $\pi : \widetilde{Y}' \to Y'$. The function $f : Y' \to \bA^{1}$ pulls back to a function $\tilde{f} : \widetilde{Y}' \to \bA^{1}$. We let $\widetilde{X}$ be the strict transform of $X = f^{-1}(0)$ under the resolution. Then we have $\widetilde{X} \subset \tilde{f}^{-1}(0)$. Note that $\widetilde{X}$ and $X$ are birational but usually not isomorphic. The third category which is relevant for us is 
  \begin{equation}
    \Dsing(\widetilde{X}) = \MF(\widetilde{Y}',\tilde{f})
  \end{equation}
  which again is genuinely 2-periodic.
\end{enumerate}
We shall see that under certain hypotheses, the second category is equivalent to a localization of the first category and also a localization of the third. The third category is often equivalent to the Fukaya category of a Riemann surface, which allows us to connect the second kind of category to the Fukaya categories of nodal Riemann surfaces \`{a} la Jeffs \cite{jeffs}: see Corollary \ref{cor:relative-sing-fuk} and Section \ref{sec:singular-divisor-in-k3}.

The category $\MF(Y,L,s)$ is $L$-twisted $2$-periodic, whereas the other two categories are $2$-periodic. An obvious way to get from the $L$-twisted $2$-periodic setting to the untwisted $2$-periodic setting is to localize the category with respect to a natural transformation $s' : \id \to L\otimes$, which is to say, a section of the line bundle $L$.\footnote{Since there is preferred isomorphism between $L\otimes $ and $[2]$, it would be equivalent to localize with respect to a natural transformation $t' : \id \to [2]$.}  On the other hand, the choice of $s'$ gives us the data necessary to construct the variety $Y'$ described above. Thus we seek to relate the localization $\MF(Y,L,s)[s'^{-1}]$ to $\MF(Y',f)$.

Take $(Y,L,s)$ with $X = s^{-1}(0)$ and $Y$ smooth. Choose an auxiliary section $s' \in \Gamma(Y,L)$ so that $\langle s, s' \rangle \subset \Gamma(Y,L)$ defines a pencil of divisors on $Y$. Consider $W = \{us + vs' = 0\} \subset Y \times \bP^{1}$, where $[u:v]$ are homogeneous coordinates on $\bP^{1}$. Then $\pi_{2} : W \to \bP^{1}$ is a morphism whose fibers are the members of the pencil. The fibers of $\pi_{1} : W \to Y$ are points except over the base locus $B = s^{-1}(0) \cap s'^{-1}(0)$ where they are lines. The scheme $W$ need not be smooth: the singularities of $W$ correspond to points on the base locus where $X = X_{0}  =s^{-1}(0)$ and $X_{\infty} = s'^{-1}(0)$ fail to be transverse. More precisely, $(p,[u:v])$ is a singular point of $W$ if and only if $p \in B$ and $u\,ds + v\,ds' = 0$ as a differential on $Y$. Equivalently, $(p,[u:v])$ is a singular point of $W$ if and only if $p \in B$ and $X_{[u:v]}$ has a singularity at $p$.

We now have two embeddings of $X$: the original embedding of $X \subset Y$, and the embedding of $X = X_{0} \subset W$. The absolute singularity category of $X$ is
\begin{equation}
  \Dsing(X) \cong \MF(Y,L,s),
\end{equation}
and this category is $L$-twisted $2$-periodic. Let $Y' = W\setminus X_{\infty}$, so that $\pi_{2} : W \to \bP^{1}$ restricts to a map that we denote $f : Y' \to \bA^{1}$. Then the relative singularity category of $X$ in $W$ or $Y'$ is
\begin{equation}
  \Dsing(X,W) \cong \Dsing(X,Y') \cong \MF(Y',f),
\end{equation}
and this category is $2$-periodic without any twisting. Recall that in constructing $W$ and $Y'$ from $(Y,L,s)$, we made the choice of an additional section $s' \in \Gamma(Y,L)$. Thus $\MF(Y,L,s)$ carries both a natural isomorphism $t : \id \to L^{-1}\otimes [2]$ and a natural transformation (not isomorphism) $s': \id \to L\otimes$. Localizing $\MF(Y,L,s)$ by formally inverting $s'$ leads to a $2$-periodic category that is closely related to $\Dsing(X,Y')$, and which is in many cases equivalent to it.

The statement of the next theorem refers to support conditions for $\Dsing$, these are to be interpreted in terms of the theory of Section \ref{sec:sing-supp}. There is a singular support variety $\gsing(X)$ which is a stratified vector bundle over $X$. Objects in $\Coh(X)$ have a notion of singular support in $\gsing(X)$, and objects in $\Dsing(X)$ have a notion of singular support in $\gsing^{\circ}(X) = \gsing(X) \setminus X$. If $Z \subset \Sing(X)$ is a closed subset of the singular locus of $X$, then $\Dsing_{Z}(X)$ denotes the subcategory of $\Dsing(X)$ with objects having singular support contained in $p^{-1}(Z)$, where $p: \gsing^{\circ}(X) \to \Sing(X)$ is the projection. Note that a coherent sheaf that represents an object of $\Dsing_{Z}(X)$ may have ordinary support outside of $Z$: the condition is that the singular support of this coherent sheaf is contained in $X \cup p^{-1}(Z)$.

\begin{theorem}
  \label{thm:relative-localization}
  Let $Y,L,s,s',Y',X$ be as above. Then there is a natural functor
  \begin{equation}
    \psi: \Dsing(X,Y') \to \MF(Y,L,s)[s'^{-1}].
  \end{equation}
  Let $Z = X\cap \Sing(Y')$. If the image of the functor $i^{*}: \Dsing_{Z}(Y') \to \Dsing_{Z}(X)$ split-generates $\Dsing_{Z}(X)$, then $\psi$ is an equivalence.
\end{theorem}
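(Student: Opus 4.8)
The plan is to construct the functor $\psi$ and then verify that it becomes an equivalence under the split-generation hypothesis by a two-stage argument: first localize away from the locus $Z$ where everything is already transparent, then analyze the ``interesting'' part supported near $Z$ using the hypothesis. First I would build $\psi$ conceptually. We have $X$ embedded in two ways, $i_Y : X \hookrightarrow Y$ and $i_W : X \hookrightarrow Y' \subset W$. Pulling back along $i_W$ and passing to the quotient by perfect complexes and by $i_W^*\Coh(Y')$ produces $\Dsing(X,Y') \cong \MF(Y',f)$, while $\MF(Y,L,s) \cong \Dsing(X)$ is the quotient of $\Coh(X)$ by $\Perf(X)$ alone. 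Since the composite $\Coh(Y') \xrightarrow{i_W^*} \Coh(X) \to \Dsing(X)$ need not vanish, we must further localize $\Dsing(X)$ to kill the image of $i_W^*\Coh(Y')$; I would identify this localization with the localization $\MF(Y,L,s)[s'^{-1}]$ inverting the natural transformation $s' : \id \to L \otimes (-)$. The point is that the objects $i_W^*\cF$ for $\cF \in \Coh(Y')$, modulo perfect complexes on $X$, are exactly the objects that become acyclic after inverting $s'$: on $Y' = W \setminus X_\infty$ the section $s'$ is nowhere zero, so pullbacks from $Y'$ are $s'$-torsion in the appropriate sense. Having matched the kernel, the universal property of Verdier localization produces $\psi : \Dsing(X,Y') \to \MF(Y,L,s)[s'^{-1}]$ canonically. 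I would phrase this cleanly using the $\cR$-module language of Section \ref{sec:twisted-lg}: inverting $s'$ is a base change of module categories, and $\psi$ is the induced comparison map.

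Next, the core of the argument is to show $\psi$ is an equivalence, and here I would exploit the support decomposition. Away from $Z = X \cap \Sing(Y')$, the threefold $Y'$ is smooth, so $\Dsing(Y') $ is supported only over $Z$; equivalently, for $\cF \in \Coh(X)$ whose singular support avoids $p^{-1}(Z)$, the pullback question is vacuous and $\psi$ is visibly an equivalence on that part. Concretely: both sides carry semiorthogonal-type or recollement decompositions relative to the closed subset $Z \subset \Sing(X)$, splitting into a piece supported on $\gsing^\circ(X)|_{Z}$ and a complementary piece supported on $\gsing^\circ(X) \setminus p^{-1}(Z)$; I would check $\psi$ respects these and is an equivalence on the complementary piece directly (there the ambient $Y'$ is smooth, so $\Dsing(X,Y') \cong \Dsing(X)$ locally and no localization is needed). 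This reduces the theorem to proving that $\psi$ restricts to an equivalence $\Dsing_Z(X,Y') \to \MF(Y,L,s)[s'^{-1}]_Z$ on the pieces supported over $Z$.

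It remains to treat the $Z$-supported part, and this is exactly where the split-generation hypothesis enters. On the source side, $\Dsing_Z(X,Y')$ is the cofiber of $i^* : \Dsing_Z(Y') \to \Dsing_Z(X)$; on the target side, $\MF(Y,L,s)[s'^{-1}]_Z$ is obtained from $\Dsing_Z(X)$ by inverting $s'$, which (again) amounts to quotienting by the thick subcategory generated by the image of $i^*$. The hypothesis that $i^*(\Dsing_Z(Y'))$ split-generates $\Dsing_Z(X)$ guarantees that this thick subcategory is all of $\Dsing_Z(X)$ on the target side — so the localization is trivial? No: rather, it guarantees that the \emph{cofiber} $\Dsing_Z(X,Y')$ computed on the source and the \emph{localization} computed on the target agree, because quotienting a triangulated category by a split-generating subcategory gives the same thing whether one takes the Verdier quotient by the essential image or by the thick subcategory it generates. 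Precisely, I would argue that $\psi$ on $Z$-supported parts is the comparison between $\Coh_Z(X)/\langle \Perf, i^*\Coh_Z(Y')\rangle$ and the localization of $\Coh_Z(X)/\Perf$ at $s'$, and the split-generation hypothesis ensures the thick closure of $i^* \Coh_Z(Y')$ inside $\Dsing_Z(X)$ coincides with the $s'$-acyclics, so the two quotients literally coincide. I would prove this last coincidence by showing directly that an object of $\Dsing_Z(X)$ is $s'$-acyclic if and only if it lies in the thick subcategory generated by $i^*\Coh_Z(Y')$ — one inclusion is the easy computation that $s'$ acts nilpotently on pullbacks from $Y'$, and the reverse inclusion uses split-generation together with the fact that the $s'$-acyclics form a thick (idempotent-complete) subcategory.

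The main obstacle I anticipate is this last coincidence of subcategories, specifically the forward direction: controlling the $s'$-acyclic objects of $\Dsing_Z(X)$ and showing they cannot be strictly larger than the thick closure of $i^*\Coh_Z(Y')$ \emph{without} the hypothesis — which is precisely why the hypothesis is imposed — and, with the hypothesis, making the comparison of the two Verdier quotients rigorous at the level of $\infty$-categories rather than triangulated categories. I would handle this by working throughout with the stable $\infty$-categorical (or DG-categorical) Verdier quotient, where ``quotient by a subcategory'' and ``quotient by its thick closure'' agree on the nose after idempotent completion, so that the split-generation hypothesis translates into an honest equivalence of quotient categories and hence of their localizations, yielding that $\psi$ is an equivalence.
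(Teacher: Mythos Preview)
Your overall strategy---realize both $\Dsing(X,Y')$ and $\MF(Y,L,s)[s'^{-1}]$ as quotients of $\Dsing(X)$ and compare the kernels---is the same as the paper's. But two concrete steps in your execution do not go through as written.

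\textbf{The construction of $\psi$.} Your argument that $i^{*}\Dsing(Y')$ lands in the $s'$-acyclics is ``on $Y' = W \setminus X_\infty$ the section $s'$ is nowhere zero, so pullbacks from $Y'$ are $s'$-torsion.'' Both the premise and the inference are wrong. The pullback $\pi_1^{*}s'$ to $Y'$ \emph{does} vanish, namely on the $\bA^{1}$-bundle $\pi_1^{-1}(B)\cap Y'$ over the base locus; and if a section were nowhere zero, objects would be $s'$-local rather than $s'$-torsion. The paper instead invokes Orlov's theorem that $\Dsing(Y')$ is split-generated by objects set-theoretically supported on $\Sing(Y')$; hence $i^{*}\Dsing(Y')$ is split-generated by objects supported on $Z = X\cap \Sing(Y') \subset B$, and these die under restriction to $X\setminus B$. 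You never invoke this result, and without it you have no proof that $\psi$ is well-defined.

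\textbf{The missing identification of the localization.} The step that makes the whole argument short is
\[
  \MF(Y,L,s)[s'^{-1}] \;\cong\; \Dsing(X \setminus B),
\]
obtained simply by commuting the two localizations (inverting $s'$ on $\Coh(X)$ is restriction to $X\setminus B$, then modding out $\Perf$). Once you have this, the kernel of $\Dsing(X)\to \MF(Y,L,s)[s'^{-1}]$ is visibly $\Dsing_{Z}(X)$ (since $\Sing(X)\cap B = Z$), and the hypothesis that $i^{*}\Dsing_{Z}(Y')$ split-generates $\Dsing_{Z}(X)$ shows immediately that the two kernels have the same thick closure, so the two quotients agree. Your proposal talks around this (``$s'$-acyclics'') without ever proving that the $s'$-acyclics are precisely $\Dsing_{Z}(X)$; your ``reverse inclusion'' argument assumes this rather than establishing it. The recollement scaffolding you set up is then unnecessary: once both sides are identified as quotients of $\Dsing(X)$ by subcategories with the same thick closure, there is nothing left to decompose.
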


\begin{proof}
  Since $\MF(Y,L,s)[s'^{-1}]$ is the localization at $s'$ of the localization $\Coh(X)/\Perf(X)$, by doing the localizations in the other order, we see that this category is equivalent to $\Coh(X)[s'^{-1}]/\Perf(X)[s'^{-1}]$. Now localizing $\Coh(X)$ at the section $s'^{-1}$ corresponds to deleting the vanishing locus of $s'$ on $X$, which is simply the base locus $B$, so we find
  \begin{equation}
    \MF(Y,L,s)[s'^{-1}] \cong \Dsing(X \setminus B).
  \end{equation}

  Recall that $\Dsing(X,Y') \cong \Dsing(X)/i^{*}\Dsing(Y')$. There is an obvious functor $\Dsing(X) \to \MF(Y,L,s)[s'^{-1}] \cong \Dsing(X\setminus B)$, which amounts to restriction to an open subscheme. In order to show that $\psi$ is well defined, we claim that $i^{*}\Dsing(Y')$ is in the kernel. By a theorem of Orlov \cite{orlov-completion}, the category $\Dsing(Y')$ is split-generated by objects set-theoretically supported on $\Sing(Y')$. Thus $i^{*}\Dsing(Y')$ is split-generated by objects supported on $Z = X \cap \Sing(Y')$. Now by the discussion above $Z = X \cap \Sing(Y')$ may be identified with $\Sing(X) \cap B$, and in particular $Z \subset B$, so $i^{*}\Dsing(Y')$ is sent to zero as claimed. This completes the proof of the first assertion.

  For the second assertion, consider that $\Dsing(X)$ is split-generated by objects supported on $\Sing(X)$, so restriction to the complement $X\setminus B$ has the effect of quotienting by objects supported on $\Sing(X)\cap B = Z$. In other words, the functor $\Dsing_{Z}(X) \to \Dsing(X)$ essentially surjects onto the kernel of the localization $\Dsing(X) \to \Dsing(X\setminus B)$. Suppose that the image of $i^{*}: \Dsing_{Z}(Y')\to \Dsing_{Z}(X)$ split-generates the target category. Then any object that is sent to zero by $\psi$ is a summand of an object in the image of $\Dsing_{Z}(Y')$ in $\Dsing(X)$, hence is a summand of an object in $i^{*}\Dsing(Y')$, hence is zero in $\Dsing(X,Y')$.
\end{proof}

\subsection{Relative singularity categories and resolutions}

Another thing we may consider is a resolution $\pi : \widetilde{Y}' \to Y'$. The function $f : Y' \to \bA^{1}$ pulls back to a function $\tilde{f} : \widetilde{Y}' \to \bA^{1}$. We let $\widetilde{X}$ be the strict transform of $X = f^{-1}(0)$ under the resolution. Then we have $\widetilde{X} \subset \tilde{f}^{-1}(0)$. Note that $\widetilde{X}$ and $X$ are birational but usually not isomorphic.


The question is then to relate the singularity categories $\Dsing(\widetilde{X}) = \MF(\widetilde{Y}',\tilde{f})$ and $\Dsing(X,Y') = \MF(Y',f)$. There is a simple relationship based on the fact that $\widetilde{X}$ is birational to $X$. Recall $Z = X \cap \Sing(Y') = \Sing(X) \cap B$. We assume that the birational map $\pi: \widetilde{X} \to X$ is an isomorphism outside of $Z$, so that $\widetilde{X} \setminus \pi^{-1}(Z) \cong X \setminus Z$; this holds automatically if the resolution $\pi : \widetilde{Y}' \to Y'$ is an isomorphism over the nonsingular locus.

\begin{theorem}
  \label{thm:resolution-localization}
  Let the notation be as above. Suppose that $\pi : \widetilde{Y}' \to Y'$ is an isomorphism over the nonsingular locus, or at least that $\pi : \widetilde{X} \to X$ is an isomorphism outside of $Z$. Then there is an equivalence
  \begin{equation}
    \MF(Y,L,s)[s'^{-1}] \cong \Dsing(\widetilde{X} \setminus \pi^{-1}(Z))
  \end{equation}
  If the image of $i^{*} : \Dsing_{Z}(Y') \to \Dsing_{Z}(X)$ split-generates $\Dsing_{Z}(X)$, these categories are equivalent to $\Dsing(X,Y')$ as well.
\end{theorem}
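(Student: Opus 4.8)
The plan is to deduce this from Theorem~\ref{thm:relative-localization} together with a locality property of $\Dsing$. Recall from the proof of Theorem~\ref{thm:relative-localization} that $\MF(Y,L,s)[s'^{-1}]\cong\Dsing(X\setminus B)$, where $B=s^{-1}(0)\cap s'^{-1}(0)$ is the base locus, and recall from the discussion preceding that theorem that $Z=X\cap\Sing(Y')$ equals $\Sing(X)\cap B$. Since $Z\subseteq B$, we have open immersions $X\setminus B\subseteq X\setminus Z\subseteq X$, and because $\Sing(X)\cap B=Z$ the open subscheme $X\setminus B$ contains the entire singular locus $\Sing(X)\setminus Z=\Sing(X\setminus Z)$ of $X\setminus Z$. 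The first step is to use this to conclude $\Dsing(X\setminus B)\cong\Dsing(X\setminus Z)$: one covers $X\setminus Z$ by $X\setminus B$ and by the regular locus $(X\setminus Z)^{\mathrm{reg}}$, observes that this regular locus and its intersection with $X\setminus B$ are smooth (so their singularity categories vanish), and concludes from \'etale descent for $\Dsing^{\infty}$ (Theorem~\ref{thm:descent-dsing}) that the \v{C}ech totalization collapses to $\Dsing^{\infty}(X\setminus B)$; passing to compact objects and idempotent completions gives the claim. (Equivalently, this is the statement --- used already in Proposition~\ref{prop:local-model-vertex} --- that $\Dsing$ depends only on a neighborhood of the singular locus.)

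For the second step, the hypothesis that $\pi$ is an isomorphism over the regular locus of $Y'$ --- or at least that $\pi:\widetilde{X}\to X$ is an isomorphism away from $Z$ --- means precisely that $\pi$ restricts to an isomorphism of schemes $\widetilde{X}\setminus\pi^{-1}(Z)\xrightarrow{\ \sim\ }X\setminus Z$, hence induces an equivalence $\Dsing(\widetilde{X}\setminus\pi^{-1}(Z))\cong\Dsing(X\setminus Z)$. Concatenating,
\begin{equation}
  \MF(Y,L,s)[s'^{-1}]\ \cong\ \Dsing(X\setminus B)\ \cong\ \Dsing(X\setminus Z)\ \cong\ \Dsing(\widetilde{X}\setminus\pi^{-1}(Z)),
\end{equation}
which is the first assertion. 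The last assertion is immediate: when the image of $i^{*}:\Dsing_{Z}(Y')\to\Dsing_{Z}(X)$ split-generates $\Dsing_{Z}(X)$, Theorem~\ref{thm:relative-localization} tells us that $\psi:\Dsing(X,Y')\to\MF(Y,L,s)[s'^{-1}]$ is an equivalence, so $\Dsing(X,Y')$ is also part of the chain above.

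I do not anticipate a serious obstacle: the theorem is essentially a repackaging of Theorem~\ref{thm:relative-localization} with the birational comparison that is hard-wired into the hypothesis on $\pi$. The only point requiring genuine care is the locality step, and within it the book-keeping of the \v{C}ech totalization for the two-element cover $\{X\setminus B,\ (X\setminus Z)^{\mathrm{reg}}\}$ of $X\setminus Z$ --- one must check that every term of the cosimplicial diagram other than the ones built solely from $X\setminus B$ involves a smooth scheme and hence a null category, so that the limit reduces to $\Dsing^{\infty}(X\setminus B)$. It is also worth stating explicitly, as part of the proof, the scheme-level identity $\Sing(X)\cap B=Z$ (obtained from the characterization of $\Sing(W)$ recalled before Theorem~\ref{thm:relative-localization}), since this is exactly what guarantees that $X\setminus B$ and $X\setminus Z$ have the same singular locus and hence the same singularity category.
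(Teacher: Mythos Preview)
Your proposal is correct and follows essentially the same route as the paper: the paper's proof is the one-line observation that $\MF(Y,L,s)[s'^{-1}]\cong\Dsing(X\setminus B)\cong\Dsing(X\setminus Z)$ (the latter equivalence being the locality of $\Dsing$ at the singular locus, which you spell out via descent) together with the scheme isomorphism $X\setminus Z\cong\widetilde{X}\setminus\pi^{-1}(Z)$ supplied by the hypothesis on $\pi$. Your added detail on the \v{C}ech totalization and the identification $Z=\Sing(X)\cap B$ is welcome but not a departure from the paper's argument.
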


\begin{proof}
  This follows from the equivalences $\MF(Y,L,s)[s'^{-1}] \cong \Dsing(X\setminus B) \cong \Dsing(X\setminus Z)$ from the proof of the previous theorem, and the isomorphism $X \setminus Z = \widetilde{X} \setminus \pi^{-1}(Z)$.
\end{proof}

\subsection{The case of normal crossings surfaces}

Next we return to the normal crossing surface case. Consider a triple $(Y,L,s)$ with $Y$ a smooth threefold such that $X = s^{-1}(0)$ is a normal crossings surface with graph-like singular locus. We wish to choose the additional section $s' \in \Gamma(Y,L)$ so that it is in general position with respect to $X$. Specifically, we require that for any irreducible component $C \subset \Sing(X)$, the section $s'|_{C} \in \Gamma(C,L|_{C})$ has only simple zeros, and that these zeros are disjoint from the triple points. Note that this implies that $\deg(L|_{C}) \geq 0$. In a certain sense, the base locus $B = s'^{-1}(0) \cap X$ is transverse to $\Sing(X)$. The subscheme $Z = B \cap \Sing(X)$ is a discrete set of points contained in $\Sing(X)$ but disjoint from the set of triple points.

The variety $Y'$ is obtained by blowing up $B$ and deleting the fiber at $\infty$, and the singularities of $Y'$ along $X$ are ordinary double points occurring at $Z = X \cap \Sing(Y')$. We choose a resolution $\pi: \widetilde{Y}' \to Y$ that is an isomorphism over the nonsingular locus (the resolution is not unique, but in our examples there are usually several natural resolutions that are related by flops).

We now analyze the functor $i^{*} : \Dsing_{Z}(Y')\to \Dsing_{Z}(X)$. Since $Z$ is a disjoint union of points, we may work one point at time, so let $z$ be some point of $Z$. Let $k_{z,Y'} \in \Coh_{Z}(Y')$ and $k_{z,X} \in \Coh_{Z}(X)$ be the skyscraper sheaves at $z$ with respect to the two different ambient varieties.
\begin{lemma}
  \label{lem:skyscraper-pullback}
  For the (derived, as always) pullback functor $i^{*}: \Coh_{Z}(Y') \to \Coh_{Z}(X)$, we have
  \begin{equation}
  i^{*}(k_{z,Y'}) \cong k_{z,X} \oplus k_{z,X}[1].    
  \end{equation}
\end{lemma}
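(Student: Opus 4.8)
The plan is to reduce the statement to an explicit local computation, which is legitimate because $i^{*}$ is a local operation and $k_{z,Y'}$ is a skyscraper supported at the single point $z$. First I would set up convenient local coordinates near $z$. Since $z$ lies on $\Sing(X)$ but is, by hypothesis, not one of the triple points, near $z$ the surface $X$ is a simple normal crossing of two smooth branches: trivializing $L$ near $z$, there are coordinates $(x,y,w)$ on $Y$ with $s = xy$ and $\Sing(X) = \{x = y = 0\}$. The assumption that $s'$ restricted to $\Sing(X)$ has a simple zero at $z$ allows one first to rescale $w$ so that $s'|_{\{x=y=0\}} = w$ and then, after subtracting a multiple of $x$ and a multiple of $y$, to arrange $s' = w$ on the nose. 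Passing to the affine chart $t = v/u$ on $\bP^{1}$ around $[1:0]$ (where $X = X_{[1:0]}$ sits), the threefold $Y'$ is cut out near $z$ by $\{xy + tw = 0\} \subset \bA^{4}$, the ordinary double point, with $z$ the origin, and $X = f^{-1}(0) = \{t=0\}$ where $f = t$.

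The second step is to record that $f = t$ is a non-zero-divisor in the local ring $\cO_{Y',z}$: this is visible directly from the model, since the polynomial $xy + tw$ is irreducible so $\cO_{Y',z}$ is a domain; more conceptually it reflects the flatness of $f : Y' \to \bA^{1}$, equivalently of $\pi_{2} : W \to \bP^{1}$ (miracle flatness, $W$ being a hypersurface in the smooth variety $Y \times \bP^{1}$ with equidimensional fibers over the smooth curve $\bP^{1}$). Consequently $i : X \to Y'$ is a regular embedding of codimension one, and as a complex of $\cO_{Y'}$-modules $\cO_{X} \simeq \bigl[\cO_{Y'} \xrightarrow{\,f\,} \cO_{Y'}\bigr]$, the Koszul complex, placed in cohomological degrees $-1$ and $0$.

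The third step is the computation itself: $i^{*}(k_{z,Y'}) = k_{z,Y'} \otimes^{L}_{\cO_{Y'}} \cO_{X} \simeq \bigl[\, k_{z,Y'} \xrightarrow{\,\cdot f\,} k_{z,Y'}\,\bigr]$. Because $f$ lies in the maximal ideal $\mathfrak{m}_{z}$, multiplication by $f$ on the skyscraper $k_{z,Y'}$ is the zero map, so this complex has vanishing differential and is therefore the direct sum of its two terms, namely a copy of $k_{z,X}$ in degree $0$ and a copy in degree $-1$. That is exactly $i^{*}(k_{z,Y'}) \cong k_{z,X} \oplus k_{z,X}[1]$.

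There is no real obstacle in this argument; the only things demanding a little care are the reduction to the local model (routine, since pullback and the formation of skyscrapers are local) and the verification that $f$ is a non-zero-divisor, for which either the explicit equation of the ordinary double point or the flatness of the pencil suffices. I would note in passing that the computation uses nothing about the ordinary double point beyond the fact that $X$ is a Cartier divisor $f^{-1}(0)$ with $f$ vanishing at $z$: the same splitting $i^{*}k_{z} \cong k_{z} \oplus k_{z}[1]$ holds for the skyscraper at any point of any such divisor.
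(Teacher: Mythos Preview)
Your argument is correct. The paper takes a slightly different, more categorical route: rather than resolving $\cO_{X}$ by the Koszul complex and tensoring, it invokes the fundamental exact triangle of endofunctors $\id[1] \to i^{*}i_{*} \to \id \to \id[2]$ on $\Coh(X)$, applies it to $k_{z,X}$, and uses $i_{*}k_{z,X} = k_{z,Y'}$ to get a triangle $k_{z,X}[1] \to i^{*}k_{z,Y'} \to k_{z,X} \to k_{z,X}[2]$; the splitting is then deduced from surjectivity of the counit map $i^{*}k_{z,Y'} \to k_{z,X}$, which forces the connecting morphism to vanish. The two arguments are close cousins---the fundamental triangle \emph{is} the Koszul resolution, applied functorially---but your version is more hands-on and has the virtue of making the splitting visible immediately (the differential is literally zero), whereas the paper's version avoids any choice of local coordinates and works directly with the adjunction. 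Your closing remark that only the Cartier-divisor structure and $f(z)=0$ are used is exactly right, and matches the level of generality of the paper's proof; the elaborate local model in your first step is not needed for the lemma itself.
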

\begin{proof}
  Because $i : X \to Y'$ is the inclusion of the hypersurface $X = f^{-1}(0)$, we have the fundamental exact triangle of endofunctors of $\Coh(X)$
  \begin{equation}
    \id[1] \to i^{*}i_{*} \to \id \to \id[2].
  \end{equation}
  Applying this to $k_{z,X}$, and using $i_{*}(k_{z,X}) = k_{z,Y'}$, we obtain an exact triangle
  \begin{equation}
    k_{z,X}[1] \to i^{*}(k_{z,Y'}) \to k_{z,X} \to k_{z,X}[2]
  \end{equation}
  The map $i^{*}(k_{z,Y'}) \to k_{z,X}$ induced from the counit of the adjunction is surjective, so the connecting map $k_{z,X} \to k_{z,X}[2]$ vanishes, and the conclusion follows.
\end{proof}

\begin{proposition}
  \label{prop:skyscraper-split-gen}
  Let $X$ be a normal crossings surface with graph-like singular locus, and let $z$ be a point on the singular locus that is not a triple point. Then $\Dsing_{\{z\}}(X)$ is split-generated by the skyscraper sheaf $k_{z,X}$.
\end{proposition}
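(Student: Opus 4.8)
The plan is to reduce the statement to an explicit local model near $z$ and then to an elementary computation with coherent sheaves on a curve.

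First I would observe that $\Dsing_{\{z\}}(X)$ is \emph{local at} $z$: any object of $\Dsing(X)$ whose singular support is contained in $p^{-1}(\{z\})$ restricts to a perfect complex on $X\setminus\{z\}$, hence to zero in $\Dsing(X\setminus\{z\})$. Combining this with \'{e}tale (in particular Zariski) descent for $\Dsing$ (Theorem~\ref{thm:descent-dsing}) and the fact that $\Dsing$ depends only on a formal neighborhood of the singular locus \cite[Section~5]{PS21}, the subcategory $\Dsing_{\{z\}}(X)$, together with the object $k_{z,X}$ inside it, is unchanged if we replace a neighborhood of $z$ in $X$ by any scheme with a formally isomorphic neighborhood of the corresponding point. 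Since $X$ is normal crossings and $z$ is not a triple point, $z$ lies on a single curve component of $\Sing(X)$, so such a neighborhood may be taken to be a neighborhood of a point of $\bG_{m}\times\{0\}$ in $\bG_{m}\times\{ab=0\}\subset\bG_{m}\times\bA^{2}_{a,b}$, with $z$ corresponding to a point of the $\bG_{m}$ factor and $\Sing(X)$ to $\bG_{m}\times\{0\}$ (this is exactly the local picture appearing in the proof of Proposition~\ref{prop:local-model-edge}). So it suffices to treat this local model.

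Next I would invoke Kn\"{o}rrer periodicity, as in Proposition~\ref{prop:local-model-edge}: it identifies $\Dsing(\bG_{m}\times\{ab=0\})=\MF(\bG_{m}\times\bA^{2},ab)$ with $\Perf^{(2)}(\bG_{m})$, and the equivalence is, up to a shift, the functor $M\mapsto\cO_{\{a=b=0\}}\otimes M$ (with $M$ pulled back from $\bG_{m}$). Hence the skyscraper $k_{z,X}$, which is the structure sheaf of $\{a=b=0\}$ tensored with the skyscraper $\cO_{z}$ at $z\in\bG_{m}$, corresponds to $\cO_{z}$, up to shift and $2$-periodic folding. Moreover, by the description of singular support via the total $\Ext$-algebra recalled in Section~\ref{sec:sing-supp} (following \cite{arinkin-gaitsgory}), the condition that an object of $\Dsing(\bG_{m}\times\{ab=0\})$ have singular support over $\{z\}$ translates, under this equivalence, into the condition that the corresponding object of $\Perf^{(2)}(\bG_{m})$ be set-theoretically supported at $z$, since $\gsing^{\circ}$ of the local model is a $\bG_{m}$-bundle over $\Sing\cong\bG_{m}$. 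This yields an equivalence $\Dsing_{\{z\}}(X)\simeq\Perf^{(2)}_{\{z\}}(\bG_{m})$, the full subcategory of objects supported at $z$, which carries $k_{z,X}$ to $\cO_{z}$.

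It then remains to check that $\Perf^{(2)}_{\{z\}}(\bG_{m})$ is split-generated, and in fact generated, by $\cO_{z}$. Since $\bG_{m}$ is regular of dimension one, $\Perf_{\{z\}}(\bG_{m})$ is the bounded derived category of coherent sheaves supported at $z$, and any such sheaf admits a finite filtration all of whose subquotients are isomorphic to $\cO_{z}$; therefore the thick subcategory generated by $\cO_{z}$ is the whole category, and the same argument applies after $2$-periodic folding. Unwinding the identifications above, $k_{z,X}$ split-generates $\Dsing_{\{z\}}(X)$.

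The step I expect to be the main obstacle is the middle one: tracking the skyscraper $k_{z,X}$ precisely through the Kn\"{o}rrer periodicity equivalence, keeping account of the shift, and matching the singular-support condition defining $\Dsing_{\{z\}}$ with ordinary support on $\bG_{m}$. The final generation statement, and the locality reduction, are routine.
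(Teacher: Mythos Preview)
Your overall strategy matches the paper's: reduce to the local model $\bG_{m}\times\{ab=0\}$, apply Kn\"{o}rrer periodicity to identify $\Dsing_{\{z\}}$ with $\Perf^{(2)}_{\{z\}}(\bG_{m})$, and conclude from generation of the latter by the skyscraper $\cO_{z}$. The locality reduction and the final generation step are fine.

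The error is exactly where you flagged it. The Kn\"{o}rrer periodicity kernel is \emph{not} $\cO_{\{a=b=0\}}$, the structure sheaf of the singular locus; it is $\cO_{\{a=0\}}$ (or $\cO_{\{b=0\}}$), the structure sheaf of one branch, corresponding to a choice of maximal isotropic subbundle of the quadratic bundle $(\bA^{2},ab)$ over $\bG_{m}$. Consequently the Kn\"{o}rrer equivalence sends $\cO_{z}\in\Perf^{(2)}(\bG_{m})$ to the structure sheaf $\cO_{L}$ of the line $\{a=0,\ \zeta=z\}$ (or the other line, up to shift), \emph{not} to $k_{z,X}$. One can see directly that your identification cannot be right by comparing endomorphisms: in $\Perf^{(2)}(\bG_{m})$ the object $\cO_{z}$ has $\End$ equal to $k[u^{\pm 1}]$, whereas $k_{z,X}$ is decomposable in $\Dsing(X)$ and has a strictly larger endomorphism ring.

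The paper supplies the missing step: it shows $k_{z,X}\simeq \cO_{L_{x}}\oplus \cO_{L_{y}}\simeq \cO_{L_{x}}\oplus \cO_{L_{x}}[1]$ in $\Dsing(X)$, by exhibiting the perfect complex $\cO_{\{\zeta=z\}}$ (the union $L_{x}\cup L_{y}$) together with a surjection to $k_{z,X}$ whose kernel is the direct sum of the ideal sheaves of $z$ in $L_{x}$ and in $L_{y}$, each isomorphic to the structure sheaf of its line. Thus $\cO_{L_{x}}$ is a summand of $k_{z,X}$; since $\cO_{L_{x}}$ corresponds to $\cO_{z}$ under Kn\"{o}rrer and hence generates $\Dsing_{\{z\}}(X)$, split-generation by $k_{z,X}$ follows. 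Equivalently, under Kn\"{o}rrer $k_{z,X}$ corresponds to $\cO_{z}\oplus\cO_{z}[1]$ rather than $\cO_{z}$, but these split-generate the same subcategory, so your final paragraph applies once the identification is corrected.
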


\begin{proof}
  It suffices to work formally locally near $z$, so that we can reduce to the case where $X = \{xy=0\}$ in the ambient space $\bA^{2}_{xy} \times \bG_{m,\zeta} = \{(x,y,\zeta) \mid \zeta \neq 0\}$, and $z = (0,0,1)$. The category $\Dsing(X)$ in this case is equivalent by Kn\"{o}rrer periodicity to $\Perf^{(2)}(\bG_{m,\zeta})$, and the condition of having support at $z = (0,0,1)$ goes over to the condition of having support at $\zeta = 1$, so $\Dsing_{\{z\}}(X) \cong \Coh^{(2)}_{\{\zeta = 1\}}(\bG_{m,\zeta})$. The category $\Coh^{(2)}_{\{\zeta = 1\}}(\bG_{m,\zeta})$ is generated by the skyscraper $k_{\{\zeta=1\}}$.

  Passing back through Kn\"{o}rrer periodicity, $k_{\{\zeta=1\}}$ corresponds to (a shift of) $\cO_{L_{y}}$, where $L_{y} = \{x = 0,\zeta =1\}$ is a line in one component of $X$ transverse to the singular locus. Note that $\cO_{L_{y}} \simeq \cO_{L_{x}}[1]$ where $L_{x}=\{y= 0,\zeta = 1\}$. Thus we find that $\cO_{L_{y}}$ or $\cO_{L_{x}}$ generates $\Dsing_{\{z\}}(X)$.

  We now claim that $\cO_{L_{y}}$ is a summand of $k_{z,X}$. In fact $k_{z,X} \simeq \cO_{L_{y}}\oplus \cO_{L_{x}}$ in $\Dsing(X)$. This may be seen by considering $\cO_{\zeta=1}$, the perfect complex $\cO_{X} \to \cO_{X}$ with differential equal to multiplication by $\zeta -1$. Geometrically $\cO_{\zeta=1}$ represents the union $L_{x}\cup L_{y}$. There is a surjective map $\cO_{\zeta=1} \to k_{z,X}$ whose kernel is the ideal sheaf of $z$ inside $\cO_{\zeta=1}$. This kernel is also the direct sum of the ideal sheaves of $z$ inside $\cO_{L_{x}}$ and $\cO_{L_{y}}$ respectively, and those ideal sheaves are in turn isomorphic to the structure sheaves of the two lines.
\end{proof}

\begin{lemma}
  \label{lem:pullback-gens}
  In the situation above, the image of the pullback functor $i^{*}: \Dsing_{Z}(Y') \to \Dsing_{Z}(X)$ split generates $\Dsing_{Z}(X)$.
\end{lemma}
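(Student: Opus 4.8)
The plan is to localize the statement around the points of $Z$ and then feed in Lemma \ref{lem:skyscraper-pullback} and Proposition \ref{prop:skyscraper-split-gen}. Since $Z$ is a finite set of closed points of $\Sing(X)$ disjoint from the triple points, the singular support conditions over $Z$ decouple: one has $\Dsing_{Z}(X) \simeq \prod_{z \in Z} \Dsing_{\{z\}}(X)$ and likewise $\Dsing_{Z}(Y') \simeq \prod_{z \in Z}\Dsing_{\{z\}}(Y')$, and $i^{*}$ is compatible with these product decompositions because pullback preserves scheme-theoretic support. So it suffices to fix one $z \in Z$ and prove that the image of $i^{*} : \Dsing_{\{z\}}(Y') \to \Dsing_{\{z\}}(X)$ split-generates the target.

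The first point to verify is that the skyscraper $k_{z,Y'}$ really is a nonzero object of $\Dsing_{\{z\}}(Y')$. Its scheme-theoretic support is the point $z$, so its singular support is automatically contained in the fiber of $\gsing(Y') \to Y'$ over $z$; and it cannot be perfect on $Y'$, for if it were, its pullback would be perfect on $X$, contradicting the formula $i^{*}(k_{z,Y'}) \cong k_{z,X} \oplus k_{z,X}[1]$ of Lemma \ref{lem:skyscraper-pullback} together with the fact that $k_{z,X}$ is nonzero in $\Dsing(X)$ (which holds since $z$ lies on $\Sing(X)$, so $k_{z,X}$ is not perfect on $X$).

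Now Lemma \ref{lem:skyscraper-pullback} exhibits $k_{z,X}$ as a direct summand of $i^{*}(k_{z,Y'})$, so $k_{z,X}$ belongs to the idempotent-complete thick subcategory of $\Dsing_{\{z\}}(X)$ generated by the image of $i^{*}$. By Proposition \ref{prop:skyscraper-split-gen} the object $k_{z,X}$ split-generates $\Dsing_{\{z\}}(X)$; hence the image of $i^{*}$ split-generates $\Dsing_{\{z\}}(X)$, and, reassembling over $z \in Z$, the image of $i^{*} : \Dsing_{Z}(Y') \to \Dsing_{Z}(X)$ split-generates $\Dsing_{Z}(X)$. There is no serious difficulty here: the only step that needs attention is the bookkeeping with singular supports that justifies the reduction to a single point, and everything else is immediate from the two cited results.
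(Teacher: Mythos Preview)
Your proof is correct and follows essentially the same route as the paper: feed the skyscrapers $k_{z,Y'}$ through Lemma~\ref{lem:skyscraper-pullback} to put $k_{z,X}$ in the split-closure of the image, then invoke Proposition~\ref{prop:skyscraper-split-gen}. Your added product decomposition over $z\in Z$ and the check that $k_{z,Y'}$ is nonzero in $\Dsing$ are harmless extra care, but the paper dispenses with them since the argument only needs that $i^{*}(k_{z,Y'}) \cong k_{z,X}\oplus k_{z,X}[1]$ lies in the image, regardless of whether the source object is zero.
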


\begin{proof}
  Consider the collection of skyscraper sheaves $\{k_{z,Y'}\}_{z \in Z}$. (Since $Y'$ has ordinary double points at the points of $Z$, these objects split generate $\Dsing(Y')$, but this fact is not used in this argument.) By Lemma \ref{lem:skyscraper-pullback}, these objects map to $\{k_{z,X}\oplus k_{z,X}[1]\}_{z \in Z}$. Hence the objects $\{k_{z,X}\}_{z\in Z}$ are in the split-closure of the image of $i^{*}$, and by Proposition \ref{prop:skyscraper-split-gen}, the split-closure is all of $\Dsing_{Z}(X)$.
\end{proof}

\begin{theorem}
  \label{thm:relative-graph-like}
  Let $Y$ be a smooth threefold, $L$ a line bundle on $Y$, and $s$ section of $L$ such that $X = s^{-1}(0)$ is a normal crossings surface with graph-like singular locus. Let $s'$ be another section of $L$ whose restriction to $\Sing(X)$ has only simple zeros at points which are not triple points. Let $Y'$ be the singular threefold constructed from the pencil spanned by $s$ and $s'$. Then there is an equivalence of categories
  \begin{equation}
    \Dsing(X,Y') \cong \MF(Y,L,s)[s'^{-1}].
  \end{equation}
  
  Let $Z = \Sing(Y') \cap X$. Let $\pi : \widetilde{Y}' \to Y'$ be a resolution of $Y'$ that is an isomorphism over the nonsingular locus, and $\widetilde{X}$ be the strict transform of $X$, then there is an equivalence of categories
  \begin{equation}
    \MF(Y,L,s)[s'^{-1}] \cong \Dsing(\widetilde{X}\setminus \pi^{-1}(Z)).
  \end{equation}
\end{theorem}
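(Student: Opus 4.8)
The proof is essentially an assembly of the general statements Theorem \ref{thm:relative-localization} and Theorem \ref{thm:resolution-localization} together with the verification that their hypotheses hold in the graph-like situation; that verification is the content of Lemma \ref{lem:pullback-gens}. The plan is as follows.

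First I would pin down the geometry of $Y'$ under the genericity hypothesis on $s'$. Since $s'|_{\Sing(X)}$ has only simple zeros and they avoid the triple points, the base locus $B = s^{-1}(0)\cap (s')^{-1}(0)$ meets $\Sing(X)$ transversally in a finite set $Z$ of non-triple points, and the criterion recalled earlier — that $(p,[u:v])$ is a singular point of $W$ if and only if $p\in B$ and $u\,ds+v\,ds'=0$ — shows that $\Sing(Y') = Z$ and that at each $z\in Z$ the threefold $Y'$ has an ordinary double point, with $X\subset Y'$ locally the hypersurface $\{xy=0\}$ inside that ODP threefold. This is precisely the local situation assumed in Lemma \ref{lem:skyscraper-pullback}, Proposition \ref{prop:skyscraper-split-gen}, and Lemma \ref{lem:pullback-gens}.

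Next I would apply Theorem \ref{thm:relative-localization}: it produces the natural functor $\psi : \Dsing(X,Y')\to \MF(Y,L,s)[s'^{-1}]$ and asserts it is an equivalence provided the image of $i^{*}:\Dsing_{Z}(Y')\to\Dsing_{Z}(X)$ split-generates $\Dsing_{Z}(X)$. That hypothesis is exactly Lemma \ref{lem:pullback-gens}, whose proof uses Lemma \ref{lem:skyscraper-pullback} (the local identity $i^{*}(k_{z,Y'})\cong k_{z,X}\oplus k_{z,X}[1]$, valid because $z$ is an ordinary double point of $Y'$) together with Proposition \ref{prop:skyscraper-split-gen} (that $k_{z,X}$ split-generates $\Dsing_{\{z\}}(X)$, via Kn\"orrer periodicity identifying $\Dsing_{\{z\}}(X)$ with $\Coh^{(2)}_{\{\zeta=1\}}(\bG_{m})$). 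This yields the first claimed equivalence $\Dsing(X,Y')\cong \MF(Y,L,s)[s'^{-1}]$.

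Finally, choosing a resolution $\pi:\widetilde{Y}'\to Y'$ that is an isomorphism over the nonsingular locus — such a resolution exists since $Y'$ has only ordinary double points — Theorem \ref{thm:resolution-localization} gives the equivalence $\MF(Y,L,s)[s'^{-1}]\cong \Dsing(\widetilde{X}\setminus\pi^{-1}(Z))$ unconditionally, and, the split-generation hypothesis having already been checked, also the further identification with $\Dsing(X,Y')$. I expect the only step requiring genuine care is the first: confirming that genericity of $s'$ forces $Y'$ to be singular exactly along $Z$ with ordinary double points there, so that the cited local lemmas genuinely apply. Everything afterwards is a matter of quoting the general results; one should, however, also double-check the support bookkeeping — that $\Dsing_{Z}$ of each space is split-generated by objects set-theoretically supported on $Z$ (as in the proof of Theorem \ref{thm:relative-localization}, via Orlov's theorem on generation of $\Dsing$) so that the reduction to skyscrapers at the points of $Z$ loses nothing.
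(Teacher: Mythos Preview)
Your proposal is correct and follows exactly the paper's approach: the paper's proof simply reads ``This is the combination of Theorem \ref{thm:relative-localization}, Theorem \ref{thm:resolution-localization} and Lemma \ref{lem:pullback-gens}.'' Your additional unpacking of the geometry of $Y'$ (ordinary double points along $Z$) and of the ingredients behind Lemma \ref{lem:pullback-gens} is helpful context, though note that Lemma \ref{lem:skyscraper-pullback} itself does not use the ODP hypothesis---it holds for any hypersurface inclusion via the triangle $\id[1]\to i^{*}i_{*}\to \id$.
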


\begin{proof}
  This is the combination of Theorem \ref{thm:relative-localization}, Theorem \ref{thm:resolution-localization} and Lemma \ref{lem:pullback-gens}.
\end{proof}

According to our previous work \cite{PS21}, as long as the dual intersection complex of $X$ is orientable, the category $\Dsing(\widetilde{X})$ is equivalent to the Fukaya category of a Riemann surface, since $\widetilde{X}$ is the fiber of a morphism $f : \widetilde{Y}'\to \bA^{1}$ in addition to being a normal crossings surface with graph-like singular locus. Thus we may connect relative singularity categories to Fukaya categories.

\begin{corollary}
  \label{cor:relative-sing-fuk}
  Take the notation as in Theorem \ref{thm:relative-graph-like}, and suppose further that the dual intersection complex of $X$ is orientable. Let $\Sigma$ be the Riemann surface such that $\Fuk(\Sigma) \simeq \Dsing(\widetilde{X})$. Then $\MF(Y,L,s)[s'^{-1}]$ and $\Dsing(X,Y')$ are equivalent to a localization of $\Fuk(\Sigma)$ obtained by quotienting objects supported on simple closed curves in $\Sigma$.
\end{corollary}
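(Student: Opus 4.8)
The plan is to chain the two equivalences of Theorem~\ref{thm:relative-graph-like} with the main result of \cite{PS21}, and then to identify what the removed locus becomes on the symplectic side. Theorem~\ref{thm:relative-graph-like} already gives
\begin{equation}
  \MF(Y,L,s)[s'^{-1}]\cong\Dsing(X,Y')\cong\Dsing(\widetilde{X}\setminus\pi^{-1}(Z)),
\end{equation}
so the task reduces to comparing $\Dsing(\widetilde{X}\setminus\pi^{-1}(Z))$ with $\Dsing(\widetilde{X})\simeq\Fuk(\Sigma)$.

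First I would set $\widetilde{Z}=\pi^{-1}(Z)\cap\widetilde{X}$, the locus where $\pi:\widetilde{X}\to X$ fails to be an isomorphism, and observe --- exactly as in the proofs of Theorems~\ref{thm:relative-localization} and \ref{thm:resolution-localization} --- that restriction to the open subscheme $\widetilde{X}\setminus\widetilde{Z}$ exhibits the idempotent-completed category $\Dsing(\widetilde{X}\setminus\widetilde{Z})$ as the Verdier quotient of $\Dsing(\widetilde{X})$ by the subcategory $\Dsing_{\widetilde{Z}}(\widetilde{X})$ of objects supported on $\widetilde{Z}$: one quotients $\Coh(\widetilde{X})/\Perf(\widetilde{X})$ further by the coherent sheaves supported on $\widetilde{Z}$, and invokes Orlov's completion theorem \cite{orlov-completion} to match this with the singular-support version of $\Dsing_{\widetilde{Z}}$. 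Since $\widetilde{X}$ is a normal crossings surface with graph-like singular locus and orientable dual intersection complex --- these are inherited from $X$ because the (small) resolution does not alter the combinatorial type of $\Sing$ near $Z$ --- and is the fiber of $\tilde{f}:\widetilde{Y}'\to\bA^{1}$, \cite{PS21} provides $\Dsing(\widetilde{X})\simeq\Fuk(\Sigma)$. Transporting the localization sequence along this equivalence then yields
\begin{equation}
  \MF(Y,L,s)[s'^{-1}]\cong\Dsing(X,Y')\simeq\Fuk(\Sigma)/K,
\end{equation}
with $K$ the image of $\Dsing_{\widetilde{Z}}(\widetilde{X})$.

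The remaining point is to show $K$ is split-generated by objects supported on simple closed curves. I would first argue, from the local geometry of resolving the ordinary double points of $Y'$ along $X$, that the exceptional curve over each $z\in Z$ lies in the smooth locus of $\widetilde{X}$ except at one point of $\Sing(\widetilde{X})$ and introduces no new branching of the singular locus, so that $\widetilde{Z}$ meets $\Sing(\widetilde{X})$ in a finite set $S$ of non-triple points and $\Dsing_{\widetilde{Z}}(\widetilde{X})=\Dsing_{S}(\widetilde{X})$. Then Proposition~\ref{prop:skyscraper-split-gen} says that $\Dsing_{\{p\}}(\widetilde{X})$ is split-generated by the structure sheaf $\cO_{L_p}$ of a line transverse to $\Sing(\widetilde{X})$ at $p$. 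Passing to the local model near the component $C_p$ of $\Sing(\widetilde{X})$ through $p$, where $\Dsing$ is $\Perf^{(2)}(\bG_m)\simeq\Fuk^{(2)}(\text{cylinder})$ and $\cO_{L_p}$ is (a shift of) the skyscraper, and using that this Kn\"{o}rrer-periodicity/cylinder picture is compatible with the global \cite{PS21} equivalence --- under which $C_p$ corresponds to a neck circle of the pair-of-pants decomposition of $\Sigma$ --- one sees that $\cO_{L_p}$ goes to the core circle of that cylinder equipped with a rank-one local system, i.e.\ to an object of $\Fuk(\Sigma)$ supported on a simple closed curve. Hence $K$ is split-generated by such objects and the corollary follows.

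I expect the hard part to be the last step: extracting from \cite{PS21} enough about its local-to-global structure to be sure that its building blocks are intertwined with the Kn\"{o}rrer-periodicity identification $\MF(\bA^{3}\setminus\{x=0\},xyz)\simeq\Perf^{(2)}(\bG_m)$ and with the cylinder mirror equivalence sending skyscrapers to the core circle, and that the restriction functors on the two sides match up, so that $K$ is genuinely the subcategory generated by simple-closed-curve objects rather than merely abstractly equivalent to it. The resolution geometry (that $\widetilde{Z}$ touches $\Sing(\widetilde{X})$ in finitely many non-triple points, and that $\widetilde X$ stays graph-like with orientable dual complex) is routine but worth recording precisely.
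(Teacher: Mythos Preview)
Your proposal is correct and follows essentially the same line as the paper's proof, which is a single sentence: the localization $\Dsing(\widetilde{X}\setminus\pi^{-1}(Z))$ is obtained by quotienting by the skyscraper sheaves at the points of $Z$, and these correspond to objects in $\Fuk(\Sigma)$ supported on simple closed curves. You are considerably more explicit than the paper about the steps it elides---verifying that $\widetilde{X}$ inherits the graph-like and orientability hypotheses, that $\pi^{-1}(Z)$ meets $\Sing(\widetilde{X})$ in finitely many non-triple points, invoking Proposition~\ref{prop:skyscraper-split-gen} to identify the generators of the kernel, and tracing these through the Kn\"orrer/cylinder local model to the core circles---but the architecture is identical.
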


\begin{proof}
  The localization $\Dsing(\widetilde{X} \setminus \pi^{-1}(Z))$ is obtained by quotienting by the skyscraper sheaves at the points of $Z$, which correspond to objects in $\Fuk(\Sigma)$ supported on simple closed curves.
\end{proof}

\section{Symplectic constructions and conjectures}
\label{sec:symplectic-conjectures}

We seek to construct a symplectic counterpart to $L$-twisted singularity categories considered above. Our proposal depends on the theory of Rabinowitz Fukaya categories that is currently under development by Ganatra, Gao, and Venkatesh \cite{ganatra-talk}. Since this theory is very recent, we will content ourselves to describing a conjectural picture.

Part of the conjectural story is more general than our intended application. It is about the Fukaya categories of general type symplectic manifolds. Our conjectures, together with other expected properties of Rabinowitz Fukaya categories, imply certain conjectures of Lekili and Ueda \cite{lekili-ueda-milnor,lekili-ueda-complements} about complements of smooth ample divisors in smooth Calabi-Yau varieties.

In Section \ref{sec:singular-divisor-in-k3} below we present a generalization of a conjecture of Lekili and Ueda to complements of nodal ample divisors in smooth K3 or abelian surfaces, drawing on the work of Maxim Jeffs \cite{jeffs}.

Owing to the conjectural nature of this section, it contains statements of varying logical status:
\begin{enumerate}
\item Statements that are rigorously proven or whose proof presents no essential difficulties. The geometric constructions dealing with symplectic four-manifolds are of this kind.
\item Statements involving Rabinowitz Fukaya categories that may be deduced from the expected properties of these categories \cite{ganatra-talk}. We formulate such statements as Propositions with these expected properties as hypotheses.
\item Statements involving Rabinowitz Fukaya categories whose proof will require one to establish new properties of these categories. We present such statements as Conjectures.
\end{enumerate}
It is also the case that some special cases of our conjectures may be deduced from known results.

\subsection{Principal canonical bundles of general type symplectic manifolds}

We begin by recalling the standard story of prequantization spaces associated to general type varieties.

Let $Z$ be a general type variety of dimension $n$, meaning that the canonical line bundle $K_{Z}$ is ample. We can make $Z$ into a monotone symplectic manifold by equipping it with a symplectic form $\omega$ that arises as the curvature of a connection on $K_{Z}$: more precisely we may choose a Hermitian metric $h$ on $K_{Z}$ such that the curvature of the associated Chern connection is $i\omega$ where $\omega$ is a symplectic form. Assuming such choices have been made, we refer to $(Z,\omega)$ as a \emph{general type symplectic manifold}.

There is a principal $S^{1}$-bundle $S_{Z} \subset K_{Z}$ given by vectors of unit $h$-norm. The connection has an associated $1$-form $\theta \in \Omega^{1}(S_{Z})$ that satisfies $d\theta = \pi^{*}\omega$, where $\pi$ denotes the projection to $Z$. It follows that $\theta$ is a contact form on $S_{Z}$. We denote the symplectization of $(S_{Z},\theta)$ by $P_{Z} \simeq S_{Z} \times \R$. Observe that $P_{Z}$ is diffeomorphic to the total space of the canonical bundle minus the zero section: $P_{Z} \simeq K_{Z} \setminus Z$. This is witnessed by the two projections
\begin{equation}
K_{Z} \setminus Z \to S_{Z},\  v \mapsto v/\sqrt{h(v,v)}, \quad K_{Z} \setminus Z \to \R,\  v \mapsto \log h(v,v).
\end{equation}
We use this diffeomorphism to put a symplectic structure on $K_{Z}\setminus Z$.
Being a symplectization, $P_{Z}$ has a convex end and a concave end; the convex end is the one where $h(v,v) \to 0$, and the concave end is the one where $h(v,v) \to \infty$. We could therefore say that the zero section $Z$ sits at the \emph{convex} end. This is because $K_{Z}$ is the total space of a \emph{positive} line bundle. (This is opposite to the situation that arises for the total space of the canonical bundle over a Fano variety).

The presentation of $P_{Z}$ as a symplectization singles out a homotopy class of almost complex structures, namely those compatible with the symplectic structure. This class contains the complex structure on $P_{Z} \simeq K_{Z}\setminus Z$ arising from the fact that $K_{Z}$ is a complex manifold. This allows us to compute the Chern classes of $P_{Z}$. In fact $c_{1}(P_{Z}) = 0$, since the canonical bundle of $K_{Z}$, a fortiori $P_{Z}$, has a canonical homotopy class of trivializations. We denote by $\Omega_{Z}$ the complex volume form that induces this trivialization. The form $\Omega_{Z}$ makes $P_{Z}$ into a graded symplectic manifold.

Since $S_{Z}$ is a prequantum principal $S^{1}$-bundle, $P_{Z}$ carries an $S^{1}$-action that is free and Hamiltonian with moment map $v \mapsto h(v,v)$. The reduced spaces of this action are copies of $(Z,\lambda \omega)$ for $\lambda > 0$.

As mentioned above, $P_{Z}$ is not a convex symplectic manifold, but it belongs to a class of manifolds (completions of Liouville cobordism) for which a version of the Fukaya category can be defined. This is the Rabinowitz Fukaya category developed by Ganatra-Gao-Venkatesh. At least morally, the Rabinowitz Fukaya category is a variant of the wrapped Fukaya category where the wrapping is implemented at both the convex and concave ends.  We denote by
\begin{equation}
  \RFuk^{\gr}(P_{Z},\Omega_{Z})
\end{equation}
the Rabinowitz Fukaya category generated by Lagrangian branes that are graded with respect to $\Omega_{Z}$.

\begin{remark}
  For any ample line bundle $L$ over $Z$, we could form associated the principal bundle $P \cong L \setminus Z$, and give it a symplectic structure and Hamiltonian $S^{1}$-action. We could then contemplate the Rabinowitz Fukaya category $\RFuk(P)$. The case where $L = K_{Z}$ that we are studying here is distinguished by the existence of a natural grading, that is, the complex volume form $\Omega_{Z}$, and hence the possibility of considering the graded Rabinowitz Fukaya category $\RFuk^{\gr}(P_{Z},\Omega_{Z})$.
\end{remark}

While we shall not provide an introduction to the theory of Rabinowitz Fukaya categories, there are some technical points that should be mentioned.\footnote{We thank Mohammed Abouzaid and Sheel Ganatra for discussions of these points.} One is that the construction of $\RFuk(Y)$ requires a filling of the concave end of $Y$, and the resulting category depends on this choice \emph{a priori}. In what follows, we shall tacitly assume that a filling exists; in our applications to complements of divisors in K3 and abelian surfaces there is always a preferred filling. We shall also tacitly assume that the resulting category does not depend up to equivalence on the choice of filling in the cases of interest; should this fail to be true, our conjectures could be weakened to say ``there is some filling such that'' the conclusion holds. Another technical point is that $\RFuk(Y)$ seems to depend on the Liouville structure of $Y$ in a nontrivial way, so that an abstract symplectomorphism $Y \simeq Y'$ does not necessarily imply an equivalence $\RFuk(Y) \simeq \RFuk(Y')$. We believe that our arguments do not run afoul of this difficulty, since the symplectomorphisms we use to compare categories are sufficiently ``tame,'' but this is another assumption we make henceforth.

\subsection{An ``Orlov equivalence''}

We expect on general grounds that the Hamiltonian $S^{1}$-action on $P_{Z}$ induces an action of $C_{*}(\Omega S^{1}) \simeq C_{*}(\Z) = k[u,u^{-1}]$ on $\RFuk^{\gr}(P_{Z},\Omega_{Z})$. More broadly, we expect an action of the wrapped Fukaya category $\Fw(T^{*}S^{1}) \cong C_{*}(\Omega S^{1})\modules$ on $\RFuk^{\gr}(P_{Z},\Omega_{Z})$, which is induced by a certain Lagrangian correspondence constructed from the moment map \cite{teleman-icm,evans-lekili}. A key point involves the grading: the symplectic manifold $T^{*}S^{1} \cong \mathbb{C}^{\times}$ should be graded by $dz$ rather than the more common $z^{-1}\,dz$ in order for the action to be compatible with the grading $\Omega_{Z}$. This is because the grading $\Omega_{Z}$ on $P_{Z} \cong K_{Z}\setminus Z$ extends over the zero section. The main consequence of this is that the variable $u$ must be given degree $2$.

We may also consider $\Fuk(Z)$, the \emph{balanced $2$-periodic Fukaya category} of $Z$: this means that the objects are supported on Lagrangians in $Z$ that are balanced with respect to the chosen one-form $\theta \in \Omega^{1}(S_{Z})$ satisfying $d\theta = \pi^{*}\omega$, and hence that $\Fuk(Z)$ is defined over $k$ rather than the Novikov field. Our first conjecture says that $\Fuk(Z)$ may be recovered from the Rabinowitz Fukaya category of $P_{Z}$. One might consider it to be a kind of Orlov equivalence.
\begin{conjecture}
  \label{conj:orlov}
  With the notation as above, there is a $k[u,u^{-1}]$-linear ($\deg(u) = 2$) equivalence of categories
  \begin{equation}
    \RFuk^{\gr}(P_{Z},\Omega_{Z}) \simeq \Fuk(Z).
  \end{equation}
  The $k[u,u^{-1}]$-action on the left-hand side is the one induced by the Hamiltonian $S^{1}$-action on $P_{Z}$, and the $k[u,u^{-1}]$-action on the right-hand side is the $2$-periodic structure.
\end{conjecture}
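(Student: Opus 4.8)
The plan is to realize the claimed equivalence as an explicit functor of Lagrangian correspondences --- the ``symplectic Orlov functor'' alluded to above --- and then to establish full faithfulness and essential surjectivity by degenerating Rabinowitz Floer theory on $P_Z$ to ordinary Floer theory on $Z$. To build the functor, one sends an oriented Lagrangian $L\subset Z$ that is balanced with respect to $\theta$ to its real canonical bundle $\det_{\R}T^{*}L$, which sits inside $K_Z|_L$; balancedness guarantees that its image misses the zero section, so it defines a noncompact submanifold $\widetilde{L}\subset P_Z\cong K_Z\setminus Z$, conical over a Legendrian in $S_Z$ at the concave end, and a $C^{1}$-small perturbation makes it Lagrangian. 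The orientation of $L$ together with the volume form $\Omega_Z$ equips $\widetilde{L}$ with a grading, so it is a legitimate object of $\RFuk^{\gr}(P_Z,\Omega_Z)$. Packaging $L\mapsto\widetilde{L}$ as a Lagrangian correspondence $\Gamma\subset Z^{-}\times P_Z$ and invoking functoriality of Fukaya categories under geometric correspondences (or counting matched holomorphic strips directly) upgrades this to an $A_\infty$-functor $\Fuk(Z)\to\RFuk^{\gr}(P_Z,\Omega_Z)$. This is the A-model counterpart of Isik's theorem that $\Coh$ of a quasi-smooth scheme is a graded singularity category \cite{umut-isik}, with the passage from $Z$ to $P_Z$ playing the role of the passage from $X$ to the total space of a line bundle.

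Next I would pin down the $k[u,u^{-1}]$-structure. The free Hamiltonian $S^{1}$-action on $P_Z$ with moment map $v\mapsto h(v,v)$ induces, via the moment-map Lagrangian correspondence \cite{teleman-icm,evans-lekili}, an action of $\Fw(T^{*}S^{1})\simeq C_{*}(\Omega S^{1})\modules\simeq k[u,u^{-1}]\modules$ on $\RFuk^{\gr}(P_Z,\Omega_Z)$; freeness of the action makes $u$ act invertibly. Crucially $u$ has degree $2$: since $\Omega_Z$ extends over the zero section $Z\subset K_Z$ as a nowhere-vanishing form, the grading on $T^{*}S^{1}\cong\mathbb{C}^{\times}$ entering the correspondence must be the one defined by $dz$ rather than $z^{-1}\,dz$, which places the generator of $C_{*}(\Omega S^{1})=C_{*}(\Z)$ in degree $2$. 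Under the functor of the previous paragraph this action should match the tautological periodicity of $\Fuk(Z)$: because $K_Z$ is ample, the natural grading on $\Fuk(Z)$ is only $\Z/2\Z$, which is precisely the datum of a $\Z$-graded $k[u,u^{-1}]$-linear category, and one checks that multiplication by $u$ corresponds to the shift of $\widetilde{L}$ by one full turn around the $S^{1}$-fiber.

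The heart of the argument is the identification, for compact balanced $L_0,L_1\subset Z$ with lifts $\widetilde{L}_0,\widetilde{L}_1$,
\begin{equation}
  HW^{*}_{\mathrm{Rab}}(\widetilde{L}_0,\widetilde{L}_1)\;\cong\;HF^{*}(L_0,L_1)\otimes_{k}k[u,u^{-1}],
\end{equation}
compatibly with $A_\infty$-operations. Since the Reeb flow of $\theta$ on $S_Z$ is the $2\pi$-periodic fiber rotation, the Rabinowitz chords between $\widetilde{L}_0$ and $\widetilde{L}_1$ fall into $\Z$-indexed families labeled by winding number around the fiber, each a copy of the intersection data of $(L_0,L_1)$ in $Z$; a neck-stretching argument at the concave end should then show that the discs defining the Rabinowitz $A_\infty$-structure degenerate to discs in $Z$ decorated by fiber-winding numbers, giving both the displayed isomorphism and the agreement of products, with the winding grading matching the power of $u$. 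For essential surjectivity, any object of $\RFuk^{\gr}(P_Z)$ may be Liouville-flowed to the concave end until it is conical over a Legendrian $\Lambda\subset S_Z$, which front-projects through the Boothby-Wang fibration $S_Z\to Z$ to a Lagrangian in $Z$; a resolution/surgery argument then exhibits the object as a twisted complex of lifts, so the image split-generates. In the special case that $Z$ is a smooth ample divisor in a Calabi-Yau $V$, this last step can be bypassed: the expected localization sequence \eqref{ggv-localization} identifies $\RFuk^{\gr}(P_Z)$ with $\Fw(V\setminus Z)/\Fc(V\setminus Z)$ and reduces the statement to the Lekili-Ueda conjecture \cite{lekili-ueda-complements}, known in many examples.

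The main obstacle is twofold. First, $\RFuk$ and the structural inputs used above --- its functoriality under correspondences, the $C_{*}(\Omega S^{1})$-action, and the localization sequence \eqref{ggv-localization} --- are not yet available in the literature, so any proof is for now conditional on the forthcoming foundations of \cite{ganatra-talk}. Second, even granting those foundations, the full faithfulness step requires genuine analytic work: one must control holomorphic curves escaping to the concave end and wrapping around the $S^{1}$-fibers, prove the requisite transversality for the Rabinowitz Floer problem, and show that the resulting bijection of moduli spaces is an honest $A_\infty$-equivalence rather than merely an isomorphism on cohomology. I expect this neck-stretching / SFT-type analysis to be the principal difficulty.
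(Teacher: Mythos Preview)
The statement is a \emph{conjecture}, and the paper does not prove it; immediately after stating it the authors write ``We now sketch some ideas for a possible proof'' and then outline exactly the strategy you describe: lift a balanced Lagrangian $L\subset Z$ to $\widetilde{L}\subset P_Z$ via the real determinant line $\det_{\R}T^{*}L\subset K_Z$ (deforming to a genuine Lagrangian using balancedness), and observe that Rabinowitz wrapping of two such lifts over an intersection point produces a $\Z$-indexed family of generators with degree step $2$, matching the $2$-periodic Hom in $\Fuk(Z)$. Your proposal is thus the same approach, fleshed out with additional structure the paper does not spell out---the Lagrangian-correspondence packaging, a neck-stretching outline for full faithfulness, and a front-projection/resolution argument for essential surjectivity---and you correctly identify that everything is conditional on the not-yet-available foundations of \cite{ganatra-talk}. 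The only ingredient in the paper's sketch that you do not mention is the proposed inverse functor $\RFuk^{\gr}(P_Z,\Omega_Z)\to\Fuk(Z)$ given by Hamiltonian reduction, but this is a minor difference of emphasis rather than of strategy.
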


We now sketch some ideas for a possible proof of this conjecture.

On the one hand, we expect there to be a functor $\RFuk^{\gr}(P_{Z},\Omega_{Z}) \to \Fuk(Z)$ given by Hamiltonian reduction with respect to the $S^{1}$-action on $P_{Z}$.

To construct a functor in the other direction, we begin with the question: When does a Lagrangian in $L \subset Z$ lift to a gradable Lagrangian  $\widetilde{L} \subset P_{Z}$? Note that restricting the circle bundle to a Lagrangian, that is, taking $S_{Z}|_{L}$, does produce a Lagrangian, but this is \emph{never} gradable with respect to $\Omega_{Z}$. Noting that $P_{Z}\simeq K_{Z}\setminus Z$, we may consider
the real line bundle over $L$ given by $\det_{\R}(T^{*}L) \subset \det_{\mathbb{C}}(T^{*}Z) = K_{Z}$. Removing zero disconnects $\det_{\R}(T^{*}L)$, and choosing an orientation of $L$ selects one component, and thus we obtain an $\R_{+}$-bundle $\widetilde{L}$ over $L$ that is embedded in $P_{Z}$. The manifold $\widetilde{L}$ is not necessarily Lagrangian even when $L$ is, because the canonical line bundle $K_{Z}$ restricted to $L$, which is flat by the Lagrangian condition, may have nontrivial holonomy. In fact, if we equip $Z$ with a K\"{a}hler-Einstein form $\omega$, then the condition that $\widetilde{L}$ be Lagrangian in $P_{Z}$ is equivalent to the condition that $L$ is a minimal Lagrangian in $Z$. On the other hand, it will suffice for our purposes to assume that $L$ is balanced, meaning that $K_{Z}$ restricted to $L$ has trivial holonomy; in this case the construction of $\widetilde{L}$ may be deformed to produce a Lagrangian.

We propose that there is a functor $\Fuk(Z) \to \RFuk^{\gr}(P_{Z},\Omega_{Z})$, which on objects is given by the lifting process $L \mapsto \widetilde{L}$ for balanced Lagrangian $L$ described above, and that this is an inverse to the Hamiltonian reduction functor. Very roughly, given two balanced Lagrangians $L_{0},L_{1}$ in $Z$, and a transverse intersection point $x \in L_{0}\cap L_{1}$, their lifts $\widetilde{L}_{0},\widetilde{L}_{1}$ intersect cleanly in a ray. The Rabinowitz-type wrapping applied to $(\widetilde{L}_{0},\widetilde{L}_{1})$ then produces a family of orbits indexed by $\Z$, where the degree difference between two neighboring orbits is $2$. We expect this forms a free orbit for the $k[u,u^{-1}]$-action, matching what we find when taking the $2$-periodic morphism space of $L_{0},L_{1}$ in $Z$.

\subsection{Relationship with a conjecture of Lekili-Ueda}

Lekili and Ueda have proposed a general conjecture \cite[Conjecture 1.5]{lekili-ueda-milnor} relating the Fukaya category of an ample divisor in a Calabi-Yau manifold to the wrapped Fukaya category of its complement.
\begin{conjecture}[Lekili-Ueda]
  \label{conj:lekili-ueda}
  Let $V$ be a compact Calabi-Yau manifold, and let $Z \subset V$ be a smooth ample divisor. Set $U = V \setminus Z$, so that $U$ admits the structure of a Weinstein manifold. Then there is an equivalence of categories
  \begin{equation}
    \Fw(U)/\Fc(U) \simeq \Fuk(Z)
  \end{equation}
  where the left-hand side is the quotient of the (graded) wrapped Fukaya category of $U$ by the subcategory generated by objects supported on compact Lagrangians in $U$.
\end{conjecture}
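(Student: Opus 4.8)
The plan is to prove Conjecture \ref{conj:lekili-ueda} by interposing the graded Rabinowitz Fukaya category of the principal canonical bundle $P(K_Z)$ between its two sides; this is the argument already indicated in the introduction (the conditional Lekili--Ueda statement there).

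First I would invoke the expected Ganatra--Gao--Venkatesh localization sequence \eqref{ggv-localization}: for a smooth ample divisor $Z$ in $V$, there is a localization sequence $\Fc(U) \to \Fw(U) \to \RFuk^{\gr}(P(N_Z))$, where $N_Z$ is the normal bundle of $Z$ in $V$ and $P(N_Z) \cong N_Z \setminus Z$ carries the Liouville structure it inherits as the region near infinity of the Weinstein manifold $U = V \setminus Z$. Passing to the Verdier quotient gives $\Fw(U)/\Fc(U) \simeq \RFuk^{\gr}(P(N_Z))$. Since $V$ is Calabi--Yau, the trivialization of $K_V$ together with the adjunction isomorphism $K_Z \cong K_V|_Z \otimes N_Z$ yields $K_Z \cong N_Z$, so as principal $\bC^{\times}$-bundles $P(N_Z) \cong P(K_Z)$; moreover the trivialization of $K_V$ restricts to the complex volume form $\Omega_Z$ defining the grading on $\RFuk^{\gr}(P(K_Z))$, so this is an identification of graded symplectic manifolds. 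One must check that the two a priori distinct Liouville/symplectic structures on $K_Z \setminus Z$ --- the one pulled back from $V$ and the symplectization structure on the prequantization circle bundle $S_Z$ used to build $P(K_Z) = P_Z$ --- agree up to the deformations and filling choices under which $\RFuk^{\gr}$ is insensitive; this is exactly the point flagged in the discussion preceding Conjecture \ref{conj:orlov}.

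Next I would apply Conjecture \ref{conj:orlov} (equivalently Conjecture \ref{conj-canonical-bundle}): there is a $k[u,u^{-1}]$-linear equivalence $\RFuk^{\gr}(P(K_Z),\Omega_Z) \simeq \Fuk(Z)$, where $\Fuk(Z)$ is the balanced $2$-periodic Fukaya category of $Z$ regarded as $k[u,u^{-1}]$-linear with $\deg(u) = 2$. Composing with the previous step gives $\Fw(U)/\Fc(U) \simeq \Fuk(Z)$. Finally, since $c_1(U) = 0$ and $c_1(P(K_Z)) = 0$, the Rabinowitz-side categories are genuinely $\Z$-graded and $k$-linear, the $u$-action being extra structure; forgetting this $k[u,u^{-1}]$-module structure on each side produces precisely the $k$-linear (indeed $\Z$-graded) equivalence $\Fw(U)/\Fc(U) \simeq \Fuk(Z)$ asserted by Lekili--Ueda, after unfolding the $2$-periodicity of the right-hand side into a $\Z$-grading.

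The hard part is that the two inputs are themselves open: the localization sequence \eqref{ggv-localization} is announced but not yet available, and Conjecture \ref{conj:orlov} is supported here only by a sketch. Establishing the latter is the real content: one needs a Hamiltonian-reduction functor $\RFuk^{\gr}(P_Z) \to \Fuk(Z)$ along the moment map of the free $S^1$-action on $P_Z$, an inverse ``lifting'' functor sending a balanced Lagrangian $L \subset Z$ to a Lagrangian deformation of the $\R_{+}$-subbundle of $K_Z \setminus Z = P_Z$ cut out by $\det_{\R} T^*L$ together with an orientation of $L$, and a Floer computation showing that the Rabinowitz wrapping of two such lifts produces a $\Z$-indexed family of generators forming a single free $k[u,u^{-1}]$-orbit that reproduces the $2$-periodic morphism space of $L_0, L_1$ in $Z$, with the degree gap of $2$ between consecutive generators tracking $\deg(u) = 2$. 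Managing the filling- and Liouville-structure-dependence of $\RFuk$ in the first step, so that the abstract bundle isomorphism $P(N_Z) \cong P(K_Z)$ upgrades to an equivalence of Rabinowitz categories, is a further technical obstacle.
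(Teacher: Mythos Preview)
Your proposal is correct and matches the paper's own treatment: the paper does not prove this conjecture outright but gives exactly the conditional argument you describe, interposing $\RFuk^{\gr}(P(K_Z),\Omega_Z)$ via the adjunction identification $N_Z \cong K_Z$ and reducing to Conjecture~\ref{conj:orlov} together with the Ganatra--Gao--Venkatesh input. The only cosmetic difference is that the paper phrases the first step as $\Fw(U)/\Fc(U) \simeq \RFuk(U)$ (equation~\eqref{eq:wrapped-mod-compact}) and then separately identifies the end of $U$ with $P_Z$ as graded symplectic manifolds, whereas you invoke the localization sequence~\eqref{ggv-localization} with $P(N_Z)$ already built in; these are equivalent packagings of the same argument.
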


This conjecture is attractive because it would give us access to the Fukaya category of the \emph{compact} general type manifold $Z$ by way of the wrapped Fukaya category of the noncompact Weinstein manifold $U$, where many more techniques are available. Also it is very general: many different general type varieties $Z$ may appear as ample divisors in Calabi-Yau manifolds.

While we do not have a proof of Conjecture \ref{conj:lekili-ueda}, we wish to point out that our Conjecture \ref{conj:orlov} is related to it.

To make this connection, we need to spell out one property of the Rabinowitz Fukaya category that is expected to hold in great generality. Suppose that $U$ is a Weinstein manifold with $c_{1}(U) = 0$, and choose a grading on $U$. Then we may consider the graded versions of: the wrapped Fukaya category $\Fw(U)$, the compact Fukaya category $\Fc(U)$, and also the Rabinowitz Fukaya category $\RFuk(U)$. Note that the Rabinowitz Fukaya category probes the geometry of the end of $U$, but it \emph{a priori} depends on the filling. Ganatra-Gao-Venkatesh have shown that $\RFuk(U)$ is related to $\Fw(U)$ by a purely algebraic construction known as the \emph{formal punctured neighborhood at infinity}, denoted $\widehat{\Fw(U)}_{\infty}$ \cite{efimov-punctured}. In some cases, this description may simplify, so that the formal punctured neighborhood is wrapped modulo compact:
\begin{equation}
  \label{eq:wrapped-mod-compact}
  \RFuk(U) \cong \widehat{\Fw(U)}_{\infty} \cong \Fw(U)/\Fc(U).
\end{equation}

The following proposition is, strictly speaking, conjectural, but we call it a proposition since the only gaps are general expected properties of the Rabinowitz Fukaya category.

\begin{proposition}
  Conjecture \ref{conj:lekili-ueda} is true provided that $Z$ satisfies Conjecture \ref{conj:orlov}, and $U$ satisfies the equivalences \eqref{eq:wrapped-mod-compact}.
\end{proposition}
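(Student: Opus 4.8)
The plan is to assemble a chain of equivalences, with $\RFuk^{\gr}(P(N_Z))$ as the pivot and the adjunction $N_Z \cong K_Z$ transporting everything onto the principal canonical bundle $P_Z = P(K_Z)$. First I would record the geometric input. Since $V$ is Calabi-Yau, $K_V \cong \cO_V$, so the adjunction formula gives $K_Z \cong (K_V \otimes \cO_V(Z))|_Z \cong \cO_V(Z)|_Z \cong N_{Z/V}$, and hence an identification of principal $\bC^{\times}$-bundles $P(N_Z) \simeq P(K_Z) = P_Z$. I would then check that this identification is compatible with the auxiliary structures: the trivialization of $c_1(V)$ that makes $\Fw(U)$, $\Fc(U)$ (and a fortiori their quotient, and $\RFuk^{\gr}(P(N_Z))$) $\mathbb{Z}$-graded should restrict, under $P(N_Z) \simeq K_Z \setminus Z$, to the canonical complex volume form $\Omega_Z$ used to define $\RFuk^{\gr}(P_Z,\Omega_Z)$ in Conjecture \ref{conj:orlov}. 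This is a matter of unwinding the standard trivialization of the canonical bundle of the total space of a line bundle relative to its base, equivalently of identifying the canonical grading on $P_Z$ with the one inherited from $U$. Likewise the fiberwise rotation $S^1$-action on $P_Z$ is carried to the fiberwise rotation on $P(N_Z)$, so the two $k[u,u^{-1}]$-module structures ($\deg(u)=2$ in both cases) agree.

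Next comes the categorical assembly. By hypothesis $U$ satisfies \eqref{eq:wrapped-mod-compact}, so $\Fw(U)/\Fc(U) \simeq \RFuk(U)$. The Weinstein manifold $U = V \setminus Z$ has its single (convex) end equal to the punctured tubular neighborhood of $Z$ in $V$, i.e.\ to the convex end of the symplectization $P(N_Z) \simeq S(N_Z) \times \R$; invoking the expected locality of the Rabinowitz Fukaya category at the contact boundary at infinity (with the tubular-neighborhood filling of the concave end) one gets $\RFuk(U) \simeq \RFuk^{\gr}(P(N_Z))$. Combining with the first step, $\RFuk^{\gr}(P(N_Z)) \simeq \RFuk^{\gr}(P_Z,\Omega_Z)$, and then Conjecture \ref{conj:orlov} for $Z$ gives $\RFuk^{\gr}(P_Z,\Omega_Z) \simeq \Fuk(Z)$. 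Concatenating the links,
\begin{equation*}
  \Fw(U)/\Fc(U) \simeq \RFuk(U) \simeq \RFuk^{\gr}(P(N_Z)) \simeq \RFuk^{\gr}(P_Z,\Omega_Z) \simeq \Fuk(Z),
\end{equation*}
which is exactly the assertion of Conjecture \ref{conj:lekili-ueda}; in fact one obtains a $k[u,u^{-1}]$-linear equivalence, slightly stronger than the $k$-linear statement asked for, where the $k[u,u^{-1}]$-action on the left is the one produced by the identification with $\RFuk^{\gr}(P(N_Z))$ and on the right is the $2$-periodic structure of the balanced Fukaya category.

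The main obstacle is the link $\RFuk(U) \simeq \RFuk^{\gr}(P(N_Z))$: it rests on the Rabinowitz Fukaya category being a genuine invariant of the contact boundary at infinity, independent of the filling in the cases at hand — one of the general expected properties of these categories (see \cite{ganatra-talk}) that is not yet available in the literature, and the precise reason the statement is recorded only as conjectural. A secondary, essentially bookkeeping, point is to verify that each equivalence in the chain respects the grading and linearity data: passing from the $\mathbb{Z}$-grading on $\Fw(U)/\Fc(U)$ inherited from the trivialization of $K_V$, through the $k[u,u^{-1}]$-linear structure furnished by the Hamiltonian $S^1$-action on $P(N_Z)$, to the balanced $2$-periodic structure on $\Fuk(Z)$; here one uses that Conjecture \ref{conj:orlov} is stated as a $k[u,u^{-1}]$-linear equivalence and that the comparison functors underlying \eqref{eq:wrapped-mod-compact} are $S^1$-equivariant.
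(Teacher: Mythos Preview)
Your proposal is correct and follows essentially the same approach as the paper: reduce via \eqref{eq:wrapped-mod-compact} to showing $\RFuk(U) \simeq \RFuk^{\gr}(P_{Z},\Omega_{Z})$, use adjunction $K_{Z}\simeq N_{Z}$ to identify the end of $U$ with $P_{Z}$ as graded symplectic manifolds, and then invoke Conjecture \ref{conj:orlov}. Your additional remarks on $k[u,u^{-1}]$-linearity and the filling-independence caveat are consistent with the paper's surrounding discussion.
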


\begin{proof}
  The stated assumptions reduce the problem to showing an equivalence of categories
  \begin{equation}
    \RFuk(U) \simeq \RFuk^{\gr}(P_{Z},\Omega_{Z}).
  \end{equation}
  Note that in our notation, the left hand side is indeed a $\Z$-graded category, graded by a choice of complex volume form $\Omega_{V}$ on $V$. The adjunction formula states
  \begin{equation}
    K_{Z} \simeq K_{V} \otimes NZ \simeq NZ
  \end{equation}
  where $NZ$ is the normal bundle of $Z$ in $V$. Therefore the end of $U$, which is diffeomorphic to a punctured tubular neighborhood of $Z$ in $NZ$, is diffeomorphic to a punctured tubular neighborhood of $Z$ in $K_{Z}$, which is nothing but $P_{Z}$. Note that both the end of $U$ and $P_{Z}$ are equipped with symplectic structures that are convex as one approaches $Z$, and that in both cases the grading form $\Omega_{V}$ and $\Omega_{Z}$ extend over $Z$. Thus the end of $U$ and $P_{Z}$ are equivalent as graded symplectic manifolds, and the desired equivalence follows from general invariance properties of the Rabinowitz Fukaya category.
\end{proof}

\begin{remark}
  The ideas presented here suggest another, less direct, approach to proving Conjectures \ref{conj:orlov} and \ref{conj:lekili-ueda}.  Suppose it were possible to prove that $\RFuk^{\gr}(P_{Z},\Omega_{Z})$ is independent of the filling (restricting perhaps to the class of fillings to those arising from divisor complements in Calabi-Yau manifolds). Then it would suffice to prove Conjectures \ref{conj:orlov} and \ref{conj:lekili-ueda} in a single case which we could take to be as convenient as possible for computation. For instance, when $Z$ is the genus two curve, we could take $Z \subset T^{4}$ to be the embedding of $Z$ as a divisor in its Jacobian.
\end{remark}

\subsection{The case of curves}
\label{sec:case-of-curves}

For the remainder of this section we shall restrict to the case where $Z$ is a curve. Our aim is to generalize the Lekili-Ueda conjecture to the case where $Z$ is a nodal curve.

Let $Z$ be a compact Riemann surface of genus $g$. Suppose that $Z$ is embedded as an ample divisor in a smooth K3 surface $V$, and let $U = V \setminus Z$. Then a special case of Conjecture \ref{conj:lekili-ueda} states that there is an equivalence
\begin{equation}
  \Fw(U)/\Fc(U) \simeq \Fuk(Z).
\end{equation}
As explained by Lekili and Ueda \cite{lekili-ueda-complements}, this statement is related to homological mirror symmetry for $U$. The mirror to $U$ is a singular K3 surface $X$. More precisely, $X$ is a type III K3 surface: in the terminology of Section \ref{sec:graph-like}, this means that $X$ is a normal crossings surface with graph-like singular locus such that $\deg(\gsing(X)|_{C}) = 0$ for each $C \simeq \bP^{1}$ contained in the singular locus. The genus $g$ corresponds to the so-called \emph{index} of $X$. In this setting, homological mirror symmetry states that there are equivalences of categories
\begin{equation}
  \Coh(X) \simeq \Fw(U),\quad \Perf(X) \simeq \Fc(U).
\end{equation}
Of course, $\Dsing(X) \cong \Coh(X)/\Perf(X)$, so we get an equivalence of localization sequences
\begin{equation}
  \label{eq:case-of-curves}
  \xymatrix{
    \Fc(U) \ar[r]\ar[d]& \Fw(U) \ar[r]\ar[d] & \Fuk(Z) \ar[d]\\
    \Perf(X)\ar[r]\ar[u] & \Coh(X) \ar[u] \ar[r]& \Dsing(X) \ar[u]
  }
\end{equation}
We shall seek to generalize this diagram to the case where $Z$ is a nodal ample divisor in a K3 or abelian surface $V$. We shall see that in place of $\Fuk(Z)$ we have $\RFuk(U)$, and we conjecture a definite relationship between this category and $\Fuk(Z)$, where $\Fuk(Z)$ is taken in the sense of Jeffs \cite{jeffs}. In this case, we expect $X$ to be a normal crossings surface with graph-like singular locus, but not necessarily a type III K3 surface.

\begin{remark}
  It is perhaps worth mentioning that this circle of ideas provides a strategy to prove Conjecture \ref{conj:lekili-ueda} for a smooth compact Riemann surface $Z$. Assume that HMS for the pair $(U,X)$ has been established, at least in the sense that the first two columns of diagram \eqref{eq:case-of-curves} are valid. Then we deduce that $\Dsing(X) \cong \Coh(X)/\Perf(X) \cong \Fw(U)/\Fc(U)$. Because $X$ is a type III K3 surface, the result of \cite{PS21} supplies an equivalence $\Dsing(X) \cong \Fuk(Z)$, from which Conjecture \ref{conj:lekili-ueda} follows. This proof is perhaps unnatural since it uses HMS for $U$ and $Z$ separately.
\end{remark}

\subsection{Twisted principal canonical bundles of curves}
\label{sec:twisted-principal}
The conjectures presented so far imply that, when $X$ is a type III K3 surface, there is an equivalence of categories
\begin{equation}
  \Dsing(X) \simeq \RFuk^{\gr}(P_{Z},\Omega_{Z}),
\end{equation}
where $Z$ is a Riemann surface, and we propose to generalize this equivalence. The key point is to incorporate the twists characterized in Lemma \ref{lem:discrete-twists}.

To this end, we shall construct a certain class of symplectic four-manifolds that are associated to trivalent graphs with certain decorations. In fact, we shall describe these manifolds in four different ways in order to highlight various pieces of the structure:
\begin{enumerate}
\item The first topological construction is based on the concept of a \emph{graph manifold} coming from three-manifold theory. The symplectic manifolds we consider are diffeomorphic to a graph manifold times $\R$.
\item We then refine this construction using the theory of integrable systems and affine manifolds. This will show that our manifolds are symplectic, and carry a trivialization of their canonical bundle.
\item In Section \ref{sec:t-duality-gsing}, we use T-duality to describe our manifolds as the SYZ mirrors of $\gsing^{\circ}(X) = \gsing(X) \setminus X$, where $X$ is a normal crossings surface with graph-like singular locus. We find this independently interesting, and it provides some additional support to our HMS proposal, Conjecture \ref{conj:twisted-hms}.
\item In Section \ref{sec:tubular-neighborhood-nodal} we show that our manifolds are also obtained as punctured tubular neighborhoods of nodal curves in K3 and abelian surfaces. This description shows why our proposal works as a generalization of the Lekili-Ueda conjecture for smooth curves.
\end{enumerate}

Our starting point for the construction is a Riemann surface $Z$ equipped with a pair of pants decomposition with associated graph $G$. Let $e$ be an edge of $G$, and let $C_{e}$ be the associated simple closed curve in $Z$. Pick an integer $n_{e} \in \Z$. Recall that $p: S_{Z} \to Z$ is the unit cotangent $S^{1}$-bundle. The preimage $T_{e} = p^{-1}(C_{e})$ is a two-torus. We can cut $S_{Z}$ along $T_{e}$ and reglue using an automorphism of the two torus.

To specify a specific automorphism, we describe certain canonical framings of the torus $T_{e}$, meaning bases of $H_{1}(T_{e},\Z)$. The orientation of the surface determines an orientation of the fiber of $S_{Z}$; we denote this class by $f \in H_{1}(T_{e},\Z)$. Choosing an orientation on $C_{e}$, we obtain a canonical lift of $C_{e}$ to a loop in $T_{e}$; this is given by taking, at each point $p \in C_{e}$, the unit cotangent vector $\xi \in T^{*}_{p}Z$ whose pairing with the positively oriented tangent vector of $C_{e}$ is maximal. This leads to two classes $b_{1},b_{2}\in H_{1}(T_{e},\Z)$ corresponding to the two orientations of $C_{e}$. Note the relation $b_{1} = -b_{2}$. We call $f$ the \emph{fiber class} and we call $b_{1}$ and $b_{2}$ \emph{base classes}. Both $(b_{1},f)$ and $(b_{2},f)$ give framings of the torus $T_{e}$, and they are related by the change of basis matrix
\begin{equation}
  \phi_{0} = \begin{pmatrix} -1 & 0 \\ 0 & 1\end{pmatrix}.
\end{equation}
The idea is to replace this matrix with
\begin{equation}
  \label{eq:twist-matrix}
  \phi_{n_{e}} = \begin{pmatrix} -1 & n_{e} \\ 0 & 1\end{pmatrix}.
\end{equation}
This requires a bit of care about the framings. When we cut $S_{Z}$ along $T_{e}$ we are cutting $Z$ along $C_{e}$. This produces a bordered surface whose boundary contains two copies of $C_{e}$, we assume that each copy is equipped with the boundary orientation induced from the orientation of $Z$. Thus one copy carries orientation $b_{1}$ and the other copy carries orientation $b_{2}$. One copy of $T_{e}$ is therefore framed by $(b_{1},f)$ and the other by $(b_{2},f)$. The twisted gluing is defined by using a diffeomorphism between the two copies of $T_{e}$ such that these framings correspond under the transformation $\phi_{n_{e}}$ from \eqref{eq:twist-matrix}; note that since $\phi_{n_{e}}$ is its own inverse it does not matter in which direction we consider the transformation. 

This construction depends on a choice of pants decomposition of $Z$ and an integer $n_{e}$ for each edge $e$ in the graph $G$ associated to the pants decomposition. We denote the resulting three-manifold by $S(\{n_{e}\})$.

A more intuitive description of these twists is that they \emph{preserve the base classes, while shearing the fiber class in the direction of the base class}. The complementary kind of twist, which preserves the fiber class while shearing the base classes, is the kind that preserves the structure of an $S^{1}$-bundle and hence the $S^{1}$-action on the total space. Conversely, \emph{the twists we consider here break the $S^{1}$-bundle structure and the $S^{1}$-action}.

\begin{remark}
  The theory of three-manifolds provides terminology for describing the construction above. The unit cotangent bundle of a surface is an instance of a \emph{Seifert-fibered} three-manifold. A manifold obtained by gluing together Seifert-fibered manifolds along torus boundaries is an instance of what are termed \emph{graph manifolds}. Thus $S(\{n_{e}\})$ is a graph manifold; the ``graph'' to which this name refers is essentially our graph $G$ decorated by the twists $\{n_{e}\}$. The pair of pants decomposition of $Z$ used in the construction corresponds to the \emph{Jaco-Shalen-Johannson (JSJ) decomposition} of $S(\{n_{e}\})$.
\end{remark}

So far we have constructed a three-manifold $S(\{n_{e}\})$, but in order to take the Rabinowitz Fukaya category we need to produce a graded symplectic manifold that we shall denote $P(\{n_{e}\})$. As a smooth manifold, we shall have $P(\{n_{e}\})\simeq S(\{n_{e}\}) \times \R$, but this presentation does not make the symplectic structure or grading evident.

To construct $P(\{n_{e}\})$ precisely, we use Lagrangian torus fibrations (completely integrable systems) defined near $C_{e}$. (For now this is just a technical convenience, but in the next subsection we shall see how it connects to the SYZ picture.)

Consider a neighborhood $U$ of $C_{e}$ in $Z$, such that $U$ is fibered by circles parallel to $C_{e}$. The fibers of the bundle $q: P_{Z}\to Z$ are also fibered by circles (the orbits of the $S^{1}$-action), and by considering products we see that $q^{-1}(U)$ is fibered by two-tori that are parallel to the torus $T_{e}$ considered above. This torus fibration gives rise to a map $q^{-1}(U) \to B$ where $B$ is an integral affine manifold that is diffeomorphic to an open set in $\R^{2}$. The canonical framings of $T_{e}$ correspond to canonical integral affine coordinate systems on $B$. The set $q^{-1}(C_{e})$ is a codimension one submanifold. We then cut $P_{Z}$ along $q^{-1}(C_{e})$ and reglue after applying the shear matrix $\phi_{n_{e}}$ from \eqref{eq:twist-matrix} to the affine coordinates on $B$. Since integral linear transformations of the affine coordinates induce symplectomorphisms of the torus fibration, the result carries a symplectic structure. When performing the regluing, there is a choice: the two pieces may be translated relative to one another by some translation in (the model vector space of) the affine manifold $B$. This affects the local symplectic area of certain surfaces, and hence the symplectic structure of the resulting manifold does depend on this choice. We denote the resulting manifold by $P(\{n_{e}\})$, suppressing the dependence on the translations used in the gluing.

It is also important to realize that the gluings induced by the shear maps $\phi_{n}$ are precisely the ones the that preserve the grading form $\Omega_{Z}$ on $P_{Z}$, at least in the sense that the homotopy class of trivialization of the canonical bundle is preserved. Thus we may equip $P(\{n_{e}\})$ with a grading form, which we shall also denote $\Omega_{Z}$. We may therefore consider the (graded) Rabinowitz Fukaya category $\RFuk^{\gr}(P(\{n_{e}\}),\Omega_{Z})$, just as we did for $(P_{Z},\Omega_{Z})$.

The following conjecture connects $\RFuk^{\gr}(P(\{n_{e}\},\Omega_{Z})$ to the singularity category of a normal crossings surface $X$ where $\deg(\gsing(X))|_{C}$ is not necessarily zero.

\begin{conjecture}
  \label{conj:twisted-hms}
  Let $X$ be a proper normal crossings surface with graph-like singular locus described by the trivalent graph $G(X)$, such that the dual intersection complex is orientable. For each edge $e$ of $G(X)$ whose corresponding component $C_{e}$ of $\Sing(X)$ is isomorphic $\bP^{1}$, let $n_{e} = -\deg(\gsing(X))|_{C_{e}}$. Let $Z$ be a Riemann surface equipped with a pants decomposition whose graph is $G(X)$. Then there is a graded symplectic manifold $P(\{n_{e}\})$ obtained by the construction described above such that there is an equivalence of categories
  \begin{equation}
  \label{eq:twisted-hms}
    \Dsing(X) \simeq \RFuk^{\gr}(P(\{n_{e}\}),\Omega_{Z}).
  \end{equation}
  In particular, the right-hand side carries an autoequivalence $\Lambda$ and a natural isomorphism $t: \id \to \Lambda^{-1}[2]$, that is, a $\Lambda$-twisted $2$-periodic structure.
\end{conjecture}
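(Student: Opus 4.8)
The plan is to prove the equivalence \eqref{eq:twisted-hms} by exhibiting both sides as homotopy limits of the \emph{same} $J(G(X))$-shaped (equivalently $I(G(X))$-shaped) diagram of DG categories, thereby reducing everything to a matching of gluing data vertex-by-vertex and edge-by-edge. On the $B$-side this presentation is exactly Theorem~\ref{thm:twisted-gluing}: $\Dsing(X)\simeq\holim(F\colon J(G(X))\to\DGCat)$ with $F(v)=\cV=\MF(\bA^{3},xyz)$, $F(e)=\cE=\Coh^{(2)}(\bG_{m})$, and the flag functors built from restrictions, Kn\"{o}rrer periodicity, shifts, automorphisms, and the non-$2$-periodic autoequivalences $\phi_{n_{e}}$ --- where $n_{e}$ is pinned down to $-\deg(\gsing(X)|_{C_{e}})$ by Lemma~\ref{lem:twist-equals-degree}, the shift $[1]$ is pinned down by Lemma~\ref{lem:always-shift} (using orientability of the dual intersection complex), and the discrete form by Lemma~\ref{lem:discrete-twists}. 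Thus all the work is on the $A$-side.

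First I would establish a Mayer--Vietoris/descent property for the graded Rabinowitz Fukaya category with respect to the decomposition of $P(\{n_{e}\})$ obtained by cutting along the codimension-one submanifolds $q^{-1}(C_{e})$. Concretely, the cover of $Z$ by pairs of pants $Z_{v}$ (with overlaps annular neighborhoods of the curves $C_{e}$) pulls back to a cover of $P(\{n_{e}\})$ whose pieces are neighborhoods of the principal canonical bundles $P_{Z_{v}}$ and whose overlaps are neighborhoods of $q^{-1}(C_{e})\cong T_{e}\times\R$. Granting the expected sheaf-of-categories behaviour of $\RFuk^{\gr}$ under gluing along convex/contact-type hypersurfaces (a Rabinowitz analogue of generalized Viterbo restriction/K\"{u}nneth, in the spirit of \cite{ganatra-talk}), one gets $\RFuk^{\gr}(P(\{n_{e}\}),\Omega_{Z})\simeq\holim$ over a \v{C}ech-type diagram which, because all triple overlaps are empty, collapses to a $J(G(X))$-shaped diagram. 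One then identifies the pieces: $\RFuk^{\gr}(P_{Z_{v}},\Omega)\simeq\Fuk(\text{pair of pants})\simeq\MF(\bA^{3},xyz)=\cV$, the first equivalence being Conjecture~\ref{conj:orlov} specialized to the pair of pants (plausibly directly verifiable in this low-complexity case) and the second being homological mirror symmetry for the pair of pants \cite{abouzaid2013homological}; and $\RFuk^{\gr}$ of a neighborhood of $T_{e}\times\R$ is the graded Fukaya category of a piece of $T^{*}T^{2}$, namely $\Coh^{(2)}(\bG_{m})=\cE$, with the base $S^{1}$-factor producing the coordinate $x$ and the fiber $S^{1}$-factor --- along which $\Omega_{Z}$ forces winding, hence degree $2$ --- producing the Laurent variable $u$.

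Next I would match the flag functors. On the $A$-side the transition across the edge $e$ is the diffeomorphism of the Lagrangian torus fibration near $q^{-1}(C_{e})$ induced by the shear matrix $\phi_{n_{e}}=\begin{pmatrix}-1&n_{e}\\0&1\end{pmatrix}$ acting on the integral-affine coordinates of the base, together with a translation. Under the mirror identification of those base coordinates with $(x,u)$ described above, an integral change of framing by $M\in\GL(2,\Z)$ induces precisely the corresponding graded automorphism of $k[x^{\pm1},u^{\pm1}]$; hence the shear $\phi_{n_{e}}$ induces the autoequivalence $(x,u)\mapsto(x^{-1},x^{n_{e}}u)$ of $\cE$, exactly the discrete part appearing on the $B$-side (Lemmas~\ref{lem:discrete-twists} and \ref{lem:twist-equals-degree}); the shift $[1]$ arises from the choice of boundary orientation of $C_{e}$ just as in Lemma~\ref{lem:always-shift}; and the continuous parameters $\alpha_{e},\beta_{e}\in k^{\times}$ (the residual freedom in $\pi_{0}\Auteq(\cE)$, Proposition~\ref{prop:autoequiv-of-edge}) correspond to the translational freedom in the regluing, which controls the relevant symplectic areas and holonomies. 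With vertex categories, edge categories, and all flag functors matched, the two homotopy limits agree, giving \eqref{eq:twisted-hms}. The twisted $2$-periodic structure then transports along the equivalence: $\Lambda$ is the autoequivalence glued from the local data $\beta_{e}x^{n_{e}}$ (equivalently, tensoring with the line bundle $L$ on a formal neighborhood of $\Sing(X)$), and $t\colon\id\to\Lambda^{-1}[2]$ is glued from the local $2$-periodic structures $u$, matching the $L$-twisted structure on $\Dsing(X)$ from Section~\ref{sec:graph-like}.

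The main obstacle is the first step: the descent/Mayer--Vietoris formalism for $\RFuk^{\gr}$ along convex hypersurfaces is not yet available --- it is among the anticipated but unproven foundational properties of Rabinowitz Fukaya categories --- and a version adapted to the concave-end gluings used here, in which one must also track the grading form $\Omega_{Z}$ and the fact that it extends over the curves $C_{e}$, is really the crux. A secondary difficulty is careful bookkeeping of the continuous parameters and grading conventions so that the $A$-side and $B$-side diagrams match on the nose. An alternative, more geometric route that sidesteps part of this is to use the T-duality identification of $P(\{n_{e}\})$ with the SYZ mirror of $\gsing^{\circ}(X)$ from Section~\ref{sec:t-duality-gsing} together with family-Floer homological mirror symmetry, combined with the singular-support description of $\Dsing(X)$ as the restriction of $\Coh(X)$ to $\gsing^{\circ}(X)$ from Section~\ref{sec:sing-supp}; but this trades the Rabinowitz-descent input for an equally substantial SYZ input.
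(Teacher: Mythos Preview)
The statement you are trying to prove is labeled a \emph{Conjecture} in the paper, and the paper does not supply a proof. So there is no ``paper's own proof'' to compare against. What the paper does offer is exactly the strategy you propose: in Remark~\ref{rem:non-proper} the authors write that ``one might be able to prove Conjecture~\ref{conj:twisted-hms} by showing that the categories of the form $\RFuk^{\gr}(P(\{n_{e}\}),\Omega_{Z})$ satisfy descent with respect to the pants decomposition that was used to construct $P(\{n_{e}\})$, and this would directly match the description that we have proved for $\Dsing(X)$ in Section~\ref{sec:graph-like}.'' Your plan is a fleshed-out version of precisely this sentence, and you have correctly identified that the crux is the as-yet-unavailable descent/Mayer--Vietoris package for $\RFuk^{\gr}$.

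There is one genuine difficulty you have underemphasized, and it is the very point Remark~\ref{rem:non-proper} is about. Your local pieces $P_{Z_{v}}$ sit over pairs of pants, which are \emph{punctured} surfaces; consequently $P_{Z_{v}}$ is not a Liouville cobordism between compact contact manifolds but, in the paper's phrase, a ``Liouville cobordism between noncompact contact manifolds.'' The Rabinowitz Fukaya category for such objects is not part of the Ganatra--Gao--Venkatesh framework as currently announced, and your invocation of Conjecture~\ref{conj:orlov} for the pair of pants already presupposes such an extension. So the missing input is not merely Rabinowitz descent along compact hypersurfaces: you need a theory of $\RFuk^{\gr}$ that simultaneously handles the Rabinowitz-type wrapping at the convex/concave ends and ordinary wrapping at the punctures, together with compatible restriction functors. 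This is a strictly larger gap than ``descent for $\RFuk^{\gr}$.'' Your alternative T-duality route via Section~\ref{sec:t-duality-gsing} is also the one the paper gestures at, and you are right that it trades one substantial unproven input for another.
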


\begin{remark}
  \label{rem:non-proper}
  When the surface $X$ is not proper, the corresponding Riemann surface $Z$ has punctures. While the manifold $P(\{n_{e}\})$ still exists, it is not strictly speaking a Liouville cobordism any more, since it is in fact a ``Liouville cobordism between noncompact contact manifolds.'' Therefore in order to generalize Conjecture \ref{conj:twisted-hms} to this case one would need a variant of the Rabinowitz Fukaya category that incorporates wrapping along this other type of noncompactness.

If such a theory were constructed, one might be able to prove Conjecture \ref{conj:twisted-hms} by showing that the categories of the form $\RFuk^{\gr}(P(\{n_{e}\}),\Omega_{Z})$ satisfy descent with respect to the pants decomposition that was used to construct $P(\{n_{e}\})$, and this would directly match the description that we have proved for $\Dsing(X)$ in Section \ref{sec:graph-like}. We leave it to the reader to formulate the precise conjecture.
\end{remark}

\begin{remark}
Let us explain how Conjecture \ref{conj:twisted-hms} is related to other results and conjectures in mirror symmetry.
\begin{itemize}
\item Hori-Vafa HMS predicts that categories of matrix factorizations of toric Calabi-Yau threefolds should be mirror to the Fukaya category of certain \emph{punctured} Riemann surfaces. Let 
$X$ be the toric boundary divisor of a toric Calabi--Yau threefold $Y$; in particular $X$ can be written as $W^{-1}(0)$ where $W$ is a toric superpotential $W:Y \to \mathbb{A}^1$. The fact that $X$ has graph-like singular locus is automatic in the toric setting. Also, since $X$ arises as the fiber of a regular function $n_e=0$ for all edges of $G(X)$, and thus $P(\{n_{e}\})=P_{Z}$. The classical statement of Hori-Vafa HMS is formulated as the equivalence 
\begin{equation}
  \MF(Y,W) \simeq \Fuk^{w}(Z).
\end{equation}
In this form, Hori-Vafa HMS has been fully established, thanks to the contributions of many mathematicians including \cite{abouzaid2013homological}, \cite{lee2016homological}, \cite{bocklandt2016noncommutative}, and \cite{pascaleff2019topological}.
Because $X$ is necessarily not proper, Conjecture \ref{conj:twisted-hms} does not directly apply to this case (see Remark \ref{rem:non-proper}). But suppose for the time being that there is a reasonable generalization of $\RFuk^{\gr}(P_{Z},\Omega_{Z})$ for punctured $Z$, which satisfies the conclusion of Conjecture \ref{conj:orlov}, namely
\begin{equation}
  \RFuk^{\gr}(P_Z,\Omega_{Z}) \simeq \Fuk^{w}(Z).
\end{equation}
The generalization of Conjecture \ref{conj:twisted-hms} to this case would be an equivalence
\begin{equation}
  \Dsing(X) \simeq \RFuk^{\gr}(P_Z,\Omega_{Z}).
\end{equation}
Combining these equivalences with $\MF(Y,W) \simeq \Dsing(X)$ yields the Hori-Vafa HMS statement. 
\item The main result of our previous paper \cite{PS21} can be viewed as a generalization of Hori-Vafa HMS to the case when $X$ is a normal crossing surface with graph-like singular locus that arises as the central fiber of a superpotential $W:Y \to \mathbb{A}^1$, where $Y$ is a smooth threefold, but not necessarily toric. One novelty with respect to the Hori-Vafa setting is that $G(X)$ and $Z$ are allowed to be compact. Suppose that in the hypothesis of Conjecture \ref{conj:twisted-hms} we add the assumption that $X$ arises as the fiber of a morphism $W : Y \to \mathbb{A}^{1}$ from a smooth threefold. This implies that all $n_{e} = 0$, so that the manifold $P(\{n_{e}\})$ reduces to $P_{Z}$. Then the conclusion of Conjecture \ref{conj:twisted-hms} follows from the main result of \cite{PS21} plus Conjecture \ref{conj:orlov}.
\item Conjecture  \ref{conj:twisted-hms} gives an  HMS statement for singularity categories of normal crossings surfaces with graph-like singular locus, dropping the assumption of the the existence of a superpotential or ambient threefold $Y$. Surprisingly the mirror is actually a four dimensional  Liouville cobordism, and in general there is no way to cut the dimension down to two. As an interesting class of examples,  Conjecture  \ref{conj:twisted-hms} builds mirrors for singularity categories of toric boundary divisors of toric threefolds; thus in particular it provides one generalization of Hori-Vafa HMS to all toric threefolds, without the Calabi-Yau assumption.
 \end{itemize}\end{remark}

\subsection{T-duality on $\gsing$}
\label{sec:t-duality-gsing}
We shall now interpret Conjecture \ref{conj:twisted-hms} in terms of T-duality and the SYZ picture. This makes the construction of $P(\{n_{e}\})$ appear more natural. (In order for this to make geometric sense we take the ground field to be $\mathbb{C}$ on the B-side.)

Let $X$ be a normal crossings surface with graph-like singular locus, and recall from Section \ref{sec:sing-supp} the singular support variety $\gsing(X)$. There is a projection $\pi: \gsing(X) \to X$ and a zero section $X \to \gsing(X)$ that makes $\gsing(X)$ into a stratified vector bundle over $X$; the rank is zero over the regular locus of $X$ and one over the singular locus of $X$. Recall that a coherent sheaf on $X$ has a singular support that is a conical subvariety of $\gsing(X)$, and that  perfect complexes are precisely those objects whose singular support is contained in the zero section $X$. In this way we obtain the heuristic principle
\begin{equation}
\Dsing(X) \text{ lives on } \gsing^{\circ}(X) := \gsing(X) \setminus X.
\end{equation}
(See also Remarks \ref{rem:efimov} and \ref{rem:recollement}.)

From this heuristic, we infer directly that in order to obtain a symplectic manifold whose Fukaya category (in an appropriate variant) recovers $\Dsing(X)$, we should find the mirror of $\gsing^{\circ}(X)$, and one way to do that is to find a singular torus fibration on $\gsing^{\circ}(X)$ and apply T-duality. We claim that the result is essentially the manifold $P(\{n_{e}\})$ appearing in Conjecture \ref{conj:twisted-hms}.

It has long been observed that the singular locus $\Sing(X)$ is T-dual to the Riemann surface $Z$ with pants decomposition described by $G(X)$. Since $\Sing(X)$ is a union of copies of $\bP^{1}$ and $\bA^{1}$ meeting at triple points, deleting the triple points yields a toric variety that carries a natural $S^{1}$-fibration. On the mirror side, each pair of pants in the pants decomposition of $Z$ carries an $S^{1}$-fibration on the complement of a $\Theta$-shaped graph in the pair of pants. T-duality takes these fibrations one to the other; from this picture we infer that the triple points on $\Sing(X)$ correspond the certain $\Theta$-shaped graphs in $Z$ that are the core of each pair of pants.

To see the torus fibration on $\gsing^{\circ}(X)$, we note that $\gsing^{\circ}(X)$ is a $\mathbb{C}^{\times}$-bundle over $\Sing(X)$. Deleting the fibers over the triple points gives a toric variety that has a natural $T^{2}$-fibration. Thus the T-dual space, call it $P$ should be constructed by taking the dual fibration and compactifying it with certain singular fibers. We argue that taking $P = P(\{n_{e}\})$ does the trick, where $n_{e} = -\deg(\gsing(X))|_{C_{e}}$.

First consider the case where all $n_{e} = 0$, meaning that $\gsing^{\circ}(X)$ is a trivial $\mathbb{C}^{\times}$-bundle over $\Sing(X)$, and that $P = P_{Z}$ reduces to the principal canonical bundle of $Z$. While it is difficult to describe precisely what happens at the triple point of $\Sing(X)$, we can justify this case by appealing to the known equivalence $\Dsing(X) \simeq \Fuk(Z)$ and the conjectural equivalence $\Fuk(Z) \simeq \RFuk^{\gr}(P_{Z},\Omega_{Z})$.

When some of the integers $n_{e}$ are different from zero, then both $\gsing^{\circ}(X)$ and $P(\{n_{e}\})$ differ from the $n_{e}= 0$ case by modifying the gluings in the affine structures over the edges in $G(X)$. (This was explicit in the way that $P(\{n_{e}\})$ was constructed from torus fibrations.) Note that on the side of $\gsing^{\circ}(X)$, the gluings are twisted by transformations that preserve the structure of the fibration over $\Sing(X)$, while on the side of $P(\{n_{e}\})$, the gluings are twisted in a way that destroys the fibration structure on $P_{Z}$. This is exactly what T-duality would predict.

It remains to check that the gluing matrices match. On the $\gsing^{\circ}(X)$ side, along the edge $e$ we have two coordinate systems $(x_{1},u_{1})$ and $(x_{2},u_{2})$, $\deg(x_{i}) = 0$, $\deg(u_{i}) = 2$, and these are related by the transformation
\begin{equation}
  (x_{1},u_{1}) \mapsto (x_{2},u_{2}) = (x_{1}^{-1},x_{1}^{n_{e}}u_{1}),
\end{equation}
which is nothing but the action of the matrix $\phi_{n_{e}}$ on these monomials.
This transformation preserves the fibration structure because $x_{2}$ is a function of $x_{1}$ only. In terms of affine structures, the monomials appearing in this formula lie in a certain lattice $M \simeq H^{1}(T,\Z)$ where $T$ is the torus fiber in $\gsing^{\circ}(X)$. Under T-duality, this becomes $M \simeq H_{1}(T^{\vee},\Z)$, where $T^{\vee}$ is the torus fiber in the dual space. Thus we must build the mirror space by using $\phi_{n_{e}}$ to twist the bases of $H_1(T^{\vee},\Z)$, and this is precisely how $P(\{n_{e}\})$ was constructed.

\subsection{Tubular neighborhoods of nodal divisors in K3 and abelian surfaces}
\label{sec:tubular-neighborhood-nodal}

Let $V$ be a smooth K3 or abelian surface, and let $Z \subset V$ be an at-worst-nodal divisor, and let $U = V \setminus Z$. In this subsection we show that the end of $U$ is modeled on a twisted principal canonical bundle of a Riemann surface as set out in Section \ref{sec:twisted-principal}. This furnishes yet another geometric description of these manifolds, and it also gives an interpretation of the autoequivalence $\Lambda$ appearing in Conjecture \ref{conj:twisted-hms}.

Let $Z'$ be a smoothing of the nodal surface $Z$. Then $Z'$ carries a collection of pairwise disjoint simple closed curves that are the vanishing cycles of the retraction $Z' \to Z$. There may be parallel curves in this collection; if so we just remember one curve $C_{e}$ from each homotopy class and the number $n_{e}$ of parallel copies. This gives us a collection of pairs $(C_{e},n_{e})$ where the $C_{e}$ are pairwise disjoint and pairwise nonhomotopic simple closed curves, and $n_{e} > 0$. We then use the surface $Z'$ and the pairs $\{(C_{e},n_{e})\}_{e}$ as input for the construction of Section \ref{sec:twisted-principal}, and we write
\begin{equation}
  P_{Z} = P(\{(C_{e},n_{e})\})
\end{equation}
for the resulting twisted principal canonical bundle. Thus, the manifold $P_{Z}$ is obtained from the untwisted principal canonical bundle $P_{Z'} = K_{Z'}\setminus Z'$ by cutting along the curves $C_{e}$ and regluing with a shear parameterized by $n_{e}$.

\begin{lemma}
  \label{lem:dehn-twists}
  The manifold $P_{Z}$ carries a symplectomorphism $\Lambda$ that is the identity outside of the cut-and-glue regions near the curves $C_{e}$, and which acts by the monodromy of the mapping torus of an $n_{e}$-fold Dehn twist near $C_{e}$.
\end{lemma}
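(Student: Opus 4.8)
### Proof Proposal for Lemma \ref{lem:dehn-twists}

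The plan is to construct $\Lambda$ directly from the cut-and-glue description of $P_Z$ and then check that it is a symplectomorphism preserving the structure away from the gluing regions. Recall from Section \ref{sec:twisted-principal} that near each curve $C_e$ we have a neighborhood $q^{-1}(U_e)$ in $P_{Z'}$, fibered in two-tori $T_e = p^{-1}(C_e)$ parallel to a fixed torus, with an integral affine base $B_e \cong$ (open set in $\R^2$); the twisted manifold $P_Z$ is obtained by cutting along $q^{-1}(C_e)$ and regluing via the shear $\phi_{n_e}$ acting on affine coordinates. The first step is to write down, in these coordinates, the ``fiberwise shear'' map: on $q^{-1}(U_e)$ there is a one-parameter family of symplectomorphisms $\Lambda_e^t$, $t \in [0,1]$, supported near $C_e$, that realizes the monodromy of the mapping torus of the $n_e$-fold Dehn twist $\tau_{C_e}^{n_e}$ of the surface $Z'$, extended to $P_{Z'} = K_{Z'} \setminus Z'$ by its canonical action on the canonical bundle. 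Concretely, a Dehn twist $\tau_{C_e}^{n_e}$ on $Z'$ lifts to a symplectomorphism of $T^*Z'$ (hence of $K_{Z'}$ and of $P_{Z'}$) supported in a neighborhood of $C_e$; in the affine/torus-fibration picture this lifted map is exactly the shear $\phi_{n_e}$ applied fiberwise, interpolated to the identity at the two ends of the collar.

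The second step is to verify that $\Lambda_e := \Lambda_e^1$ descends to a well-defined symplectomorphism of $P_Z$. This amounts to checking that $\Lambda_e^1$ is compatible with the regluing: the cut-and-reglue that produces $P_Z$ from $P_{Z'}$ identifies the two ends of the collar via $\phi_{n_e}$, and $\Lambda_e^1$, being the time-one monodromy of a shear that precisely interpolates between the identity and $\phi_{n_e}$, intertwines the two gluing prescriptions. Equivalently, one observes that $P_Z$ is the mapping torus construction for the surface-with-shear data, and $\Lambda_e$ is the deck/monodromy transformation; this is automatically a symplectomorphism because all the transition maps in the affine structure are integral linear (hence symplectic for the torus fibration), as noted in Section \ref{sec:twisted-principal}. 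Since the curves $C_e$ are pairwise disjoint, the maps $\Lambda_e$ for different edges have disjoint supports, and we set $\Lambda = \prod_e \Lambda_e$, which is the identity outside the union of the cut-and-glue regions by construction.

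The third step is to record that $\Lambda$ preserves the grading form $\Omega_Z$ on $P_Z$, so that it is an autoequivalence-inducing symplectomorphism in the graded sense relevant to $\RFuk^{\gr}(P_Z, \Omega_Z)$. This follows because the shear matrices $\phi_{n_e}$ were chosen precisely to preserve the homotopy class of trivialization of the canonical bundle (this is exactly the compatibility observed in Section \ref{sec:twisted-principal} when constructing $P(\{n_e\})$), and the interpolating isotopy $\Lambda_e^t$ can be chosen to lie in the subgroup of symplectomorphisms preserving $\Omega_Z$ up to homotopy. I expect the main obstacle to be making precise the claim that the lifted Dehn twist agrees, near $C_e$, with the fiberwise shear $\phi_{n_e}$ in the chosen integral affine coordinates, and in particular arranging the interpolation so that it is simultaneously (i) compactly supported in the collar, (ii) symplectic at every time $t$, and (iii) grading-preserving; this is a routine but somewhat delicate local normal-form computation with Lagrangian torus fibrations near a singular fiber, and is the only place where genuine care (rather than formal manipulation) is needed.
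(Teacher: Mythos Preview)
Your proposal is correct and arrives at the same key observation as the paper: that the cut-and-glue construction near each $C_e$ is nothing other than inserting the symplectic mapping torus of the $n_e$-fold Dehn twist, so that $\Lambda$ is simply the built-in monodromy (deck) transformation of that mapping torus. The paper's proof consists of exactly this one sentence; your Steps 1 and 2 are a more elaborate route to the same point, and your Step 3 on grading, together with the flagged local normal-form computation, are extra care beyond what the lemma actually asserts or requires.
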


\begin{proof}
This follows from the observation that the process of cutting along $C_{e}$ and gluing with a shear by $n_{e}$ is nothing but cutting along $C_{e}$ and inserting the symplectic mapping torus of an $n_{e}$-fold Dehn twist along the curve $C_{e}$.
\end{proof}

\begin{lemma}
  The end of $U$ is modeled on $P_{Z}$ (that is, there is a complement of a compact subset of $U$ that is symplectomorphic to an open subdomain in $P_{Z}$ onto which $P_{Z}$ retracts).
\end{lemma}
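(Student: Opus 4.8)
The plan is to describe the symplectic model of the end of $U = V \setminus Z$ explicitly as a neighborhood of the nodal divisor $Z$ inside $V$, and to match it, piece by piece, with the cut-and-paste construction of $P_{Z}$ from Section \ref{sec:twisted-principal}. First I would invoke the symplectic neighborhood theorem for the singular divisor $Z$. Away from the nodes, $Z$ is a smooth symplectic curve, and a tubular neighborhood of its smooth part is symplectomorphic to a neighborhood of the zero section of the normal bundle $N_{Z}$, hence (after removing $Z$ itself) to a piece of the symplectization of the unit circle bundle, with the grading form extending over $Z$; this is exactly the ``untwisted'' local model $P_{Z'}$ restricted to the complement of the vanishing cycles. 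The content of the lemma is therefore concentrated at the nodes of $Z$.

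The key local computation is at a single node. A neighborhood of a node of $Z$ in $V$ is modeled on a neighborhood of $\{xy = 0\}$ in $\mathbb{C}^{2}$, and the complement of $\{xy=0\}$ is $(\mathbb{C}^{\times})^{2}$, which retracts onto the torus $T^{2}$ at level $\{|x| = |y| = \epsilon\}$. Passing to the smoothing $Z'$ near the node replaces $\{xy=0\}$ by $\{xy = \delta\}$, whose complement in a ball is an annulus times $\mathbb{C}^{\times}$; the vanishing cycle $C_{e}$ is the core circle $\{|x|=|y|\}$ of the annulus factor. Comparing the monodromy of the Milnor fibration $xy : B \setminus \{xy=0\} \to \mathbb{C}^{\times}$ with the gluing used to build $P(\{n_{e}\})$, one sees that going around a node contributes precisely a shear of the torus $T^{2}$ by the matrix $\phi_{n_{e}}$ of \eqref{eq:twist-matrix} when there are $n_{e}$ parallel nodes; a single node gives $\phi_{1}$, and $n_{e}$ collinear nodes compose to give $\phi_{n_{e}}$. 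This is the same manipulation recorded in Lemma \ref{lem:dehn-twists}: inserting the mapping torus of an $n_{e}$-fold Dehn twist along $C_{e}$ is exactly cutting and regluing with shear $n_{e}$. So near the nodes the end of $U$ is glued from the smooth model exactly in the way $P_{Z}$ is glued from $P_{Z'}$.

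Then I would assemble these local identifications into a global symplectomorphism. Choose a compact exhaustion of $U$ whose complement is a neighborhood of $Z$ in $V$ with $Z$ removed; over the locus of smooth points of $Z$ use the symplectic normal-bundle model, over small balls around the nodes use the local model above, and glue on the overlaps using the uniqueness part of the symplectic neighborhood theorem (Moser's trick), checking that the grading forms $\Omega_{V}$ on the $V$-side and $\Omega_{Z}$ on the $P_{Z}$-side are matched because both extend over $Z$ (this uses the Calabi--Yau condition on $V$, i.e. $K_{V} \cong \mathcal{O}_{V}$, so that $N_{Z} \cong K_{Z}$ by adjunction just as in the proof of the Proposition relating Conjectures \ref{conj:orlov} and \ref{conj:lekili-ueda}). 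The main obstacle I anticipate is precisely this gluing over the nodes: one must check that the locally-defined shears are compatible on triple overlaps and that the Liouville/contact structure at infinity, not just the symplectic form, is matched, since the Rabinowitz Fukaya category is sensitive to the Liouville structure. This should follow from choosing the cut-and-glue regions of $P_{Z}$ to coincide with Milnor-fiber neighborhoods of the nodes and arranging the translation parameters in the $P_{Z}$ construction to match the actual symplectic areas of the vanishing-cycle regions in $V$; once that bookkeeping is done, Moser's theorem upgrades the diffeomorphism to a symplectomorphism onto a subdomain of $P_{Z}$ onto which $P_{Z}$ retracts.
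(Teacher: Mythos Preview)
Your approach is essentially the same as the paper's: both reduce to the smooth-divisor case (where the end of $U$ is modeled on $P_{Z'}$ by the adjunction argument you cite) and then analyze the local modification at each node. The paper phrases the local step more conceptually, observing that passing from the punctured neighborhood of the nodal $Z$ to that of its smoothing $Z'$ is an instance of Weinstein $2$-handle attachment (one handle per node), and that this handle attachment is precisely inverse to the cut-and-reglue operation that produces $P_{Z}$ from $P_{Z'}$. Your local Milnor-model computation amounts to verifying this by hand.

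One small point of bookkeeping: your sentence ``a single node gives $\phi_{1}$, and $n_{e}$ collinear nodes compose to give $\phi_{n_{e}}$'' is not literally correct, since the matrices $\phi_{n} = \begin{pmatrix} -1 & n \\ 0 & 1 \end{pmatrix}$ are involutions and do not compose additively. The $-1$ in the upper-left entry records the orientation reversal between the two sides of $C_{e}$, which is present regardless of $n_{e}$; what each node contributes is a unipotent shear $\begin{pmatrix} 1 & 1 \\ 0 & 1 \end{pmatrix}$, and $n_{e}$ of those compose to $\begin{pmatrix} 1 & n_{e} \\ 0 & 1 \end{pmatrix}$, which combined with the orientation flip gives $\phi_{n_{e}}$. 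This does not affect the substance of your argument.
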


\begin{proof}
The end of $U$ is a punctured tubular neighborhood of the nodal curve $Z$ in the surface $V$. We may smooth $Z$ to $Z'$ within $V$ (since $Z$ is ample), and a punctured tubular neighborhood of $Z'$ is modeled on the principal canonical bundle $P_{Z'}$. The process of passing from the punctured tubular neighborhood of $Z$ to $P_{Z'}$ is therefore an instance of Weinstein $2$-handle attachment, where one $2$-handle is attached for each node of $Z$. It may be checked via a local calculation that this is precisely inverse to the process that produced $P_{Z}$ from $P_{Z'}$.  
\end{proof}

These lemmas lead one naturally to the following Conjecture.
\begin{conjecture}
  The category $\RFuk(U)$ is equivalent to $\RFuk(P_{Z})$. The symplectomorphism $\Lambda$ from Lemma \ref{lem:dehn-twists} induces an autoequivalence of $\RFuk(P_{Z})$, and this is precisely the autoequivalence $\Lambda$ appearing in Conjecture \ref{conj:twisted-hms}.
\end{conjecture}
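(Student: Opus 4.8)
The plan is to establish the two asserted equivalences separately and then identify the two incarnations of $\Lambda$. First I would address $\RFuk(U) \simeq \RFuk(P_{Z})$. By the second lemma just proved, the end of $U$ is symplectomorphic (after removing a compact subdomain) to an open subdomain of $P_{Z}$ onto which $P_{Z}$ retracts; in particular the two spaces have the same convex and concave ends as graded symplectic manifolds, and the filling of the concave end of $P_{Z}$ is realized by the complement of the end in $U$. Granting the invariance properties of the Rabinowitz Fukaya category that we have assumed throughout Section~\ref{sec:symplectic-conjectures} — namely that $\RFuk(-)$ depends only on the Liouville structure near the end together with a choice of filling, and that the ``tame'' symplectomorphisms we use do not change the category — the equivalence $\RFuk(U) \simeq \RFuk(P_{Z})$ follows exactly as in the proof of the Proposition relating Conjecture~\ref{conj:lekili-ueda} to Conjecture~\ref{conj:orlov}. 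The content here is geometric (the preceding two lemmas) plus these structural expectations.

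Next I would identify the autoequivalence. Lemma~\ref{lem:dehn-twists} produces an honest symplectomorphism $\Lambda$ of $P_{Z}$ supported in the cut-and-glue regions near the curves $C_{e}$, acting there by the monodromy of the mapping torus of an $n_{e}$-fold Dehn twist. Any symplectomorphism of a graded symplectic manifold that is graded (preserves the homotopy class of trivialization of $K$) and compatible with the Liouville structure near the end induces an autoequivalence of $\RFuk$; I would check that $\Lambda$ has these properties, which is clear from the explicit local model — the shear matrices $\phi_{n_{e}}$ preserve the grading form $\Omega_{Z}$, as noted in Section~\ref{sec:twisted-principal}, and $\Lambda$ is the identity near the end. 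This gives an autoequivalence $\Lambda_{*}$ of $\RFuk(P_{Z})$.

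Finally I would match $\Lambda_{*}$ with the autoequivalence $\Lambda$ of Conjecture~\ref{conj:twisted-hms}. The latter was characterized $B$-side as tensoring with a line bundle $L$ on (a formal neighborhood of) $\Sing(X)$ whose restriction to $C_{e} \simeq \bP^{1}$ has degree $n_{e} = -\deg(\gsing(X)|_{C_{e}})$, equivalently as the autoequivalence appearing in the descent diagram of Section~\ref{dd:dotf} via the monomial transformation $u \mapsto x^{n_{e}}u$. Under the T-duality dictionary of Section~\ref{sec:t-duality-gsing}, the matrix $\phi_{n_{e}}$ governing the gluing of $\gsing^{\circ}(X)$ over the edge $e$ is precisely the matrix used to reglue $P_{Z'}$ in constructing $P_{Z}$; inserting an $n_{e}$-fold Dehn twist mapping torus along $C_{e}$ is the geometric operation that realizes multiplication by this monomial on the affine-coordinate side. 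So on the level of the (conjectural) descent diagrams, $\Lambda_{*}$ acts on each edge category $\RFuk^{\gr}$ of the cylinder exactly by the autoequivalence $\phi_{n_{e}}$, matching $\Lambda$ edge by edge; since both autoequivalences are trivial on the vertex pieces, they agree globally. I expect the main obstacle to be precisely this last matching step: it presupposes the descent description of $\RFuk^{\gr}(P(\{n_{e}\}),\Omega_{Z})$ that is only conjectural (Remark~\ref{rem:non-proper}), so a fully rigorous identification of $\Lambda_{*}$ with $\Lambda$ would require either establishing that descent or else directly comparing the action of $\Lambda_{*}$ on morphism spaces between lifted Lagrangians with the $B$-side computation of tensoring by $L$ — a Floer-theoretic computation of the monodromy of a Dehn-twist mapping torus that is the genuinely hard analytic input.
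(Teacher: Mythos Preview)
The statement you are attempting to prove is stated in the paper as a \emph{Conjecture}, not as a theorem; the paper offers no proof and no proof sketch beyond the sentence ``These lemmas lead one naturally to the following Conjecture.'' So there is no ``paper's own proof'' to compare against, and your proposal should be read not as a competing proof but as an articulation of why the conjecture is plausible and what would be needed to establish it.

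Read in that light, your proposal is reasonable and in fact tracks the paper's own heuristics closely. The first step (the equivalence $\RFuk(U)\simeq\RFuk(P_Z)$) is exactly the argument the paper uses in the smooth-divisor case, transported via the two preceding lemmas, and you correctly flag that it rests on the standing assumptions about invariance of $\RFuk$ and independence of the filling. The second step (that $\Lambda$ induces an autoequivalence) is the easy part. For the third step you invoke the T-duality picture of Section~\ref{sec:t-duality-gsing} and the conjectural edge-by-edge descent of Remark~\ref{rem:non-proper}; you are right that this is where the genuine content lies, and you are also right that neither ingredient is presently available rigorously. Your closing remark, that a direct Floer-theoretic computation of the monodromy of the Dehn-twist mapping torus acting on $\RFuk$ would be the alternative route, is a fair assessment of the analytic input required.

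In short: you have not proved the conjecture, but neither has the paper, and your outline of the expected argument and its gaps is consistent with the paper's own stance.
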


\subsection{Generalization of the Lekili-Ueda conjecture for nodal divisors in K3  and abelian surfaces}
\label{sec:singular-divisor-in-k3}
We now interpret Conjecture \ref{conj:twisted-hms} in terms of singular divisors in K3 and abelian surfaces and their complements. This generalizes the conjecture of Lekili-Ueda for smooth divisors. On the B-side, this involves the theory of relative singularity categories from Section \ref{sec:relative-sing}.

Let $V$ be a smooth K3 or abelian surface, and let $Z \subset V$ be an at-worst-nodal ample divisor. We postulate that the mirror to the pair $(V,Z)$ is a smooth threefold $Y$ with an anticanonical pencil spanned by two sections $s,s' \in \Gamma(Y,L)$, where $L = K^{-1}_{Y}$; this should be such that $X = s^{-1}(0)$ is a normal crossings surface with graph-like singular locus which is mirror to $U = V \setminus Z$ in the sense that there is an equivalence $\Coh(X) \simeq \Fw(U)$. Furthermore, we expect there to be compatible localization sequences as in Section \ref{sec:case-of-curves},
\begin{equation}
  \xymatrix{
    \Fc(U) \ar[r]\ar[d]& \Fw(U) \ar[r]\ar[d] & \RFuk(U) \ar[d]\\
    \Perf(X)\ar[r]\ar[u] & \Coh(X) \ar[u] \ar[r]& \Dsing(X) \ar[u]
  }
\end{equation}

Thus far we have not used the second section $s' \in \Gamma(Y,L)$, nor the compactification of $U$ as $V = U \cup Z$. Our conjecture is that these two choices correspond to each other in a sense we now explain.

On the B-side, the category $\Dsing(X)$ carries an autoequivalence $L\otimes$, and a natural isomorphism $t : \id \to L^{-1}\otimes [2]$. The second section $s'$ induces a natural transformation $s' : \id \to L\otimes$.

On the A-side, we showed in Section \ref{sec:tubular-neighborhood-nodal} that the end of $U$ is modeled on a twisted principal canonical bundle of a Riemann surface as set out in Section \ref{sec:twisted-principal}, and that this latter manifold carries a symplectomorphism $\Lambda$. Thus  $\Lambda$ induces an autoequivalence of the category $\RFuk(U)$ and (assuming Conjecture \ref{conj:twisted-hms}) a natural isomorphism $t : \id \to \Lambda^{-1}[2]$. It remains to see that the choice of compactification $V = U \cup Z$ induces a natural transformation $\id \to \Lambda$.

We are now in a situation that has been studied in detail by Maxim Jeffs \cite{jeffs}. When $\Lambda$ is the monodromy of a symplectic fibration over a punctured disk, one may obtain a natural transformation $\eta : \id \to \Lambda$ by filling in the  central fiber to obtain a Lefschetz fibration over the disk, and counting sections of the Lefschetz fibration. What we propose is that, since the autoequivalence $\Lambda$ acting on $\RFuk(U) \cong \RFuk(P_{Z})$ is obtained locally from Dehn twists, the choice of compactification divisor $Z$ similarly induces a natural transformation $\eta : \id \to \Lambda$.

In his setting Jeffs has proven that, when the fiber is a surface $\Sigma$, and $\Lambda$ is a composite of Dehn twists about pairwise disjoint curves, then the localization $\Fuk(\Sigma)[\eta^{-1}]$ is equivalent to the quotient of $\Fuk(\Sigma)$ by the vanishing cycles; he takes this as the definition of the Fukaya category of the nodal surface obtained by collapsing the vanishing cycles. Note that we have already shown a similar equivalence for the category $\Dsing(X,Y') \cong \MF(Y,L,s)[s'^{-1}]$ in Corollary \ref{cor:relative-sing-fuk}.

We summarize this discussion in our last conjecture.
\begin{conjecture}
  \label{conj:jeffs}
  Let $V$ be a smooth K3 or abelian surface and $Z \subset V$ an at-worst-nodal ample divisor. Let $U = V \setminus Z$. Then $\RFuk(U)$ carries an autoequivalence $\Lambda$, a natural isomorphism $t : \id \to \Lambda^{-1}[2]$, and a natural transformation $\eta: \id \to \Lambda$, where $\Lambda$ and $\eta$ are given by Jeffs' construction locally near the nodes of $Z$. Also, the localization $\RFuk(U)[\eta^{-1}]$ is equivalent to Jeffs' version of $\Fuk(Z)$.

  There is a smooth threefold $Y$ and anticanonical sections $s,s' \in \Gamma(Y,L)$ ($L = K_{Y}$) such that $X = s^{-1}(0)$ is the mirror of $U$ in the sense that there are equivalent localization sequences
  \begin{equation}
    \label{eq:conj-localization-sequences}
  \xymatrix{
    \Fc(U) \ar[r]\ar[d]& \Fw(U) \ar[r]\ar[d] & \RFuk(U) \ar[d]\\
    \Perf(X)\ar[r]\ar[u] & \Coh(X) \ar[u] \ar[r]& \Dsing(X) \ar[u]
  }
\end{equation}
and such that the $\Lambda$-twisted $2$-periodic structure of $\RFuk(U)$ matches with the $L$-twisted $2$-periodic structure of $\Dsing(X) \cong \MF(Y,L,s)$. Under this equivalence, the natural transformation $\eta$ goes over to $s' : \id \to L\otimes$. Hence there is an equivalence of localizations
\begin{equation}
  \Fuk(Z) \cong \RFuk(U)[\eta^{-1}] \cong \MF(Y,L,s)[s'^{-1}] \cong \Dsing(X,Y'),
\end{equation}
where $Y'$ is the singular variety obtained by blowing up the base locus of the pencil $\langle s, s'\rangle$.
\end{conjecture}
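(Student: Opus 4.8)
The plan is to assemble the conjecture from three results established earlier---Conjecture~\ref{conj:twisted-hms}, Theorem~\ref{thm:relative-graph-like}, and the tubular-neighborhood picture of Section~\ref{sec:tubular-neighborhood-nodal}---together with Jeffs' theory of Fukaya categories of nodal surfaces and the expected formal properties of the Rabinowitz Fukaya category. The existence of a mirror threefold $Y$ carrying an anticanonical pencil $\langle s,s'\rangle$ for which $X=s^{-1}(0)$ is normal crossings with graph-like singular locus and $\Coh(X)\simeq\Fw(U)$, $\Perf(X)\simeq\Fc(U)$ is the basic mirror-symmetry input; it holds for the quartic surface and in general is part of the HMS program. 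Granting it, the localization sequence $\Fc(U)\to\Fw(U)\to\RFuk(U)$ (an expected property of the Ganatra--Gao--Venkatesh construction, via the formal punctured neighborhood at infinity of $\Fw(U)$) matches the tautological sequence $\Perf(X)\to\Coh(X)\to\Dsing(X)$, producing the commuting diagram~\eqref{eq:conj-localization-sequences} and in particular a $k$-linear equivalence $\RFuk(U)\simeq\Dsing(X)\cong\MF(Y,L,s)$; by Section~\ref{sec:tubular-neighborhood-nodal} the end of $U$ is modeled on $P_{Z}=P(\{(C_{e},n_{e})\})$, so this equivalence is compatible with the identification $\RFuk(U)\simeq\RFuk^{\gr}(P_{Z},\Omega_{Z})$ of Conjecture~\ref{conj:twisted-hms}.

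Next I would exhibit $\Lambda$ and the twisted $2$-periodic structure. On the A-side $\Lambda$ is the symplectomorphism of $P_{Z}$ from Lemma~\ref{lem:dehn-twists}, the monodromy of the mapping torus of the $n_{e}$-fold Dehn twists about the curves $C_{e}$; it induces an autoequivalence of $\RFuk(P_{Z})\simeq\RFuk(U)$, and the natural isomorphism $t:\id\to\Lambda^{-1}[2]$ is transported from the $\Lambda$-twisted $2$-periodic structure carried by $\Dsing(X)$ under Conjecture~\ref{conj:twisted-hms}. On the B-side, $\Lambda$ corresponds to $L\otimes$: the number $n_{e}$ of parallel vanishing cycles in the smoothing equals $\deg(L|_{C_{e}})=-\deg(\gsing(X)|_{C_{e}})$ by Lemma~\ref{lem:twist-equals-degree}, so the Dehn-twist monodromy near $C_{e}$ mirrors tensoring by a line bundle of degree $n_{e}$ along the corresponding $\bP^{1}\subset\Sing(X)$, and the $L$-twisted $2$-periodic structure of $\MF(Y,L,s)$ matches the $\Lambda$-twisted one.

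The crux is the natural transformation $\eta:\id\to\Lambda$ and its mirror. Near each node $p$ of $Z$, the pair $(V,Z)$ is locally a Lefschetz fibration over a disk whose central fiber is nodal and whose generic fiber is the smoothing; deleting the central fiber exhibits the local model of $\Lambda$ as the monodromy of the mapping torus of a single Dehn twist, and Jeffs' construction---filling in the central fiber and counting sections of the Lefschetz fibration---produces a local natural transformation $\id\to\Lambda$. I would then check that these local transformations glue to a global $\eta$ on $\RFuk(U)$, and that by Jeffs' theorem, applied to the fiber surface $Z'$ and the pairwise disjoint curves $\{C_{e}\}$, the localization $\RFuk(U)[\eta^{-1}]$ coincides with his definition of $\Fuk(Z)$. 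Independently, $s'\in\Gamma(Y,L)$ gives $s':\id\to L\otimes$ on $\Coh(X)$, hence on $\Dsing(X)$. These must be matched: the nodes of $Z$ correspond under mirror symmetry to the simple zeros of $s'|_{\Sing(X)}$ (there are $n_{e}=\deg(L|_{C_{e}})$ on the component $C_{e}$, matching the $n_{e}$ parallel vanishing cycles), and near such a point $s'$ is the mirror of Jeffs' section-count for the local nodal degeneration---a local HMS statement for an ordinary double point of $Y'$. Granting it, the equivalence $\Dsing(X)\simeq\RFuk(U)$ carries $\eta$ to $s'$.

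Finally I would combine the localizations: Jeffs' theorem gives $\Fuk(Z)\cong\RFuk(U)[\eta^{-1}]$; transporting along the equivalence of the previous paragraph with $\eta\leftrightarrow s'$ gives $\RFuk(U)[\eta^{-1}]\cong\MF(Y,L,s)[s'^{-1}]$; and Theorem~\ref{thm:relative-graph-like} (equivalently Corollary~\ref{cor:relative-sing-fuk}) identifies the latter with $\Dsing(X,Y')$, where $Y'$ is the blow-up of the base locus of $\langle s,s'\rangle$ with the fiber at infinity removed. The hypotheses of Theorem~\ref{thm:relative-graph-like}---that $s'|_{\Sing(X)}$ has only simple zeros, away from triple points---are exactly the translation of ``$Z$ at-worst-nodal and ample'': ampleness forces $s'$ to vanish enough, the at-worst-nodal condition forces simplicity and genericity. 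Beyond the still-provisional foundations of the Rabinowitz category needed even to define $\eta$ and $\RFuk(U)[\eta^{-1}]$, the main obstacle is this last local matching of $\eta$ with $s'$: it is a precise local homological mirror symmetry statement comparing Jeffs' Lefschetz-section construction at a node with multiplication by a local defining equation on the B-side, and establishing it---together with its compatibility with the global descent underlying Conjecture~\ref{conj:twisted-hms}---is where the real work lies.
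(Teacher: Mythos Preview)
This statement is a \emph{conjecture} in the paper, not a theorem; the paper does not prove it. What follows the conjecture in the paper is a discussion in the quartic example, identifying which parts are known and which remain open, capped by a Proposition that records only the provable fragment (the existence of \emph{some} nodal surface $Z_{1}$ with $\Fuk(Z_{1})\cong\MF(Y,L,s)[s'^{-1}]\cong\Dsing(X,Y')$, given HMS for $(U,X)$ and the combinatorial matching of defect curves with nodes). Your proposal is therefore not being compared against a proof but against the paper's own analysis of the conjecture's status.

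That said, your outline tracks the paper's discussion closely and accurately. You and the paper agree on the architecture: assume HMS for $(U,X)$ to get the first two columns of~\eqref{eq:conj-localization-sequences}; use the expected Ganatra--Gao--Venkatesh localization sequence to complete the diagram and obtain $\RFuk(U)\simeq\Dsing(X)$; identify $\Lambda$ via Lemma~\ref{lem:dehn-twists} and the tubular-neighborhood description, matching it with $L\otimes$ through Lemma~\ref{lem:twist-equals-degree}; invoke Jeffs' construction for $\eta$; and close with Theorem~\ref{thm:relative-graph-like} and Corollary~\ref{cor:relative-sing-fuk}. You correctly isolate the residual gap as the local matching of $\eta$ with $s'$, which the paper also flags (``verify the geometric interpretation of $\eta$ as counting sections of local Lefschetz fibrations''). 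One point the paper singles out that you do not mention: even after matching the topology of $Z_{1}$ with $Z$, there remain continuous moduli --- the symplectic areas of the irreducible components --- that must be matched, and the paper leaves this open as well. So your proposal is a faithful expansion of the paper's own heuristic argument, not a proof of something the paper proves.
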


To illustrate the general theory, let us describe what this conjecture amounts to in the case of the Batyrev mirror family for the quartic K3 surface $V \subset \bP^{3}$. We assume that $V$ is in general position with respect to the toric boundary divisor $\partial \bP^{3}$; for concreteness we could let $V = \{\sum_{i=0}^{3} x_{i}^{4}=0\}$ be the Fermat quartic. Then $Z = V \cap \partial \bP^{3}$ is a nodal Riemann surface with four irreducible components each of genus three, with each pair of components meeting at four nodes.

Let $\Delta$ denote the reflexive polytope for the anticanonical linear system on $\bP^{3}$ (a tetrahedron). The cone over $\Delta$ gives a fan $\Sigma$, and the dual threefold $Y$ is a toric resolution of the associated toric variety $X_{\Sigma}$. There are multiple resolutions, but there is a highly symmetrical one corresponding to the subdivision of the faces of $\Delta$ by triangles with sides parallel to the edges of $\Delta$. The toric boundary $X = \partial Y$ is a normal crossings K3 surface, but it is not of type III, since the triple point formula is not satisfied along certain curves; we call these curves \emph{defect} curves. There are four defect curves along each edge of the tetrahedron $\Delta$. If we let $G(X)$ be the trivalent graph describing $\Sing(X)$, and we let $Z'$ surface with pants decomposition described by $G(X)$, then $Z'$ is a smoothing of the original divisor $Z$, and the positions of the nodes correspond to the positions of the defect curves in $\Sing(X)$. Along each of these curves the defect is $n_{e} = 1$.

Please see Figure \ref{fig:subdivision} for a visualization of this geometry, using code from \cite{stackexchange}.

\begin{figure}[h]
  \centering
  \includegraphics[width=.4\linewidth]{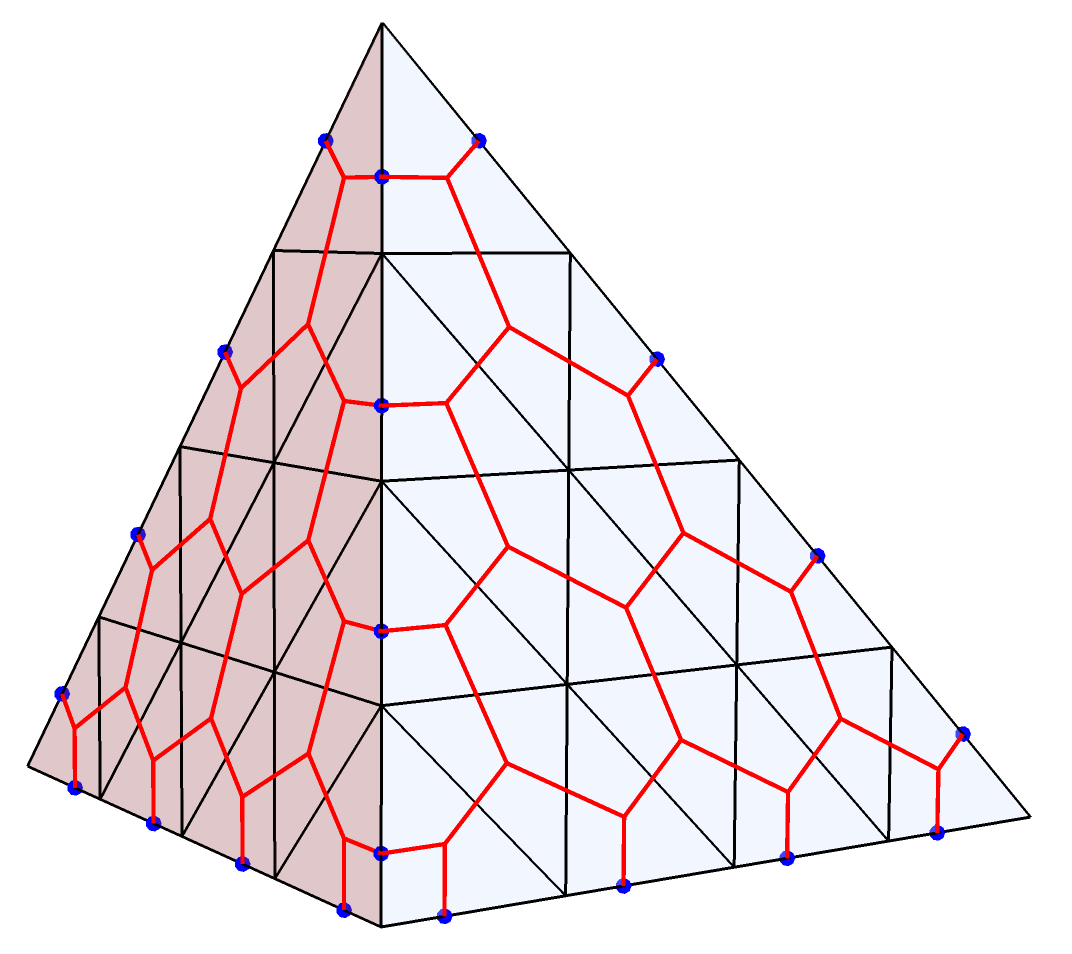}\qquad\includegraphics[width=.4\linewidth]{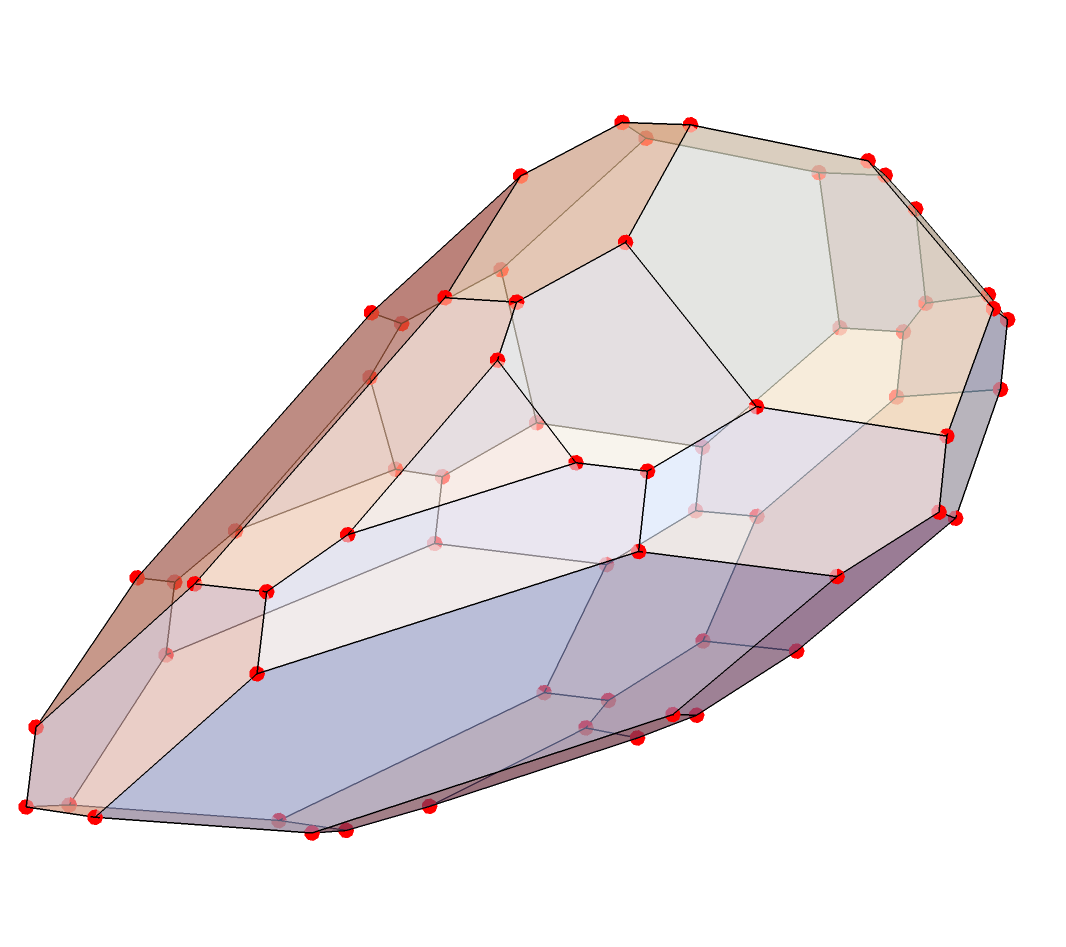}
  \caption{Left: The tetrahedron $\Delta$ in 3D perspective. Each face is subdivided into triangles, and the variety $Y$ is the toric variety whose fan is the cone over this subdivision. The dual graph to the subdivision may be interpreted as a tropical picture of the nodal curve $Z$, and the nodes occur a the points marked along the edges of $\Delta$. Right: The polytope associated to a certain ample line bundle on $Y$. The $1$-skeleton of this polytope is isomorphic to the dual graph in the left-hand picture; this graph is $G(X)$ when $X = \partial Y$. The boundary of the polytope consists of 4 triangles and 30 hexagons; 18 of the hexagons represent $\bP^{2}$ blown up at three torus-fixed points, and 12 represent $\bP^{1}\times \bP^{1}$ blown up at two torus-fixed points lying on a torus-invariant curve  (these surfaces are abstractly isomorphic but carry different toric structures). The defect curves are the torus-invariant curves where the sum of the self-intersection numbers differs from $-2$, and in each case it is $-1$. In the right-hand picture, the defect curves are the edges of the triangles and also the edges opposite to these in faces that neighbor the triangles, while in the left-hand picture they are the edges marked by the blue points.}
  \label{fig:subdivision}
\end{figure}

The variety $X$ is mirror to $U= V\setminus Z$, and in this setting HMS has been established \cite{seidel-quartic, sheridan-cy, gammage-shende-very-affine}. Assuming the expected property that $\RFuk(U) \simeq \Fc(U)/\Fw(U)$, this proves the validity of the diagram \eqref{eq:conj-localization-sequences}. The process of compactifying $U$ to $V$ gives rise to a mirror family, which may be described by a pencil $\langle s,s' \rangle$ where $X = s^{-1}(0)$. More specifically, $Y$ is a resolution of $\bP^{3}/\Gamma_{16}$, where $\Gamma_{16}$ is the group of scalar matrices that preserves the Dwork pencil $y_{0}y_{1}y_{2}y_{3}+q(y_{0}^{4}+y_{1}^{4}+y_{2}^{4}+y_{3}^{4})$, and the pencil on $Y$ is induced from the Dwork pencil itself. The second section $s'$, when restricted to $\Sing(X)$, vanishes exactly once along each defect curve.

We now consider the proposed chain of equivalences
\begin{equation}
  \label{eq:proposed}
  \Fuk(Z) \cong \RFuk(U)[\eta^{-1}] \cong \MF(Y,L,s)[s'^{-1}] \cong \Dsing(X,Y').
\end{equation}
The third equivalence has already be proven in Theorem \ref{thm:relative-graph-like}. Having already established the equivalence $\RFuk(U) \cong \Dsing(X) \cong \MF(Y,L,s)$, it is then clear that the second equivalence in \eqref{eq:proposed} holds for \emph{some} choice of $\eta$. Additionally, Corollary \ref{cor:relative-sing-fuk} implies that there is an equivalence $\Fuk(Z_{1}) \cong \MF(Y,L,s)[s'^{-1}]$ for \emph{some} nodal Riemann surface $Z_{1}$. What remains to be demonstrated is that this $Z_{1}$ really is the compactification divisor $Z$ we started with, and to verify the geometric interpretation of $\eta$ as counting sections of local Lefschetz fibrations. Because the positions of the nodes in $Z$ correspond to the positions of the defect curves in $\Sing(X)$, and because $s'$ vanishes exactly once along each defect curve, we see $Z_{1}$ has the same topology as $Z$. However, there are symplectic moduli given by the symplectic areas of the irreducible components of $Z_{1}$ and $Z$, and these should be matched.

The reader is invited to spell out what Conjecture \ref{conj:jeffs} says for other Batyrev mirror families, where an analogous analysis goes through.

\begin{proposition}
  Let $V,Z,U$ be as in Conjecture \ref{conj:jeffs}. Suppose that $(Y,L,s)$ is given such that $X = s^{-1}(0)$ is a normal crossings surface with graph-like singular locus that is mirror to $U$ in the sense that the first two columns of diagram \eqref{eq:conj-localization-sequences} are valid. Let $s'$ be a section of $L$ whose restriction to $\Sing(X)$ has only simple zeros at points which are not triple points.

  Suppose that the positions of the defect curves in $\Sing(X)$ correspond to the positions of the nodes in $Z$, meaning that the graph $G(X)$ describes a smoothing $Z'$ of $Z$, such that whenever $e$ is an edge corresponding a defect curve with defect $n_{e} > 0$, there are precisely $n_{e}$ parallel vanishing cycles the map $Z' \to Z$ that lie over $e$.

  Then there is a nodal symplectic surface $Z_{1}$ with the same topology as $Z$ such that
  \begin{equation}
    \Fuk(Z_{1}) \cong \MF(Y,L,s)[s'^{-1}] \cong \Dsing(X,Y')
  \end{equation}
  where $Y'$ is the singular variety obtained by blowing up the base locus of the pencil $\langle s, s'\rangle$, and where $\Fuk(Z_{1})$ is understood as the quotient of the Fukaya category of a smoothing of $Z_{1}$ by a collection of objects supported on the vanishing cycles.
\end{proposition}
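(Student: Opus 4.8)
The plan is to assemble the statement from three inputs that are already in hand: Theorem~\ref{thm:relative-graph-like}, which identifies $\MF(Y,L,s)[s'^{-1}]$ with $\Dsing(X,Y')$ and with $\Dsing(\widetilde{X}\setminus\pi^{-1}(Z))$ for a resolution $\pi:\widetilde{Y}'\to Y'$ that is an isomorphism over the smooth locus; the main theorem of \cite{PS21}, as packaged in Corollary~\ref{cor:relative-sing-fuk}, which realizes these categories as a localization of the Fukaya category of a Riemann surface; and Jeffs' description \cite{jeffs} of the Fukaya category of a nodal surface as the quotient of the Fukaya category of a smoothing by objects supported on the vanishing cycles. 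The only genuinely new work will be combinatorial: identifying the nodal surface $Z_1$ that emerges on the B-side with the given ample divisor $Z\subset V$ at the level of topology.

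First I would feed the triple $(Y,L,s)$ and the auxiliary section $s'$ into Corollary~\ref{cor:relative-sing-fuk}. Its hypotheses are met: $X=s^{-1}(0)$ is a normal crossings surface with graph-like singular locus, $s'|_{\Sing(X)}$ has only simple zeros away from the triple points, and the dual intersection complex of $X$ is orientable (it is a triangulated $2$-sphere in the K3 case and a $2$-torus in the abelian surface case). The corollary then produces a Riemann surface $\Sigma$ — by \cite{PS21} this is the surface dual to $\Sing(\widetilde{X})$, with a pants decomposition encoded by the graph $G(\widetilde{X})$ — together with an equivalence identifying $\MF(Y,L,s)[s'^{-1}]\cong\Dsing(X,Y')$ with the quotient of $\Fuk(\Sigma)$ by the objects supported on a collection of simple closed curves. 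Next I would unwind what $\Sigma$ and these curves are. Because $\pi$ is an isomorphism over the smooth locus we have $\widetilde{X}\setminus\pi^{-1}(Z)\cong X\setminus Z$, so $\Sing(\widetilde{X})$ has the same dual graph $G(X)$ as $\Sing(X)$; hence $\Sigma$ carries a pants decomposition with graph $G(X)$, which by the matching hypothesis means $\Sigma$ is the smoothing $Z'$ of $Z$. Moreover a point $z\in Z$ lying on the component $C_e\cong\bP^1$ of $\Sing(X)$ goes, under the equivalence $\Dsing(\widetilde{X})\simeq\Fuk(\Sigma)$ and the T-duality of Section~\ref{sec:t-duality-gsing}, to a copy of the simple closed curve $C_e\subset\Sigma$; and $s'$ has $\deg(L|_{C_e})=n_e$ simple zeros along $C_e$ by Lemma~\ref{lem:twist-equals-degree}, so the collection of curves being quotiented consists of exactly $n_e$ parallel copies of $C_e$ for each defect edge $e$ and nothing over the non-defect edges.

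Consequently the localization furnished by Corollary~\ref{cor:relative-sing-fuk} is precisely the quotient of $\Fuk(Z')$ by the objects supported on these parallel families of simple closed curves, which is exactly Jeffs' $\Fuk(Z_1)$ for the nodal surface $Z_1$ obtained from $Z'$ by collapsing $n_e$ parallel copies of $C_e$ over each defect edge $e$. By the matching hypothesis, $Z_1$ then has $n_e$ nodes over each defect edge and is smooth over the others, so $Z_1$ is homeomorphic to $Z$; combined with Theorem~\ref{thm:relative-graph-like} this yields $\Fuk(Z_1)\cong\MF(Y,L,s)[s'^{-1}]\cong\Dsing(X,Y')$ with $Z_1$ of the asserted topological type.

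I expect the main obstacle to be that this argument only pins $Z_1$ down up to topology. Upgrading ``same topology as $Z$'' to ``symplectomorphic to $Z$'' would require matching the symplectic areas of the irreducible components of $Z_1$ — which are governed by the affine/K\"{a}hler moduli entering the resolution $\widetilde{Y}'$ and the equivalence $\Dsing(\widetilde{X})\simeq\Fuk(\Sigma)$ of \cite{PS21} — against the areas of the components of the given divisor $Z\subset V$, and controlling this form of the mirror map is exactly what we do not attempt here. A subsidiary point to make explicit is that the identification in Corollary~\ref{cor:relative-sing-fuk} must be checked to be compatible with multiplicities, i.e.\ that the $n_e$ distinct points of $\pi^{-1}(Z)$ over $C_e$ correspond to $n_e$ genuinely distinct (though isotopic) objects rather than being merged; this follows by tracking the skyscraper sheaves through Kn\"{o}rrer periodicity as in the proof of Proposition~\ref{prop:skyscraper-split-gen}, but it should be stated rather than left implicit.
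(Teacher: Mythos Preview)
Your proposal is correct and follows the same approach as the paper: the paper's own proof is the one line ``This follows from Theorem~\ref{thm:relative-graph-like} and Corollary~\ref{cor:relative-sing-fuk},'' with the combinatorial matching of $Z_{1}$ to $Z$ (via the count $n_{e}=\deg(L|_{C_{e}})$ of zeros of $s'$ on each defect curve) handled informally in the discussion preceding the statement. You have simply made that matching explicit, and your caveat about symplectic moduli is exactly the point the paper flags as well.
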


\begin{proof}
  This follows form Theorem \ref{thm:relative-graph-like} and Corollary \ref{cor:relative-sing-fuk}.
\end{proof}

\bibliographystyle{alpha}
\bibliography{dsing-divisors}

\end{document}